\pgfplotsset{width=10cm,compat=1.17}
\newcommand{\bN}{ {\mathbb{N} } }
\newcommand{\bQ}{ {\mathbb{Q}} }
\newcommand{\bR}{ {\mathbb{R}} }
\newcommand{\bZ}{ {\mathbb{Z}} }
\newtheorem{theorem}{Theorem}
    \newtheorem{lemma}[theorem]{Lemma}
    \newtheorem{corollary}[theorem]{Corollary}
    \newenvironment{manualtheorem}[1]{%
        \manualtheoreminner
    }{\endmanualtheoreminner}
\theoremstyle{definition} 
    \newtheorem*{definition}{Definition}
\theoremstyle{remark} 
    \newtheorem*{remark}{Remark}
\DeclareMathOperator*{\argmin}{arg\,min}
\title{Grid entropy in last passage percolation --- a superadditive critical exponent approach}
\author{Alexandru Gatea \\ \quad \\ Department of Mathematics, University of Toronto \\ \href{mailto:alex.gatea@mail.utoronto.ca}{alex.gatea@mail.utoronto.ca} }
\begin{document}

\maketitle
\thispagestyle{empty}

\begin{abstract}
Working in the setting of i.i.d. last-passage percolation on $\mathbb{R}^D$ with no assumptions on the underlying edge\hyp{}weight distribution, we arrive at the notion of grid entropy --- a Subadditive Ergodic Theorem limit of the entropies of paths with empirical measures weakly converging to a given target, or equivalently a deterministic critical exponent of canonical order statistics associated with the Levy-Prokhorov metric. This provides a fresh approach to an entropy first developed by Rassoul-Agha and Sepp{\"a}l{\"a}inen as a large deviation rate function of empirical measures along paths. In their 2014 paper \cite{rassoul2014quenched}, variational formulas are developed for the point-to-point/point-to-level Gibbs Free Energies as the convex conjugates of this entropy. We rework these formulas in our new framework and explicitly link our descriptions of grid entropy to theirs. We also improve on a known bound for this entropy by introducing a relative entropy term in the inequality.  Furthermore, we show that the set of measures with finite grid entropy coincides with the deterministic set of limit points of empirical measures studied in a recent paper \cite{bates} by Bates. In addition, we partially answer a directed polymer version of a question of Hoffman which was previously tackled in the zero temperature case by Bates. Our results cover both the point-to-point and point-to-level scenarios.
\end{abstract}

\tableofcontents

\section{Introduction}

The limiting behaviour of empirical measures is of paramount importance in percolation theory. The normalized passage time along a path, what we ultimately care about, is nothing but the identity function integrated against the empirical measure of that path. In First/Last Passage Percolation we study the minimizing/maximizing paths called geodesics.

A major open problem posed by Hoffman  in \cite{american} is whether empirical measures along geodesics in a fixed direction converge weakly. This is partially answered in \cite{bates} for FPP. Bates  proves that the  sets $\mathcal{R}^q, \mathcal{R}$ of limiting distributions in direction $q$, limiting distributions in the direction-free case respectively are deterministic and derives an explicit variational formula for the limit shape of the first passage time as the minimum value of a linear functional over $\mathcal{R}^q$. When the set of minimizers is a singleton, which Bates argues happens for a dense family of edge-weight distributions, it follows that Hoffman's question is answered in the affirmative. The same argument applied to the LPP model yields analogous conclusions.

Generally, there is no known way of computing the limiting distributions along geodesics. \cite{martinAllan} showcases some cutting edge developments for the solvable Exponential LPP on $\bZ^2$ model, including an explicit formula for  weak limits of empirical measures along geodesics. For other recent work on geodesics see 
 \cite{ahlberg}, \cite{janjigianRassoul} and \cite{janjigianShen}.

One way to extend the work in \cite{bates} is to study the more general question of what \emph{entropy} of  empirical measures in a fixed direction or direction-free converge in distribution to a certain target measure. Grid entropy gives an exact, deterministic answer.

In \cite{rassoul2014quenched}, Rassoul-Agha and Sepp{\"a}l{\"a}inen derive this entropy as a large deviation rate function of empirical measures along paths and they provide variational formulas realizing the point-to-point/point-to-level Gibbs Free Energies as the convex conjugates of this mysterious new entropy. This rate function approach suggests that one might be able to derive formulas for this entropy coming from subadditivity, and furthermore that it is related to a critical exponent of paths.

Our main aim is to prove these intuitions correct, working in an LPP setting on $\bZ^D$ which can easily be extended to more general frameworks. Rather than starting from the definitions given in \cite{rassoul2014quenched}, we  define grid entropy as a certain critical exponent of paths and show that it is equivalently described as a superadditive ergodic limit. Moreover, we arrive at variational formulas which are analogues to Rassoul-Agha and Sepp{\"a}l{\"a}inen's and which are in fact positive-temperature analogues of Bates' variational formula. Along the way, we  establish various properties of this entropy, some of which are already noted in or follow from \cite{rassoul2014quenched} and \cite{bates}, but also some of which are not, such as a partial answer to Hoffman's question in a directed polymer setting and such as an amazing equality between grid entropy, relative entropy and Shannon entropy. We give versions of these results for both direction-$q$ grid entropy $||(q,\nu)||$ and direction-free grid entropy $||\nu||$ of target measures $\nu$. By concavity and the lattice symmetries,  the direction-free case turns out to simply be the direction-fixed case in the unit direction $(\frac1D,\ldots, \frac1D)$ which maximizes the total number of paths. 

\bigskip

Let us now be more precise. We consider north-east nearest-neighbor paths on $\bZ^D$, and work on $\bR^D$ by taking coordinate-wise floors. We follow a similar approach to \cite{bates}, in that we couple our i.i.d. edge weights $\tau_e$ to i.i.d. edge labels Unif[0,1] random variables $U_e$ via a measurable function $\tau: [0,1] \rightarrow \bR$ satisfying $\tau_e = \tau(U_e)$. This lets us work on [0,1] at no additional cost, as we can lift everything back to $\bR$ via the pushforward of $\tau$. We then interest ourselves with \emph{how many} empirical measures $\frac1n \mu_{\pi} = \frac1n \sum \limits_{e\in \pi} \delta_{U_e}$ for paths $\vec{0} \rightarrow \lfloor nq \rfloor$ converge to some given target measure $\nu$ in $\mathcal{M}_+$, the set of finite non-negative Borel measures on [0,1]. We may keep the direction $q$ fixed, or we may vary $q$ over all points in $\bR^D$ with the same 1-norm $||q||_1$.

To perform the counting, we consider the order statistics of the distance of the paths' empirical measures $\frac1n \mu_{\pi}$ to the target $\nu$, where distance is measured via the Levy-Prokhorov metric $\rho$, which metrizes weak convergence of measures. That is, given a direction $q \in \bR^D$ and a target measure $\nu$, for every $n \in \bN$ we let
\[ \min_{\pi: \vec{0} \rightarrow \lfloor nq \rfloor}^1 \rho\bigg(\frac1n \mu_{\pi}, \nu \bigg) \leq \min_{\pi: \vec{0} \rightarrow \lfloor nq \rfloor}^2   \rho\bigg(\frac1n \mu_{\pi}, \nu \bigg) \leq \ldots \leq  \min_{\pi: \vec{0} \rightarrow \lfloor nq \rfloor}^{\# \pi: \vec{0} \rightarrow \lfloor nq \rfloor}  \rho\bigg(\frac1n \mu_{\pi}, \nu \bigg) \]
denote the order statistics value of $ \rho(\frac1n \mu_{\pi}, \nu)$. It is convenient to define 
\[\min \limits_{\pi: \vec{0} \rightarrow \lfloor nq \rfloor}^j \rho \bigg(\frac1n \mu_{\pi}, \nu \bigg) := +\infty \ \mbox{for} \ j > \# \pi: \vec{0} \rightarrow \lfloor nq \rfloor\]
In the direction-free case, where we count all paths  of a certain scaled length from $\vec{0}$, we let \\ $t := ||\nu||_{TV}$ and similarly define  $\min \limits_{\pi \ \mbox{s.t.} \ |\pi| = \lfloor nt \rfloor }^j \rho(\frac1n \mu_{\pi}, \nu)$ over paths of length $\lfloor nt \rfloor$ anchored at $\vec{0}$.
Of course, these order statistics and the paths corresponding to them are event-dependent.

We then define the grid entropy with respect to the target  $\nu$ and the direction $q$, denoted $||(q, \nu)||$, and the direction-free grid entropy, denoted $||\nu||$, to be the critical exponent where the corresponding order statistics change from converging to 0 a.s. to diverging a.s.:
\begin{equation} \label{firstDef}
\begin{split}
 ||(q, \nu)|| &:= \sup \bigg\{\alpha \geq 0 \ : \lim_{n \rightarrow \infty} \min_{\pi: \vec{0} \rightarrow \lfloor nq \rfloor}^{\lfloor e^{\alpha n} \rfloor}  \rho\bigg(\frac1{n} \mu_{\pi}, \nu \bigg) = 0  \ \mbox{a.s.} \bigg\} \\
 ||\nu|| &:= \sup \bigg\{\alpha \geq 0 \ : \lim_{n \rightarrow \infty} \min_{\pi \ \mbox{s.t.} \ |\pi| = \lfloor nt \rfloor}^{\lfloor e^{\alpha n} \rfloor}  \rho\bigg(\frac1{n} \mu_{\pi}, \nu \bigg) = 0  \ \mbox{a.s.} \bigg\}
 \end{split}
 \end{equation}
where these are defined to be $-\infty$ if the set of $\alpha$'s is empty.
It will turn out that we can replace the limits in this definition  with $\liminf$'s and  get the same quantity.  Observe that these grid entropies lie in $\{-\infty\} \cup [0, H(q)]$, $\{-\infty\} \cup [0, \log D]$ respectively where
\[ H(q): =\sum_{i =1}^D -q_i \log \frac{q_i}{||q||_1}  \]
is  the (Shannon) entropy of the total number of paths in direction $q$, in the sense that
\[(\# \mbox{paths} \ \vec{0} \rightarrow nq) = e^{H(q)n+o(n)}\]

We note that the description \eqref{firstDef}  of grid entropy as the critical exponent of these order statistics has been previously shown to hold only for Bernoulli edge labels (see \cite{carmona2010directed}).

Over the course of this paper we establish two other equivalent definitions of grid entropy which avoid the annoyance of dealing with these event-dependent orderings of the paths. One of these alternate descriptions is remarkably simple: grid entropy is the negative convex conjugate of Gibbs Free Energy; as a direct consequence, our notion of grid entropy is equivalent to that appearing in \cite{rassoul2014quenched} modulo some normalizations. The other, as the entropy of paths with empirical measure Levy-Prokhorov-close to the target, is completely new though not unexpected. 

We summarize these characterizations in the following theorem and link them to current literature in subsequent remarks.

\begin{manualtheorem}{A}
\begin{enumerate}[label=(\roman*)]
\item[]
\item Let $q \in \bR^D_{\geq 0}$ and let $\nu$ be a finite non-negative Borel measure on [0,1]. Then the direction$-q$ grid entropy $||(q,\nu)||$ as defined in \eqref{firstDef} is also given by
\begin{equation} \label{newLabel}
    ||(q, \nu)|| = \inf_{\epsilon > 0} \lim_{n \rightarrow \infty} \frac1n \log \sum_{\pi: \vec{0} \rightarrow \lfloor nq \rfloor} e^{-\frac{n}{\epsilon} \rho(\frac1n \mu_{\pi}, \nu)} \ \mbox{a.s.}
    \end{equation}
The expressions we take an infimum of are each directed metrics with negative sign on $\bR^D \times \mathcal{M}_+$. That is, they take value in $[-\infty, \infty)$, evaluate to 0 when $(q,\nu) = (\vec{0},0)$, and satisfy the triangle inequality with the sign reversed.

Moreover, direction-fixed grid entropy is the negative convex conjugate of the point-to-point $\beta$-Gibbs Free Energy in direction $q$ (as a function of the environment-coupling function $\tau$): For $\beta > 0$,
\[ ||(q, \nu)|| = -(G_q^{\beta})^{*}(\nu) = -\sup_{\tau} [ \beta \langle \tau, \nu \rangle - G_q^{\beta}(\tau)] \]
where the supremum is over bounded measurable $\tau: [0,1] \rightarrow \bR$, where $\langle \cdot, \cdot \rangle$ is the integration linear functional $\langle \tau, \nu \rangle = \int_0^1 \tau(u) d\nu$ and where the point-to-point $\beta$-Gibbs Free Energy is given by
\[ G_{q}^{\beta}(\tau)  = \lim_{n \rightarrow \infty} \frac1n \log \sum_{\pi: \vec{0} \rightarrow \lfloor nq \rfloor} e^{\beta T(\pi)} \]
\item Analogous results hold in the direction-free case. Let $\nu$ be a finite non-negative Borel measure on [0,1] and let $t := ||\nu||_{TV}$. Then the direction-free grid entropy $||\nu||$ as defined in \eqref{firstDef} is also given by
\begin{align*}
      ||\nu|| &= \inf_{\epsilon > 0} \lim_{n \rightarrow \infty} \frac1n \log \sum_{\pi \in \mathcal{P}_{\lfloor nt \rfloor}( \vec{0})} e^{-\frac{n}{\epsilon} \rho(\frac1n \mu_{\pi}, \nu)} \ \mbox{a.s.} \\
      &= \sup_{q \in \bR^D_{\geq 0}, ||q||_1 = t} ||(q,\nu)|| \\
      &= ||(t\ell, \nu)|| \ \mbox{where} \ \ell = \bigg(\frac1D, \ldots, \frac1D \bigg)
\end{align*}  
The expressions we take an infimum of are each directed metrics with negative sign on $\mathcal{M}_t$, the set of finite non-negative Borel measures with total mass $t$. Moreover, direction-free grid entropy is the negative convex conjugate of the point-to-level $\beta$-Gibbs Free Energy:
\[ || \nu|| = -(G^{\beta})^{*}(\nu) = -\sup_{\tau} [\beta \langle \tau, \nu \rangle - G^{\beta}(\tau)] \ \mbox{a.s.}\]
where the supremum is over bounded measurable $\tau: [0,1] \rightarrow \bR$ and where the point-to-level $\beta$-Gibbs Free Energy is given by
\[ G^{\beta}(\tau)  = \lim_{n \rightarrow \infty} \frac1n \log \sum_{\pi \ \mbox{s.t.} \ |\pi| = n} e^{\beta T(\pi)} \]
\end{enumerate}
\end{manualtheorem}

\begin{remark}
We can extend these variational formulas for point-to-point/point-to-level Gibbs Free Energy to general, possibly unbounded, measurable $\tau$ by truncating $\tau$ in $\beta \langle \tau, \nu \rangle$ at some constant $C > 0$ we take to $\infty$. For example,
\[ ||(q, \nu)|| =  - \sup_{C > 0} \sup_{\tau} [ \beta \langle \tau \land C, \nu \rangle - G_q^{\beta}(\tau)] \]
\end{remark}

\begin{remark}
This theorem is partially proved in \cite{carmona2010directed} [Corollary 2] for the case when the edge labels follow a Bernoulli($p$) distribution. In this setting, Carmona shows that the negative convex conjugate of Gibbs Free Energy of measures of the form $\nu_s := s \delta_1 + (1-s) \delta_0$ is given by
\begin{equation*} 
 -(G^{1})^{\ast} (\nu_s) =  \begin{cases} 
\lim\limits_{n \rightarrow \infty} \frac1n \log \#(\mbox{length $n$ paths from $\vec{0}$ with $\geq ns$ 1-labels}), & s \geq p \\ \lim \limits_{n \rightarrow \infty} \frac1n \log \#(\mbox{length $n$ paths from $\vec{0}$ with $\leq ns$ 1-labels}), & s < p \\ \end{cases} 
\end{equation*}
The inequalities $\geq, \leq$ can be replaced with equality since the number of paths with $ns$ 1-labels is exponentially decaying in $s$. Using the definition of the Levy-Prokhorov metric we can conclude that this formulation is equivalent to \eqref{firstDef}.
\end{remark}
\begin{remark}
As mentioned, grid entropy first appears in literature as the rate function for the Large Deviation Principle of the empirical measures in \cite{rassoul2014quenched}. The fact that we derive formula \eqref{newLabel}  using the Subadditive Ergodic Theorem should therefore not be surprising. Our definitions \eqref{firstDef} for direction-fixed/direction-free grid entropy align with those of Rassoul-Agha and Sepp{\"a}l{\"a}inen as follows. Consider the product space $\Omega \times \mathcal{G}$ of the environment \\ $\Omega := [0,1]^{\bZ^D \times  \mathcal{G}}$ consisting of the i.i.d. edge labels and of the $D$ unit NE steps \\ $\mathcal{G} = \{e_1, \ldots, e_D\} $, and let  $\phi: \Omega \times \mathcal{G} \rightarrow [0,1]$ map an (environment, unit direction) pair to the edge label of the corresponding unit direction anchored at the origin. Then 
\begin{equation} \label{translationEqn}
||(q,\nu)|| = \log D - \inf_{\mu: \phi_{\ast}(\mu) = \nu, E^{\mu}[Z_1] = q} H_1(\mu), \ ||\nu|| = \log D - \inf_{\mu:\phi_{\ast}(\mu) = \nu}  H_1(\mu) 
\end{equation}
where $H_1$ is the relative entropy defined in (5.2) of \cite{rassoul2014quenched} which can be traced back to Varadhan's paper \cite{varadhan2003large}, and where the infimums are over the measures $\mu$ on $\Omega \times \mathcal{G}$ whose $\phi$-pushforward is $\nu$ and, in the direction-fixed case, for which in addition the $\mu$-mean of the step coordinate $Z_1$ is $q$. See Section \ref{translationSection} for a more detailed derivation and discussion of \eqref{translationEqn}.
\end{remark}

\bigskip

Now grid entropy satisfies some rather interesting properties. The following theorem captures the highlights for direction$-q$ grid entropy; the direction-free analogues hold as well. All but properties (iii) and (v) follow easily from the framework presented previously in \cite{rassoul2014quenched}, yet we will showcase the power of our new approach to grid entropy by proving \emph{all} of these properties directly.

\begin{manualtheorem}{B}\label{part1_B}
Let $q \in \bR^D_{\geq 0}$. Then:
\begin{enumerate}[label=(\roman*)]
\item Grid entropy $||(q,\nu)||$ is a  directed norm with negative sign; it scales with positive-factors and it satisfies a reverse triangle inequality:
\[ ||(p, \xi)|| + ||(q, \nu)|| \leq ||(p+q, \xi+\nu)|| \]
\item Grid entropy $||(q,\nu)||$ is upper semicontinuous.

Let 
\[\mathcal{R}^q := \{\mbox{accumulation points of empirical measures along paths in direction $q$}\}\]
\item $\mathcal{R}^q$ is weakly closed, convex and deterministic and coincides almost surely with 
\[\{\nu \in \mathcal{M}_+: ||(q, \nu)|| > -\infty\}\]
\item $\mathcal{R}^q$ consists only of measures $\nu$ with total variation $||\nu||_{TV} = ||q||_1$ that are absolutely continuous with respect to the Lebesgue measure $\Lambda$ on [0,1].
\item Any $\nu \in \mathcal{R}^q$ satisfies the following upper bound on the sum of the grid entropy and the relative entropy with respect to Lebesgue measure on [0,1]:
$$D_{KL} (\nu||\Lambda) + ||(q, \nu)|| \leq  \sum_{i =1}^D -q_i \log \frac{q_i}{||q||_1} := H(q)$$
 where $D_{KL}$ denotes relative entropy (or Kullback-Leibler divergence), and where, \\ again, this upper bound is simply the (Shannon) entropy of the total number of paths in direction $q$. 
 \end{enumerate}
\end{manualtheorem}

Why do we care? The  deterministic set $\mathcal{R}^q$ is nothing more than the LPP analogue of the set Bates takes a infimum over in \cite{bates} in his variational formula for the FPP time constant. In our LPP setting, his formula becomes
\begin{align*} 
\mbox{LPP time constant} &:= \lim_{n \rightarrow \infty} \frac{\mbox{last passage time for paths $\vec{0} \rightarrow \lfloor nq \rfloor$}}n \\
& = \sup_{\nu \in \mathcal{R}^q} \langle \tau, \nu \rangle \ \mbox{a.s.}
\end{align*}
We thus link \cite{bates} and \cite{rassoul2014quenched} by providing a new, more enlightening description of these sets $\mathcal{R}^q$ in terms of these grid entropies. Furthermore, the amazing bound in (iii) is an improvement of a  version  without the grid entropy term proved by Bates and which also follows from Rassoul-Agha and Sepp{\"a}l{\"a}inen's work.

As in \cite{rassoul2014quenched}, the main application of grid entropy we present is a variational formula for the point-to-point/point-to-level Gibbs Free Energies in direction $q$/to the "level" $x_1 +\ldots +x_D = nt$ in the directed polymer model. Our variational formula is simply the zero temperature LPP analogue of Bates' variational formula for the FPP limit shape, and furthermore can be molded into Rassoul-Agha and Sepp{\"a}l{\"a}inen's variational formula via some normalizations, thus proving that what we call grid entropy really is the same object first developed in \cite{rassoul2014quenched}.

 \begin{manualtheorem}{C}\label{part1_C}
Fix a direction $q \in \bR^D_{\geq 0}$ and an inverse temperature $\beta > 0$. For bounded measurable $\tau: [0,1] \rightarrow \bR$, the point-to-point/point-to-level Gibbs Free Energies are given by
\[ G_{q}^{\beta}(\tau) = \sup_{\nu \in \mathcal{R}^q} [ \beta \langle \tau, \nu \rangle + ||(q, \nu)||],\]
\[G^{\beta}(\tau) = \sup_{\nu \in \mathcal{R}^1} [ \beta \langle \tau, \nu \rangle + ||\nu||] = G^{\beta}_{\ell} = \sup_{q \in \bR^D_{\geq 0}, ||q||_1=1} G^{\beta}_q \]
where $\ell = (\frac1D, \ldots, \frac1D)$ is the maximizing direction, and 
\[\mathcal{R}^1 = \bigcup \limits_{q \in \bR^D_{\geq 0}, ||q||_1=1} \mathcal{R}^q\]
 is the set of Borel probability measures $\nu$ that have finite direction-free grid entropy. 
Moreover, these supremums are achieved by some $\nu$ in $\mathcal{R}^q, \mathcal{R}^{\ell}$ respectively.
\end{manualtheorem}
\begin{remark}
We may replace the supremums in these formulas to be over all measurable $\tau$ by truncating $\tau$ at some $C>0$ and taking a supremum over $C$. 
\end{remark}
 
 As in \cite{bates}, it follows that the directed polymer analogue of Hoffman's question is answered in the affirmative when  our variational formula has a unique maximizer, which happens for a dense family of measurable functions $\tau$.
 
 \begin{manualtheorem}{D}\label{part1_D}
Fix an inverse temperature $\beta > 0$ and a bounded measurable $\tau: [0,1] \rightarrow \bR$. 
\begin{enumerate}[label=(\roman*)]
\item Fix $q \in \bR^D_{\geq 0}$ and suppose $\beta \langle \tau, \nu \rangle + ||(q,\nu)||$ has a unique maximizer $\nu \in \mathcal{R}^q$. For every $n$ pick a path $\pi_n: \vec{0} \rightarrow \lfloor nq \rfloor$ independently and at random according to the probabilities prescribed the corresponding point-to-point $\beta$-polymer measure 
\[ \rho_{n,q}^{\beta}(d\pi) = \frac{e^{\beta T(\pi)}}{ \sum \limits_{\pi: \vec{0} \rightarrow \lfloor nq \rfloor} e^{\beta T(\pi)} } \ \mbox{for paths} \ \pi:\vec{0} \rightarrow \lfloor nq \rfloor \]
Then the empirical measures $\frac1n \mu_{\pi_n}$ converge weakly to $\nu$ a.s.
\item Suppose $\beta \langle \tau, \nu \rangle + ||\nu||$ has a unique maximizer $\nu \in \mathcal{M}_1$. For every $n$ pick a length $n$ path $\pi_n$ from $\vec{0}$ independently and at random according to the probabilities prescribed the corresponding point-to-level $\beta$-polymer measure 
\[ \rho_{n}^{\beta}(d\pi) = \frac{e^{\beta T(\pi)}}{ \sum \limits_{\pi \ \mbox{s.t.} \ |\pi| =  n} e^{\beta T(\pi)} } \ \mbox{for length $n$ paths $\pi$ from} \ \vec{0}  \]
Then the empirical measures $\frac1n \mu_{\pi_n}$ converge weakly to $\nu$ a.s.
\end{enumerate}
\end{manualtheorem}
\begin{remark}
In fact, even if there is no unique maximizer, we can show that all accumulation points of random paths chosen according to the direction-fixed/direction-free $\beta$-polymer measure are among the maximizers of the corresponding variational formula. This partial answer to Hoffman's question in the positive temperature case is immediate from the development of grid entropy by Rassoul-Agha and Sepp{\"a}l{\"a}inen \cite{rassoul2014quenched} as the rate function of the Large Deviation Principle of empirical measures. However, to the best of our knowledge this is the first time it has been formulated explicitly.
\end{remark}
 
The plan is as follows. In Section \ref{section2}, we describe the model and setup, and outline various facts and notions we will need over the course of this paper. Section \ref{section3} focuses on developing the second definition of grid entropy \eqref{newLabel}  as a directed norm with negative sign and showing that it is equivalent to the original definition with the $\min \limits_{\pi}^j$.  Then, in Section \ref{section4}, we investigate what information we can extract from grid entropy and what properties it satisfies (Theorem \ref{part1_B}). We devote Section \ref{section5} to applying our results to establish our variational formula for point-to-point/point-to-level Gibbs Free Energies and study the consequences of this (namely Theorems \ref{part1_C},\ref{part1_D}) as well as the correspondence to the work in \cite{rassoul2014quenched}. Last but not least, we make some closing remarks about adapting our results to other models.

\newpage

\section{Preliminaries}\label{section2}

\subsection{Empirical Measures on the Lattice}
We begin by briefly describing the setup we use in this paper. 

We restrict ourselves to a directed Last Passage Percolation model. 
Consider north-east nearest-neighbour paths on the lattice $\bZ^D, D \geq 1$ with i.i.d. edge weights $\tau_e \sim \theta$ for some probability distribution  $\theta$ on $\bR$. By north-east, we of course mean that the coordinates of points on the path are nondecreasing.

For $p, q \in \bZ^D$ we denote by $\mathcal{P}(p,q)$ the set of all NE paths $\pi: p \rightarrow q$. Similarly, for $q \in \bZ^D$ and $t \in \bZ_{\geq 0}$ we denote by $\mathcal{P}_t(q)$ the set of all NE paths from $q$ of length $t$ (no restriction on the endpoint).

Observe that  either $q-p \notin \bZ_{\geq 0}^D$ and $\mathcal{P}(p,q) = \emptyset$ , or $q-p \in \bZ_{\geq 0}$ and  
\[|\mathcal{P}(p,q) | = \binom{||q-p||_1}{q_1-p_1,q_2-p_2,\cdots, q_D-p_D}\]
Here $||\cdot||_1$ is the 1-norm on $\bR^D$ defined by $||p||_1 = \sum \limits_{i=1}^D |p_i|$.

On the other hand, $|\mathcal{P}_t(q)| = D^t$ trivially for any $q \in \bZ^D$.

Note that unlike other recent work (such as \cite{martinAllan}) we do not restrict ourselves to a known solvable model. We do not impose any restrictions on the edge weight distribution $\theta$, so that our results hold with the greatest generality possible.

We scale the grid by $n$ and look at the behaviour in the limit. As is standard, we extend our initial inputs $p, q$ to lie in $\bR^D$ by taking coordinatewise floors of the scaled coordinates. That is, we consider  paths $\pi \in \mathcal{P}(\lfloor np \rfloor, \lfloor nq \rfloor)$ where
 \[\lfloor (x_1,\ldots, x_D) \rfloor := (\lfloor x_1 \rfloor, \ldots, \lfloor x_D \rfloor)\]
  Our normalized directed metrics will converge almost surely to a translation-invariant limit so it will suffice to consider the case when $p = \vec{0}$. But for now we let $\vec{p}$ be arbitrary.

Various inequalities we derive involve the asymptotics of the number of length $n$ NE paths from the origin in a fixed or unfixed direction. The following lemma, which is easily proved using Sterling's approximation, gives us what we want.

\begin{lemma}\label{lemma1}
Let $k, n \in \bN, a_i \in \bR_{\geq 0}, \sum a_i = a$. Then
\[\binom{\lfloor na \rfloor}{\lfloor na_1 \rfloor, \lfloor na_2 \rfloor, \ldots, \lfloor na_k \rfloor}  = \bigg(\frac{a^{a}}{ \prod \limits_{1 \leq i \leq k} a_i^{a_i}} + o(1) \bigg)^n  \]
where we use the convention $0^0 = 1$.
\end{lemma}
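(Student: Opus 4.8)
The plan is a direct Stirling's-approximation computation on the logarithm of the multinomial coefficient, after two easy preliminary reductions. First I would dispose of the indices $i$ with $a_i = 0$: for such $i$ one has $\lfloor na_i\rfloor = 0$ for all $n$, so the factor $(\lfloor na_i\rfloor)! = 1$ contributes nothing, and $a_i^{a_i} = 1$ by convention; thus we may assume $a_i > 0$ for every $i$ (in particular $a>0$, the case $a = 0$ being the trivial identity $1 = 1$). Second, since $0 \le \lfloor na\rfloor - \sum_i \lfloor na_i\rfloor \le k-1$, whatever convention is used for the multinomial symbol when its bottom entries fail to sum to the top, the coefficient $M_n$ equals $\frac{(\lfloor na\rfloor)!}{\prod_{i=1}^k (\lfloor na_i\rfloor)!}$ up to the factorial of a bounded nonnegative integer, i.e.\ up to an $O(1)$ factor irrelevant to the stated asymptotics.

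With these reductions, I would write $\log M_n = \log\big((\lfloor na\rfloor)!\big) - \sum_{i=1}^k \log\big((\lfloor na_i\rfloor)!\big) + O(1)$ and plug in the Stirling estimate $\log m! = m\log m - m + O(\log m)$ (valid for $m \ge 1$) together with $\lfloor nx\rfloor = nx + O(1)$ and $\log\lfloor nx\rfloor = \log n + \log x + O(1/n)$ for fixed $x>0$. Expanding, the $n\log n$ terms cancel because $a = \sum_i a_i$; the ``$-m$'' contributions amount only to $O(1)$, again because $a = \sum_i a_i$; the remaining cross terms and Stirling error terms are all $O(\log n)$; and the surviving main term is precisely $n\big(a\log a - \sum_i a_i\log a_i\big) = n\log\frac{a^a}{\prod_i a_i^{a_i}}$. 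Hence $\log M_n = n\log\frac{a^a}{\prod_i a_i^{a_i}} + O(\log n)$, so $\frac1n\log M_n \to \log\frac{a^a}{\prod_i a_i^{a_i}}$.

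To recover the precise form in the statement I would note that $c := \frac{a^a}{\prod_i a_i^{a_i}}$ is a strictly positive real (indeed $c\ge 1$, since $\log c = \sum_i a_i\log(a/a_i)$ equals $a$ times the Shannon entropy of the distribution $(a_i/a)_i$), so $\frac1n\log M_n \to \log c$ is equivalent to $M_n = (c + o(1))^n$: from $\log M_n = n\log c + o(n)$ one gets $M_n = c^n e^{o(n)} = \big(c\,e^{o(1)}\big)^n = (c+o(1))^n$, and conversely. This gives the claim.

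There is no real obstacle here; the only thing requiring attention is the error bookkeeping --- checking that, once divided by $n$, every contribution except the entropy term tends to $0$ --- together with handling the floors and the possibly-non-summing bottom entries cleanly, which is exactly what the two preliminary reductions above take care of.
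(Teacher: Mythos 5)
Your proof is correct, and it follows exactly the route the paper itself indicates (the paper states the lemma is ``easily proved using Sterling's approximation'' and gives no further argument). The two preliminary reductions --- discarding indices with $a_i = 0$ and absorbing the bounded discrepancy $\lfloor na\rfloor - \sum_i\lfloor na_i\rfloor \in \{0,\dots,k-1\}$ into an $O(1)$ factor --- together with the observation that $c := a^a/\prod_i a_i^{a_i} \ge 1$ (so that $\tfrac1n\log M_n \to \log c$ really is equivalent to $M_n = (c+o(1))^n$) are exactly the bookkeeping points one needs to make the Stirling computation rigorous, and you handle them cleanly.
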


\begin{remark}
Recall that for $q \in \bR^D_{\geq 0}$ we denote by $\mathcal{P}(\vec{0}, \lfloor nq \rfloor)$ the set of paths $\pi: \vec{0} \rightarrow \lfloor nq \rfloor$. Thus
\[  \lim_{n \rightarrow \infty} \frac1n \log |\mathcal{P}(\vec{0}, \lfloor nq \rfloor)|   = \sum_{i=1}^D -q_i \log\frac{q_i}{||q||_1} := H(q) \]
$H(q)$ is the (Shannon) entropy of the number of paths in direction $q$. Note that $H(q)$ scales with positive scalars and  $H(q)$ is maximized among $q \in \bR^D_{\geq 0}$ with the same 1-norm by
\[q = ||q||_1 \bigg(\frac1D, \ldots, \frac1D \bigg) := ||q||_1 \ell\]
in which case $H(q) = ||q||_1 \log D$.
\end{remark}

On the other hand, for $t \geq 0$ recall that we denote by $\mathcal{P}_{\lfloor nt \rfloor}(\vec{0})$ the set of paths from $\vec{0}$ of length $\lfloor nt \rfloor$. Thus
\[ \lim_{n \rightarrow \infty} \frac1n \log |\mathcal{P}_{\lfloor nt \rfloor}(\vec{0})| = t \log D = H(t\ell) \]

 We study the distribution of weights that we observe along paths $\# \pi: \lfloor np \rfloor \rightarrow \lfloor nq \rfloor$. For a path $\pi$, let the unnormalized empirical measure along $\pi$ be
\[\sigma_{\pi} =  \sum_{e\in \pi} \delta_{\tau_e}\]
Note that we normalize by $\frac1n$ rather than $\frac1{|\pi|}$. This is simply for convenience in our proofs, as it gives us a certain superadditivity we do not get when we normalize by $\frac1{|\pi|}$.

 The Glivenko-Cantelli Theorem \cite[Thm.~2.4.7]{durrett} tells us that for any fixed infinite NE path in the grid, empirical measures along the path converge weakly to $\theta$.

\begin{theorem}[Glivenko-Cantelli Theorem]\label{thm1}
Let $F_{\theta}$ be the cumulative distribution function of $\theta$, let $X_i \sim \theta$ be i.i.d.  random variables and let 
\[F_{n}(x) = \frac1{n} \sum_{i=1}^n \mathbf{1}_{\{X_i \leq x\}} \]
be the cumulative distribution functions of the empirical measures. Then 
\[\sup_x |F_n(x) - F_{\theta}(x)| \rightarrow 0 \ \mbox{a.s. as} \ n \rightarrow \infty \]

\end{theorem}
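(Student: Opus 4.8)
The plan is the classical one: bootstrap the pointwise Strong Law of Large Numbers up to a uniform statement by exploiting the monotonicity of distribution functions. First, for each fixed $x \in \bR$ the random variables $\mathbf{1}_{\{X_i \le x\}}$ are i.i.d.\ Bernoulli$(F_\theta(x))$, so the SLLN gives $F_n(x) \to F_\theta(x)$ a.s.; applying the SLLN instead to $\mathbf{1}_{\{X_i < x\}}$ gives $F_n(x^-) \to F_\theta(x^-)$ a.s., where $F_n(x^-) := \frac1n \sum_{i=1}^n \mathbf{1}_{\{X_i < x\}}$ and $F_\theta(x^-) := \lim_{y \uparrow x} F_\theta(y)$.

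Next, fix $\epsilon > 0$. Using that $F_\theta$ is nondecreasing, right-continuous, with limits $0$ at $-\infty$ and $1$ at $+\infty$, I would pick finitely many points $-\infty = x_0 < x_1 < \cdots < x_m = +\infty$ with $F_\theta(x_j^-) - F_\theta(x_{j-1}) \le \epsilon$ for every $j$ (take the $x_j$ to be appropriate quantiles of $\theta$). Then for any $x \in [x_{j-1}, x_j)$, monotonicity of both $F_n$ and $F_\theta$ gives
\[
F_n(x) - F_\theta(x) \le F_n(x_j^-) - F_\theta(x_{j-1}) = \big(F_n(x_j^-) - F_\theta(x_j^-)\big) + \big(F_\theta(x_j^-) - F_\theta(x_{j-1})\big) \le \big(F_n(x_j^-) - F_\theta(x_j^-)\big) + \epsilon,
\]
and symmetrically $F_n(x) - F_\theta(x) \ge \big(F_n(x_{j-1}) - F_\theta(x_{j-1})\big) - \epsilon$, so that
\[
\sup_{x} |F_n(x) - F_\theta(x)| \le \epsilon + \max_{0 \le j \le m} \Big( |F_n(x_j) - F_\theta(x_j)| \vee |F_n(x_j^-) - F_\theta(x_j^-)| \Big).
\]

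Letting $n \to \infty$, the right-hand maximum is over finitely many sequences each converging to $0$ a.s., hence $\limsup_n \sup_x |F_n(x) - F_\theta(x)| \le \epsilon$ a.s. Finally I would take the intersection over $\epsilon \in \{1/k : k \in \bN\}$, a countable intersection of full-measure events, on which $\limsup_n \sup_x |F_n(x) - F_\theta(x)| = 0$; this is the claim.

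The only step that needs genuine care is the construction of the partition when $\theta$ has atoms: one must ensure that every atom of mass exceeding $\epsilon$ is one of the points $x_j$, so that on each piece the increment $F_\theta(x_j^-) - F_\theta(x_{j-1})$ --- which never ``sees'' the jump at $x_j$ itself --- really is at most $\epsilon$. There are at most $\lfloor 1/\epsilon \rfloor$ such atoms, so this is harmless, and subdividing the complementary intervals finely enough is possible by the usual generalized-inverse argument. Everything else is routine bookkeeping with countable intersections of a.s.\ events.
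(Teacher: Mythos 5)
Your proof is correct and is precisely the classical argument (pointwise SLLN at the points $x_j$ and their left limits, plus a quantile partition to handle atoms and upgrade to uniformity). The paper does not prove this statement at all --- it simply quotes it from Durrett \cite[Thm.~2.4.7]{durrett} --- and your write-up reproduces the standard proof found there, including the one genuinely delicate point about atoms of mass exceeding $\epsilon$.
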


However, we are interested in the limiting behavior of the empirical measure of not one path, but of all paths from $\vec{0}$ or all paths with a given direction, as we scale the length of the paths. This allows us to observe more than just the original measure $\theta$.

\subsection{Metrics on Measures}
To gauge the distance between two measures we use the Levy-Prokhorov metric. We briefly introduce this metric as well as the total variation metric and we  outline the relevant properties.

Consider a metric space $(X,d)$ with Borel $\sigma$-algebra $\mathcal{B}$. We denote by $\mathcal{M}$ the set of finite  Borel measures on $(X, \mathcal{B})$, by $\mathcal{M}_+$ the set of non-negative finite Borel measures, and by $\mathcal{M}_t$ the set of Borel non-negative finite  measures with total mass $t$ for any $t \geq 0$. In this notation, $\mathcal{M}_1$ is the set of Borel probability measures.

\begin{definition}
The total variation norm on $\mathcal{M}$ is defined by 
\[||\mu||_{TV} = \sup_{A \in \mathcal{B}} |\mu(A)|\]
This of course gives rise to a total variation metric, given by \[d_{TV}(\mu, \nu) = ||\mu-\nu||_{TV}\]
\end{definition}

For example, the total variation of any measure $\mu \in \mathcal{M}_+$ is its total mass $\mu(X)$.

\begin{definition}
For any $A \in \mathcal{B}$ and $\epsilon > 0$, the $\epsilon$-neighborhood of $A$ is defined to be
$$A^{\epsilon} := \{x \in X: d(x, a) < \epsilon \ \mbox{for some} \ a \in A \}$$
\end{definition}

\begin{definition}The Levy-Prokhorov metric on $\mathcal{M}_+$ is defined by 
\[\rho(\mu, \nu) = \inf \{\epsilon > 0: \mu(A) \leq \nu(A^{\epsilon}) + \epsilon \ \mbox{and} \ \nu(A) \leq \mu(A^{\epsilon}) + \epsilon \ \forall A \in \mathcal{B}\}\]
\end{definition}

 It is a standard result that $\rho$ metrizes the weak convergence of measures and total variation metrizes the strong convergence of measures in $\mathcal{M}_+$. For details, see \cite[Sect.~2.3]{huber}.

 We now derive two useful inequalities involving the Levy-Prokhorov metric.

\begin{lemma}\label{lemma3}
For $\mu, \nu \in \mathcal{M}_+$,
\[\rho(\mu, \nu) \leq ||\mu - \nu||_{TV}\]
\end{lemma}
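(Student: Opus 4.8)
The plan is to unwind both definitions and observe that a single value of $\epsilon$, namely $\epsilon = \|\mu-\nu\|_{TV}$ itself (when positive), already witnesses membership in the set over which the Levy--Prokhorov infimum is taken. Write $\delta := \|\mu - \nu\|_{TV} = \sup_{A \in \mathcal{B}} |(\mu - \nu)(A)|$; since $\mu,\nu$ are finite measures this is a finite number. If $\delta = 0$ then $(\mu-\nu)(A)=0$ for every Borel $A$, so $\mu = \nu$ and $\rho(\mu,\nu) = 0 = \delta$, and we are done; so assume $\delta > 0$.

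Now fix an arbitrary Borel set $A$. By definition of $\delta$ we have $\mu(A) - \nu(A) \le |(\mu-\nu)(A)| \le \delta$, hence $\mu(A) \le \nu(A) + \delta$. Since $\delta > 0$, for any $a \in A$ we have $d(a,a) = 0 < \delta$, so $A \subseteq A^{\delta}$, and by monotonicity of $\nu$ this gives $\nu(A) \le \nu(A^{\delta})$. Combining, $\mu(A) \le \nu(A^{\delta}) + \delta$. The symmetric computation (interchanging the roles of $\mu$ and $\nu$, using $|(\mu-\nu)(A)| = |(\nu-\mu)(A)|$) yields $\nu(A) \le \mu(A^{\delta}) + \delta$. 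As $A$ was arbitrary, $\epsilon = \delta$ belongs to the set $\{\epsilon > 0 : \mu(A) \le \nu(A^\epsilon)+\epsilon \text{ and } \nu(A) \le \mu(A^\epsilon)+\epsilon\ \forall A\}$, and therefore $\rho(\mu,\nu) \le \delta = \|\mu-\nu\|_{TV}$.

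There is no real obstacle here; the only point that requires a moment's care is that the inclusion $A \subseteq A^{\epsilon}$ needs $\epsilon > 0$, which is exactly why the degenerate case $\delta = 0$ is peeled off first (where the conclusion is immediate anyway). If one prefers to avoid the case split entirely, one can instead verify the displayed inequalities for every $\epsilon > \delta$ — each such $\epsilon$ is automatically positive — and then pass to the infimum to get $\rho(\mu,\nu) \le \delta$; this is a cosmetic variant of the same argument.
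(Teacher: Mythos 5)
Your proof is correct and follows essentially the same route as the paper's: peel off the case $\|\mu-\nu\|_{TV}=0$, then for $\delta>0$ use $A\subseteq A^{\delta}$ together with the definition of the total variation norm to verify that $\delta$ satisfies both defining inequalities of the Levy--Prokhorov infimum. The extra care you take about positivity of $\epsilon$ is a minor refinement of the same argument.
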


\begin{remark}
This lemma establishes that the Levy-Prokhorov metric is weaker than the total variation metric.
\end{remark}

\begin{remark}
It is trivial to see that $\rho(\mu, 0) = ||\mu||_{TV}$.
\end{remark}

\begin{proof}
Let $\epsilon := ||\mu-\nu||_{TV}$. If $\epsilon = 0$ then $\mu = \nu$ so $\rho(\mu,\nu) = 0$. If $\epsilon > 0$, for any $A \in \mathcal{B}$, 
$$A \subseteq A^{\epsilon} \Rightarrow \mu(A) - \nu(A^{\epsilon}) \leq \mu(A) -  \nu(A)  \leq \sup_{A' \in \mathcal{B}} |\mu(A') - \nu(A')| = \epsilon$$
and similarly $\nu(A) - \mu(A^{\epsilon}) \leq \epsilon$ so $\rho(\mu, \nu) \leq \epsilon$.
\end{proof}

We next show that $\rho$ satisfies a kind of subadditivity.

\begin{lemma}\label{lemma4}
Let $\mu_1, \mu_2, \nu_1, \nu_2 \in \mathcal{M}_+$. Then
\[\rho(\mu_1 + \mu_2,  \nu_1 + \nu_2) \leq \rho(\mu_1, \nu_1) + \rho(\mu_2, \nu_2)\]
\end{lemma}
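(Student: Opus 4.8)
The plan is to argue directly from the definition of the Levy-Prokhorov metric via $\epsilon$-neighborhoods, exploiting only the monotonicity of neighborhoods in $\epsilon$ and the additivity of the measures. First I would fix arbitrary $\epsilon_1 > \rho(\mu_1,\nu_1)$ and $\epsilon_2 > \rho(\mu_2,\nu_2)$. Since any $\epsilon$ larger than an admissible value is again admissible (because $A^{\epsilon'} \subseteq A^{\epsilon}$ for $\epsilon' \le \epsilon$, so the defining set is an up-set), these choices satisfy, for every Borel set $A$,
\[\mu_i(A) \le \nu_i(A^{\epsilon_i}) + \epsilon_i \quad \text{and} \quad \nu_i(A) \le \mu_i(A^{\epsilon_i}) + \epsilon_i, \qquad i = 1,2.\]

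The key observation is then that with $\epsilon := \epsilon_1 + \epsilon_2$ we have $A^{\epsilon_i} \subseteq A^{\epsilon}$, hence $\nu_i(A^{\epsilon_i}) \le \nu_i(A^{\epsilon})$ and likewise $\mu_i(A^{\epsilon_i}) \le \mu_i(A^{\epsilon})$. Adding the two bounds above over $i = 1, 2$ and using additivity of $\mu_1 + \mu_2$ and $\nu_1 + \nu_2$ gives
\[(\mu_1+\mu_2)(A) \le \nu_1(A^{\epsilon}) + \nu_2(A^{\epsilon}) + \epsilon_1 + \epsilon_2 = (\nu_1+\nu_2)(A^{\epsilon}) + \epsilon,\]
and symmetrically $(\nu_1+\nu_2)(A) \le (\mu_1+\mu_2)(A^{\epsilon}) + \epsilon$. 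As $A \in \mathcal{B}$ was arbitrary, this exhibits $\epsilon = \epsilon_1 + \epsilon_2$ as an admissible value in the infimum defining $\rho(\mu_1+\mu_2, \nu_1+\nu_2)$, so $\rho(\mu_1+\mu_2, \nu_1+\nu_2) \le \epsilon_1 + \epsilon_2$.

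Finally I would let $\epsilon_1 \downarrow \rho(\mu_1,\nu_1)$ and $\epsilon_2 \downarrow \rho(\mu_2,\nu_2)$ to obtain $\rho(\mu_1+\mu_2, \nu_1+\nu_2) \le \rho(\mu_1,\nu_1) + \rho(\mu_2,\nu_2)$. There is no real obstacle in this argument; the only mild point of care is that $\rho(\mu_i,\nu_i)$ itself need not be an admissible $\epsilon$, which is precisely why one works with strictly larger $\epsilon_i$ and passes to the infimum at the end — and this also disposes of the degenerate case $\rho(\mu_i,\nu_i) = 0$ without any separate treatment.
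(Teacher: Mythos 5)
Your proposal is correct and follows essentially the same argument as the paper: fix $\epsilon_i > \rho(\mu_i,\nu_i)$, enlarge the neighborhoods to $A^{\epsilon_1+\epsilon_2}$, add the two inequalities, and pass to the infimum. Your explicit remark that the defining set of admissible $\epsilon$'s is an up-set is a point the paper uses implicitly, so your write-up is if anything slightly more careful.
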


\begin{remark}
Note that in the case $\mu_2 = \nu_2$  the inequality becomes
\[\rho(\mu_1 + \mu_2,  \nu_1 + \mu_2) \leq \rho(\mu_1, \nu_1) \]
\end{remark}

\begin{proof}
For any $\epsilon_1 > \rho(\mu_1, \nu_1), \epsilon_2 > \rho(\mu_2, \nu_2)$ we have for any $A \in \mathcal{B}$,
\[\mu_1(A) \leq \nu_1(A^{\epsilon_1}) + \epsilon_1 \leq \nu_1(A^{\epsilon_1 +\epsilon_2}) + \epsilon_1 \ \mbox{and} \ \mu_2(A) \leq \nu_2(A^{\epsilon_2}) + \epsilon_2 \leq \nu_2(A^{\epsilon_1 +\epsilon_2}) + \epsilon_2\]
\[\Rightarrow (\mu_1 + \mu_2)(A) \leq (\nu_1 + \nu_2) (A^{\epsilon_1 +\epsilon_2}) + \epsilon_1 + \epsilon_2\]
By symmetry, the same inequality holds with $\mu_i, \nu_i$ swapped. Thus 
\[\rho(\mu_1 + \mu_2,  \nu_1 + \nu_2) \leq \rho(\mu_1, \nu_1) + \rho(\mu_2, \nu_2)\]
\end{proof}

\subsection{A Convenient Coupling of the Edge Weights}
We follow \cite[Sect.~2.1]{bates} in coupling the environment to uniform random variables in order to work in a compact space of measures and to connect our results with his.

The idea is to write our i.i.d. edge weights $\tau_e \sim \theta$ as
\[ \tau_e = \tau(U_e)\]
for some measurable function $\tau: [0,1] \rightarrow \bR$ and i.i.d. Unif[0,1]-valued random variables $(U_e)_{e \in E(\bZ^D)}$ on the same probability space as $(\tau_e)_{e \in E(\bZ^D)}$. For instance, we could take the quantile function
\[ \tau(x) = F_{\theta}^-(x) := \inf \{t \in \bR: F_{\theta}(t) \geq x\}\]
But our results (in particular, our definitions of grid entropy) are independent of the $\tau$ chosen so we allow $\tau$ to be arbitrary (with the conditions stated above). This comes into play later in Section \ref{section5}, when we study the Gibbs Free Energy as a function of  $\tau$.

To distinguish between the $\tau_e$ and the $U_e$, we call the former edge \emph{weights} and the latter edge \emph{labels}.

Let $\Lambda$ denote Lebesgue measure on $[0,1]$. We tweak the definition of empirical measures in this new setup: for any NE path $\pi: \lfloor np \rfloor \rightarrow \lfloor nq \rfloor$ in $\bZ^D$, define
\[ \mu_{\pi} := \sum_{e \in \pi} \delta_{U_e} \]
Then we can relate $\Lambda$ and the $\mu_{\pi}$ to $\theta$ and the $\sigma_{\pi}$ respectively via the pushforward:
\[ \theta = \tau_{*}(\Lambda), \sigma_{\pi} = \tau_{*}(\mu_{\pi}) \ \mbox{where} \ \tau_{*}(\xi)(B) = \xi(\tau^{-1} (B)) \ \forall B \in \mathcal{B}(\bR^D) \ \mbox{and $\forall$ measures $\xi$ on $[0,1]$}\]
One advantage is of course that the set of probability measures on $[0,1]$ is weakly compact, so for any sequence of paths $\pi_n: \lfloor np \rfloor \rightarrow \lfloor nq \rfloor$ in the grid we get a subsequence for which $\frac1{n_k} \mu_{\pi_{n_k}}$ converges weakly to some measure. 

In the case of a continuous cumulative distribution function $F_{\theta}$, we can use  a lemma proved in \cite{bates} to get a nice duality.

\begin{lemma}\label{lemma5Bates}
Given a measure $\theta$ on $\bR$ with continuous cdf $F_{\theta}$, if we let \\ $\tau = F_{\theta}^{-}: [0,1] \rightarrow \bR$ be its quantile function then there is a probability 1 event on which $\frac1{n_k} \mu_{\pi_{n_k}} \Rightarrow \nu$ for some subsequence $n_k$ and paths $\pi_{n_k}: \vec{0} \rightarrow \lfloor n_k q \rfloor$ if and only if $ \tau_{*}(\frac1{n_k}\mu_{\pi_{n_k}}) \Rightarrow \tau_{*}(\nu)$.
\end{lemma}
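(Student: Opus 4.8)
The statement is a pushforward/duality lemma, and my plan is to isolate two inputs — one about the map $\tau=F_\theta^{-}$, one about weak limits of empirical measures — and then run a short mapping-theorem argument in the ``$\Rightarrow$'' direction and a compactness argument in the ``$\Leftarrow$'' direction. \textbf{Structure of $\tau$.} Since $F_\theta$ is continuous, $\theta$ is non-atomic, and this forces the quantile function $\tau=F_\theta^{-}$ to be \emph{strictly} increasing on $(0,1)$: a flat stretch of $F_\theta^{-}$ at a level $c$ would require $F_\theta$ to jump at $c$, contradicting continuity. Being monotone, $\tau$ has an at most countable discontinuity set $D_\tau\subseteq(0,1)$ (the levels at which $F_\theta$ is flat, i.e. the gaps in the support of $\theta$), and $\tau$ is real-valued on $(0,1)$, the only exceptions being $\tau(0),\tau(1)\in\{\pm\infty\}$. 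From the Galois relation $F_\theta^{-}(x)\le t\iff x\le F_\theta(t)$ (valid since $F_\theta$ is right-continuous) I get the identity $F_{\tau_{*}\xi}(t)=\xi\big([0,F_\theta(t)]\big)$ for every $\xi\in\mathcal M_+$ on $[0,1]$, and this does most of the work. \textbf{The a.s.\ event.} I work on the probability-$1$ event on which all labels $U_e$ are distinct, lie in $(0,1)$, and (as $D_\tau$ is countable) avoid $D_\tau$; and — the one substantial point — every weak subsequential limit $\nu$ of $\tfrac1n\mu_{\pi_n}$ along paths $\pi_n:\vec0\to\lfloor nq\rfloor$ is non-atomic. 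This may be quoted from Theorem~\ref{part1_B}(iv) (such $\nu$ are absolutely continuous with respect to $\Lambda$, and $D_\tau$ is $\Lambda$-null), or proved directly by a union bound over the $e^{(H(q)+o(1))n}$ paths together with a Cram\'er-type binomial tail estimate: for fixed $x_0$ and small $\epsilon>0$ every path eventually carries at most $c(\epsilon)n$ edges with label in $(x_0-\epsilon,x_0+\epsilon)$, with $c(\epsilon)\downarrow0$, whence $\nu(\{x_0\})=0$.

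\textbf{Forward direction.} Suppose $\tfrac1{n_k}\mu_{\pi_{n_k}}\Rightarrow\nu$. Then $\nu$ is non-atomic, so $s\mapsto\nu([0,s])$ is continuous on $[0,1]$; moreover $\nu(\{0\})=\nu(\{1\})=0$ and each $\tfrac1{n_k}\mu_{\pi_{n_k}}$ charges only $(0,1)$, so the endpoint ambiguity of $\tau$ is harmless and all pushforwards are honest finite Borel measures on $\bR$. Weak convergence on the compact space $[0,1]$ gives $\tfrac1{n_k}\mu_{\pi_{n_k}}([0,s])\to\nu([0,s])$ at every continuity point of $\nu$'s cdf — hence at \emph{every} $s$ — together with convergence of total masses. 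Substituting $s=F_\theta(t)$ into the identity above yields $F_{\tau_{*}(\frac1{n_k}\mu_{\pi_{n_k}})}(t)\to F_{\tau_{*}\nu}(t)$ for all $t\in\bR$, and the total masses still converge, which is exactly $\tau_{*}(\tfrac1{n_k}\mu_{\pi_{n_k}})\Rightarrow\tau_{*}\nu$. (Equivalently, apply the mapping theorem, whose sole hypothesis $\nu(D_\tau)=0$ was just established, or couple via Skorokhod.)

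\textbf{Reverse direction.} Suppose $\tau_{*}(\tfrac1{n_k}\mu_{\pi_{n_k}})\Rightarrow\tau_{*}\nu$. The measures $\tfrac1{n_k}\mu_{\pi_{n_k}}$ sit on compact $[0,1]$ with total mass $\|\lfloor n_kq\rfloor\|_1/n_k\to\|q\|_1$, so the family is relatively weakly compact. Given any subsequence, pass to a further subsequence with $\tfrac1{n_{k_j}}\mu_{\pi_{n_{k_j}}}\Rightarrow\nu'$ for some $\nu'$; the forward direction gives $\tau_{*}(\tfrac1{n_{k_j}}\mu_{\pi_{n_{k_j}}})\Rightarrow\tau_{*}\nu'$, forcing $\tau_{*}\nu'=\tau_{*}\nu$. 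Since $\tau$ is strictly increasing, $\tau_{*}$ is injective on finite Borel measures on $[0,1]$: from $F_{\tau_{*}\xi}(t)=\xi([0,F_\theta(t)])$ and the density of the range of $F_\theta$ in $[0,1]$, $\tau_{*}\xi$ determines $s\mapsto\xi([0,s])$ on a dense set, hence everywhere by right-continuity (the value at $s=1$ being the total mass $\tau_{*}\xi(\bR)$); alternatively invoke Lusin--Souslin. Thus $\nu'=\nu$; every subsequential weak limit of $\tfrac1{n_k}\mu_{\pi_{n_k}}$ equals $\nu$, so $\tfrac1{n_k}\mu_{\pi_{n_k}}\Rightarrow\nu$.

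\textbf{Main obstacle.} The only genuinely non-formal step is the non-atomicity of the limit measures on the exceptional set $D_\tau$: citing Theorem~\ref{part1_B}(iv) (or the corresponding result in \cite{bates}) makes it immediate, and otherwise it costs the large-deviation union bound sketched above. Everything else — the Galois identity for $F_\theta^{-}$, the equivalence of weak convergence on $[0,1]$ with pointwise cdf convergence at continuity points plus mass convergence, relative compactness from bounded mass on a compact space, and the injectivity of $\tau_{*}$ — is routine bookkeeping, and the possibly infinite endpoint values $\tau(0),\tau(1)$ never cause trouble precisely because $\nu$ is non-atomic and the $U_e$ avoid $\{0,1\}$.
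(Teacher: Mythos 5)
Your proof is correct, and its second half is exactly the paper's argument: for the reverse implication both you and the paper extract a weakly convergent subsubsequence by compactness of measures on $[0,1]$, feed it through the forward implication to get $\tau_{*}(\xi)=\tau_{*}(\nu)$, and then conclude $\xi=\nu$ from the Galois identity $\tau^{-1}((-\infty,t])=[0,F_{\theta}(t)]$ together with the density of the range of the continuous $F_\theta$ (a point the paper leaves implicit and you make explicit, along with the sub-subsequence principle needed to upgrade to convergence of the whole subsequence). Where you genuinely diverge is the forward implication: the paper simply cites \cite[Lemma 6.15]{bates} for the probability-one event on which $\frac1{n_k}\mu_{\pi_{n_k}}\Rightarrow\nu$ implies $\tau_{*}(\frac1{n_k}\mu_{\pi_{n_k}})\Rightarrow\tau_{*}(\nu)$, whereas you re-derive it from the non-atomicity of subsequential limits plus the mapping theorem (equivalently your cdf computation). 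That substitution is sound, and your direct large-deviation argument for non-atomicity (Cram\'er bound on the binomial count of labels in a short interval, union bound over the $e^{H(q)n+o(n)}$ paths, Borel--Cantelli, then the open-set half of the portmanteau theorem, intersected over countably many rational centers and radii) is the right way to make it self-contained; it buys independence from Bates at the cost of a page of estimates. Two small cautions: if you instead invoke Theorem~\ref{part1_B}(iv) for non-atomicity you are forward-referencing a result proved in Section~\ref{section4}, so you should check (as is in fact the case) that nothing in its proof feeds back through this lemma; and the uniformity of the non-atomicity over all potential atoms must indeed be arranged on a single almost-sure event, exactly as you indicate with the countable net.
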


\begin{proof}
 \cite[Lemma 6.15]{bates}  establishes that there is a probability 1 event on which \\ $\frac1{n_k}\mu_{\pi_{n_k}} \Rightarrow \nu$ implies $ \tau_{*}(\frac1{n_k}\mu_{\pi_{n_k}}) \Rightarrow \tau_{*}(\nu)$. But then on the same event, given a subsequence for which $ \tau_{*}(\frac1{n_k}\mu_{\pi_{n_k}}) \Rightarrow \tau_{*}(\nu)$, compactness gives us a convergent subsubsequence $\frac1{n_{k_j}}\mu_{\pi_{n_{k_j}}} \Rightarrow \xi$ hence 
 \[ \tau_{*}\bigg(\frac1{n_{k_j}}\mu_{\pi_{n_{k_j}}} \bigg) \Rightarrow \tau_{*}(\xi)\]
 so $\tau_{*}(\xi) = \tau_{*}(\nu)$. The fact that $F_{\theta}$ is continuous and the quantile function $\tau$ satisfies
 \[ \tau^{-1}((-\infty, x]) = [0,F_{\theta}(x)] \ \forall x \in \bR\]
 implies $\xi$ and $\nu$ agree on all sets $[0,F_{\theta}]$ hence $\xi = \nu$.
\end{proof}

Thus, in the case when $\theta$ has continuous cdf, we lose no generality by doing this coupling and working with measures on [0,1].

However, even in the most general case where $\theta$ may not have continuous cdf or   bounded support, our work in developing grid entropy still holds because we only use the compactness of the space of measures in later sections devoted to our variational formula for the Gibbs Free Energy.

In short, we lose nothing by restricting ourselves to the compact space of measures on $[0,1]$. 

A benefit of this coupling is the following amazing result of Bates \cite[Lemma 6.3 and  Thm 6.4]{bates}. Here we denote by $\mathcal{M}_+, \mathcal{M}_t$ the sets of finite non-negative Borel measures on [0,1] and finite non-negative Borel measures on [0,1] with total mass $t\geq 0$.

\begin{theorem}\label{thm5Bates}
\begin{enumerate}[label=(\roman*)]
\item[]
\item Fix $q \in \bR^D_{\geq 0}$. Define $\mathcal{R}_{\infty}^{q}$ to be the (event-dependent) set of measures $\nu \in \mathcal{M}_+$ for which there is a subsequence $\pi_{n_k}$ of paths $\vec{0} \rightarrow \lfloor n_k q \rfloor$ with $\mu_{\pi_{n_k}} \Rightarrow \nu$. Then there exists a deterministic, weakly closed set $\mathcal{R}^{q} \subseteq \mathcal{M}_{||q||_1} $ independent of $\tau$ s.t.
\[ P(\mathcal{R}_{\infty}^{q} = \mathcal{R}^{q}) = 1\]
\item Fix $t \geq 0$. Define $\mathcal{R}^t_{\infty}$ to be the (event-dependent) set of measures $\nu \in \mathcal{M}_+$ for which there is a subsequence $\pi_{n_k}$ of length $\lfloor nt \rfloor$ paths from $\vec{0}$ with $\mu_{\pi_{n_k}} \Rightarrow \nu$. Then there exists a deterministic, weakly closed set $\mathcal{R}^t \subseteq \mathcal{M}_t$ independent of $\tau$ s.t.
\[ P(\mathcal{R}^t_{\infty} = \mathcal{R}^t) = 1\]
Moreover, $\mathcal{R}^t = \bigcup \limits_{q \in \bR^D_{\geq 0}, ||q||_1 = t} \mathcal{R}^q$.
\end{enumerate}
\end{theorem}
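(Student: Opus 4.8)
The plan is to reduce the (set-valued, uncountably indexed) object $\mathcal{R}^q_\infty$ to a countable family of scalar events that are visibly tail events in the i.i.d.\ labels $(U_e)$, and then invoke Kolmogorov's zero--one law. Two soft facts, valid on every outcome, set this up. First, $\mathcal{R}^q_\infty$ is weakly closed: if $\nu_m\in\mathcal{R}^q_\infty$ and $\nu_m\Rightarrow\nu$, pick $n_m\uparrow\infty$ and paths $\pi_{n_m}:\vec{0}\to\lfloor n_m q\rfloor$ with $\rho(\tfrac1{n_m}\mu_{\pi_{n_m}},\nu_m)<1/m$; then $\rho(\tfrac1{n_m}\mu_{\pi_{n_m}},\nu)\le 1/m+\rho(\nu_m,\nu)\to 0$. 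Second, a path $\vec{0}\to\lfloor nq\rfloor$ has length $||\lfloor nq\rfloor||_1=n||q||_1+O(1)$, and total mass is weakly continuous on measures on the compact space $[0,1]$, so every weak limit of $\tfrac1n\mu_\pi$ has mass $||q||_1$; hence $\mathcal{R}^q_\infty\subseteq\mathcal{M}_{||q||_1}$.

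Now fix a countable weakly dense $\{\mu_j\}\subseteq\mathcal{M}_+$ and rationals $r>0$. Using compactness of $\mathcal{M}_{||q||_1}$ one checks that
\[\mathcal{R}^q_\infty\cap B_\rho(\mu_j,r)\ne\emptyset \iff E_{j,r}:=\Big\{\liminf_{n\to\infty}\ \min_{\pi:\vec{0}\to\lfloor nq\rfloor}\rho\big(\tfrac1n\mu_\pi,\mu_j\big)<r\Big\}.\]
Since a weakly closed subset of the separable space $\mathcal{M}_+$ is determined by which of the $\rho$-balls $B_\rho(\mu_j,r)$ it meets, $\mathcal{R}^q_\infty$ is a measurable function of $(E_{j,r})_{j,r}$. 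But each $E_{j,r}$ is a tail event: altering the labels on a finite edge set $S$ changes $\mu_\pi$ by at most $2|S|$ point masses for every path $\pi$ (and not at all if $\pi$ avoids $S$), so by Lemmas \ref{lemma3} and \ref{lemma4} it changes $\min_\pi\rho(\tfrac1n\mu_\pi,\mu_j)$ by at most $2|S|/n\to 0$, leaving the $\liminf$ and hence $E_{j,r}$ untouched. Kolmogorov's zero--one law gives $P(E_{j,r})\in\{0,1\}$, say $c_{j,r}$; on the probability-one event where every $E_{j,r}$ equals $c_{j,r}$, $\mathcal{R}^q_\infty$ coincides with the deterministic, weakly closed set $\mathcal{R}^q:=\{\nu:\ c_{j,r}=1\ \text{whenever}\ \nu\in B_\rho(\mu_j,r)\}\subseteq\mathcal{M}_{||q||_1}$. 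Independence of $\tau$ is automatic, since $\mu_\pi=\sum_{e\in\pi}\delta_{U_e}$ never involves $\tau$. Verbatim reasoning with length-$\lfloor nt\rfloor$ paths from $\vec{0}$ produces the deterministic weakly closed $\mathcal{R}^t\subseteq\mathcal{M}_t$.

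Finally, the union formula $\mathcal{R}^t=\bigcup_{q\ge 0,\ ||q||_1=t}\mathcal{R}^q$. The inclusion $\supseteq$ is path surgery: a path $\vec{0}\to\lfloor nq\rfloor$ has length $\lfloor nt\rfloor+O(1)$, and trimming or prolonging it by $O(1)$ steps changes $\tfrac1n\mu_\pi$ by $O(1/n)$ in total variation, so $\mathcal{R}^q_\infty\subseteq\mathcal{R}^t_\infty$ on a full-probability event, whence $\mathcal{R}^q\subseteq\mathcal{R}^t$. For $\subseteq$: given $\nu\in\mathcal{R}^t=\mathcal{R}^t_\infty$ realized along length-$\lfloor n_kt\rfloor$ paths with endpoints $z_{n_k}$, pass to a subsequence with $z_{n_k}/n_k\to q$ (so $q\ge 0$ and $||q||_1=t$) and alter the ends of these paths by $o(n_k)$ steps to land at $\lfloor n_kq\rfloor$; this shows $\nu\in\mathcal{R}^q_\infty$. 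The main obstacle is that this $q$ is a priori random, so one needs $\mathcal{R}^q_\infty=\mathcal{R}^q$ \emph{simultaneously for all} directions $q$, not just for a single fixed $q$. I would get this from a Hausdorff-continuity estimate --- again via the surgery bound behind Lemmas \ref{lemma3}--\ref{lemma4}: when $|q-q'|_1$ is small, every element of $\mathcal{R}^q_\infty$ lies within $O(|q-q'|_1)$ in $\rho$ of $\mathcal{R}^{q'}_\infty$, so that $\mathcal{R}^q_\infty=\bigcap_{\delta>0}\overline{\bigcup_{q'\in\bQ^D,\ |q'-q|_1<\delta}\mathcal{R}^{q'}_\infty}$. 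Defining $\mathcal{R}^q$ for all $q$ by this same formula applied to the countably many deterministic sets $\mathcal{R}^{q'}$, $q'\in\bQ^D$, then forces $\mathcal{R}^q_\infty=\mathcal{R}^q$ for every $q$ on one probability-one event, and the union formula follows.
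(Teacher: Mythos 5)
The paper itself does not prove this statement --- it is imported from Bates \cite{bates} --- so you are supplying an argument the paper omits. Your reduction for a fixed direction $q$ (and for fixed $t$) is correct and is essentially the right argument: closedness of $\mathcal{R}^q_\infty$ and the mass constraint hold pathwise, a weakly closed subset of the separable space $(\mathcal{M}_+,\rho)$ is determined by which basic balls it meets, the equivalence with the events $E_{j,r}$ follows from weak compactness of mass-bounded measures on $[0,1]$, each $E_{j,r}$ is invariant under finite modifications of the labels by Lemmas \ref{lemma3} and \ref{lemma4}, and Kolmogorov's zero--one law finishes. The ``$\supseteq$'' half of the union formula is also sound, since for each fixed $q$ the inclusion $\mathcal{R}^q\subseteq\mathcal{R}^t$ compares two deterministic sets and therefore holds outright once it holds on a probability-one event.

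The gap is the Hausdorff-continuity estimate on which your ``$\subseteq$'' direction rests. The surgery behind Lemmas \ref{lemma3}--\ref{lemma4} (and Lemma \ref{lemma12}) is one-sided: it converts a path $\vec{0}\to\lfloor nq\rfloor$ into a path $\vec{0}\to\lfloor nq'\rfloor$ by \emph{appending} edges, which requires $q'-q\in\bR^D_{\geq 0}$. Two nearby directions on the simplex $\{\|q\|_1=t\}$ never dominate one another, and there is no elementary surgery in that case: deleting or rerouting steps in the middle of a NE path shifts every lattice edge traversed downstream of the modification, so the modified path collects entirely different (independent) labels and no total-variation bound on $\mu_\pi-\mu_{\pi'}$ is available. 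This is precisely the asymmetry the paper flags in the proof of Theorem \ref{thmNoDir}, where continuity is available only when $q$ is approached from the SE and not along $\{r\in\bR^D_{\geq0}:\|r\|_1=t\}$. Consequently your identity $\mathcal{R}^q_\infty=\bigcap_{\delta>0}\overline{\bigcup_{q'\in\bQ^D,\,|q'-q|_1<\delta}\mathcal{R}^{q'}_\infty}$ is justified only in the direction where you may choose $q'\geq q$; the reverse inclusion, and hence the identification of the random limiting direction $q=\lim z_{n_k}/n_k$ with a deterministic set, remains open. Closing it requires a genuinely two-sided continuity input --- for instance the rescaling trick of Theorem \ref{thm20}(i), Case 3, which turns a non-dominating perturbation into a dominating one by a factor $B_k\to1$ using positive homogeneity, or the route through the maximal direction $t\ell$ as in Theorem \ref{thmNoDir}, or Bates's original concentration-plus-superadditivity argument. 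As written, the union formula is not proved.
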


\begin{remark}
Bates proves this theorem in the setup of First Passage Percolation, but notes that it holds analogously in the Last Passage model.
\end{remark}

Instead of looking at \textit{all} empirical measures that have a weakly convergent subsequence we may look at only certain empirical measures with this property and the same result will hold. The proof is almost identical to Bates's original proof except for this change, so we omit it.

\begin{corollary}\label{corollary6Bates}
\begin{enumerate}[label=(\roman*)]
\item[]
\item Fix $q \in \bR^D$ and $0 \leq \alpha \leq R(q)$. Define $\mathcal{R}_{\infty}^{q, \alpha}$ to be the (event-dependent) set of measures $\nu \in \mathcal{M}_+$ for which there is a subsequence $\pi_{n_k}$ of the paths $\pi_n: \vec{0} \rightarrow \lfloor nq \rfloor$ with the $\lfloor e^{n\alpha}\rfloor$th smallest values of $\rho(\frac1n \mu_{\pi_n}, \nu)$ satisfying 
\[\frac1n \mu_{\pi_{n_k}} \Rightarrow \nu \ \mbox{i.e.} \ \liminf_{n \rightarrow \infty} \min_{\pi: \vec{0} \rightarrow \lfloor nq \rfloor}^{\lfloor e^{n\alpha} \rfloor} \rho\bigg(\frac1n \mu_{\pi}, \nu\bigg) = 0 \]
Then there exists a deterministic weakly closed set $\mathcal{R}^{q,\alpha} \subseteq \mathcal{M}_{||q||_1}$ s.t.
\[ P(\mathcal{R}_{\infty}^{q,\alpha} = \mathcal{R}^{q, \alpha}) = 1\]
\item Fix $t \geq 0$ and $0 \leq \alpha \leq t \log D$. Define $\mathcal{R}_{\infty}^{t, \alpha}$ to be the (event-dependent) set of  measures $\nu \in \mathcal{M}_+$ for which there is a subsequence $\pi_{n_k}$ of the length $\lfloor tn \rfloor$ paths $\pi_n$ from $\vec{0}$  with the $\lfloor e^{n\alpha}\rfloor$th smallest values of $\rho(\frac1n \mu_{\pi_n}, \nu)$ satisfying 
\[\frac1n \mu_{\pi_{n_k}} \Rightarrow \nu \ \mbox{i.e.} \ \liminf_{n \rightarrow \infty} \min_{\pi: |\pi| = \lfloor nt \rfloor}^{\lfloor e^{n\alpha} \rfloor} \rho\bigg(\frac1n \mu_{\pi}, \nu\bigg) = 0 \]
Then there exists a deterministic weakly closed set $\mathcal{R}^{t,\alpha} \subseteq \mathcal{M}_t$ s.t.
\[ P(\mathcal{R}_{\infty}^{t,\alpha} = \mathcal{R}^{t,\alpha}) = 1\]
\end{enumerate}
\end{corollary}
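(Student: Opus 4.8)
We follow the structure of Bates's proof of Theorem \ref{thm5Bates}, changing only that the minimum over all paths $\min^1_{\pi}\rho(\frac1n\mu_\pi,\nu)$ is replaced throughout by the $\lfloor e^{n\alpha}\rfloor$-th order statistic $\min^{\lfloor e^{n\alpha}\rfloor}_{\pi}\rho(\frac1n\mu_\pi,\nu)$, and I indicate only the places that must be re-examined. Fix $q$ and $\alpha$ in the stated range. For $\nu\in\mathcal{M}_+$ and $\delta>0$ let $E_{\nu,\delta}$ be the event that $\min^{\lfloor e^{n\alpha}\rfloor}_{\pi:\vec{0}\to\lfloor nq\rfloor}\rho(\frac1n\mu_\pi,\nu)<\delta$ for infinitely many $n$, and set $F_{\nu,\delta}:=\bigcap_{\delta'\in\mathbb{Q},\,\delta'>\delta}E_{\nu,\delta'}$. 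Since $\liminf_n\min^{\lfloor e^{n\alpha}\rfloor}_{\pi}\rho(\frac1n\mu_\pi,\nu)=0$ precisely when $E_{\nu,\delta}$ holds for every $\delta>0$, membership $\nu\in\mathcal{R}^{q,\alpha}_\infty$ is exactly the event $\bigcap_{\delta\in\mathbb{Q}_{>0}}F_{\nu,\delta}$.

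First I would prove the zero--one law: for each fixed $\nu$ and $\delta$, $F_{\nu,\delta}$ is insensitive to altering the labels $U_e$ on a finite edge set $S$, hence lies in the tail $\sigma$-algebra of $(U_e)_{e\in E(\bZ^D)}$ and has probability $0$ or $1$ by Kolmogorov's zero--one law. The point is that changing $U_e$ for $e\in S$ moves $\frac1n\mu_\pi$ by at most $|S|/n$ in total variation for every path $\pi$, hence by at most $|S|/n$ in $\rho$ by Lemma \ref{lemma3}; since perturbing a finite list of reals by at most $\epsilon$ moves each of its order statistics by at most $\epsilon$, the sequence $n\mapsto\min^{\lfloor e^{n\alpha}\rfloor}_{\pi}\rho(\frac1n\mu_\pi,\nu)$ is shifted by at most $|S|/n=o(1)$. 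Thus $E_{\nu,\delta'}$ in the original labels forces $E_{\nu,\delta''}$ in the modified ones whenever $\delta''>\delta'$, and intersecting over $\delta'\downarrow\delta$ gives the asserted insensitivity.

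Next I would pass from single measures to the whole set. The set of positive measures on $[0,1]$ of total mass at most $||q||_1$ is weakly compact and metrizable; fix a countable weakly dense subset $\mathcal{D}$. On the probability-one event on which every $F_{\nu_j,\delta}$ with $\nu_j\in\mathcal{D}$ and $\delta\in\mathbb{Q}_{>0}$ equals its deterministic value, define
\[ \mathcal{R}^{q,\alpha}:=\Big\{\nu:\ \forall\,\delta\in\mathbb{Q}_{>0}\ \exists\,\nu_j\in\mathcal{D}\ \text{with}\ \rho(\nu,\nu_j)\le\delta\ \text{and}\ P(F_{\nu_j,\delta})=1\Big\}. \]
This is a deterministic set, and using only the triangle inequality for $\rho$ and the monotonicity $F_{\nu_j,a}\subseteq F_{\nu_j,b}$ for $a\le b$ one verifies that $\mathcal{R}^{q,\alpha}=\mathcal{R}^{q,\alpha}_\infty$ almost surely and that $\mathcal{R}^{q,\alpha}$ is weakly closed (a weak limit of members of $\mathcal{R}^{q,\alpha}$ satisfies the level-$\delta$ condition using a witness $\nu_j$ supplied by the level-$\delta/2$ condition for a nearby member). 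Finally, every $\nu\in\mathcal{R}^{q,\alpha}_\infty$ is a weak subsequential limit of measures $\frac1n\mu_{\pi_n}$ of total mass $\frac1n||\lfloor nq\rfloor||_1\to||q||_1$, so testing weak convergence against the constant function $1$ on the compact space $[0,1]$ forces $||\nu||_{TV}=||q||_1$ and hence $\mathcal{R}^{q,\alpha}\subseteq\mathcal{M}_{||q||_1}$. Part (ii) is the same argument with $\mathcal{P}(\vec{0},\lfloor nq\rfloor)$ replaced by $\mathcal{P}_{\lfloor nt\rfloor}(\vec{0})$ and $||q||_1$ by $t$.

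The only genuine work I expect is bookkeeping rather than conceptual: tracking the $\epsilon$'s in the passage between the countable family $\mathcal{D}$ and an arbitrary $\nu$ so that the set one writes down is simultaneously almost surely equal to $\mathcal{R}^{q,\alpha}_\infty$ and weakly closed, and confirming that the convention $\min^j_{\pi}:=+\infty$ for $j$ exceeding the number of available paths causes no trouble at the upper end of the range of $\alpha$ (where $\mathcal{R}^{q,\alpha}$ simply becomes empty). Everything else is a transcription of Bates's proof of Theorem \ref{thm5Bates}, which is why the details are omitted.
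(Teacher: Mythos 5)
Your proposal is correct and follows exactly the route the paper intends: it adapts Bates's zero--one-law argument for Theorem \ref{thm5Bates}, and you correctly isolate the one point requiring verification, namely that perturbing finitely many edge labels shifts every order statistic $\min^{\lfloor e^{n\alpha}\rfloor}_{\pi}\rho(\frac1n\mu_\pi,\nu)$ by at most $|S|/n$, so the $\liminf$ event remains a tail event. The paper itself omits the proof for precisely this reason, so your sketch supplies the same argument the author had in mind.
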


\begin{remark}
Since the $\min \limits_{\pi: \vec{0} \rightarrow \lfloor nq \rfloor}^{\lfloor e^{n\alpha} \rfloor}$ are increasing in $\alpha$ then the sets $\mathcal{R}^{q, \alpha}$ are decreasing in $\alpha$. The same holds for $\mathcal{R}^{t,\alpha}$.
\end{remark}

Once we develop the concept of grid entropy, we will easily relate these sets $\mathcal{R}^q, \mathcal{R}^{q,\alpha}, \mathcal{R}^t, \mathcal{R}^{t,\alpha}$ to the sets of measures with finite grid entropy in direction $q$, grid entropy at least $\alpha$ in direction $q$, finite direction-free length $t$ grid entropy, direction-free length $t$ grid entropy at least $\alpha$ respectively.

\subsection{Directed Metric Spaces}
Grid entropy will turn out to be a directed metric with negative sign. We recall what that entails.

\begin{definition}
A directed metric space with positive sign is a triple $(M, d, +)$ where $M$ is a vector space, $d: M^2 \rightarrow (-\infty, +\infty]$ is a distance function satisfying $d(x,x) = 0$ and the usual triangle inequality $d(x,y) + d(y,z) \geq d(x,z)$. A directed metric space with negative sign is a triple $(M, d, -)$ such that $(M, -d, +)$ is a directed metric space with positive sign.
\end{definition}
\begin{remark} Standard metric spaces are clearly examples of directed metric spaces with positive sign. However, directed metric spaces with positive sign might not be metric spaces: the distance $d$ might not be symmetric and might not be positive and finite for non-equal arguments. The "directed" in the name indicates the possibility for asymmetry.
\end{remark}
 
 Certain directed metrics give rise to directed norms in the same way certain metrics give rise to norms.
 \begin{definition}
 If $(M,d, \sigma)$ is a directed metric with positive/negative sign such that it is translation-invariant and homogeneous with respect to positive factors, then it induces a directed norm with positive/negative sign given by
 \[||x|| := d(\vec{0}, x)\]
 \end{definition}

 Of particular interest to us are directed norms defined in terms of the  empirical measures we observe along paths between points. In the fixed direction case, our directed norms will be defined on the space of  tuples consisting of a point in $\bR^D$ (the "direction" we are observing) and a finite Borel measure on $\bR$ (the target measure we want the empirical measures to be near). In the direction-free case, our directed norms will just be defined on the space $\mathcal{M}_+$ of  finite Borel measure on $\bR$.

\subsection{The Subadditive Ergodic Theorem }\label{section2.5}
The key theorem we use to prove the existence of the scaling limit of these directed metrics is Liggett's improved version of Kingman's Subadditive Ergodic Theorem. Before stating this theorem, we recall the definitions of stationary sequences and ergodicity, as presented in  \cite[Sect. 7]{durrett}.

\begin{definition}
A sequence $(Y_n)_{n \geq 1}$ of random variables is called stationary if the joint distributions of the shifted sequences $\{Y_{k+n}: n \geq 1\}$ is not dependent on $k \geq 0$.
\end{definition}

 As it turns out, the  sequence of random variables we are interested in is a sequence of i.i.d. $(Y_n)_{n \geq 1}$, which  clearly is stationary.

\begin{definition}
Let $(\Omega, \mathcal{F}, P)$ be a probability space and let $T: \Omega \rightarrow \Omega$ be a map. $T$ is said to be measure-preserving if $P(T^{-1}(A)) = P(A) \ \forall A \in \mathcal{F}$. $T$ is said to be ergodic if it is measure-preserving and if all $T$-invariant measurable sets are trivial, i.e. $P(A) \in \{0,1\}$ whenever $A \in \mathcal{F}$ and $T^{-1}(A) = A$.
\end{definition}

 In the context of sequences, we look at the space $\Omega = \bR^{\infty}$ of infinite sequences of real numbers with the $\sigma$-algebra $\mathcal{B}_{\infty}$ generated by 
 \[\{\{(y_1, y_2, \ldots) \in \Omega: y_n \in B\}: n \geq 1, B \in \mathcal{B}(\bR)\}\]
 (where $\mathcal{B}(\bR)$ is the Borel $\sigma$-algebra on $\bR$) and with the product probability measure $\mu_{\infty}$ on $\mathcal{B}_{\infty}$ determined by
$$\mu_{\infty}(B_1 \times B_2 \times \ldots \times B_n \times \bR \times \cdots) = \prod_{i=1}^n \mu(B_i) $$
where $B_i \in \mathcal{B}(\bR)$ and $\mu$ is a Borel probability measure on $\bR$. We consider the shift operator $T: \Omega \rightarrow \Omega$ given by $T(y_1, y_2, y_3, \ldots) = T(y_2, y_3, \ldots)$. $T$ is easily seen to be measure-preserving with respect to $\mu_{\infty}$ since $\mu_{\infty}(T^{-1}(A)) = \mu_{\infty}(A)$ for the generating sets $A$ of $\mathcal{B}_{\infty}$. When we refer to the ergodicity of a sequence of random variables, we mean the ergodicity of this shift operator.

 In our case, where we have a sequence of i.i.d. $(Y_n)_{n \geq 1}$, the corresponding shift operator is ergodic. Indeed, if $T^{-1}(A) = A$ then 
\[(Y_1, Y_2, \ldots) \in A \Leftrightarrow T^{n-1}(Y_1, Y_2, \ldots) = (Y_{n}, Y_{n+1}, \ldots) \in A \ \forall n \geq 1\]
so $A$ is in the tail $\sigma$-field $\bigcap \limits_{n \geq 1} \sigma(Y_n, Y_{n+1}, \ldots)$ and thus $\mu_{\infty}(A) \in \{0,1\}$ by Kolmogorov's 0-1 Law.

We are now ready to state the Subadditive Ergodic Theorem in the form we need.

\begin{theorem}[Kingman's Subadditive Ergodic Theorem, \cite{liggett}]  \label{thmKingman}
 Suppose $(Y_{m,n})_{0 \leq m < n}$ are random variables satisfying 
 \begin{enumerate}[label=(\roman*)]
\item $\exists$ constant $C$ s.t. $E|Y_{0,n}| < \infty$ and $EY_{0,n} \geq Cn$ for all $n$
\item $\forall k \geq 1$, $\{Y_{nk, (n+1)k}: n \geq 1\}$ is a stationary process
\item The joint distributions of $\{Y_{m, m+k}: k \geq 1\}$ are not dependent on $m$
\item $Y_{0,m+n} \leq Y_{0,m} + Y_{m, m+n} \ \forall m, n > 0$
\end{enumerate}
Then 
 \begin{enumerate}[label=(\alph*)]
\item $\lim \limits_{n \rightarrow \infty} \frac{EY_{0,n}}n = \inf \limits_{m \geq 0} \frac{EY_{0,m}}m := \gamma$
\item $Y:= \lim \limits_{n \rightarrow \infty} \frac{Y_{0,n}}n$ exists a.s. and in $L^1$, and $EY = \gamma$
\item If the stationary sequences in (ii) are ergodic, then $Y = \gamma$ a.s.
\end{enumerate}
\end{theorem}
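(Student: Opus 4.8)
The plan is to deduce the theorem from Fekete's subadditive lemma together with Birkhoff's pointwise ergodic theorem \cite[Sect.~7.2]{durrett}, following Kingman's argument in the streamlined form of Liggett \cite{liggett}. First I would dispose of (a): set $a_n := EY_{0,n}$; hypothesis (iii) gives $EY_{m,m+n} = EY_{0,n}$, so taking expectations in the subadditivity (iv) yields $a_{m+n} \le a_m + a_n$, and by (i) each $a_n$ is finite with $a_n \ge Cn$, whence Fekete's lemma gives $a_n/n \to \inf_{m\ge 1} a_m/m =: \gamma \in [C,\infty)$.

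Next, realize the array on a sequence space carrying a measure-preserving shift $\theta$ (hypothesis (iii)), with the block sequences of (ii) stationary under $\theta^k$, so that $Y_{m,n} = Y_{0,n-m}\circ\theta^m$. Write $\overline{Y} := \limsup_n Y_{0,n}/n$ and $\underline{Y} := \liminf_n Y_{0,n}/n$. From $Y_{0,n} \le Y_{0,1} + Y_{1,n} = Y_{0,1} + Y_{0,n-1}\circ\theta$ one gets $\overline{Y} \le \overline{Y}\circ\theta$; since $\theta$ is measure-preserving these have the same law, forcing $\overline{Y} = \overline{Y}\circ\theta$ a.s., and likewise for $\underline{Y}$, so both are $\theta$-invariant. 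The upper bound $E\overline{Y} \le \gamma$ is then soft: for fixed $k$, writing $n = qk+r$ with $0 \le r < k$, subadditivity gives $Y_{0,n} \le \sum_{j=0}^{q-1} Y_{jk,(j+1)k} + Y_{qk,n}$; Birkhoff applied to the $\theta^k$-stationary integrable sequence $(Y_{jk,(j+1)k})_j$ shows $\frac1q\sum_{j<q} Y_{jk,(j+1)k} \to E[Y_{0,k}\mid\mathcal I_k]$ a.s., while the remainder is $o(n)$ a.s. because $Z\circ\theta^m/m \to 0$ a.s. for integrable $Z$; hence $\overline{Y} \le \frac1k E[Y_{0,k}\mid\mathcal I_k]$ a.s., and taking expectations and then $\inf_k$ gives $E\overline{Y} \le \gamma$.

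The hard part is the matching lower bound $E\underline{Y} \ge \gamma$, for which I would use a greedy stopping-time covering argument. Because (i) controls only the \emph{mean} of $Y_{0,n}$, I would first pass to the truncation $Y^M_{m,n} := Y_{m,n}\vee(-M(n-m))$, which remains subadditive and satisfies (i)--(iv), is bounded below pointwise, and whose limit constant $\gamma_M$ decreases to $\gamma$ as $M\to\infty$. Fixing $\epsilon > 0$, I would define the stopping time $N(m) := \min\{\ell \ge 1 : Y^M_{m,m+\ell} \le \ell(\underline{Y^M}\circ\theta^m + \epsilon)\}$, finite a.s. and bounded by a deterministic $L$ off a set of probability at most $\epsilon$, greedily tile $[0,n]$ by the blocks $[m,m+N(m)]$, and fill the exceptional gaps with single unit steps (where $Y^M_{m,m+1} \ge -M$). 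Summing the block estimates and invoking Birkhoff once more to bound both the density of exceptional steps and their integrable contributions yields $\liminf_n Y^M_{0,n}/n \ge \gamma_M - O(\epsilon)$; letting $\epsilon \to 0$ then $M\to\infty$ gives $E\underline{Y} \ge \gamma$. Combined with $E\overline{Y} \le \gamma$ and $\underline{Y} \le \overline{Y}$ this forces $\underline{Y} = \overline{Y} =: Y$ a.s. with $EY = \gamma$, and $L^1$ convergence follows from uniform integrability (the positive parts of $Y_{0,n}/n$ are dominated by the $L^1$-convergent averages $\frac1n\sum_{i<n} Y_{i,i+1}$, and the means converge); this is (b). Finally (c): $Y = \overline{Y}$ is $\theta$-invariant, so if the block shifts are ergodic $Y$ is a.s. constant, hence $Y \equiv EY = \gamma$.

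The main obstacle is this stopping-time/tiling step: making the greedy cover of $[0,n]$ fit together while simultaneously carrying the truncation level $M$ and the exceptional budget $\epsilon$ through the ergodic averages without losing a constant factor. Everything else --- part (a), the upper bound, and part (c) --- is routine once Fekete's lemma and Birkhoff's theorem are on the table.
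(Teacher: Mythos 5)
This theorem is not proved in the paper at all: it is imported verbatim from Liggett as a black-box preliminary (Section \ref{section2.5}), so there is no in-paper proof to compare against. Your sketch is the standard Liggett--Durrett argument for exactly this statement --- Fekete's lemma for (a), Birkhoff applied to the stationary block sequences $\{Y_{jk,(j+1)k}\}_j$ for the upper bound $E\overline{Y}\le\gamma$, and a truncation plus greedy stopping-time tiling for the matching lower bound $E\underline{Y}\ge\gamma$, with $L^1$ convergence from uniform integrability of the positive parts via $Y_{0,n}\le\sum_{i<n}Y_{i,i+1}$. In structure this is the right (and essentially the only known) proof.

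There is, however, one genuine gap: the step ``realize the array on a sequence space carrying a measure-preserving shift $\theta$ \ldots so that $Y_{m,n}=Y_{0,n-m}\circ\theta^m$'' is not available under hypotheses (ii)--(iii). Condition (iii) constrains only the marginal law of each row $\{Y_{m,m+k}\}_{k\ge1}$, and (ii) gives stationarity of each block sequence separately; neither says anything about the joint law of the whole array, so in general no single transformation $\theta$ intertwines all the $Y_{m,n}$. This is precisely Liggett's improvement over Kingman's original hypothesis (which \emph{did} assume the array is stationary under the diagonal shift, and for which your reduction would be legitimate). The consequences propagate: the claims that $\overline{Y}\le\overline{Y}\circ\theta$ forces $\theta$-invariance, that the remainder is controlled by $Z\circ\theta^m/m\to0$, and that the stopping time compares $Y_{m,m+\ell}$ to $\underline{Y}\circ\theta^m$ all need restating. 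The standard repair is to work with $\underline{Y}_m:=\liminf_n Y_{m,m+n}/n$ directly, note that subadditivity $Y_{0,m+n}\le Y_{0,m}+Y_{m,m+n}$ gives the pointwise inequality $\underline{Y}_0\le\underline{Y}_m$ for every $m$, and use (iii) only to assert equality in distribution of the stopping times $T_m$ and of the $\underline{Y}_m$; likewise (c) follows not from $\theta$-invariance of $Y$ but from the fact that each Birkhoff limit $A_k$ is measurable with respect to the invariant $\sigma$-field of the $k$-block sequence, hence constant when that sequence is ergodic, giving $Y\le\inf_k EY_{0,k}/k=\gamma$ a.s.\ and then $Y=\gamma$ since $EY=\gamma$. (For the application in this paper the environment is i.i.d., a global shift does exist, and your argument goes through as written; the gap concerns only the theorem in the generality stated.)
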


\begin{remark}
We may replace $Y_{m,n}$ with $-Y_{m,n}$ in the statement of the theorem to obtain a version for superadditive sequences.
\end{remark}

\noindent This theorem is the basis for the construction of grid entropy in our paper.

\subsection{Relative Entropy and Sanov's Theorem} \label{section2.6}
In the next preliminary section, we recall the basics of the Kullback-Leibler divergence (introduced in \cite{kullback}) and Sanov's Theorem for large deviations. We later use this theorem to establish a relationship between our grid entropy and this notion of relative entropy.

\begin{definition}
Let $P, Q$ be distributions on our inherent metric space $X$.  The Kullback-Leibler divergence or relative entropy of $Q$ from $P$ is defined to be
\[D_{KL} (P || Q) = \begin{cases} \int_X  \log f \ dP = \int_X   f \log f \ dQ, & P \ll Q \\ +\infty, & \mbox{otherwise} \end{cases}\]
where $f := \frac{dP}{dQ}$ is the Radon-Nikodym derivative and $\log$ is the natural logarithm. 
\end{definition}

\begin{remark}
 \cite{kullback} also derive several basic properties such as $D_{KL}$ being a pre-metric. \cite{posner} shows that $D_{KL}$ is lower semicontinuous, in the sense that, given probability distributions $P_n \Rightarrow P$ and $Q_n \Rightarrow Q$, we have
 \[ D_{KL}(P||Q) \leq \liminf_{n \rightarrow \infty} D_{KL}(P_n || Q_n)\]
 
\end{remark}

 Our main interest in relative entropy is that it is the rate function for large deviations of empirical measures. This is captured by Sanov's Theorem. 

\begin{theorem}[Sanov's Theorem, \cite{deuschel}]\label{thmSanov}
Consider a sequence of i.i.d. random variables $X_i \sim \theta$ taking values in a set $X$. Let $\mu_n =  \sum_{i=1}^n \delta_{X_i}$ be their  unnormalized empirical measures. Then for any weakly closed set $F \subset \mathcal{M}_1$ we have
\[\limsup_{n \rightarrow \infty} \frac1n \log P \bigg(\frac1n \mu_n \in F \bigg) \leq -\inf_{\xi \in F} D_{KL}(\xi||\theta)\]
and for any weakly open set $G \subset \mathcal{M}_1$ we have
\[\liminf_{n \rightarrow \infty} \frac1n \log P\bigg(\frac1n \mu_n \in U \bigg) \geq -\inf_{\xi \in G} D_{KL}(\xi||\theta)\]
\end{theorem}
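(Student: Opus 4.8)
The statement is classical (see \cite{deuschel}); here is the route I would take. The plan is the standard two-sided argument: an exponential Markov (Chernoff) estimate for the upper bound on weakly closed sets, and an exponential change-of-measure (tilting) argument together with the strong law of large numbers for the lower bound on weakly open sets. Throughout write $\mu_n = \sum_{i=1}^n \delta_{X_i}$ for the unnormalized empirical measure. In the setting actually used in this paper the underlying space is the compact interval $X=[0,1]$, so $\mathcal{M}_1$ is itself weakly compact; I will flag where this compactness is used and what must replace it for a general Polish $X$.

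\textbf{Upper bound.} The starting point is the Donsker--Varadhan variational formula
\[ D_{KL}(\xi || \theta) = \sup_{\phi \in C_b(X)} \Big[ \int_X \phi\, d\xi - \log \int_X e^{\phi}\, d\theta \Big], \]
which follows from Jensen's inequality (the bound $\leq$ is immediate; the reverse comes from taking $\phi$ close to $\log \frac{d\xi}{d\theta}$ and truncating). Fix $\delta, L > 0$. For each $\xi \in \mathcal{M}_1$ choose $\phi_\xi \in C_b(X)$ with $\int \phi_\xi\, d\xi - \log \int e^{\phi_\xi}\, d\theta \geq \min(D_{KL}(\xi||\theta), L) - \delta$, and let $U_\xi := \{ \eta \in \mathcal{M}_1 : \int \phi_\xi\, d\eta > \int \phi_\xi\, d\xi - \delta \}$, a weakly open neighbourhood of $\xi$. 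Since $\langle \mu_n, \phi_\xi \rangle$ is a sum of $n$ i.i.d.\ copies of $\phi_\xi(X_1)$, applying Markov's inequality to $e^{\langle \mu_n, \phi_\xi\rangle}$ gives
\[ P\Big( \tfrac1n \mu_n \in U_\xi \Big) \leq e^{-n(\int \phi_\xi\, d\xi - \delta)} \Big( \int e^{\phi_\xi}\, d\theta \Big)^n \leq e^{-n(\min(D_{KL}(\xi||\theta), L) - 2\delta)}. \]
When $X$ is compact, $F$ is compact, so finitely many $U_{\xi_1}, \dots, U_{\xi_m}$ cover it; a union bound followed by $\limsup_n$ gives $\limsup_n \frac1n \log P(\frac1n \mu_n \in F) \leq -(\min(\inf_{\xi \in F} D_{KL}(\xi||\theta), L) - 2\delta)$, and then $\delta \downarrow 0$, $L \uparrow \infty$ finish. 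For a general Polish $X$ one first establishes exponential tightness: using tightness of $\theta$, pick compacts $\Gamma_j \subseteq X$ with $\theta(\Gamma_j^c)$ decaying super-exponentially, so that the Prokhorov-precompact sets $K_M = \{ \xi : \xi(\Gamma_j^c) \leq 1/j\ \forall j \}$ obey $\limsup_n \frac1n \log P(\frac1n \mu_n \notin K_M) \leq -M$; intersect $F$ with $\overline{K_M}$, apply the compact case, and let $M \uparrow \infty$.

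\textbf{Lower bound.} Let $G$ be weakly open. It suffices to prove $\liminf_n \frac1n \log P(\frac1n \mu_n \in G) \geq -D_{KL}(\xi||\theta)$ for each fixed $\xi \in G$ with $D_{KL}(\xi||\theta) < \infty$ (otherwise the bound is vacuous). Such a $\xi$ satisfies $\xi \ll \theta$ with density $f = \frac{d\xi}{d\theta}$, and $\int |\log f|\, d\xi < \infty$, since $\int (\log f)^+ d\xi \leq D_{KL}(\xi||\theta) + e^{-1}$ and $\int (\log f)^- d\xi \leq e^{-1}$ (using $|x \log x| \leq e^{-1}$ on $[0,1]$). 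Let $Y_1, Y_2, \dots$ be i.i.d.\ with law $\xi$. Because $\theta$ restricted to $\{ f > 0 \}$ is absolutely continuous with respect to $\xi$ with density $1/f$, and $\xi(\{ f = 0 \}) = 0$, a change of variables yields
\[ P\Big( \tfrac1n \mu_n \in G \Big) \geq E_\xi\Big[ \mathbf{1}\big\{ \tfrac1n \sum_{i=1}^n \delta_{Y_i} \in G \big\}\, e^{-\sum_{i=1}^n \log f(Y_i)} \Big]. \]
Restricting the expectation to $A_n := \big\{ \tfrac1n \sum \delta_{Y_i} \in G \big\} \cap \big\{ \sum \log f(Y_i) \leq n(D_{KL}(\xi||\theta) + \epsilon) \big\}$, on which the exponential is at least $e^{-n(D_{KL}(\xi||\theta) + \epsilon)}$, gives $P(\frac1n\mu_n \in G) \geq e^{-n(D_{KL}(\xi||\theta)+\epsilon)} P_\xi(A_n)$. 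By the strong law of large numbers $\frac1n \sum \delta_{Y_i} \Rightarrow \xi \in G$ (open) and $\frac1n \sum \log f(Y_i) \to \int \log f\, d\xi = D_{KL}(\xi||\theta)$, so $P_\xi(A_n) \to 1$; hence $\liminf_n \frac1n \log P(\frac1n\mu_n \in G) \geq -(D_{KL}(\xi||\theta) + \epsilon)$, and $\epsilon \downarrow 0$ followed by the supremum over $\xi \in G$ completes the argument.

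\textbf{Expected obstacle.} The one genuine difficulty is promoting the Chernoff estimate, which naturally only bounds weakly open neighbourhoods, to arbitrary weakly closed sets: this needs either compactness of $X$ --- which is exactly our situation, with $X=[0,1]$, so the difficulty evaporates --- or a real exponential-tightness argument in the space of measures. The remaining work (the variational formula for $D_{KL}$, the null-set bookkeeping around $\{ f = 0 \}$ and $\{ f = +\infty \}$, and integrability of $\log f$) is routine once $D_{KL}(\xi||\theta) < \infty$ is in force.
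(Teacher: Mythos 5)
Your proof is correct, but note that the paper does not prove this statement at all --- it is quoted as a classical result with a citation to Deuschel--Stroock. Your outline (Chernoff estimate via the Donsker--Varadhan variational formula plus compactness/exponential tightness for the upper bound, and tilting plus the strong law for the lower bound) is exactly the standard argument in that reference, and the compactness simplification you flag is indeed available here since $X=[0,1]$.
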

\begin{remark}
Since $F$ is closed then the infimum is achieved by some $\xi \in F$. Furthermore, if $\theta \in F$ then the right-hand side of the inequality is 0, which gives us no information; however, if $\theta \notin F$, then the theorem gives an exponential bound on  large deviations.
\end{remark}

\section{Grid Entropy as a Directed Norm}\label{section3}
\subsection{The Plan for Deriving Direction-Fixed Grid Entropy }
For the purposes of Section \ref{section3}, we temporarily forget our original definition \eqref{firstDef} of grid entropy and rederive it as a limit of scaled directed metrics. 

 To summarize the setting we described in section \ref{section2}, we consider empirical measures $\frac1n \mu_{\pi}$ along NE-paths on the lattice $\bZ^D$, where the edges have weights $\tau_e = \tau(U_e)$ for some measurable $\tau: [0,1] \rightarrow \bR$ and $U_e$ are i.i.d. Unif[0,1] random variables. $\mathcal{M}, \mathcal{M}_+, \mathcal{M}_t$ denote the spaces of finite, finite non-negative, and finite non-negative with total mass $t \geq 0$ Borel measures respectively.

We begin with direction-fixed grid entropy. We wish to count the number of paths with empirical measure very close to the target $\nu$. Let us try to define a distance on $\mathbb{\bR}^D \times \mathcal{M}_+$   by
\[d((p, \xi), (q, \nu)) = \log \# \{
\mbox{paths $\pi: \lfloor p \rfloor \rightarrow \lfloor q \rfloor$ with $ \mu_{\pi}= \nu-\xi$}\}\]
Note that this is $-\infty$ if there are no such paths and it is 0 if $p=q, \xi = \nu$ (since there is exactly one path $\pi: \lfloor p \rfloor \rightarrow \lfloor p \rfloor$, and it has empirical measure 0). 

 One glaring issue is that we are only counting paths that have exactly the target empirical measure.  Since the Lebesgue measure on [0,1] is continuous, then almost surely the $U_e$ have different values, and thus the unnormalized empirical measures uniquely determine the paths $\pi$. It follows that almost surely $d((p, \xi), (q, \nu))$ is always either $-\infty$ or 0. We need to change our definition of $d$ to count paths with an empirical measure "close" to $\nu-\xi$ instead. 

 Another problem is that we wish to apply the Subadditive Ergodic Theorem to learn about the  behavior of
 \[\frac{d((np,n\xi), (nq, n\nu))}n\]
as $n \rightarrow \infty$. Thus we must also change our definition of $d$ so that it is integrable (and in particular finite a.s.) when $q-p \in \bR_{\geq 0}^D$. 

 The trick is to replace the counting of the paths exhibiting  the exact target empirical measures with a "cost function" that attributes an exponential cost to each path based on how far its empirical measure is from the target. We also introduce a parameter $\epsilon$ that, as it decreases to 0, takes the cost function towards the counting of the paths with the target empirical measure we tried initially.
 
\begin{definition}
Fix $\epsilon > 0$. Define a distance on $\mathbb{R}^D \times \mathcal{M}_+$  by
\[d^{\epsilon}((p, \xi), (q, \nu)) = \log \sum_{\pi \in \mathcal{P}( \lfloor p \rfloor, \lfloor q \rfloor)}  e^{-\frac1{\epsilon} \rho(\mu_{\pi}, \nu-\xi)}\]
where  $\rho$ is the Levy-Prokhorov metric and where we sum over all NE paths $\pi: \lfloor p \rfloor \rightarrow \lfloor q \rfloor$.
\end{definition}

\begin{remark}
This distance is 0 if $p=q, \xi = \nu$ and $-\infty $ if and only if $q-p \notin \bR_{\geq 0}^D$. 
\end{remark}
\begin{remark}
For any path $\pi: \lfloor p \rfloor \rightarrow \lfloor q \rfloor$, the corresponding  empirical measure $\mu_{\pi}$ observed must necessarily be of the form $\mu_{\pi} = \sum \limits_{i=1}^{|\pi|} \delta_{a_i}$ where $|\pi|= ||\lfloor q \rfloor - \lfloor p \rfloor||_1$.
\end{remark}
\begin{remark}
As $\epsilon \rightarrow 0$ the costs $e^{-\frac1{\epsilon} \rho(\mu_{\pi}, \nu - \xi) }$ converge to the indicators
\[\mathbf{1}_{\nu-\xi}( \mu_{\pi}) = \mathbf{1}_{\frac{\nu-\xi}{||\lfloor q \rfloor - \lfloor p \rfloor||_1} } \bigg(\frac1{|\pi|} \mu_{\pi} \bigg)\]
Hence the sum of the costs approaches the number of paths with empirical measures $\frac1{|\pi|} \mu_{\pi}$ precisely equal to $\frac{\nu-\xi}{||\lfloor q \rfloor - \lfloor p \rfloor||_1}$.
\end{remark}

With this definition in mind, let us discuss the plan of attack. 

First, we prove the existence of $\lim \limits_{n \rightarrow \infty} \frac{d^{\epsilon}((np,n\xi), (nq, n\nu))}n$ using the Subadditive Ergodic Theorem and some estimates we derive for the error terms when $p, q$ do not have integer coordinates. Then we  take the infimum over $\epsilon > 0$ of these limits and we define the resulting norm to be our grid entropy.

\begin{theorem}\label{thm7}
Fix $\epsilon > 0, \nu, \xi \in\mathcal{M}_+$ and $p, q \in \bR^D$. Then
\[\frac{d^{\epsilon}((np, n\xi), (nq, n\nu))}n\]
converges in probability to a constant. When $p = \vec{0}$, the convergence is pointwise a.s.
\end{theorem}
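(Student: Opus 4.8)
The plan is to realize $\frac1n d^\epsilon((\vec0,0),(nq,n\nu))$ as the normalization of a superadditive array, apply Liggett's form of Kingman's theorem (Theorem \ref{thmKingman}), and then remove the integrality of the endpoint by elementary floor estimates.

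\textbf{Reduction.} First I would reduce to $p=\vec0$, $\xi=0$. The random variable $d^\epsilon((np,n\xi),(nq,n\nu))$ depends only on the i.i.d.\ labels $U_e$ in the lattice box with corners $\lfloor np\rfloor,\lfloor nq\rfloor$ and on the target $n(\nu-\xi)$, so by translation invariance of $(U_e)$ it is equidistributed with $\log\sum_{\pi:\vec0\to\lfloor nq\rfloor-\lfloor np\rfloor}e^{-\frac1\epsilon\rho(\mu_\pi,\,n(\nu-\xi))}$. Thus once the $p=\vec0$ case is shown to converge a.s.\ to a deterministic constant, convergence in distribution of the general expression to that same constant follows, which — the limit being constant — is convergence in probability; the $O(1)$ gap between $\lfloor nq\rfloor-\lfloor np\rfloor$ and $\lfloor n(q-p)\rfloor$ gets absorbed by the floor estimates below. (If $\nu-\xi\notin\mathcal M_+$ or $q-p\notin\bR^D_{\ge0}$ the expression is identically $-\infty$.) So fix $q\in\bR^D_{\ge0}$ and $\nu\in\mathcal M_+$.

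\textbf{The superadditive array.} For $0\le m<n$ set $Y_{m,n}:=\log\sum_{\pi:\lfloor mq\rfloor\to\lfloor nq\rfloor}e^{-\frac1\epsilon\rho(\mu_\pi,\,(n-m)\nu)}$, so that $Y_{0,n}=d^\epsilon((\vec0,0),(nq,n\nu))$. Restricting the sum defining $Y_{0,m+n}$ to paths through $\lfloor mq\rfloor$ and factoring each such path as a concatenation $\pi_1\cdot\pi_2$ — so $\mu_{\pi_1\cdot\pi_2}=\mu_{\pi_1}+\mu_{\pi_2}$ — Lemma \ref{lemma4} together with $(m+n)\nu=m\nu+n\nu$ yields the superadditivity $Y_{0,m+n}\ge Y_{0,m}+Y_{m,m+n}$. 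Each summand is $\le1$ and there are at most $D^{n\|q\|_1}$ paths, while any single path contributes at least $-\frac1\epsilon\|\mu_\pi-n\nu\|_{TV}\ge-\frac n\epsilon(\|q\|_1+\|\nu\|_{TV})$ by Lemma \ref{lemma3}; hence $|Y_{0,n}|\le Cn$ deterministically, so $\bE|Y_{0,n}|<\infty$ and $\bE Y_{0,n}\ge -Cn$. When $q\in\bZ^D_{\ge0}$ the blocks $\{Y_{nk,(n+1)k}\}_{n\ge1}$ involve disjoint congruent boxes of labels, hence are i.i.d.\ — in particular a stationary ergodic sequence — and the joint law of $\{Y_{m,m+k}\}_{k\ge1}$ does not depend on $m$. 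The superadditive form of Theorem \ref{thmKingman} then gives $\frac1n Y_{0,n}\to\gamma(q,\nu,\epsilon):=\lim_m\frac1m\bE Y_{0,m}$ a.s.\ and in $L^1$, a deterministic constant.

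\textbf{From integer to real directions, and the main obstacle.} This is where the real work lies. The basic floor estimate is: for lattice points $a\le b\le b''$ and $\lambda\in\mathcal M_+$, appending a fixed path $b\to b''$ to every $a\to b$ path adds $\|b''-b\|_1$ unit point masses to $\mu_\pi$, so by Lemmas \ref{lemma3}--\ref{lemma4} one gets $d^\epsilon((a,0),(b'',\lambda))\ge d^\epsilon((a,0),(b,\lambda))-\frac1\epsilon\|b''-b\|_1$; likewise $|d^\epsilon((a,0),(b,\lambda))-d^\epsilon((a,0),(b,\lambda'))|\le\frac1\epsilon\|\lambda-\lambda'\|_{TV}$ from the triangle inequality for $\rho$ and Lemma \ref{lemma3}. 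Using these I would (i) extend the previous step to $q=q'/M\in\bQ^D_{\ge0}$: along $n\in M\bN$ the endpoint $\lfloor nq\rfloor=\frac nM q'$ is integral and $Y_{0,n}$ is exactly the array for direction $q'$ and target $M\nu$; for general $n$, $\lfloor nq\rfloor$ lies between the consecutive block endpoints $\lfloor n/M\rfloor q'$ and $(\lfloor n/M\rfloor+1)q'$, which differ from it by $O(1)$, and the target moves by $O(1)$ in total variation, so the two estimates squeeze $\frac1n d^\epsilon((\vec0,0),(nq,n\nu))$ to $\frac1M\gamma(q',M\nu,\epsilon)=:\gamma(q,\nu,\epsilon)$ a.s.; and (ii) pass to arbitrary $q$ by choosing rationals $q^-\le q\le q^+$ with $\|q^\pm-q\|_1\to0$, whereupon the floor estimate pins $\liminf_n\frac1n Y_{0,n}$ and $\limsup_n\frac1n Y_{0,n}$ between $\gamma(q^\mp,\nu,\epsilon)\mp\frac1\epsilon\|q^\pm-q\|_1$, and the fact that $q\mapsto\gamma(q,\nu,\epsilon)$ is concave (again a concatenation argument) and everywhere finite — hence continuous on $\{q:q_i>0\ \forall i\}$, with the faces $\{q_i=0\}$ reducing to dimension $D-1$ by an easy induction — forces $\gamma(q^\pm,\nu,\epsilon)\to\gamma(q,\nu,\epsilon)$, giving a.s.\ convergence to that deterministic constant. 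The appeal to Kingman's theorem is routine; the genuine obstacle is this floor bookkeeping, and in particular extracting enough regularity of $q\mapsto\gamma(q,\nu,\epsilon)$ to handle every real direction while retaining \emph{almost-sure}, not merely in-probability or subsequential, convergence. I would also note that the superadditivity $Y_{0,m+n}\ge Y_{0,m}+Y_{m,m+n}$ — and hence the whole scheme — depends on normalizing the empirical measures by $\frac1n$ rather than $\frac1{|\pi|}$, exactly as anticipated in Section \ref{section2}.
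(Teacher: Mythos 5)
Your proposal is correct in outline and matches the paper through the first two stages: the reduction to $p=\vec 0$, $\xi=0$ by translation invariance of the labels, the superadditive array fed to Liggett's theorem for $q\in\bZ^D_{\geq 0}$ (Lemma \ref{lemma11}), and the passage to rational $q$ by squeezing $n$ between consecutive multiples of the denominator (Lemma \ref{lemma13}) are all exactly what the paper does. Where you genuinely diverge is the step from rational to arbitrary real $q$. You sandwich $q$ between rationals $q^-\le q\le q^+$ and invoke concavity of $q\mapsto\gamma(q,\nu,\epsilon)$ to get continuity, so that both bounds converge to a common value. The paper instead approximates only from below, by a staircase $p_k\uparrow q$ with $(p_k)_j=q_j$ whenever $q_j\in\bQ$, and closes the argument with the bespoke Lemma \ref{lemma15}: a Fatou-type statement that upgrades a one-sided comparison $Y_{n,k}\le Y_n$ plus the identity $\sup_n EY_n=\sup_n\sup_k EY_{n,k}$ to almost-sure convergence of $Y_n$. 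The crucial trick there is that for each \emph{fixed} $n$ one has $\lfloor np_k\rfloor=\lfloor nq\rfloor$ for all large $k$, so condition (iii) of Lemma \ref{lemma15} holds exactly and no regularity of $\gamma$ in $q$ is ever needed. Your route buys a more conceptual statement (concavity and continuity of the limit constant, which the paper only establishes later, in Theorem \ref{thm16} and Lemma \ref{maximalLemma}) at the price of having to prove that regularity up front; the paper's route is more hands-on but entirely self-contained.

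The one place your argument needs more care than you have written is the boundary of $\bR^D_{\geq 0}$. A finite concave function on the orthant is locally Lipschitz on the interior, but it need \emph{not} be upper semicontinuous at a face when approached from the interior: the limit of $\gamma(q^+)$ as $q^+\to q$ with $q^+_i>0=q_i$ can strictly exceed $\gamma(q)$, and concavity only gives the inequality in the wrong direction. So your upper bound $\limsup_n\frac1nY_{0,n}\le\gamma(q^+)+\frac1\epsilon\|q^+-q\|_1$ is only useful if you choose $q^+$ with the \emph{same} zero coordinates as $q$, restricting the whole sandwich to the face spanned by the nonzero coordinates of $q$ and running the continuity argument in that lower-dimensional relative interior. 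This is recoverable within your scheme (and is presumably what your "easy induction on the faces" intends), but it is precisely the pitfall the paper's staircase construction is built to avoid, so it deserves to be made explicit.
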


\begin{remark}
The theorem holds trivially, with the limit being $-\infty$,  if $q-p \notin \bR^D_{\geq 0}$ or if $q = p$ and $\nu \neq \xi$. It also holds trivially, with the limit being 0, if $q=p$ and $\nu = \xi$.
\end{remark}

 The limit given by this theorem is a directed metric with negative sign on $\bR^D \times \mathcal{M}_+$. When we  take an infimum over $\epsilon > 0$ we still get a directed metric with negative sign. 

\begin{theorem}\label{thm8}
For $\epsilon > 0$, $\nu, \xi \in \mathcal{M}_+$ and $p, q \in \bR^D$ define 
\[\widetilde{d}^{\epsilon}((p, \xi), (q, \nu)) := \lim \limits_{n \rightarrow \infty} \frac{d^{\epsilon}((np,n\xi), (nq, n\nu))}n \ \mbox{and} \ \widetilde{d}((p, \xi), (q, \nu)) := \inf_{\epsilon > 0} \widetilde{d}^{\epsilon}((p, \xi), (q, \nu))\]
Then each $\widetilde{d}^{\epsilon}$ as well as $\widetilde{d}$ are directed metrics with negative sign on $\bR^D \times \mathcal{M}_+$.
\end{theorem}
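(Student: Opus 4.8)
The plan is to establish, for each fixed $\epsilon>0$, the three defining features of a directed metric with negative sign — range contained in $[-\infty,+\infty)$, vanishing on the diagonal, and the reversed (super-additive) triangle inequality $\widetilde d^{\epsilon}(x,y)+\widetilde d^{\epsilon}(y,z)\le\widetilde d^{\epsilon}(x,z)$ — and then transfer all three to $\widetilde d=\inf_{\epsilon>0}\widetilde d^{\epsilon}$. The diagonal and the range are handled at the finite level. For any $p$ there is exactly one NE path $\lfloor np\rfloor\to\lfloor np\rfloor$, the empty one, with $\mu_\pi=0$, so $d^{\epsilon}((np,n\xi),(np,n\xi))=\log e^{-\rho(0,0)/\epsilon}=0$ for every $n$, giving $\widetilde d^{\epsilon}((p,\xi),(p,\xi))=0$ and hence $\widetilde d((p,\xi),(p,\xi))=\inf_{\epsilon}0=0$. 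Since every summand $e^{-\rho(\mu_\pi,\cdot)/\epsilon}$ lies in $(0,1]$, we get $d^{\epsilon}((np,n\xi),(nq,n\nu))\le\log|\mathcal P(\lfloor np\rfloor,\lfloor nq\rfloor)|$, which by Lemma \ref{lemma1} equals $H(q-p)n+o(n)$ when $q-p\in\bR^D_{\ge0}$ and is $-\infty$ otherwise; dividing by $n$ yields $\widetilde d^{\epsilon}\le H(q-p)<\infty$ and therefore $\widetilde d\le\widetilde d^{\epsilon}<\infty$. Together with Theorem \ref{thm7}, which supplies the existence of the limits defining $\widetilde d^{\epsilon}$, this places both $\widetilde d^{\epsilon}$ and $\widetilde d$ in $[-\infty,+\infty)$.

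The heart of the argument is the reversed triangle inequality, which comes from concatenating paths. Fix $p,q,r\in\bR^D$ and $\xi,\nu,\omega\in\mathcal M_+$; in the applications $\xi=\vec{0}$, so every target appearing below is a genuine non-negative measure, and $\mu_\pi\in\mathcal M_+$ for every path. A path $\pi_1:\lfloor np\rfloor\to\lfloor nq\rfloor$ followed by a path $\pi_2:\lfloor nq\rfloor\to\lfloor nr\rfloor$ concatenates to a path $\pi_1\pi_2:\lfloor np\rfloor\to\lfloor nr\rfloor$ with $\mu_{\pi_1\pi_2}=\mu_{\pi_1}+\mu_{\pi_2}$, and $(\pi_1,\pi_2)\mapsto\pi_1\pi_2$ is injective since the splitting point $\lfloor nq\rfloor$ is recoverable. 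Because $(n\nu-n\xi)+(n\omega-n\nu)=n\omega-n\xi$, Lemma \ref{lemma4} gives
\[\rho\big(\mu_{\pi_1}+\mu_{\pi_2},\,n\omega-n\xi\big)\ \le\ \rho\big(\mu_{\pi_1},\,n\nu-n\xi\big)+\rho\big(\mu_{\pi_2},\,n\omega-n\nu\big),\]
so $e^{-\rho(\mu_{\pi_1\pi_2},\,n\omega-n\xi)/\epsilon}\ge e^{-\rho(\mu_{\pi_1},\,n\nu-n\xi)/\epsilon}\,e^{-\rho(\mu_{\pi_2},\,n\omega-n\nu)/\epsilon}$. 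Summing over all such pairs, and noting that the concatenated paths form a subcollection of $\mathcal P(\lfloor np\rfloor,\lfloor nr\rfloor)$ while all summands are non-negative, we obtain
\[d^{\epsilon}\big((np,n\xi),(nr,n\omega)\big)\ \ge\ d^{\epsilon}\big((np,n\xi),(nq,n\nu)\big)+d^{\epsilon}\big((nq,n\nu),(nr,n\omega)\big)\]
for every $n$, with any empty sum read as $\log 0=-\infty$ so that the degenerate cases noted in the remark after Theorem \ref{thm7} hold automatically. Dividing by $n$ and letting $n\to\infty$, with all three limits supplied by Theorem \ref{thm7} and deterministic, the inequality passes to the limit to give $\widetilde d^{\epsilon}((p,\xi),(r,\omega))\ge\widetilde d^{\epsilon}((p,\xi),(q,\nu))+\widetilde d^{\epsilon}((q,\nu),(r,\omega))$.

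Finally I transfer the reversed triangle inequality to $\widetilde d=\inf_{\epsilon>0}\widetilde d^{\epsilon}$: fixing $\epsilon>0$ and using $\widetilde d\le\widetilde d^{\epsilon}$ on the two left-hand terms together with the inequality just proved,
\[\widetilde d\big((p,\xi),(q,\nu)\big)+\widetilde d\big((q,\nu),(r,\omega)\big)\ \le\ \widetilde d^{\epsilon}\big((p,\xi),(q,\nu)\big)+\widetilde d^{\epsilon}\big((q,\nu),(r,\omega)\big)\ \le\ \widetilde d^{\epsilon}\big((p,\xi),(r,\omega)\big),\]
and since the left side does not depend on $\epsilon$, taking $\inf_{\epsilon>0}$ on the right completes the proof. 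I expect the only real friction to be bookkeeping rather than conceptual: keeping the concatenation inequality honest in the presence of coordinate floors — concatenation happens at $\lfloor nq\rfloor$, not every path $\lfloor np\rfloor\to\lfloor nr\rfloor$ factors through it, and this is precisely what forces the inequality into the super-additive direction — together with the fact that Theorem \ref{thm7} only gives convergence in probability when $p\neq\vec{0}$, which is harmless since all the limits involved are constants. A secondary point to address is that, for general $\xi\in\mathcal M_+$, the target differences $n\nu-n\xi$ need not lie in $\mathcal M_+$, so one should either restrict to the "between" configurations or read $\rho$ as its natural extension; in the use cases of this paper $\xi=\vec{0}$ and the issue does not arise.
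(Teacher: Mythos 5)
Your proof is correct and follows essentially the same route as the paper: concatenate paths at $\lfloor nq\rfloor$, apply Lemma~\ref{lemma4} (subadditivity of $\rho$) to get the pointwise cost inequality, note the concatenations form a subcollection of $\mathcal{P}(\lfloor np\rfloor,\lfloor nr\rfloor)$, pass to the $n\to\infty$ limit, then take $\inf_{\epsilon>0}$. Your added remarks on the range $[-\infty,\infty)$, the harmlessness of convergence-in-probability versus a.s.\ convergence since the limits are constants, and the potential issue of $n\nu-n\xi$ escaping $\mathcal{M}_+$ are all reasonable clarifications of points the paper leaves implicit, not departures in method.
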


 We show that this metric $\widetilde{d}$  gives rise to a norm on $\bR^D_{\geq 0} \times \mathcal{M}_+$. This will finish our discussion of the direction-fixed grid entropy and we will move on to the direction-free case.

\subsection{The Limit Shape of \texorpdfstring{$d^{\epsilon}$}{de} Starting at \texorpdfstring{$(\vec{0}, 0)$}{Lg}}
In this and the following subsection, we focus on the direction-fixed grid entropy.

To prove Theorem \ref{thm7}, we first prove a simplified version, which we later generalize easily.

\begin{theorem}\label{thm9}
Fix $\epsilon > 0, \nu \in \mathcal{M}_+$ and $q \in \bR^D_{\geq 0} \setminus \{\vec{0}\}$. Then
\[\frac{X_n^{\epsilon,q, \nu}}n:= \frac{d^{\epsilon}((\vec{0}, 0), (nq, n\nu))}n \rightarrow X^{\epsilon,q, \nu} := \sup \limits_n \frac{EX_n^{\epsilon,q, \nu}}n = \lim \limits_{n \rightarrow \infty} \frac{EX_n^{\epsilon,q, \nu}}n \ \mbox{a.s.}\]
\end{theorem}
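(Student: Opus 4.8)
The plan is to verify that the sequence $X_n^{\epsilon,q,\nu} = d^{\epsilon}((\vec 0,0),(nq,n\nu))$ satisfies the hypotheses (i)--(iv) of Kingman's Subadditive Ergodic Theorem (Theorem \ref{thmKingman}), together with ergodicity of the relevant stationary processes, and then conclude that the a.s. limit equals the deterministic constant $\sup_n \frac1n EX_n^{\epsilon,q,\nu}$. The superadditivity hypothesis (iv) is where the definition of $d^{\epsilon}$ pays off: given paths $\pi_1: \vec 0 \to \lfloor mq\rfloor$ and $\pi_2: \lfloor mq\rfloor \to \lfloor (m+n)q\rfloor$ (using that $\lfloor(m+n)q\rfloor - \lfloor mq\rfloor$ differs from $\lfloor nq\rfloor$ by a bounded lattice vector, which I will need to control), their concatenation $\pi_1\pi_2$ is a path $\vec 0 \to \lfloor(m+n)q\rfloor$ with $\mu_{\pi_1\pi_2} = \mu_{\pi_1} + \mu_{\pi_2}$, so by Lemma \ref{lemma4} (subadditivity of $\rho$ under addition of measures),
\[
\rho(\mu_{\pi_1\pi_2}, (m+n)\nu) \le \rho(\mu_{\pi_1}, m\nu) + \rho(\mu_{\pi_2}, n\nu).
\]
Exponentiating and summing over all such pairs, the sum over concatenated paths of $e^{-\frac1\epsilon \rho(\mu_{\pi_1\pi_2},(m+n)\nu)}$ is at least the product of the two sub-sums, and since concatenations inject into the set of all paths $\vec 0\to\lfloor(m+n)q\rfloor$, taking logs gives $X_{m+n} \ge X_{0,m} + X_{m,m+n}$ in the superadditive form, i.e. hypothesis (iv) with the sign reversed.

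Next I would set up the two-parameter family $Y_{m,n} := -d^{\epsilon}((\lfloor mq\rfloor, m\nu),(\lfloor nq\rfloor, n\nu))$ (or rather the version anchored so that the increments are genuinely stationary — here I must absorb the floor discrepancies $\lfloor nq\rfloor - \lfloor mq\rfloor$ vs. $\lfloor(n-m)q\rfloor$, which is the one genuinely fiddly point). Stationarity (ii) and shift-invariance of joint distributions (iii) then follow because the edge labels $U_e$ are i.i.d., hence the environment is invariant under lattice translations, and the increments $Y_{mk,(m+1)k}$ depend on disjoint blocks of i.i.d. labels; ergodicity of these stationary sequences follows from the i.i.d. structure exactly as in the discussion preceding Theorem \ref{thmKingman} (Kolmogorov 0--1 law on the tail field). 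For the integrability and lower-bound condition (i): a single path gives the upper bound $d^{\epsilon}((\vec 0,0),(nq,n\nu)) \le \log|\mathcal P(\vec 0,\lfloor nq\rfloor)| \le H(q)n + o(n)$ (Lemma \ref{lemma1}), and for the lower bound I note $\rho(\mu_\pi, n\nu) \le \|\mu_\pi\|_{TV} + \|n\nu\|_{TV} = \|nq\|_1 \cdot O(1) + n\|\nu\|_{TV}$ is deterministically $O(n)$ by Lemma \ref{lemma3} and the remark $\rho(\mu,0)=\|\mu\|_{TV}$, so $d^{\epsilon}((\vec 0,0),(nq,n\nu)) \ge -\frac{Cn}{\epsilon}$ for a constant $C=C(q,\nu)$; both bounds are deterministic, so $Y_{0,n}$ is bounded, hence integrable, and $EY_{0,n} \le \frac{C}{\epsilon}n$ gives (i) in the superadditive form. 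Applying Theorem \ref{thmKingman}(a),(b),(c) (in its superadditive version, per the remark) yields that $\frac1n X_n^{\epsilon,q,\nu} \to \sup_n \frac1n EX_n^{\epsilon,q,\nu} = \lim_n \frac1n EX_n^{\epsilon,q,\nu}$ a.s.

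The main obstacle I anticipate is the bookkeeping around non-integer $q$: Kingman's theorem wants an exactly additive index structure, but $\lfloor(m+n)q\rfloor \ne \lfloor mq\rfloor + \lfloor nq\rfloor$ in general. I would handle this by showing that replacing the endpoint by a nearby lattice point changes $d^{\epsilon}$ by only $O(1)$ — concretely, prepending or appending a bounded number of edges to every path changes each $\rho(\mu_\pi, \cdot)$ by $O(1)$ (again via Lemmas \ref{lemma3} and \ref{lemma4}) and changes the number of paths by at most a constant factor $D^{O(1)}$, so $\frac1n$ times the discrepancy vanishes — and then apply the Subadditive Ergodic Theorem to a cleanly stationary sequence (e.g. indexed along the coordinate direction that avoids floors, or using the standard device of comparing with $Y_{m,n}$ defined via the actual lattice points $\lfloor mq\rfloor, \lfloor nq\rfloor$ and checking hypotheses (ii)--(iii) directly for those, which hold on the nose because only the \emph{differences} of endpoints and the i.i.d. environment matter). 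The convergence is a.s. rather than merely in probability precisely because we have ergodicity here; the weaker ``in probability'' statement of Theorem \ref{thm7} for general starting point $p\ne\vec 0$ will later require a separate argument, but for $p=\vec 0$ the above delivers the full a.s. statement.
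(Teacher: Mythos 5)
Your verification of superadditivity (concatenation plus Lemma \ref{lemma4}), the deterministic two-sided bound giving hypothesis (i), and the ergodicity discussion are all correct, and for $q \in \bZ^D_{\geq 0}$ your argument is exactly the paper's (Lemma \ref{lemma11}): there $\lfloor nq \rfloor = nq$ and all four hypotheses of Theorem \ref{thmKingman} hold exactly. The gap is in your treatment of non-integer $q$, which you correctly flag as the crux but then resolve incorrectly. Your claim that for $Y_{m,n} := -d^{\epsilon}((\lfloor mq\rfloor, m\nu),(\lfloor nq\rfloor, n\nu))$ hypotheses (ii)--(iii) ``hold on the nose because only the differences of endpoints and the i.i.d.\ environment matter'' is false: the differences $\lfloor nq\rfloor - \lfloor mq\rfloor$ are not functions of $n-m$ alone (e.g.\ for $D=1$, $q = 3/2$ the increments $\lfloor (n+1)q\rfloor - \lfloor nq\rfloor$ alternate between $1$ and $2$), so $\{Y_{nk,(n+1)k}\}_n$ is not even identically distributed, let alone stationary, and Theorem \ref{thmKingman} does not apply to it. Conversely, if you repair stationarity by fixing the displacement to be exactly $\lfloor (n-m)q\rfloor$, then concatenation lands at $\lfloor mq\rfloor + \lfloor nq\rfloor \neq \lfloor (m+n)q\rfloor$ and superadditivity only holds up to an additive constant $D/\epsilon$; the stated Subadditive Ergodic Theorem does not accommodate such an error term (one can absorb a constant defect by shifting every $Y_{m,n}$ by $D/\epsilon$, but you do not identify this device, and even then the identification of the limit with $\sup_n \frac1n EX_n^{\epsilon,q,\nu}$ — as opposed to $\lim_n$ — requires exact superadditivity of the expectations, which also fails at the level of the floor-point sequence). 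Your alternative suggestion of indexing ``along the coordinate direction that avoids floors'' does not exist for irrational $q$.

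The paper's proof is staged precisely to avoid this: Kingman is applied only in the integer case; rational $q$ is handled by passing to the subsequence $n = tr$ along which $trq$ is a lattice point and interpolating via the one-sided perturbation estimate of Lemma \ref{lemma12} (note that Lemma \ref{lemma12} is genuinely one-sided — a path to the farther point need not pass through the nearer one, so your two-sided ``$O(1)$ change of endpoint'' assertion is not justified by Lemmas \ref{lemma3}--\ref{lemma4} alone); and irrational $q$, for which no such subsequence exists, is handled by approximating $q$ from below by a rational staircase $p_k \uparrow q$ and invoking the abstract convergence Lemmas \ref{lemma14}--\ref{lemma15} to transfer the a.s.\ limits. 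Some mechanism of this kind (or a bona fide approximate-subadditive ergodic theorem, which is not what Theorem \ref{thmKingman} provides) is needed to close your argument for $q \notin \bZ^D$.
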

\begin{remark}
As noted before, Theorem \ref{thm7} holds trivially when $q \notin \bR^D_{\geq 0} \setminus \{\vec{0}\}$, so we need not bother with this case.
\end{remark}

 We prove this theorem in stages, starting with the case when $q$ has integer coordinates. But first, we show a useful bound on our random variables $X_n^{\epsilon, q, \nu}$.

\begin{lemma}\label{lemma10}
Let $\epsilon > 0, \nu, \xi \in \mathcal{M}_+$ and $p, q \in \bZ^D$ with $q-p \in \bZ^D_{\geq 0} \setminus \{\vec{0}\}$. Then 
\[ d^{\epsilon}((p, \xi), (q, \nu))  \in  \bigg[ - \frac{1}{\epsilon} (||q-p||_1 + ||\nu-\xi||_{TV}), ||q-p||_1 \log D \bigg]\]
\end{lemma}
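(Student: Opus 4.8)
The plan is to bound the sum $\sum_{\pi \in \mathcal{P}(p,q)} e^{-\frac{1}{\epsilon}\rho(\mu_\pi, \nu-\xi)}$ above and below by controlling (a) the number of terms and (b) the size of each term. For the upper bound, I would use that each summand $e^{-\frac{1}{\epsilon}\rho(\mu_\pi,\nu-\xi)}$ is at most $1$ since $\rho \geq 0$, so $d^\epsilon((p,\xi),(q,\nu)) \leq \log|\mathcal{P}(p,q)| = \log\binom{||q-p||_1}{q_1-p_1,\ldots,q_D-p_D} \leq \log D^{||q-p||_1} = ||q-p||_1\log D$, using the trivial bound $\binom{N}{k_1,\ldots,k_D} \leq D^N$. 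For the lower bound, I would keep only a single path $\pi$ in the sum (the sum of nonnegative terms dominates any one term, and $\mathcal{P}(p,q) \neq \emptyset$ since $q-p \in \bZ_{\geq 0}^D$), giving $d^\epsilon((p,\xi),(q,\nu)) \geq -\frac{1}{\epsilon}\rho(\mu_\pi, \nu-\xi)$, and then bound $\rho(\mu_\pi,\nu-\xi)$ from above.

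The one remaining point is the bound $\rho(\mu_\pi, \nu-\xi) \leq ||q-p||_1 + ||\nu-\xi||_{TV}$. Here I would invoke Lemma \ref{lemma3} ($\rho \leq d_{TV}$) together with the triangle inequality for the total variation metric: $\rho(\mu_\pi, \nu-\xi) \leq ||\mu_\pi - (\nu-\xi)||_{TV} \leq ||\mu_\pi||_{TV} + ||\nu-\xi||_{TV}$. Since $\mu_\pi$ is a sum of $|\pi| = ||q-p||_1$ Dirac masses, it is a nonnegative measure with total mass $||q-p||_1$, so $||\mu_\pi||_{TV} = ||q-p||_1$. Combining, $-\frac{1}{\epsilon}\rho(\mu_\pi,\nu-\xi) \geq -\frac{1}{\epsilon}(||q-p||_1 + ||\nu-\xi||_{TV})$, which is exactly the stated lower bound.

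I do not anticipate any real obstacle here: the lemma is a matter of assembling the elementary monotonicity $\rho \geq 0$, the crude combinatorial bound $|\mathcal{P}(p,q)| \leq D^{||q-p||_1}$, and Lemmas \ref{lemma3} together with the triangle inequality for $d_{TV}$. The only mild care needed is to note that $\mathcal{P}(p,q)$ is nonempty (so the lower bound is not vacuous) and that the condition $q-p \neq \vec 0$ guarantees $|\pi| \geq 1$ so the various quantities are genuinely as claimed; both follow immediately from the hypothesis $q-p \in \bZ^D_{\geq 0}\setminus\{\vec 0\}$.
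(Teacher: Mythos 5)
Your proposal is correct and follows essentially the same route as the paper: bound each summand by $1$ and the number of paths by $D^{||q-p||_1}$ for the upper bound, and for the lower bound use Lemma \ref{lemma3} together with the triangle inequality for total variation and $||\mu_\pi||_{TV}=||q-p||_1$ to bound $\rho(\mu_\pi,\nu-\xi)$. No gaps.
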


\begin{proof}
\noindent Recall that any path $\pi:  p  \rightarrow  q $ has $|| q-p ||_1 $ edges, so $||\mu_{\pi}||_{TV} = ||q-p||_1$, and that the total number of paths $\pi: p \rightarrow q$ is 
\[\binom{||q-p||_1}{q_1-p_1, \cdots, q_D - p_D} \in [1, D^{ ||q-p||_1}]\]
Also, for any such $\pi$ we have  by Lemma \ref{lemma3}
\[\rho(\mu_{\pi}, \nu-\xi) \in [0, ||\mu_{\pi} - (\nu-\xi)||_{TV}] \subseteq [0,||q-p||_1 + ||\nu-\xi||_{TV}]\]
\[\Rightarrow e^{-\frac1{\epsilon} \rho(\mu_{\pi}, \nu-\xi) } \in [e^{-\frac{1}{\epsilon} (||q-p||_1 + ||\nu-\xi||_{TV})}, 1]\]
Thus
\[ d^{\epsilon}((p, \xi), (q, \nu)) = \log \sum_{\pi \in \mathcal{P}(p,q)}  e^{-\frac1{\epsilon} \rho(\mu_{\pi}, \nu-\xi) }  \in  \bigg[ - \frac{1}{\epsilon} (||q-p||_1 + ||\nu-\xi||_{TV}), ||q-p||_1 \log D \bigg]\]
\end{proof}

\begin{lemma}\label{lemma11}
Theorem \ref{thm9} holds with $q \in \bZ^D_{\geq 0} \setminus \{\vec{0}\}$.
\end{lemma}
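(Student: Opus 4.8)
The plan is to verify the four hypotheses of Kingman's Subadditive Ergodic Theorem (Theorem \ref{thmKingman}) for a suitable doubly-indexed array built out of the $d^\epsilon$'s, and then invoke the ergodic conclusion. Since $q$ has integer coordinates, $\lfloor nq \rfloor = nq$ and there are no floor-related error terms to worry about; this is exactly why we treat this case first. First I would set, for $0 \le m < n$,
\[
Y_{m,n} := d^{\epsilon}\big((mq, m\nu),(nq,n\nu)\big) = \log \sum_{\pi \in \mathcal{P}(mq, nq)} e^{-\frac{1}{\epsilon}\rho(\mu_\pi, (n-m)\nu)},
\]
noting that $d^\epsilon$ depends only on the edge labels $U_e$ strictly between levels $m$ and $n$ (in the sense of the partial order on $\bZ^D$), so $Y_{m,n}$ is a measurable function of an i.i.d. family indexed by a "slab" of edges. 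I would then check: (i) integrability and the lower linear bound $EY_{0,n} \ge Cn$, which is immediate from Lemma \ref{lemma10}, since $Y_{0,n} \in [-\frac{1}{\epsilon}(n\|q\|_1 + n\|\nu\|_{TV}), n\|q\|_1 \log D]$, a deterministic bounded interval of width linear in $n$; (ii) and (iii), stationarity of $\{Y_{nk,(n+1)k}\}_n$ and translation-invariance of the joint law of $\{Y_{m,m+k}\}_k$, both of which follow because translating by $q$ (an integer vector) induces a measure-preserving shift on the i.i.d. label family $(U_e)_{e\in E(\bZ^D)}$; (iv) the subadditivity $Y_{0,m+n} \le Y_{0,m} + Y_{m,m+n}$.

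The subadditivity step (iv) is the one deserving real care, and I expect it to be the main obstacle. The point is that a path $\pi: \vec 0 \to (m+n)q$ decomposes uniquely as the concatenation of $\pi_1:\vec 0 \to mq$ and $\pi_2: mq \to (m+n)q$, with $\mu_\pi = \mu_{\pi_1} + \mu_{\pi_2}$, and conversely every such pair concatenates to a valid path. Using the subadditivity of the Levy-Prokhorov metric under sums (Lemma \ref{lemma4}) with the splitting $(m+n)\nu = m\nu + n\nu$,
\[
\rho(\mu_\pi, (m+n)\nu) = \rho(\mu_{\pi_1}+\mu_{\pi_2}, m\nu + n\nu) \le \rho(\mu_{\pi_1}, m\nu) + \rho(\mu_{\pi_2}, n\nu),
\]
so $e^{-\frac1\epsilon \rho(\mu_\pi,(m+n)\nu)} \le e^{-\frac1\epsilon \rho(\mu_{\pi_1}, m\nu)}\, e^{-\frac1\epsilon \rho(\mu_{\pi_2}, n\nu)}$. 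Summing over all $\pi$ and factoring the product of sums over the two independent slabs of edges gives
\[
\sum_{\pi:\vec0\to(m+n)q} e^{-\frac1\epsilon \rho(\mu_\pi,(m+n)\nu)} \le \Bigg(\sum_{\pi_1:\vec0\to mq} e^{-\frac1\epsilon \rho(\mu_{\pi_1}, m\nu)}\Bigg)\Bigg(\sum_{\pi_2: mq\to(m+n)q} e^{-\frac1\epsilon \rho(\mu_{\pi_2}, n\nu)}\Bigg),
\]
and taking logarithms yields $Y_{0,m+n} \le Y_{0,m} + Y_{m,m+n}$ exactly. (Here it is essential that we normalize the unnormalized empirical measure by $\frac1n$ rather than by $\frac{1}{|\pi|}$, as remarked earlier in the paper — this is what makes the target split additively as $m\nu + n\nu$.)

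Finally, for the ergodicity needed in conclusion (c) of Theorem \ref{thmKingman}: for each fixed $k$, the sequence $\{Y_{nk,(n+1)k}\}_{n\ge 0}$ is a function of disjoint blocks of the i.i.d. family $(U_e)$, hence is itself an i.i.d. sequence, and as noted in Section \ref{section2.5} the associated shift is ergodic by Kolmogorov's 0-1 law. Therefore Theorem \ref{thmKingman} gives that $\frac{Y_{0,n}}{n} = \frac{X_n^{\epsilon,q,\nu}}{n}$ converges a.s. (and in $L^1$) to the deterministic constant $\inf_m \frac{EY_{0,m}}{m} = \lim_n \frac{EY_{0,n}}{n}$. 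It remains to observe that this infimum equals the supremum $\sup_n \frac{EX_n^{\epsilon,q,\nu}}{n}$ claimed in the statement: this is because $X_n = Y_{0,n}$ is \emph{superadditive in expectation} along multiples — wait, more precisely, subadditivity of $Y$ gives $EX_n/n \to \inf_m EX_m/m$, so the claim $X^{\epsilon,q,\nu} = \sup_n EX_n^{\epsilon,q,\nu}/n$ in the theorem statement would need $EX_n/n$ to be \emph{non-decreasing}; I would reconcile this by noting the theorem is stated for the superadditive array $-Y$, i.e. applying the superadditive version from the Remark after Theorem \ref{thmKingman}, so that $\sup_n$ and $\lim_n$ coincide and agree with the a.s. limit. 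This completes the proof in the integer-direction case.
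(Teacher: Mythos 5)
Your overall architecture is the same as the paper's (Kingman via concatenation of paths plus Lemma \ref{lemma4}, with ergodicity from the i.i.d.\ labels), but the central step (iv) is stated with the inequalities reversed, and as written it fails. First, the claim that ``a path $\pi:\vec 0\to(m+n)q$ decomposes uniquely as the concatenation of $\pi_1:\vec 0\to mq$ and $\pi_2:mq\to(m+n)q$'' is false: only the paths that happen to pass through $mq$ decompose this way, and most do not. What is true is the reverse containment --- every pair $(\pi_1,\pi_2)$ concatenates to \emph{some} path $\vec0\to(m+n)q$, injectively. Second, from $\rho(\mu_{\pi_1\cdot\pi_2},(m+n)\nu)\le\rho(\mu_{\pi_1},m\nu)+\rho(\mu_{\pi_2},n\nu)$ one gets, after multiplying by $-\tfrac1\epsilon$ and exponentiating,
\[
e^{-\frac1\epsilon\rho(\mu_{\pi_1\cdot\pi_2},(m+n)\nu)}\;\ge\;e^{-\frac1\epsilon\rho(\mu_{\pi_1},m\nu)}\,e^{-\frac1\epsilon\rho(\mu_{\pi_2},n\nu)},
\]
not $\le$ as you wrote. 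Combining the correct versions of these two observations gives
\[
\Bigl(\sum_{\pi_1}e^{-\frac1\epsilon\rho(\mu_{\pi_1},m\nu)}\Bigr)\Bigl(\sum_{\pi_2}e^{-\frac1\epsilon\rho(\mu_{\pi_2},n\nu)}\Bigr)\;\le\;\sum_{\pi:\vec0\to(m+n)q}e^{-\frac1\epsilon\rho(\mu_{\pi},(m+n)\nu)},
\]
i.e.\ $d^\epsilon$ is \emph{superadditive} along concatenation, the exact opposite of your displayed inequality.

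Your closing paragraph shows you sensed the mismatch (subadditivity of $Y=d^\epsilon$ would force the limit to be $\inf_m EX_m/m$, not the $\sup$ asserted in Theorem \ref{thm9}), but the attempted reconciliation does not work: if $d^\epsilon$ really were subadditive, applying the superadditive version of Kingman to $-d^\epsilon$ would still produce $\inf_n EX_n/n$ for $X_n=d^\epsilon$, contradicting the statement. The fix is simply to run the argument the way the paper does: set $Y_{m,n}:=-d^\epsilon((mq,m\nu),(nq,n\nu))$, verify $Y_{0,m+n}\le Y_{0,m}+Y_{m,m+n}$ from the superadditivity above, apply Theorem \ref{thmKingman} to $Y$, and conclude $X_n/n=-Y_{0,n}/n\to\sup_n EX_n/n$. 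The remaining ingredients in your write-up (boundedness from Lemma \ref{lemma10}, stationarity and ergodicity from the i.i.d.\ slabs of labels) are fine.
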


\begin{proof}
We wish to use Kingman's Subadditive Ergodic Theorem (Theorem \ref{thmKingman}) with 
\[Y_{m,n} := -d^{\epsilon}((mq, m\nu), (nq, n\nu)) \ \forall m \leq n\]
Let us now check the conditions (i)-(iv).

By Lemma \ref{lemma10}, 
\[Y_{0,n} =-d^{\epsilon}((\vec{0}, 0), (nq, n\nu))  \in  \bigg[-n||q||_1 \log D,  \frac{1}{\epsilon} (n||q||_1 + n||\nu||_{TV}) \bigg]\]
hence (i) holds.

Next, for every $k \geq 1$, 
the sequence 
\[Y_{nk, (n+1)k} = -d^{\epsilon}((nkq,  nk\nu), ((n+1)kq, (n+1)k\nu))\]
is i.i.d. because  the distribution of the unnormalized empirical measures of paths $\pi: nkq \rightarrow (n+1)kq$ is not dependent on $n$ (since the edge labels $U_e$ are i.i.d.) so the distribution of the cost functions $e^{-\frac1{\epsilon} \rho(\mu_{\pi}, k\nu)}$ for $\pi: nkq \rightarrow (n+1)kq$ is not dependent on $n$. Thus (ii) holds. Furthermore, as discussed in section \ref{section2.5}, $Y_{nk, (n+1)k}$ being i.i.d. implies the sequence is ergodic.

Similarly, the joint distributions of 
$$\{Y_{m, m+k}: k \geq 1\} = \{-d^{\epsilon}((mq,  m\nu), ((m+k)q, (m+k)\nu)): k \geq 1\}$$
are not dependent on $m$ since the edge labels are i.i.d. and the distribution of empirical measures of paths in a rectangle on the lattice with the difference between the top right and bottom left corners being $kq$ is independent of the location of the rectangle. Thus we have (iii).

It remains to show (iv), namely to show that given $m, n > 0,$
\[d^{\epsilon}((\vec{0}, 0), ((m+n)q, (m+n)\nu)) \geq d^{\epsilon}((\vec{0}, 0), (mq, m\nu)) + d^{\epsilon}((mq, m\nu), ((m+n)q, (m+n)\nu)) \]
 For any paths $\pi: \vec{0} \rightarrow mq$ and $\pi': mq \rightarrow (m+n)q$, we get a unique concatenation $\pi\cdot \pi': \vec{0} \rightarrow mq \rightarrow (m+n)q$. Its  empirical measure satisfies $\mu_{\pi \cdot \pi'} = \mu_{\pi} + \mu_{\pi'}$. But  the Levy-Prokhorov metric satisfies  subadditivity by Lemma \ref{lemma4}. Thus
 \[\rho(\mu_{\pi \cdot \pi'}, (m+n) \nu) \leq \rho(\mu_{\pi}, m\nu) + \rho(\mu_{\pi'}, n\nu)\]
But then
\begin{align*}
-Y_{0,m} - Y_{m,m+n}  & = \log \sum_{\pi : \vec{0} \rightarrow mq} e^{-\frac1{\epsilon} \rho(\mu_{\pi}, m\nu) }  + \log \sum_{\pi' : mq \rightarrow (m+n)q} e^{-\frac1{\epsilon} \rho(\mu_{\pi'}, n\nu) }\\
 & = \log \sum_{\substack{\pi : \vec{0} \rightarrow mq\\ \pi' : mq \rightarrow (m+n)q}}  e^{-\frac{1}{\epsilon} \rho( \mu_{\pi}, m\nu) -\frac{1}{\epsilon} \rho( \mu_{\pi'}, n\nu)} \\
  & \leq \log \sum_{\substack{\pi : \vec{0} \rightarrow mq\\ \pi' : mp \rightarrow (m+n)q}} e^{-\frac1{\epsilon} \rho( \mu_{\pi \cdot \pi'}, (m+n)\nu)} 
\end{align*}
Not all paths $\pi''': \vec{0} \rightarrow (m+n)q$ pass through $mq$ so we can upper bound the expression above by removing this condition:
\begin{align*}
-Y_{0,m} - Y_{m,m+n}  & \leq \log \sum_{\pi''': \vec{0} \rightarrow (m+n)q} e^{-\frac1{\epsilon} \rho( \mu_{\pi'''}, (m+n)\nu)} \\
 & = -Y_{0, m+n}
\end{align*}

 Thus we can apply the Subadditive Ergodic Theorem (Theorem \ref{thmKingman}) to get that 
 \[\frac{-Y_{0,n}}n = \frac{X_n^{\epsilon, q, \nu}}n\]
 converges a.s. to the  constant 
\[X^{\epsilon, q, \nu} := \sup \limits_n \frac{EX_n^{\epsilon, q, \nu}}n = \lim \limits_{n \rightarrow \infty} \frac{EX_n^{\epsilon,q, \nu}}n \in \bigg[ - \frac1{\epsilon} \max(||q||_1, ||\nu||), ||q||_1 \log D \bigg]\]
\end{proof}

The next order of business is proving the theorem for $q$ with rational coordinates. We will find useful the following error estimate on how $X_n^{\epsilon,q,\nu}$ changes when $q$ is perturbed in the SE direction and $\nu$ is perturbed arbitrarily.

\begin{lemma}\label{lemma12}
Fix $q \in \bR^D_{\geq 0}, \epsilon > 0$ and $\nu, \xi \in \mathcal{M}_+$. Then for any $p \in \bR^D_{\geq 0}$ with $q-p \in \bR^D_{\geq 0}$, 
\begin{align*} 
&X_n^{ \epsilon, p, \xi} - \frac1{\epsilon} (n||q-p||_1 + n||\nu-\xi||_{TV} +D) \\
&\leq X_n^{ \epsilon, p, \xi} - \frac1{\epsilon} (||\lfloor nq \rfloor - \lfloor np \rfloor||_1 + n\rho(\nu,\xi)) \\
& \leq X_n^{\epsilon,  q, \nu} 
\end{align*}
\end{lemma}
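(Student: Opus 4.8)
The plan is to compare the path-sums defining $X_n^{\epsilon, q, \nu}$ and $X_n^{\epsilon, p, \xi}$ by concatenating each path $\vec{0} \rightarrow \lfloor np \rfloor$ with a \emph{single fixed} path going from $\lfloor np \rfloor$ to $\lfloor nq \rfloor$, and then controlling the error introduced by this concatenation both in the number of edges and in the Levy-Prokhorov distance to the (rescaled) target. Fix once and for all a reference path $\sigma_n : \lfloor np \rfloor \rightarrow \lfloor nq \rfloor$; since $q - p \in \bR^D_{\geq 0}$ we have $\lfloor nq \rfloor - \lfloor np \rfloor \in \bZ^D_{\geq 0}$, so such a path exists and has $||\lfloor nq \rfloor - \lfloor np \rfloor||_1$ edges. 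For every path $\pi : \vec{0} \rightarrow \lfloor np \rfloor$ the concatenation $\pi \cdot \sigma_n : \vec{0} \rightarrow \lfloor nq \rfloor$ is a valid NE path with $\mu_{\pi \cdot \sigma_n} = \mu_{\pi} + \mu_{\sigma_n}$, and the map $\pi \mapsto \pi \cdot \sigma_n$ is injective, so
\[ X_n^{\epsilon, q, \nu} = \log \sum_{\pi' : \vec{0} \rightarrow \lfloor nq \rfloor} e^{-\frac1\epsilon \rho(\mu_{\pi'}, n\nu)} \geq \log \sum_{\pi : \vec{0} \rightarrow \lfloor np \rfloor} e^{-\frac1\epsilon \rho(\mu_{\pi} + \mu_{\sigma_n}, n\nu)}. \]

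Next I would bound $\rho(\mu_{\pi} + \mu_{\sigma_n}, n\nu)$ from above in terms of $\rho(\mu_{\pi}, n\xi)$ plus an additive error independent of $\pi$. By Lemma \ref{lemma4} (subadditivity of $\rho$),
\[ \rho(\mu_{\pi} + \mu_{\sigma_n}, n\nu) \leq \rho(\mu_{\pi} + \mu_{\sigma_n}, n\xi + \mu_{\sigma_n}) + \rho(n\xi + \mu_{\sigma_n}, n\nu) \leq \rho(\mu_{\pi}, n\xi) + \rho(\mu_{\sigma_n}, n\nu - n\xi), \]
using the remark after Lemma \ref{lemma4} for the first term and subadditivity again (or Lemma \ref{lemma3}) for the second. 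Now $\rho(\mu_{\sigma_n}, n\nu - n\xi) \leq \rho(\mu_{\sigma_n}, 0) + \rho(0, n\nu - n\xi)$; here $\rho(\mu_{\sigma_n}, 0) = ||\mu_{\sigma_n}||_{TV} = ||\lfloor nq \rfloor - \lfloor np \rfloor||_1$ and $\rho(0, n\nu - n\xi) = n\rho(\nu, \xi)$ by homogeneity of the Levy-Prokhorov ball radii under scaling of both measures (or directly from the definition, since scaling $\mu, \nu$ and the test sets by $n$ scales the optimal $\epsilon$ by $n$). Plugging this in and pulling the constant $-\frac1\epsilon(||\lfloor nq \rfloor - \lfloor np \rfloor||_1 + n\rho(\nu,\xi))$ out of the sum gives exactly the middle inequality:
\[ X_n^{\epsilon, q, \nu} \geq -\tfrac1\epsilon\bigl(||\lfloor nq \rfloor - \lfloor np \rfloor||_1 + n\rho(\nu,\xi)\bigr) + \log \sum_{\pi : \vec{0} \rightarrow \lfloor np \rfloor} e^{-\frac1\epsilon \rho(\mu_{\pi}, n\xi)} = X_n^{\epsilon, p, \xi} - \tfrac1\epsilon\bigl(||\lfloor nq \rfloor - \lfloor np \rfloor||_1 + n\rho(\nu,\xi)\bigr). \]

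Finally, the first (leftmost) inequality is a routine estimate replacing floors and the Levy-Prokhorov distance by cruder $1$-norm/total-variation quantities: $||\lfloor nq \rfloor - \lfloor np \rfloor||_1 \leq ||nq - np||_1 + D = n||q-p||_1 + D$ since each coordinate's floor moves by at most $1$ and there are $D$ coordinates, and $\rho(\nu, \xi) \leq ||\nu - \xi||_{TV}$ by Lemma \ref{lemma3}; combining these and dividing appropriately yields the claimed lower bound. I expect the only subtle point to be the identity $\rho(n\mu, n\nu) = n\,\rho(\mu,\nu)$ (equivalently $\rho(0, n\nu - n\xi) = n\rho(\nu,\xi)$), which must be checked directly from the definition of the metric by the substitution $A \mapsto A$, $\epsilon \mapsto n\epsilon$ — everything else is bookkeeping with the already-established Lemmas \ref{lemma3} and \ref{lemma4}.
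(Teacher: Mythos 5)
Your overall strategy — fix a reference path $\sigma_n : \lfloor np \rfloor \rightarrow \lfloor nq \rfloor$, concatenate, and bound the cost of the concatenated path — is exactly the paper's approach, and the conclusion you aim for is correct. However, there is a genuine gap in how you bound $\rho(n\xi + \mu_{\sigma_n}, n\nu)$. You route the estimate through $\rho(\mu_{\sigma_n}, n\nu - n\xi)$, but $n\nu - n\xi$ need not be a nonnegative measure (the lemma does not assume $\nu \geq \xi$), so this expression is not well-defined in $\mathcal{M}_+$ where $\rho$ lives. More seriously, the ``only subtle point'' you flag — the identity $\rho(n\mu, n\nu) = n\,\rho(\mu,\nu)$ — is \emph{false}, and the substitution $\epsilon \mapsto n\epsilon$ you propose for verifying it does not go through: in the definition of $\rho$, the radius of $A^{\epsilon}$ is spatial and does not rescale when you multiply the measures by $n$. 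A concrete counterexample: with $\mu = \delta_0$, $\nu = \delta_1$ on $[0,1]$, one has $\rho(\delta_0,\delta_1) = 1$ but also $\rho(2\delta_0, 2\delta_1) = 1 \neq 2$. What is true, and what the proof actually needs, is only the one-sided inequality $\rho(n\xi, n\nu) \leq n\,\rho(\xi,\nu)$ for integer $n$, which follows from Lemma \ref{lemma4} (subadditivity) applied to $\xi + \cdots + \xi$ versus $\nu + \cdots + \nu$. Note also that even granting $\nu \geq \xi$, your chain $\rho(\mu_{\sigma_n}, n\nu - n\xi) \leq \rho(\mu_{\sigma_n}, 0) + \rho(0, n\nu-n\xi)$ produces $n\|\nu-\xi\|_{TV}$ in the exponent, not $n\,\rho(\nu,\xi)$, so it proves only the weaker left-hand inequality and not the sharper middle one.

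The paper avoids these pitfalls with a slightly different decomposition of the Levy-Prokhorov distance: it applies the ordinary triangle inequality twice to write
\[
\rho(\mu_{\pi}+\mu_{\sigma_n}, n\nu) \;\leq\; \rho(\mu_{\pi}+\mu_{\sigma_n},\, \mu_{\pi}) + \rho(\mu_{\pi},\, n\xi) + \rho(n\xi,\, n\nu),
\]
and then bounds each term separately: the first by $\|\mu_{\sigma_n}\|_{TV}$ via Lemma \ref{lemma3}, and the last by $n\rho(\nu,\xi)$ via Lemma \ref{lemma4} subadditivity. This sidesteps the signed measure $n\nu - n\xi$ altogether and only ever uses inequalities already established. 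Your proof can be repaired by making the same substitution in place of the offending two steps (i.e., bound $\rho(n\xi+\mu_{\sigma_n}, n\nu) = \rho(n\xi+\mu_{\sigma_n}, n\nu + 0) \leq \rho(n\xi, n\nu) + \rho(\mu_{\sigma_n}, 0)$ and then use Lemma \ref{lemma4} again on $\rho(n\xi, n\nu)$), but as written it does not prove the middle inequality of the lemma.
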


\begin{proof}
Fix any such $p$. The inequality holds trivially if $p = q$ so we may assume $||q-p||_1 > 0$. Then there is at least one path $\pi': \lfloor np \rfloor \rightarrow \lfloor nq \rfloor$, and we fix it.

For any path $\pi: \vec{0} \rightarrow \lfloor np \rfloor$, we concatenate it with  $\pi'$ to get a unique path $\pi \cdot \pi': \vec{0} \rightarrow \lfloor nq\rfloor$. Note that $\pi'$ consists of $||\lfloor nq \rfloor - \lfloor np \rfloor ||_1 \leq n||q-p||_1+D$ edges so its 
empirical measure $\mu_{\pi'}$ has Total Variation at most $n||q-p||_1+D$. Thus $\pi \cdot \pi'$ satisfies
\begin{align*}
\rho(\mu_{\pi \cdot \pi'}, n\nu) &\leq  \rho(\mu_{\pi}+\mu_{\pi'}, \mu_{\pi}) + \rho( \mu_{\pi}, n\xi) + \rho(n\xi, n\nu)\\
&\leq   ||\mu_{\pi'}||_{TV} +\rho(\mu_{\pi}, n\xi) + n\rho(\nu,\xi)  \\
&\leq \rho(\mu_{\pi}, n\xi)+  (||\lfloor nq \rfloor - \lfloor np \rfloor||_1+ n \rho(\nu,\xi))
\end{align*}
It follows that
\begin{align*} 
&X_n^{ \epsilon, p, \xi} - \frac1{\epsilon} (n||q-p||_1 + n||\nu-\xi||_{TV} +D) \\
&\leq X_n^{ \epsilon, p, \xi} - \frac1{\epsilon} (||\lfloor nq \rfloor - \lfloor np \rfloor||_1 + n\rho(\nu,\xi)) \\
& \leq X_n^{\epsilon,  q, \nu} 
\end{align*}
\end{proof}

Using this lemma to approximate $X_n^{\epsilon, q, \nu}$ for $q \in \bQ^D_{\geq 0}$ in terms of $X_n^{\epsilon, p, \nu}$ where $p \in \bZ^D_{\geq 0}$, we prove our limit theorem for $q$ with rational coordinates.

\begin{lemma}\label{lemma13}
Theorem \ref{thm9} holds for $q \in \bQ^D_{\geq 0} \setminus \{\vec{0}\}$.
\end{lemma}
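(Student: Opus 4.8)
The plan is to bootstrap from the integer case (Lemma \ref{lemma11}) to the rational case using the error estimate of Lemma \ref{lemma12} together with a density/monotonicity argument in the scaling parameter. Write $q \in \bQ^D_{\geq 0} \setminus \{\vec 0\}$ and pick $N \in \bN$ with $Nq \in \bZ^D_{\geq 0}$. Then along the subsequence $n = Nk$, $k \to \infty$, we have $\lfloor nq \rfloor = nq = k(Nq)$ with $Nq$ an integer vector, so $X_{Nk}^{\epsilon,q,\nu} = X_k^{\epsilon, Nq, N\nu}$ and Lemma \ref{lemma11} already gives $\frac1{Nk} X_{Nk}^{\epsilon,q,\nu} \to \frac1N X^{\epsilon,Nq,N\nu}$ a.s. The content of the lemma is therefore to upgrade this subsequential limit to a genuine limit over all $n \to \infty$, and to identify the limit as $\sup_n \frac{EX_n^{\epsilon,q,\nu}}{n}$.

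First I would establish a near-monotonicity along the subsequence by comparing $X_n^{\epsilon,q,\nu}$ for general $n$ with $X_{Nk}^{\epsilon,q,\nu}$ where $Nk$ is the largest multiple of $N$ below $n$ (or the smallest above). Apply Lemma \ref{lemma12} twice: once with $p = q$ but at two different scales. Concretely, writing $n = Nk + r$ with $0 \le r < N$, one sandwiches $X_n^{\epsilon,q,\nu}$ between $X_{Nk}^{\epsilon,q,\nu}$ and $X_{N(k+1)}^{\epsilon,q,\nu}$ up to an additive error of order $O_{\epsilon,N,q,\nu}(1)$ — independent of $n$ — coming from prepending or appending the bounded-length connector path $\pi': \lfloor np\rfloor \to \lfloor nq\rfloor$ and the floor discrepancies $\|\lfloor nq\rfloor - \lfloor np\rfloor\|_1 \le n\|q-p\|_1 + D$. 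Since this error is $O(1)$ while we divide by $n$, dividing through by $n$ and letting $k \to \infty$ forces $\frac1n X_n^{\epsilon,q,\nu} \to \frac1N X^{\epsilon,Nq,N\nu}$ a.s. along the full sequence.

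Next I would pin down that the a.s. limit equals $\sup_n \frac{EX_n^{\epsilon,q,\nu}}{n}$. From the subsequence identification, $\lim_n \frac1n E X_n^{\epsilon,q,\nu} = \frac1N \sup_k \frac1k E X_k^{\epsilon,Nq,N\nu}$ by Lemma \ref{lemma11}'s conclusion (part (a) of Kingman applied there). For the supremum over all $n$: the sequence $X_n^{\epsilon,q,\nu}$ is still superadditive in the sense $X_{m+n}^{\epsilon,q,\nu} \ge X_m^{\epsilon,q,\nu} + (\text{shifted copy of } X_n)$ up to the same $O(1)$ floor-error — the obstruction is precisely that $\lfloor (m+n)q\rfloor \ne \lfloor mq\rfloor + \lfloor nq\rfloor$ in general, but the defect is bounded by $D$ in each coordinate, contributing a uniformly bounded correction to the concatenation inequality. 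One then invokes a Fekete-type lemma for "almost superadditive" sequences (or simply notes that $\frac1n E X_n^{\epsilon,q,\nu}$ converges and its limit dominates $\frac1n E X_n^{\epsilon,q,\nu} - \frac{C}{n}$ for every $n$, hence equals the sup up to the convention/boundedness already in hand). Dominated convergence, justified by the two-sided bounds $X_n^{\epsilon,q,\nu} \in [-n\|q\|_1\log D - O(1), \frac1\epsilon(n\|q\|_1 + n\|\nu\|_{TV}) + O(1)]$ from Lemmas \ref{lemma10} and \ref{lemma12}, transfers the a.s. convergence to $L^1$ and lets us equate the two constants.

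The main obstacle is the floor-discrepancy bookkeeping: Lemma \ref{lemma12} is stated for a fixed pair $(q,p)$ and a single connector path, so to run the sandwich between consecutive multiples of $N$ I need the error terms to be uniform in $n$, which they are (bounded by $\frac1\epsilon(N\|q\|_1 + \text{const} + D)$-type quantities, free of $n$), but this requires care in choosing $p = p_n$ so that $\lfloor n q \rfloor - \lfloor N k q\rfloor$ is the 1-norm of a genuine lattice increment and invoking Lemma \ref{lemma12} with $(q,p)$ replaced by the appropriately rescaled pair. Once one sees that every such correction is $O(1)$ and hence vanishes after dividing by $n$, the argument is routine; the same $O(1)$-defect superadditivity is what also yields the $\sup_n$ characterization. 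Everything else — ergodicity, integrability, the triangle/subadditivity of $\rho$ via Lemma \ref{lemma4} — is inherited verbatim from the integer case.
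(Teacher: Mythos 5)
Your proposal is correct and follows essentially the same route as the paper: reduce to the integer sublattice $n\in t\bZ$ (your $N$ is the paper's denominator $t$) via the identity $X_{tr}^{\epsilon,q,\nu}=X_{r}^{\epsilon,tq,t\nu}$ and Lemma \ref{lemma11}, then interpolate between consecutive multiples of $t$ by rewriting $X_{n}^{\epsilon,q,\nu}=X_{n+1}^{\epsilon,\frac{n}{n+1}q,\frac{n}{n+1}\nu}$ and applying Lemma \ref{lemma12}, so that the errors are $O(1)$ uniformly in $n$ and vanish after normalization. The one step to watch is the identification of the limit with $\sup_n \frac{1}{n}EX_n^{\epsilon,q,\nu}$: a Fekete lemma for $C$-almost-superadditive sequences (and your parenthetical fallback) only delivers $\sup_n \frac{1}{n}\left(EX_n^{\epsilon,q,\nu}-C\right)$, whereas the paper obtains the exact supremum by taking expectations in the chained two-sided inequalities \eqref{1}, normalizing by the block index $r$, and letting the $O(1/r)$ error terms vanish in the supremum.
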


\begin{proof}
Let $q = \frac{(s_1, \ldots, s_D)}t$ for $s_i, t \in \bZ_{\geq 0}$. First,   compare $X_{tr}^{\epsilon, q, \nu},  X_{tr+1}^{\epsilon, q, \nu},  \ldots,  X_{tr+ (t-1)}^{\epsilon, q, \nu}, X_{t(r+1)}^{\epsilon, q, \nu}$ for arbitrary $r$ using the previous lemma. The idea is that $trq$ has integer coordinates so $\frac{X_{tr}^{\epsilon, q, \nu}}{tr}, \frac{X_{t(r+1)}^{\epsilon, q, \nu}}{tr}$ have the desired limits by Lemma \ref{lemma11}. The rest of the $X_{tr+j}^{\epsilon, q, \nu}$ are bounded above/below by $X_{t(r+1)}^{\epsilon, q, \nu}/X_{tr}^{\epsilon, q, \nu}$ respectively plus some small error terms which go to 0 as $r \rightarrow \infty$.

 Consider any $1 \leq j \leq t$. Note that 
 \[X_{tr+j-1}^{\epsilon, q, \nu} = d^{\epsilon}((\vec{0}, 0), ((tr+j-1)q, (tr+j-1)\nu))  = X_{tr+j}^{\epsilon, \frac{tr+j-1}{tr+j} q, \frac{tr+j-1}{tr+j}\nu} \]
and
 \[ \bigg|\bigg|q-\frac{tr+j-1}{tr+j} q \bigg|\bigg|_1 + \bigg|\bigg|\nu-\frac{tr+j-1}{tr+j} \nu \bigg|\bigg|_{TV} = \frac{||q||_1+||\nu||_{TV}}{tr+j}\]
By Lemma \ref{lemma12},
\[X_{tr+j-1}^{\epsilon, q, \nu}= X_{tr+j}^{\epsilon, \frac{tr+j-1}{tr+j} q, \frac{tr+j-1}{tr+j}\nu} \leq  X_{tr+j}^{\epsilon, q, \nu} + \frac1{\epsilon} (||q||_1+ ||\nu||_{TV} + D)\]
Thus
\begin{equation} \label{1}
\begin{split}
\frac{X_{tr}^{\epsilon,q,\nu}}{r} &\leq  \frac{X_{tr+1}^{\epsilon,q, \nu}}{r} + \frac{||q||_1+||\nu||_{TV}+D}{\epsilon r} \\
& \leq \ldots \leq \frac{X_{t(r+1)}^{\epsilon,q, \nu}}{r} + \frac{t(||q||_1+||\nu||_{TV}+D)}{\epsilon r}
\end{split}
\end{equation}
But $tq \in \bZ_{\geq 0}^D$ so by Lemma \ref{lemma10}, our limit theorem holds for $X_{tr}^{\epsilon, q, \nu} = X_r^{\epsilon, tq, t\nu}$. That is, 
\begin{equation} \label{2}
\frac{X_{tr}^{\epsilon, q,\nu}}{r} \rightarrow  \sup_r \frac{EX_{tr}^{\epsilon, q, \nu}}{r} = \lim_{r \rightarrow \infty} \frac{EX_{tr}^{\epsilon, q, \nu}}{r}  \ \mbox{a.s.} 
\end{equation}
Taking expectations in \eqref{1}, then taking the supremum/limit  as $r \rightarrow \infty$ and using \eqref{2} we get
\begin{equation} \label{3}
\lim_{r \rightarrow \infty}  \frac{EX_{tr+j}^{\epsilon,q,\nu}}r = \sup_r \frac{EX_{tr+j}^{\epsilon,q, \nu}}r =  \lim_{r \rightarrow \infty} \frac{EX_{tr}^{\epsilon, q, \nu}}{r} = \sup_r \frac{EX_{tr}^{\epsilon, q, \nu}}{r} \ \forall 0 \leq j \leq t-1
\end{equation}
since the error terms in \eqref{1} go to 0.
Similarly, taking the limit as $r \rightarrow \infty$ in \eqref{2} and using \eqref{1}, \eqref{3} we get 
$$\lim_{r \rightarrow \infty}  \frac{X_{tr+j}^{\epsilon,q, \nu}}r = \sup_r \frac{EX_{tr}^{\epsilon, q, \nu}}{r} = \lim_{r \rightarrow \infty}  \frac{EX_{tr+j}^{\epsilon,q, \nu}}r = \sup_r \frac{EX_{tr+j}^{\epsilon,q, \nu}}r \ \mbox{a.s.} \ \forall 0 \leq j \leq t-1$$
Multiplying everything by $\frac1t$ and using the fact that any $n$ can be written as $tr+j$, we get
$$\frac{X^{\epsilon,q, \nu}_{n}}{n} \rightarrow  \lim_{n \rightarrow \infty} \frac{EX_n^{\epsilon,q, \nu}}n  = \sup_n \frac{EX_n^{\epsilon,q, \nu}}n \ \mbox{a.s.} $$
\end{proof}

 Our next objective is to prove the full version of Theorem \ref{thm9}. We will need two short lemmas giving sufficient conditions for a.s. convergence of bounded random variables to the supremum of their expectations. The first of these lemmas looks at the case where we are given a particular lower bound for the liminf of all subsequences of our sequence.

\begin{lemma}\label{lemma14}
Let $Y_n$ be uniformly bounded random variables. If a.s. 
\[\sup_n EY_{n} \leq \liminf_{i \rightarrow \infty} Y_{n_i} \ \mbox{for all subsequences $(n_i)$}\]
then $\lim \limits_{n \rightarrow \infty} Y_n = \sup \limits_n EY_n = \lim \limits_{n \rightarrow \infty} EY_n$ a.s.
\end{lemma}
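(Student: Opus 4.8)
The plan is to split the conclusion into three pieces --- a lower bound on $\liminf_n Y_n$, an upper bound on $\limsup_n Y_n$, and the statement about the means --- and to get the first and the last for free from Fatou's lemma. Write $L:=\sup_n EY_n$ and recall that $|Y_n|\le M$ a.s.\ for some constant $M$. Applying the hypothesis to the trivial subsequence $n_i=i$ gives $\liminf_n Y_n\ge L$ a.s. Since the $Y_n$ are uniformly bounded below, Fatou's lemma is available and yields
\[ L\ \le\ E\bigl[\liminf_n Y_n\bigr]\ \le\ \liminf_n EY_n\ \le\ \sup_n EY_n\ =\ L, \]
so every inequality is an equality. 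In particular $\liminf_n Y_n-L$ is a non-negative random variable of zero mean, hence $\liminf_n Y_n=L$ a.s., and $\liminf_n EY_n=L$; as $\limsup_n EY_n\le\sup_n EY_n=L$ automatically, this already gives $\lim_n EY_n=L=\sup_n EY_n$, one of the two asserted identities.

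It then remains to show $\limsup_n Y_n\le L$ a.s., equivalently $(Y_n-L)^+\to 0$ a.s. I would first record the $L^1$ version. From $\liminf_n Y_n=L$ a.s.\ we get $\limsup_n(L-Y_n)=0$ a.s., and since $(L-Y_n)^+\ge 0$ this forces $(L-Y_n)^+\to 0$ a.s.; these variables are uniformly bounded, so $E[(L-Y_n)^+]\to 0$ by bounded convergence. Using the first paragraph, $E[L-Y_n]=L-EY_n\to 0$, and combining this with the pointwise identity $(Y_n-L)^+=(Y_n-L)+(L-Y_n)^+$ gives $E[(Y_n-L)^+]\to 0$; that is, $(Y_n-L)^+\to 0$ in $L^1$, hence in probability.

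Upgrading this to almost-sure convergence is the step I expect to be the \emph{main obstacle}, and it is where the clause ``for all subsequences $(n_i)$'' has to be exploited rather than used only for the trivial subsequence. The plan is to argue by contradiction: if $\limsup_n Y_n>L$ on an event of positive probability, fix $\delta>0$ and, on that event, pass to the subsequence of those indices $n$ with $Y_n>L+\delta$ (which occurs infinitely often there), and then play this off against the hypothesis together with the mean convergence and the $L^1$ control just obtained. The delicate point is that this subsequence is event-dependent, so one must either perform the extraction measurably and feed a genuine deterministic subsequence into the hypothesis, or else run the oscillation estimate along a countable exhausting family of subsequences and close the gap with a Borel--Cantelli-type argument using $E[(Y_n-L)^+]\to 0$. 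Making that precise, rather than the Fatou bookkeeping of the first two paragraphs, is the heart of the proof.
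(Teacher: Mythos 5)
Your first two paragraphs are correct and recover everything the paper's own Fatou computation yields: $\liminf_n Y_n = L$ a.s., $EY_n \to L$, and (your addition) $E[(Y_n-L)^+]\to 0$. But the proposal stops exactly at the crux: you never establish $\limsup_n Y_n \le L$ a.s., only a plan for doing so, so as it stands the proof is incomplete. Worse, the plan cannot succeed. Note first that the ``for all subsequences'' clause carries no information beyond the trivial subsequence: every subsequential $\liminf$ dominates the full-sequence $\liminf$, so the hypothesis is equivalent to $\liminf_n Y_n \ge \sup_n EY_n$ a.s. A lower bound on the $\liminf$ together with $E[(Y_n-L)^+]\to 0$ (which comes with no rate, so Borel--Cantelli is unavailable) gives convergence in probability but cannot control $\limsup_n Y_n$: take $L=0$ and $Y_n=\mathbf{1}_{A_n}-P(A_n)$ for a ``typewriter'' sequence of intervals $A_n\subseteq[0,1]$ with $P(A_n)\to 0$ and every point lying in infinitely many $A_n$. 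Then $EY_n\equiv 0=\sup_n EY_n$ and, for every $\omega$ and every subsequence, $\liminf_i Y_{n_i}(\omega)\ge -\limsup_i P(A_{n_i})=0$, so the hypothesis holds surely; yet $\limsup_n Y_n=1$ a.s. The statement is therefore false as written, and no completion of your third paragraph exists.

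Your instinct that the event-dependent subsequence is the delicate point is thus exactly right, and it is also where the paper's own proof fails: the paper shows, for each \emph{fixed} subsequence $(n_i)$, that $\liminf_i Y_{n_i}=L$ a.s., and then asserts that a.s. this equality holds for \emph{all} subsequences simultaneously (from which $Y_n\to L$ would indeed follow, since the $\limsup$ is attained along some subsequence). That step interchanges ``almost surely'' with a quantifier over uncountably many subsequences, letting the null sets accumulate; the example above shows the interchange is genuinely invalid here. Any correct version of the lemma must import additional structure on $(Y_n)$ --- for instance an almost-sure upper bound $Y_n\le Z_n+o(1)$ by a sequence $Z_n$ already known to converge a.s. to $L$ --- rather than rely on the stated hypothesis alone, and the same caveat applies to the places downstream (Lemma \ref{lemma15}, Theorems \ref{thm9} and \ref{thm7}) where only one-sided approximations from below are verified.
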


\begin{proof}
First, taking expectations in 
\[ \sup_n EY_{n} \leq \liminf_{i \rightarrow \infty} Y_{n_i} \ \mbox{a.s.}\]
and using Fatou's Lemma, we get
\begin{align*}
&\sup_n EY_{n} \leq E\liminf_{i \rightarrow \infty} Y_{n_i} \leq \liminf_{i \rightarrow \infty} EY_{n_i} \\
&\leq \limsup_{i \rightarrow \infty} EY_{n_i} \leq \sup_i EY_{n_i} \leq \sup_n EY_n
\end{align*}
so all the inequalities are equalities and
\[    \sup_n EY_{n} = \lim \limits_{i \rightarrow \infty} EY_{n_i} = \liminf_{i \rightarrow \infty} Y_{n_i} \ \mbox{a.s.}\]
This holds for all subsequences $(n_i)$, including the full sequence. Thus a.s.
\[L:= \sup_n EY_{n} = \lim \limits_{n \rightarrow \infty} EY_{n} = \liminf_{i \rightarrow \infty} Y_{n_i} \ \mbox{ for all subsequences $(n_i)$}\]
hence $Y_n \rightarrow L$ a.s..
\end{proof}

 The next lemma is about approximating a sequence of random variables from below by sequences that each converge to the supremum of their expectations.

\begin{lemma}\label{lemma15}
Let  $Y_n$ be uniformly bounded random variables s.t. 
\begin{enumerate}[label=(\roman*)]
\item For every fixed $k$, $Y_{n,k} \leq Y_n$ for large enough $n$
\item For every fixed $k$,
\[\lim_{n \rightarrow \infty} Y_{n,k} = \sup_n EY_{n,k} = \lim_{n \rightarrow \infty} EY_{n,k} \ \mbox{a.s.} \]
\item $\sup \limits_n EY_n = \sup \limits_n \sup \limits_k EY_{n,k}$
Then $\lim \limits_{n \rightarrow \infty} Y_n = \sup \limits_n EY_n = \lim \limits_{n \rightarrow \infty} EY_n$ a.s.
\end{enumerate}
\end{lemma}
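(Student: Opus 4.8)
The plan is to reduce everything to Lemma~\ref{lemma14}. The hypothesis of that lemma asks that, almost surely, $\sup_n EY_n \le \liminf_{i\to\infty} Y_{n_i}$ for \emph{every} subsequence $(n_i)$; since the smallest subsequential limit of a bounded sequence $(Y_n)$ is $\liminf_n Y_n$, this is literally the same as the single statement $\liminf_n Y_n \ge \sup_n EY_n$ a.s. So the whole proof collapses to establishing that one inequality, after which Lemma~\ref{lemma14} (whose uniform-boundedness hypothesis is exactly the standing assumption on the $Y_n$) delivers $\lim_n Y_n = \sup_n EY_n = \lim_n EY_n$ a.s.

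To get the inequality I would first fix $k$. By hypothesis (i), on a probability-one event we have $Y_{n,k}\le Y_n$ for all large $n$, hence $\liminf_n Y_n \ge \liminf_n Y_{n,k}$ there. By hypothesis (ii), on a probability-one event the sequence $(Y_{n,k})_n$ converges, so $\liminf_n Y_{n,k} = \lim_n Y_{n,k} = \sup_n EY_{n,k}$. Intersecting these two events produces a probability-one event $\Omega_k$ on which $\liminf_n Y_n \ge \sup_n EY_{n,k}$.

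Next I would take the countable intersection $\Omega_0 := \bigcap_{k\ge 1} \Omega_k$, which still has probability one. On $\Omega_0$ the bound $\liminf_n Y_n \ge \sup_n EY_{n,k}$ holds for every $k$ simultaneously, so $\liminf_n Y_n \ge \sup_k \sup_n EY_{n,k} = \sup_n \sup_k EY_{n,k}$, and by hypothesis (iii) the right-hand side equals $\sup_n EY_n$. That is precisely $\liminf_n Y_n \ge \sup_n EY_n$ a.s., so Lemma~\ref{lemma14} applies and finishes the argument.

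The only genuinely delicate point — and the closest thing here to an obstacle — is the order of quantifiers: hypotheses (i) and (ii) are ``for each fixed $k$, a.s.'', so one must pass to the countable intersection over $k$ \emph{before} taking the supremum over $k$ inside the inequality; supremizing first would only give the bound for each $\omega$ outside a $k$-dependent null set, which is not enough. Everything else is routine: the fact that the $\liminf$ along a subsequence of an a.s.\ convergent sequence equals its limit, the trivial identity $\sup_k\sup_n = \sup_n\sup_k$, and the reformulation of ``all subsequences'' in terms of $\liminf_n$.
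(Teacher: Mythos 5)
Your proof is correct and follows essentially the same route as the paper's: both reduce to Lemma~\ref{lemma14} by fixing $k$, using (i) and (ii) to get $\liminf_n Y_n \ge \sup_n EY_{n,k}$ on a full-measure event, taking the countable intersection over $k$ (the paper's event $\mathcal{F}^C$ is your $\Omega_0$), and then supremizing over $k$ and invoking (iii). Your observation that the ``for all subsequences'' hypothesis of Lemma~\ref{lemma14} collapses to the single inequality $\liminf_n Y_n \ge \sup_n EY_n$ is a harmless streamlining of the paper's subsequence-by-subsequence phrasing.
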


\begin{proof}
We wish to prove the hypothesis of Lemma \ref{lemma14} for $Y_n$. Let 
\[\mathcal{F} = \bigg\{\lim_{n\rightarrow \infty} Y_{n,k} \neq \sup_n EY_{n,k} \ \mbox{for some $k$}\bigg\}\]
which has measure 0 by (ii). We claim that in the event $\mathcal{F}^C$ we have
\[\sup_n EY_{n} \leq \liminf_{i \rightarrow \infty} Y_{n_i} \ \mbox{for all subsequences $(n_i)$}\]
Consider any subsequence $(n_i)$ and any $k$. By (i), $Y_{n_i, k} \leq Y_{n_i} \ \mbox{for large enough $i$}$. Taking the $\liminf$ over $i$, and using the fact that we are in the event $\mathcal{F}^C$, we get
\[ \sup_n EY_{n, k} = \lim_{n \rightarrow \infty} Y_{n, k} = \lim_{i \rightarrow \infty} Y_{n_i, k} \leq \liminf_{i \rightarrow \infty} Y_{n_i} \]
This holds for all $k$. Taking the supremum over $k$ and using (iii), we get
\[ \sup_n EY_n = \sup_n \sup \limits_{k} EY_{n,k} = \sup \limits_{k} \sup_n EY_{n,k}   \leq \liminf_{i \rightarrow \infty} Y_{n_i}  \]
as desired.
 Applying Lemma \ref{lemma14}, we get
\[  \lim_{n \rightarrow \infty}  EY_n = \sup_n  EY_n = \lim_{n \rightarrow \infty} Y_n  \ \mbox{a.s.}\]
\end{proof}

We can finally prove Theorem \ref{thm9}.

\begin{proof}[Proof of Theorem \ref{thm9}]
$ $\newline
The case $q \in \bQ^D_{\geq 0} \setminus \{\vec{0}\}$ is handled by Lemma \ref{lemma13}. So we may assume $q \notin \bQ^D_{\geq 0}$.

We construct a sequence $p_k \in \bQ_{\geq 0}^D$ as follows. For every $1 \leq j \leq D$, either $q_j \in \bQ$ and we pick $(p_k)_j := q_j$, or $q_j \notin \bQ$ and we pick $(p_k)_j \in \bQ_{\geq 0}$ s.t. $(p_k)_j \uparrow q_j$. It follows that $p_k \in \bQ_{\geq 0}^D$ with $p_{k+1} - p_k, q - p_k \in \bQ^D_{\geq 0} \ \forall k$. That is, $p_1, p_2, \ldots, q$ forms a "staircase" in $\bR^D$.

\begin{center}
\begin{tikzpicture}
\node (p1) at (0pt,0pt) {};
\node (p5) at (15pt,25pt) {};
\node (p2) at (50pt,25pt) {};
\node (p2) at (50pt,50pt) {};
\node (p3) at (60pt,80pt) {};
\filldraw (0pt,0pt)circle(2pt) (50pt,25pt)circle(2pt) (15pt,20pt)circle(2pt) (50pt,50pt)circle(2pt)  (52pt,60pt)circle(1pt) (54pt,65pt)circle(1pt) (56pt,70pt)circle(1pt)   (60pt,80pt)circle(2pt);
\draw (0pt,0pt) node [ below right] {$p_1$};
\draw (15pt,20pt) node [ below right] {$p_2$};
\draw (50pt,25pt) node [ below right] {$p_3$};
\draw (50pt,50pt) node [ below right] {$p_4$};
\draw (60pt,80pt) node [ below right] {$q$};
\end{tikzpicture}
\end{center}

The intuition is that $X_n^{\epsilon, p_k, \nu}$ approximates $X_n^{\epsilon, q, \nu}$ from below. Each $\frac{X_n^{\epsilon, p_k, \nu}}n$ converges in $n$ to the right limit a.s. by Lemma \ref{lemma13}, and we use Lemma \ref{lemma15} to prove that $\frac{X_n^{\epsilon,q,\nu}}n$ converges a.s. to the right limit.

Let us now prove the hypothesis of Lemma \ref{lemma15} with 
\[Y_{n,k} := \frac{X_n^{\epsilon, p_k,\nu}}n - \frac{||\lfloor nq\rfloor - \lfloor np_k \rfloor||_1}{\epsilon}, Y_n := \frac{X_n^{\epsilon, q, \nu}}n\]
Note that these random variables are bounded by Lemma \ref{lemma10}:
\[ Y_{n,k}, Y_n \in \bigg[-\frac1{\epsilon} (||q||_1 + ||\nu||_{TV}) - \frac{||q||_1}{\epsilon} , ||q||_1 \log D \bigg] \]
 First, fix $n$ and consider any $k$. By Lemma \ref{lemma12},
\[X_n^{\epsilon, p_k, \nu} - \frac{||\lfloor nq\rfloor - \lfloor np_k \rfloor||_1}{\epsilon} \leq X_n^{\epsilon,q, \nu} = Y_n \]
so we have (i). Next, (ii) follows from Lemma \ref{lemma13}:
\begin{align*}
&\lim_{n \rightarrow \infty} Y_{n,k} = \lim_{n \rightarrow \infty} \frac{X_n^{\epsilon, p_k,\nu} - \frac{||\lfloor nq\rfloor - \lfloor np_k \rfloor||_1}{\epsilon}}n  = \lim_{n \rightarrow \infty} \frac{X_n^{\epsilon, p_k, \nu}}n - \frac{||q-p_k||_1}{\epsilon}  \\
&= \sup_n \frac{EX_n^{\epsilon, p_k, \nu}}n - \frac{||q-p_k||_1}{\epsilon} = \lim_{n \rightarrow \infty} \frac{EX_n^{\epsilon, p_k, \nu}}n - \frac{||q-p_k||_1}{\epsilon} \\
&=  \lim_{n \rightarrow \infty} EY_{n,k} = \sup_n EY_{n,k}  
\end{align*}

It remains to show (iii). Consider any fixed $n$. For each $1 \leq j \leq D$ either $q_j \in \bQ$ and $q_j = (p_k)_j \ \forall k$, or $nq_j \notin \bZ$ and $(p_k)_j \uparrow q_j$ hence $\lfloor nq_j \rfloor = \lfloor n(p_k)_j \rfloor$ for large enough $k$. Thus $X_n^{\epsilon, p_k, \nu} = X_n^{\epsilon, q, \nu}$ for large enough $k$. It follows that
\[Y_{n,k} = X_n^{\epsilon, p_k, \nu} -  \frac{||\lfloor nq\rfloor - \lfloor np_k \rfloor||_1}{\epsilon} \uparrow X_n^{\epsilon, q, \nu} = Y_n \ \mbox{as} \ k \rightarrow \infty\]
By the Bounded Convergence Theorem, $EY_{n, k} \uparrow EY_n$. Thus
\[\lim_{k \rightarrow \infty} EY_{n,k} = \sup_k EY_{n,k}= EY_n \ \forall n \]
so taking the supremum over $n$ we get (iii):
\[\sup_n \sup_k EY_{n,k} = \sup_n EY_n\]

 Applying Lemma \ref{lemma15}, we get
\[\lim_{n \rightarrow \infty} Y_n = \lim_{n \rightarrow \infty} EY_n = \sup_n EY_n \ \mbox{a.s.}\]
i.e. 
\[ \lim_{n \rightarrow \infty} \frac{X_n^{\epsilon, q, \nu}}n = \lim_{n \rightarrow \infty}  \frac{EX_n^{\epsilon, q,\nu}}n = \sup_n  \frac{EX_n^{\epsilon, q, \nu}}n  \ \mbox{a.s.}\]
\end{proof}

This completes the proof of the existence of the limit shape of $d^{\epsilon}$ when measuring the distance from $(\vec{0}, 0) \in \bR^D \times \mathcal{M}_+$.

\subsection{The Limit Shape of \texorpdfstring{$d^{\epsilon}$}{Lg}—The General Case}
We use Lemma \ref{lemma15} to generalize Theorem \ref{thm9} to Theorem \ref{thm7}, where $d^{\epsilon}$ measures distance between any two elements of $\bR^D \times \mathcal{M}_+$.

\begin{manualtheorem}{\ref{thm7}}
Fix $\epsilon > 0, \nu, \xi \in\mathcal{M}_+$ and $p, q \in \bR^D$. Then
\[\frac{d^{\epsilon}((np, n\xi), (nq, n\nu))}n\]
converges in probability to a constant. When $p = \vec{0}$, the convergence is pointwise a.s.
\end{manualtheorem}

\begin{remark}
During the course of the proof, we also show that the limit shape of \\ $\frac{d^{\epsilon}((np, n\xi), (nq, n\nu))}n$ is translation-invariant. This will help us later.
\end{remark}

\begin{proof}
As noted before, the theorem holds trivially if $q - p \notin \bR^D_{\geq 0} \setminus \{\vec{0}\}$. Thus we may assume $q - p \in \bR^D_{\geq 0} \setminus \{\vec{0}\}$. 

 Observe that
\begin{equation}\label{EQ3} 
\begin{split}
d^{\epsilon}((np, n\alpha), (nq, n\nu)) &= d^{\epsilon}((np, 0), (nq, n\nu - n\xi)) \\
&=^d d^{\epsilon}((\vec{0}, 0), (\lfloor nq \rfloor - \lfloor np \rfloor, n\nu-n\xi)) 
\end{split}
\end{equation}
so it suffices to assume $\xi = 0$ and prove 
\[\frac{X_n^{\epsilon, \frac{\lfloor nq \rfloor - \lfloor np \rfloor}n, \nu}}n = \frac{d^{\epsilon}((\vec{0}, 0), (\lfloor nq \rfloor - \lfloor np \rfloor, n\nu))}n\]
converge a.s. to a constant.

Our argument mirrors the one used in the proof of Theorem \ref{thm9}: we approximate these $Y_n$ from below by some $Y_{n,k}$ converging in $k$ and we apply Lemma \ref{lemma15}.

First we prove some inequalities. Fix $n$. Observe that for $1 \leq j \leq D$,
\[n(q_j-p_j) -1  < \lfloor n (q_j-p_j) \rfloor \leq n(q_j-p_j) \]
\[n(q_j-p_j) - 1 < \lfloor nq_j \rfloor - \lfloor np_j \rfloor < n(q_j-p_j) + 1\]
\begin{equation}\label{4}
 \Rightarrow (\lfloor nq_j \rfloor - \lfloor np_j \rfloor) - \lfloor n (q_j-p_j) \rfloor \in \{0,1\}
 \end{equation}
 and thus
\begin{equation}\label{5}
(\lfloor nq \rfloor - \lfloor np \rfloor) - \lfloor n (q-p) \rfloor \in \bR^D_{\geq 0} \ \mbox{with} \    || (\lfloor nq \rfloor - \lfloor np \rfloor) - \lfloor n (q-p) \rfloor ||_1 \leq D
\end{equation}
On the other hand, for $1 \leq j \leq D$ s.t. $p_j \neq q_j$,
\[ \lfloor nq_j \rfloor - \lfloor np_j \rfloor \leq n(q_j - p_j) + 1 \leq  (n+c) (q_j-p_j) \]
for $c:= \bigg\lceil \max \limits_{1 \leq j \leq D \ \mbox{s.t.} \ p_j \neq q_j} \bigg(\frac1{q_j-p_j} \bigg) \bigg\rceil$. Thus
\[  0 \leq \lfloor (n+c) (q_j-p_j) \rfloor - (\lfloor nq_j \rfloor - \lfloor np_j \rfloor) \leq c(q_j-p_j) \ \mbox{by} \ \eqref{4}\]
This equation also holds trivially for $j$ s.t. $p_j = q_j$. Thus
\begin{equation}\label{6}
\lfloor (n+c) (q-p) \rfloor - (\lfloor nq \rfloor - \lfloor np \rfloor)  \in \bR^D_{\geq 0} \ \mbox{with} \  
||\lfloor (n+c) (q-p) \rfloor - (\lfloor nq \rfloor - \lfloor np \rfloor)  ||_1 \leq c||q-p||_1
\end{equation}

 We prove the hypothesis of Lemma \ref{lemma15} with 
 \[Y_{n,k} := \frac{X_n^{\epsilon, q-p, \nu} - \frac{n}{\epsilon k}}n, Y_n := \frac{X_n^{\epsilon, \frac{\lfloor nq \rfloor - \lfloor np \rfloor}n, \nu}}n\]
 First, by Lemma \ref{lemma12} and \eqref{5}, for every fixed $k$, for large enough $n$ we have
\begin{align*} 
\frac{X_n^{\epsilon, q-p, \nu} - \frac{n}{\epsilon k}}n &\leq \frac{X_n^{\epsilon, q-p, \nu} - \frac{D}{\epsilon}}n \\
&\leq \frac{X_n^{\epsilon, q-p, \nu} - \frac{1}{\epsilon}|| (\lfloor nq \rfloor - \lfloor np \rfloor) - \lfloor n (q-p) \rfloor ||_1}n \\
&\leq \frac{X_n^{\epsilon, \frac{\lfloor nq \rfloor - \lfloor np \rfloor}n, \nu}}n
\end{align*}
i.e. $Y_{n,k} \leq Y_n$ which is precisely (i). Also, Theorem \ref{thm9} gives (ii): for every fixed $k$, a.s. 
\[\lim_{n\rightarrow \infty} Y_{n,k} = \lim_{n\rightarrow \infty} \frac{X_n^{\epsilon, q-p, \nu} - \frac{n}{\epsilon k}}n = \sup_n \frac{EX_n^{\epsilon, q-p, \nu}}n - \frac1{\epsilon k}= \lim_{n \rightarrow \infty} \frac{EX_n^{\epsilon, q-p, \nu}}n - \frac1{\epsilon k} \]
\[= \sup_n EY_{n,k} = \lim_{n \rightarrow \infty} EY_{n,k} \]
Note that this implies
\begin{equation}\label{7}
\sup_n \frac{EX_n^{\epsilon, q-p, \nu}}n = \sup_n \sup_k EY_{n,k} = \sup_k \sup_n EY_{n,k} =  \sup_k \lim_{n \rightarrow \infty} EY_{n,k} = \lim_{n \rightarrow \infty} \frac{EX_n^{\epsilon, q-p, \nu}}n
\end{equation}
It remains to show (iii). Note that taking expectations in (i) immediately gives
\begin{equation}\label{8}
\sup_n \sup_k EY_{n,k} = \sup_k \sup_n EY_{n,k} \leq \sup EY_n
\end{equation}
We show this is an equality. By Lemma \ref{lemma12} and \eqref{6}, for every fixed $k$, for large enough $n$ we have
\begin{align*}
 X_n^{\epsilon,\frac{\lfloor nq \rfloor - \lfloor np \rfloor}{n}, \nu} & \leq X_n^{\epsilon, \frac{n+c}n (q-p), \frac{n+c}n\nu} + \frac1{\epsilon} \bigg(||\lfloor (n+c) (q-p) \rfloor - (\lfloor nq \rfloor - \lfloor np \rfloor)  ||_1+ c||\nu ||_{TV} \bigg) \\
&\leq X_{n+c}^{\epsilon, (q-p),  \nu} + \frac{c||q-p||_1+c||\nu||_{TV}}{\epsilon}
\end{align*}
Dividing by $n$, taking expectations and taking the supremum over $n$ we get by \eqref{7}
\begin{equation}\label{eqn9}
\sup_n EY_n \leq \sup_n \frac{EX_{n+c}^{\epsilon, q-p, \nu}}n  = \lim_{n \rightarrow \infty} \frac{EX_n^{\epsilon, q-p, \nu}}n = \sup_n \sup_k EY_{n,k} 
\end{equation}
which when combined with \eqref{8} gives (iii). 

 Applying Lemma \ref{lemma15}, we get 
\[ \lim_{n \rightarrow \infty} Y_n = \lim_{n \rightarrow \infty} EY_n = \sup_n EY_n \ \mbox{a.s.}\]
i.e.
\[\lim \limits_{n \rightarrow \infty} \frac{X_n^{\epsilon, \frac{\lfloor nq \rfloor - \lfloor np \rfloor}n}}n = \sup \limits_{n } \frac{EX_n^{\epsilon, \frac{\lfloor nq \rfloor - \lfloor np \rfloor}n}} n = \lim_{n \rightarrow \infty} \frac{EX_n^{\epsilon, \frac{\lfloor nq \rfloor - \lfloor np \rfloor}n}} n \ \mbox{a.s.}\]
 Furthermore, by \eqref{7}, \eqref{8} and \eqref{eqn9}, we have
\[\sup_n EY_n = \sup_n \sup_k EY_{n,k} = \sup_n \frac{EX_n^{\epsilon, q-p, \nu}}n = \lim_{n \rightarrow \infty} \frac{EX_n^{\epsilon, q-p, \nu}}n \]
Combining this with \eqref{EQ3}, we get that $\frac{d^{\epsilon}((np, n\xi), (nq, n\nu))}n$ converges in probability to the a.s. limit of $ \frac{d^{\epsilon}((\vec{0}, 0), (nq-np, n\nu - n\xi))}n$.

This completes the proof of Theorem \ref{thm7} and of translation-invariance of the limit shape.

\end{proof}

\subsection{Grid Entropy as a Directed Norm}
In the previous sections we showed the existence of the limit shape of $d^{\epsilon}$. We now take the infimum as $\epsilon \downarrow 0$ and we show that the result is a directed metric with negative sign that gives rise to a norm, which we call grid entropy.

\begin{manualtheorem}{\ref{thm9}}
For $\epsilon > 0$, $\nu, \xi \in \mathcal{M}_+$ and $p, q \in \bR^D$ define 
$$\widetilde{d}^{\epsilon}((p, \xi), (q, \nu)) := \lim \limits_{n \rightarrow \infty} \frac{d^{\epsilon}((np,n\xi), (nq, n\nu))}n, \ \mbox{and} $$
$$\widetilde{d}((p, \xi), (q, \nu)) := \inf_{\epsilon > 0} \widetilde{d}^{\epsilon}((p, \xi), (q, \nu)) \in [-\infty, \infty)$$
Then each $\widetilde{d}^{\epsilon}$ as well as $\widetilde{d}$ are  directed metrics with negative sign on $\bR^D \times \mathcal{M}_+$. 
\end{manualtheorem}

\begin{remark}
For any $p, q \in \bR^D, \epsilon > 0$ and $\nu, \xi \in \mathcal{M}_+$, $d^{\epsilon}((np, n\xi), (nq, n\nu))$ is monotone decreasing as $\epsilon \downarrow 0$ so 
\[\widetilde{d}((p, \xi), (q, \nu)) = \inf_{\epsilon > 0} \widetilde{d}^{\epsilon}((p, \xi), (q, \nu)) = \lim_{\epsilon \downarrow 0} \widetilde{d}^{\epsilon}((p, \xi), (q, \nu))\]
By Lemma \ref{lemma10}, for every $n$ and $\epsilon > 0$, 
\[d^{\epsilon}((np, n\xi), (nq, n\nu)) \in [-\infty, ||\lfloor nq \rfloor - \lfloor np \rfloor||_1 \log D]\]
so it follows that
\[ \widetilde{d}((p, \xi), (q, \nu)) \in [-\infty, ||q-p||_1 \log D]\]
Once we prove that our two definitions of grid entropy are equivalent, this bound will be improved.
\end{remark}

\begin{remark}
As was the case with Theorem \ref{thm9}, the limit $\widetilde{d}((p, \xi), (q, \nu))$ is trivially $-\infty$ if $q-p \notin \bR^D_{\geq 0}$ or if $q=p$ and $\nu \neq \xi$, and it is trivially 0 if $q=p$ and $\nu = \xi$.
\end{remark}

\begin{proof}
As noted above,
 \[\widetilde{d}^{\epsilon}((p, \xi), (p, \xi)) = \widetilde{d}((p, \xi), (p, \xi)) = 0 \ \forall \epsilon > 0\]

It remains to prove the reverse triangle inequality. Let $p,q,r \in \bR^D$ and $\nu,\xi, \eta \in \mathcal{M}_+$ and consider any $\epsilon > 0$ and any $n$. If $q-p \notin \bR^D_{\geq 0}$ or $r-q \notin \bR^D_{\geq 0}$ then the following inequality holds trivially (because the right-hand side is $-\infty$)
\begin{equation}\label{9}
d^{ \epsilon} ((np, n\xi),(nr, n\eta)) \geq d^{\epsilon} ((np, n\xi),(nq, n\nu)) + d^{\epsilon} ((nq, n \nu),(nr, n\eta)) 
\end{equation}
Now suppose $r-q, q-p \in \bR^D_{\geq 0}$. Given paths $\pi: \lfloor np \rfloor \rightarrow \lfloor nq \rfloor$, $\pi': \lfloor nq \rfloor \rightarrow \lfloor nr \rfloor$, we concatenate them to obtain a unique path $\pi \cdot \pi': \lfloor np \rfloor \rightarrow \lfloor nr \rfloor$ with unnormalized empirical measure $\mu_{\pi \cdot \pi'} = \mu_{\pi} + \mu_{\pi'}$. By the subadditivity of the Levy-Prokhorov metric (Lemma \ref{lemma4}),
\[\rho(\mu_{\pi \cdot \pi'}, n(\eta-\xi)) = \rho(\mu_{\pi} + \mu_{\pi'}, n(\eta-\nu) + n(\nu-\xi)) \leq \rho(\mu_{\pi}, n(\eta-\nu)) + \rho(\mu_{\pi'}, n(\nu-\xi))\]
so 
\begin{align*}
&\bigg(\sum_{\pi: \lfloor np \rfloor \rightarrow \lfloor nq \rfloor} e^{-\frac1{\epsilon} \rho(\mu_{\pi}, n(\eta - \nu) ) } \bigg) \bigg( \sum_{\pi': \lfloor nq \rfloor \rightarrow \lfloor nr \rfloor} e^{-\frac1{\epsilon} \rho(\mu_{\pi'}, n(\nu - \xi) ) }  \bigg) \\
&\leq \bigg(\sum_{\pi''': \lfloor np \rfloor \rightarrow \lfloor nr \rfloor} e^{-\frac1{\epsilon} \rho(\mu_{\pi'''}, n(\eta-\xi)) } \bigg)
\end{align*}
It follows that \eqref{9} holds. Dividing \eqref{9} by $n$ and taking the limit (in probability) as $n \rightarrow \infty$ we get
\[\widetilde{d}^{\epsilon} ((p, \xi),(r, \eta)) \geq \widetilde{d}^{\epsilon} ((p, \xi),(q, \nu)) + \widetilde{d}^{\epsilon} ((q, \nu),(r, \eta))\]
so $\widetilde{d}^{\epsilon}$ is a directed metric with negative sign. Taking the limit as $\epsilon \rightarrow 0^+$, we still obtain a directed metric with negative sign.
\end{proof}

We proceed to show that $\widetilde{d}$ gives rise to a directed norm with negative sign.

\begin{theorem}\label{thm16}
\begin{enumerate}[label=(\roman*)]
\item[]
\item Each $\widetilde{d}^{\epsilon}$ is translation-invariant and positive-homogeneous. So is $\widetilde{d}$. 
\item For $q \in \bR^D, \nu \in \mathcal{M}_+$ define the grid entropy with respect to $(q, \nu)$ to be
\[|| (q, \nu) || := \widetilde{d}((\vec{0}, 0), (q, \nu)) \]
Then this is a directed norm with negative sign on $\bR^D \times \mathcal{M}_+$. 
\end{enumerate}
\end{theorem}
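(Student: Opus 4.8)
The plan is to reduce both assertions to the single statement that, for each fixed $\epsilon>0$, the limit shape $\widetilde d^{\epsilon}$ is translation-invariant and positive-homogeneous. The corresponding statements for $\widetilde d=\inf_{\epsilon>0}\widetilde d^{\epsilon}$ are then automatic: an infimum of translation-invariant functions is translation-invariant, and for fixed $c>0$ multiplication by $c$ is an order isomorphism of $[-\infty,\infty)$, hence commutes with $\inf_{\epsilon}$, so $\widetilde d(cx,cy)=\inf_{\epsilon}\widetilde d^{\epsilon}(cx,cy)=\inf_{\epsilon}c\,\widetilde d^{\epsilon}(x,y)=c\,\widetilde d(x,y)$. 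Part (ii) then follows immediately from part (i) and Theorem \ref{thm8}: the latter already gives that $\widetilde d$ is a directed metric with negative sign on $\bR^D\times\mathcal{M}_+$, and translation-invariance together with positive-homogeneity are exactly the hypotheses under which the definition of an induced directed norm applies, so $||(q,\nu)||:=\widetilde d((\vec{0},0),(q,\nu))$ is a directed norm with negative sign. Thus the entire content is part (i) for a fixed $\epsilon$.

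Translation-invariance of $\widetilde d^{\epsilon}$ is essentially already in hand: equation \eqref{EQ3} in the proof of Theorem \ref{thm7}, together with the conclusion of that proof, shows that the limit shape of $\tfrac1n d^{\epsilon}((np,n\xi),(nq,n\nu))$ depends only on $q-p$ and $\nu-\xi$, i.e. $\widetilde d^{\epsilon}((p,\xi),(q,\nu))=\widetilde d^{\epsilon}((\vec{0},0),(q-p,\nu-\xi))$, from which invariance under adding a common $(a,\zeta)$ to both arguments is immediate. By this reduction, positive-homogeneity amounts to the identity
\[\widetilde d^{\epsilon}\big((\vec{0},0),(cq,c\nu)\big)=c\,\widetilde d^{\epsilon}\big((\vec{0},0),(q,\nu)\big),\qquad c>0,\ q\in\bR^D_{\geq 0}\setminus\{\vec{0}\},\ \nu\in\mathcal{M}_+,\]
together with its difference-of-measures variant (where $\nu$ is replaced by $\nu-\xi$), which goes through identically; the cases $q\notin\bR^D_{\geq 0}$ and $q=\vec{0}$ are trivial, both sides being $-\infty$, $0$, or $-\infty$ accordingly.

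For the homogeneity identity I would compare $X_n^{\epsilon,cq,c\nu}=d^{\epsilon}((\vec{0},0),(\lfloor ncq\rfloor,nc\nu))$ with $X_{\lfloor nc\rfloor}^{\epsilon,q,\nu}$ and $X_{\lceil nc\rceil}^{\epsilon,q,\nu}$ using the error bound of Lemma \ref{lemma12} applied ``at scale $1$''. Since $\lfloor nc\rfloor q\leq ncq\leq\lceil nc\rceil q$ coordinatewise, the integer endpoints $\lfloor\lfloor nc\rfloor q\rfloor\leq\lfloor ncq\rfloor\leq\lfloor\lceil nc\rceil q\rfloor$ are ordered coordinatewise with consecutive $1$-norm gaps at most $||q||_1+D$, while $||\lceil nc\rceil\nu-nc\nu||_{TV}$ and $||nc\nu-\lfloor nc\rfloor\nu||_{TV}$ are at most $||\nu||_{TV}$; feeding these into Lemma \ref{lemma12} (with its ``$n$'' taken to be $1$, its endpoints the relevant consecutive integer points, its measures the corresponding multiples of $\nu$) produces a constant $C=C(\epsilon,q,\nu)=\tfrac1\epsilon(||q||_1+||\nu||_{TV}+2D)$ with
\[X_{\lfloor nc\rfloor}^{\epsilon,q,\nu}-C\ \leq\ X_n^{\epsilon,cq,c\nu}\ \leq\ X_{\lceil nc\rceil}^{\epsilon,q,\nu}+C\qquad\text{for all }n.\]
Dividing by $n$, using $\lfloor nc\rfloor/n,\lceil nc\rceil/n\to c$ and $C/n\to 0$, and invoking the a.s.\ convergence $\tfrac1m X_m^{\epsilon,q,\nu}\to X^{\epsilon,q,\nu}=\widetilde d^{\epsilon}((\vec{0},0),(q,\nu))$ of Theorem \ref{thm9} along the integer subsequences $m=\lfloor nc\rfloor$ and $m=\lceil nc\rceil$ (both tending to $\infty$), both outer expressions converge a.s.\ to $c\,\widetilde d^{\epsilon}((\vec{0},0),(q,\nu))$, so the squeeze yields $\tfrac1n X_n^{\epsilon,cq,c\nu}\to c\,\widetilde d^{\epsilon}((\vec{0},0),(q,\nu))$ a.s. On the other hand Theorem \ref{thm9} applied with $cq$ in place of $q$ identifies the a.s.\ limit of $\tfrac1n X_n^{\epsilon,cq,c\nu}$ as $\widetilde d^{\epsilon}((\vec{0},0),(cq,c\nu))$, and comparing the two limits proves the identity.

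The one genuinely non-routine point is the homogeneity identity for \emph{irrational} $c$. When $c=a/b\in\bQ$ there is a soft argument --- relabelling $n\mapsto bn$ and noting $X_{bn}^{\epsilon,(a/b)q,(a/b)\nu}=X_{an}^{\epsilon,q,\nu}$, so the a.s.\ convergence of Theorem \ref{thm9} (applied to $q$ along $m=an$, and to $(a/b)q$ for the full sequence) gives the claim with no error estimates needed --- but for general real $c$ no such relabelling exists, and one must compare $X_n^{\epsilon,cq,c\nu}$ with $X_{\lfloor nc\rfloor}^{\epsilon,q,\nu}$ and $X_{\lceil nc\rceil}^{\epsilon,q,\nu}$ with an error that is \emph{uniform in $n$}; that uniform control is precisely what the sandwich estimate above extracts from Lemma \ref{lemma12}, which is therefore indispensable here. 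Everything else --- the passage from $\widetilde d^{\epsilon}$ to $\widetilde d$, the difference-of-measures variant, and the deduction of part (ii) from the definition of a directed norm --- is routine.
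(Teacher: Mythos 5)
Your proposal is correct, and its overall skeleton matches the paper's: translation-invariance is inherited from the proof of Theorem \ref{thm7}, everything reduces to positive-homogeneity of $\widetilde{d}^{\epsilon}((\vec{0},0),(\cdot,\cdot))$, Lemma \ref{lemma12} supplies the error control, and the statements for $\widetilde{d}$ and part (ii) follow by taking $\inf_{\epsilon}$ and invoking Theorem \ref{thm8}. Where you genuinely diverge is in how homogeneity is extracted for a general real factor $c$. The paper proceeds in two stages: first rational $c=a/b$ by the relabelling $n\mapsto bn$ (exactly your ``soft argument''), and then irrational $c$ by sandwiching $X_n^{\epsilon,cq,c\nu}$ between $X_n^{\epsilon,a_kq,a_k\nu}$ and $X_n^{\epsilon,b_kq,b_k\nu}$ for rationals $a_k\uparrow c$, $b_k\downarrow c$; there the Lemma \ref{lemma12} error terms are of order $n|c-a_k|/\epsilon$, so they survive the division by $n$ as constants and are only killed by the subsequent limit $k\to\infty$, after the rational-homogeneity step has been applied. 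You instead approximate the \emph{time index}: comparing $X_n^{\epsilon,cq,c\nu}$ with $X_{\lfloor nc\rfloor}^{\epsilon,q,\nu}$ and $X_{\lceil nc\rceil}^{\epsilon,q,\nu}$ costs only a single connector path of bounded length, hence a constant $C(\epsilon,q,\nu)$ uniform in $n$, which vanishes after dividing by $n$; the a.s.\ convergence of Theorem \ref{thm9} along the integer subsequences $\lfloor nc\rfloor,\lceil nc\rceil$ then closes the squeeze. This handles all $c>0$ in one pass, with no separate rational case and no double limit, and is arguably the cleaner of the two arguments; the paper's version buys nothing extra here beyond familiarity. Your coordinatewise monotonicity checks ($\lfloor\lfloor nc\rfloor q\rfloor\leq\lfloor ncq\rfloor\leq\lfloor\lceil nc\rceil q\rfloor$ with $1$-norm gaps at most $\|q\|_1+D$ and total-variation gaps at most $\|\nu\|_{TV}$) are exactly what is needed for Lemma \ref{lemma12} to apply, so the estimate is sound.
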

\begin{remark}
From before, $||(q, \nu)||$ is $-\infty$ if $q \notin \bR^D_{\geq 0}$ or if $q = \vec{0}$ and $\nu \neq 0$, and it is 0 if $q = \vec{0}$ and $\nu = 0$.
\end{remark}

\begin{remark}
A directed metric with negative sign is clearly concave. Thus each $ \widetilde{d}^{\epsilon}$ as well as $||(\cdot, \cdot)||$ are concave functions on their respective domains.
\end{remark}

\begin{remark}
These properties of grid entropy also follow from \cite{rassoul2014quenched}.
\end{remark}

\begin{proof}
(i) Fix $\epsilon > 0$. We already showed that $\widetilde{d}^{\epsilon}$ is translation-invariant while proving Theorem 7.

By translation-invariance, it suffices to show that $\widetilde{d}^{\epsilon}((\vec{0},0), (q,\nu))$ is positive-homogeneous. Consider any $c = \frac{a}{b} \in \bQ_{> 0}$. Then
\[\widetilde{d}^{\epsilon}((\vec{0}, 0) , (cq, c\nu))=  \lim_{n \rightarrow \infty} \frac{d^{\epsilon} ((\vec{0}, 0), (cnq, cn\nu))}{n} \ \mbox{a.s.}\]
Looking at the subsequence consisting of multiples $n = mb$ of $b$, we get
\[\widetilde{d}^{\epsilon}((\vec{0}, 0), (cq, c\nu))=  \lim_{m \rightarrow \infty} \frac{d^{\epsilon} ((\vec{0}, 0), (amq, am\nu))}{mb} \ \mbox{a.s.}\]
But each $am \in \bN$ so
\[\widetilde{d}^{\epsilon}((\vec{0}, 0), (cq, c\nu))=\frac{a}b \lim_{n \rightarrow \infty} \frac{d^{\epsilon} ((\vec{0}, 0), (nq, n\nu))}{n} = c \widetilde{d}^{\epsilon}((\vec{0}, 0), (q,\nu)) \ \mbox{a.s.}\]
Thus $\widetilde{d}^{\epsilon}$ is positive-homogeneous for rational factors. 

Now consider any $c \in \bR_{> 0}$ and take sequences $a_k, b_k \in \bQ_{> 0}, a_k \uparrow c, b_k \downarrow c$. Consider any $n$, and let us look at 
\[X_n^{\epsilon, cq, c\nu} = d^{\epsilon}((\vec{0},0), (cq, c\nu))\]
By Lemma \ref{lemma12},
\[ X_n^{\epsilon, a_kq, a_k \nu } - \frac1{\epsilon} (n|c-a_k| \cdot (||q||_1 + ||\nu||_{TV}) ) \leq X_n^{\epsilon, cq, c\nu} \leq X_n^{\epsilon, b_kq, b_k \nu } + \frac1{\epsilon} (n|b_k-c| \cdot (||q||_1 + ||\nu||_{TV}) ) \]
Dividing by $n$ and taking a.s. limits, and applying homogeneity for positive rational factors we get
\begin{align*}
&a_k \widetilde{d}^{\epsilon}((\vec{0}, 0), (q, \nu)) - \frac{|c-a_k| \cdot (||q||_1 + ||\nu||_{TV})}{\epsilon} \\
& \leq  \widetilde{d}^{\epsilon}((\vec{0}, 0), (cq, c \nu)) \\
&\leq b_k \widetilde{d}^{\epsilon}((\vec{0}, 0), (q, \nu)) + \frac{|b_k-c| \cdot (||q||_1 + ||\nu||_{TV})}{\epsilon} 
\end{align*}
Taking $k \rightarrow \infty$ gives us 
\[\widetilde{d}^{\epsilon}((\vec{0}, 0), (cq, c \nu)) = c\widetilde{d}^{\epsilon}((\vec{0}, 0), (q, \nu))\]
so $\widetilde{d}^{\epsilon}$ is homogenous with respect to any positive real factor.

Taking the infimum over $\epsilon > 0$ we get that $\widetilde{d}$ is translation-invariant and positive\hyp{}homogenous.

(ii) Follows directly from (i).

\end{proof}

\subsection{Direction-free Grid Entropy}
We now wish to develop a grid entropy for the case where we no longer restrict ourselves to paths $\pi: \vec{0} \rightarrow \lfloor nq \rfloor$ for a given direction $q \in \bR^D$, and instead look at all length $\lfloor nt \rfloor$ paths from $\vec{0}$ for a given size parameter $t \geq 0$. Another way of putting this is that we look at paths from the origin to the line or "level" $x_1 +x_2+\ldots + x_D = \lfloor nt \rfloor$. Recall that the set of all such paths is denoted $\mathcal{P}_{\lfloor nt \rfloor}(\vec{0})$. 

If we try to simply repeat our previous argument, we run into a dead end because we are no longer in a superadditive setting. The solution is to observe that the distances $\widetilde{d}^{\epsilon}((\vec{0},0), (q, \nu))$ are maximized over $q \in \bR^D_{\geq 0}$ with $||q||_1 = t$ by $q = t(\frac1D, \ldots, \frac1D) := t \ell$. This intuitively makes sense, since this direction is the direction which has the most NE paths. 

\begin{lemma}\label{maximalLemma}
Fix $t \geq 0, \nu \in \mathcal{M}_t$. Then
\[\sup_{q \in \bR^D_{\geq 0}: ||q||_1 = t} \widetilde{d}^{\epsilon}((\vec{0}, 0), (q, \nu)) = \widetilde{d}^{\epsilon}((\vec{0}, 0), (t\ell, \nu)) \ \forall \epsilon > 0 \ \mbox{and}\ \sup_{q \in \bR^D_{\geq 0}: ||q||_1 = t} ||(q, \nu)|| = ||(t\ell, \nu)||\]
\end{lemma}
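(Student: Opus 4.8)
I want to show that among all directions $q \in \bR^D_{\geq 0}$ with $\|q\|_1 = t$, the directed metric $\widetilde{d}^{\epsilon}((\vec{0},0),(q,\nu))$ is maximized at $q = t\ell$, and similarly for $\|(q,\nu)\|$. Since $\|(q,\nu)\| = \inf_{\epsilon > 0} \widetilde{d}^{\epsilon}((\vec{0},0),(q,\nu))$, the second claim follows from the first by taking an infimum over $\epsilon$ on both sides (the infimum of a family of functions each maximized at the same point $t\ell$ is again maximized at $t\ell$, provided we also know $t\ell$ is in the feasible set, which it is). So the crux is the statement for fixed $\epsilon$.

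First I would fix $\epsilon > 0$ and a direction $q \in \bR^D_{\geq 0}$ with $\|q\|_1 = t$, and aim to show $\widetilde{d}^{\epsilon}((\vec{0},0),(q,\nu)) \leq \widetilde{d}^{\epsilon}((\vec{0},0),(t\ell,\nu))$. The key observation is a path-counting inequality at finite $n$: every NE path $\pi : \vec{0} \to \lfloor nq \rfloor$ of length $\|\lfloor nq\rfloor\|_1 \approx nt$ can be "embedded" into, or compared with, the family of paths $\vec{0} \to \lfloor n(t\ell)\rfloor$ in a way that does not decrease the cost-function sum. Concretely, I would argue that the multinomial coefficient $\binom{nt}{nq_1,\dots,nq_D}$ counting paths in direction $q$ is dominated (up to $e^{o(n)}$) by $\binom{nt}{nt/D,\dots,nt/D}$ counting paths in direction $t\ell$ — this is exactly the statement that $H(q) \leq H(t\ell) = t\log D$ from Lemma \ref{lemma1} and the remark following it. But a pure cardinality comparison is not enough, since we need to control the Levy–Prokhorov distances $\rho(\mu_\pi, n\nu)$ of the \emph{empirical measures}, not just the number of paths. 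The right move is instead to build an injection from paths $\pi : \vec{0} \to \lfloor nq\rfloor$ into paths $\widetilde{\pi} : \vec{0} \to \lfloor n(t\ell) \rfloor$ that \emph{preserves the multiset of edge labels} along the path, so that $\mu_{\widetilde{\pi}} = \mu_\pi$ and hence $\rho(\mu_{\widetilde\pi}, n\nu) = \rho(\mu_\pi, n\nu)$ exactly. This is possible because both path classes consist of sequences of roughly $nt$ edges drawn from the i.i.d.\ label field; I would realize this by a coupling/relabeling argument exploiting the lattice translation symmetries and the i.i.d.\ structure — more precisely, comparing the \emph{distributions} of the cost sums rather than doing a deterministic embedding, since the label field itself is exchangeable enough that the sum over paths in direction $q$ is stochastically dominated by the sum over the larger family in direction $t\ell$.

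The cleanest route, which I would pursue, is probabilistic: for each $n$, the random variable $d^{\epsilon}((\vec{0},0),(\lfloor nq\rfloor, n\nu)) = \log \sum_{\pi:\vec{0}\to\lfloor nq\rfloor} e^{-\frac1\epsilon \rho(\mu_\pi, n\nu)}$ can be compared to $d^{\epsilon}((\vec{0},0),(\lfloor n(t\ell)\rfloor, n\nu))$ by noting that both are monotone in the set of paths summed over and that, because the direction $t\ell$ admits strictly more paths of the same length $\lfloor nt\rfloor$, one can surject the $q$-paths onto a subset of $t\ell$-paths with matching label multisets along matching edges after a measure-preserving rearrangement of the underlying i.i.d.\ field. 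Passing to the $n\to\infty$ limit using Theorem \ref{thm7} (convergence of $\frac1n d^{\epsilon}$ to the translation-invariant constant $\widetilde{d}^{\epsilon}$, together with the fact established in its proof that $\|\lfloor nq\rfloor\|_1$ differs from $nt$ by $O(1)$ so the floors don't matter asymptotically) yields $\widetilde{d}^{\epsilon}((\vec{0},0),(q,\nu)) \leq \widetilde{d}^{\epsilon}((\vec{0},0),(t\ell,\nu))$. Taking the supremum over $q$ and then the infimum over $\epsilon$ finishes both equalities — the supremum is attained since $t\ell$ is itself a feasible direction.

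\textbf{Main obstacle.} The delicate point is the passage from "$t\ell$ has more paths" (a statement about $H(q)$, i.e.\ about \emph{counts}) to the inequality for $\widetilde{d}^{\epsilon}$, which weights each path by $e^{-\frac1\epsilon\rho(\mu_\pi,n\nu)}$. One cannot naively drop paths, since the extra $t\ell$-paths might all have large $\rho$-cost. The resolution must exploit that the edge-label field is i.i.d.\ and the lattice is homogeneous, so that the \emph{joint law} of $(\mu_\pi)_{\pi:\vec0\to\lfloor nq\rfloor}$ embeds into the joint law of a sub-collection of $(\mu_{\widetilde\pi})_{\widetilde\pi:\vec0\to\lfloor n(t\ell)\rfloor}$ — i.e.\ I need a genuine coupling, not just an inequality of expectations. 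Getting this coupling precisely right (handling the floor discrepancies, and verifying that the embedding of $q$-paths into $t\ell$-paths can be done consistently across all paths simultaneously while preserving label multisets) is where the real work lies; everything else is a limiting argument using results already proved.
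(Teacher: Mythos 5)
Your reduction of the second equality to the first (taking the infimum over $\epsilon$, noting $t\ell$ is itself feasible) is fine, and you correctly isolate the real content: $\widetilde{d}^{\epsilon}((\vec{0},0),(q,\nu)) \leq \widetilde{d}^{\epsilon}((\vec{0},0),(t\ell,\nu))$ for each fixed $\epsilon$. But that inequality is exactly the step you leave open. You propose an injection of paths $\vec{0} \to \lfloor nq\rfloor$ into paths $\vec{0} \to \lfloor nt\ell\rfloor$ preserving the multiset of edge labels (or, alternatively, a stochastic domination of the two cost sums), acknowledge that constructing it ``is where the real work lies,'' and never construct it. This is a genuine gap, and I do not believe the construction exists in the form you describe: for a fixed realization the labels $U_e$ are a.s.\ pairwise distinct and the two path families use different edge sets, so no map between them can preserve label multisets; and stochastic domination of $\sum_{\pi} e^{-\frac{1}{\epsilon}\rho(\mu_\pi,n\nu)}$ between the two families is far from clear, since the two collections of overlapping paths have genuinely different dependence structures. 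Even a comparison of the \emph{expectations} of the sums (which does reduce to path counting, i.e.\ to $H(q)\le H(t\ell)$) would not transfer to the quantity $\frac1n E[\log(\cdot)]$ that actually converges to $\widetilde{d}^{\epsilon}$, because Jensen's inequality points the wrong way.

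The paper's proof needs no coupling at all. It uses two facts already in hand: (a) $\widetilde{d}^{\epsilon}((\vec{0},0),(\cdot,\cdot))$ is concave, since the reverse triangle inequality of Theorem \ref{thm8} together with translation invariance and positive homogeneity (Theorem \ref{thm16}) gives $\widetilde{d}^{\epsilon}((\vec{0},0), w x + (1-w)y) \geq w\,\widetilde{d}^{\epsilon}((\vec{0},0),x) + (1-w)\,\widetilde{d}^{\epsilon}((\vec{0},0),y)$; and (b) $\widetilde{d}^{\epsilon}((\vec{0},0),(q,\nu))$ is invariant under permuting the coordinates of $q$, by the i.i.d.\ structure and the lattice symmetry. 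Supposing some $q$ with $\|q\|_1=t$ strictly beats $t\ell$, one picks such a $q$ with the maximal number of coordinates equal to $t/D$, finds coordinates $q_i < t/D < q_j$, writes $t/D = wq_i+(1-w)q_j$, and applies concavity to $q$ and its $(i,j)$-swap: this produces a direction with at least the same value and one more coordinate equal to $t/D$, a contradiction. If you want to salvage your ``direction $\ell$ has the most paths'' intuition, it is precisely this concavity-plus-symmetry argument that makes it rigorous; the counting bound $H(q)\le H(t\ell)$ alone cannot, for the reason you yourself flagged.
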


\begin{remark}
In Section \ref{section4} we show that $||q||_1 = ||\nu||_{TV}$ is a necessary condition for $||(q,\nu)||$ to be finite, so it makes sense that we only take the supremum over $q \in \bR^D_{\geq 0}$ with $||q||_1 = t$.
\end{remark}

\begin{proof}
This is an easy consequence of the symmetries of the grid and the concavity of $\widetilde{d}^{\epsilon}$ and direction-fixed grid entropy. We focus on the proof for $\widetilde{d}^{\epsilon}$; the argument for grid entropy goes the same way.

Fix $\epsilon > 0$. By positive-homogeneity and since the $t = 0$ is trivial, we may assume $t = 1$. Suppose there exists $q \in \bR^D_{\geq 0}$ s.t. $||q||_1 = 1$ and 
\begin{equation}\label{label10}
    \widetilde{d}^{\epsilon}((\vec{0}, 0), (q, \nu)) >  \widetilde{d}^{\epsilon}((\vec{0}, 0), (\ell, \nu))
\end{equation}

 Among such $q$ pick one that maximizes the number of coordinates which are equal $\frac{1}D$. Thus there are distinct $1 \leq i,j \leq D$ s.t. $q_i < \frac{1}D < q_j$, so we can write $\frac{1}D$ as a convex combination of $q_i, q_j$:
\begin{equation}\label{label11} \frac{1}D = wq_i + (1-w) q_j \ \mbox{for some} \ w \in (0,1)
\end{equation}
Let $\sigma_{ij}(q)$ be $q$ with $q_i, q_j$ swapped. By symmetry of the grid, 
\[\widetilde{d}^{\epsilon}((\vec{0}, 0), (q, \nu)) = \widetilde{d}^{\epsilon}((\vec{0}, 0), (\sigma_{ij}(q), \nu))\]
hence by concavity of $\widetilde{d}^{\epsilon}$,
\begin{align*}
\widetilde{d}^{\epsilon}((\vec{0}, 0), (q, \nu)) &= w \widetilde{d}^{\epsilon}((\vec{0}, 0), (q, \nu)) + (1-w) \widetilde{d}^{\epsilon}((\vec{0}, 0), (\sigma_{ij}(q), \nu)) \\
& \leq \widetilde{d}^{\epsilon}((\vec{0}, 0), (wq + (1-w)\sigma_{ij}(q), \nu))
\end{align*}
But $wq + (1-w)\sigma_{ij}(q)$ only changes the coordinates of $q$ in positions $i,j$, with $q_i$ becoming $\frac{1}D$ by \eqref{label11}. Thus we have found a $q$ satisfying \eqref{label10}
that has at least one more coordinate that is $\frac{1}D$ than our previous $q$, which we had assumed had the maximal number of such coordinates. Contradiction. Therefore $q = \ell$ as desired.
\end{proof}

We now use this useful fact along with the compactness of $\{q \in \bR^D_{\geq 0}: ||q||_1 = t\}$ to show that $||(t\ell, \nu)||$ is the desired direction-free grid entropy of length $t$.

\begin{theorem}\label{thmNoDir}
Fix $t \geq 0, \nu \in \mathcal{M}_t$. For any $\epsilon > 0$ we have
\begin{align*} \widetilde{d}^{\epsilon}((\vec{0},0), (t\ell, \nu)) &= \lim_{n \rightarrow \infty} \sup_{q \in \bR^D_{\geq 0}: ||q||_1=t} \frac1n \log \sum_{\pi \in \mathcal{P}(\vec{0}, \lfloor nq \rfloor)}  e^{-\frac{n}{\epsilon} \rho(\frac1n \mu_{\pi}, \nu)} \\
&=  \lim_{n \rightarrow \infty} \frac1n \log \sum_{\pi \in \mathcal{P}_{ \lfloor nt \rfloor}(\vec{0})}  e^{-\frac{n}{\epsilon} \rho(\frac1n \mu_{\pi}, \nu)}
\end{align*}
a.s.
\end{theorem}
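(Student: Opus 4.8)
The plan is to identify the three quantities as scaled log-partition functions and play them off against the direction-fixed quantity $X_n^{\epsilon,q,\nu}=d^{\epsilon}((\vec 0,0),(nq,n\nu))$, whose a.s.\ $n\to\infty$ limit is controlled by Theorem \ref{thm7}. The case $t=0$ forces $\nu=0$ and all three quantities equal $0$, so assume $t>0$. For $q\in\bR^D_{\ge0}$ with $\|q\|_1=t$ the inner expression in the middle term is $\tfrac1n X_n^{\epsilon,q,\nu}$, and the last display is $\tfrac1n X_n^{\epsilon,t\ell,\nu}$, which converges a.s.\ to $\widetilde d^{\epsilon}((\vec 0,0),(t\ell,\nu))$ by Theorem \ref{thm7}. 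Set $B_n:=\sup_{\|q\|_1=t}\tfrac1n X_n^{\epsilon,q,\nu}$ and $C_n:=\tfrac1n\log\sum_{\pi\in\mathcal P_{\lfloor nt\rfloor}(\vec 0)}e^{-\frac n\epsilon\rho(\frac1n\mu_\pi,\nu)}$; the goal is $\lim_n B_n=\lim_n C_n=\widetilde d^{\epsilon}((\vec 0,0),(t\ell,\nu))$ a.s. Since $t\ell$ is admissible in the supremum, $\liminf_n B_n\ge\widetilde d^{\epsilon}((\vec 0,0),(t\ell,\nu))$ a.s.\ for free, so the two things to prove are the matching upper bound on $\limsup_n B_n$ and the estimate $B_n-C_n\to0$.

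For the upper bound on $\limsup_n B_n$ I would fix $\delta>0$ and, for each $q$ with $\|q\|_1=t$, let $q'$ be $q$ with every coordinate rounded up to the nearest multiple of $\delta$: then $q'\ge q$ coordinatewise, $\|q'-q\|_1\le D\delta$, and $q'$ ranges over the finite set $G_\delta$ of points in $\delta\bZ^D_{\ge0}$ with each coordinate at most $t+\delta$. Lemma \ref{lemma12}, applied with the second measure unchanged so its total-variation term drops, gives
\[
\tfrac1n X_n^{\epsilon,q,\nu}\le\tfrac1n X_n^{\epsilon,q',\nu}+\tfrac1\epsilon\big(\|q'-q\|_1+\tfrac Dn\big)\le\max_{q'\in G_\delta}\tfrac1n X_n^{\epsilon,q',\nu}+\tfrac{D\delta}\epsilon+\tfrac D{n\epsilon}.
\]
Taking $\sup$ over $q$, then $\limsup_n$, and using that $G_\delta$ is finite (so $\limsup_n$ commutes with $\max_{q'\in G_\delta}$) together with Theorem \ref{thm7} applied to each fixed $q'$, one gets, on an a.s.\ event depending on $\delta$, $\limsup_n B_n\le\max_{q'\in G_\delta}\widetilde d^{\epsilon}((\vec 0,0),(q',\nu))+\tfrac{D\delta}\epsilon$. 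Each $q'\in G_\delta$ has $\|q'\|_1\in[t,t+D\delta]$, and by the lattice symmetry and concavity of $\widetilde d^{\epsilon}$ (the argument proving Lemma \ref{maximalLemma}, used at total mass $\|q'\|_1$) we have $\widetilde d^{\epsilon}((\vec 0,0),(q',\nu))\le\widetilde d^{\epsilon}((\vec 0,0),(\|q'\|_1\ell,\nu))=:g(\|q'\|_1)$. The function $g(s)=\widetilde d^{\epsilon}((\vec 0,0),(s\ell,\nu))$ is concave and real-valued on $(0,\infty)$ (bounded above by $s\log D$ as in the remark after Theorem \ref{thm8} and below by a one-path estimate), hence continuous at $t>0$. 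Running $\delta=1/k\downarrow0$ through a countable sequence and intersecting the a.s.\ events yields $\limsup_n B_n\le g(t)=\widetilde d^{\epsilon}((\vec 0,0),(t\ell,\nu))$ a.s., and combined with the easy bound this gives $\lim_n B_n=\widetilde d^{\epsilon}((\vec 0,0),(t\ell,\nu))$ a.s., the first equality.

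For $B_n-C_n\to0$ I would argue by path surgery. First, $B_n\le C_n+\tfrac D{n\epsilon}$: for $q$ with $\|q\|_1=t$, every $\pi\in\mathcal P(\vec 0,\lfloor nq\rfloor)$ has length $m=\|\lfloor nq\rfloor\|_1$ with $0\le\lfloor nt\rfloor-m\le D$, so appending a fixed block of $\lfloor nt\rfloor-m$ unit $e_1$-steps embeds $\mathcal P(\vec 0,\lfloor nq\rfloor)$ injectively into $\mathcal P_{\lfloor nt\rfloor}(\vec 0)$ while changing $\mu_\pi$ by a measure of total mass $\le D$, hence by subadditivity of $\rho$ (Lemma \ref{lemma4}) changing $\rho(\mu_\pi,n\nu)$ by at most $D$; summing the resulting inequalities over $\pi$ gives $\tfrac1n X_n^{\epsilon,q,\nu}\le C_n+\tfrac D{n\epsilon}$ uniformly in $q$. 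Conversely, $C_n\le B_n+o(1)$: group $\mathcal P_{\lfloor nt\rfloor}(\vec 0)$ by endpoint $v$ (only polynomially many $v\in\bZ^D_{\ge0}$ with $\|v\|_1=\lfloor nt\rfloor$), note $\sum_{\pi:\vec 0\to v}e^{-\frac1\epsilon\rho(\mu_\pi,n\nu)}=e^{X_n^{\epsilon,v/n,\nu}}$, and set $q(v):=\tfrac{nt}{\lfloor nt\rfloor}\tfrac vn$, which satisfies $q(v)\ge v/n$, $\|q(v)\|_1=t$, $\|q(v)-v/n\|_1<1/n$; then Lemma \ref{lemma12} gives $X_n^{\epsilon,v/n,\nu}\le X_n^{\epsilon,q(v),\nu}+\tfrac{1+D}\epsilon$, so $C_n\le\tfrac{\log(\mathrm{poly}(n))}n+B_n+\tfrac{1+D}{n\epsilon}$. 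Combining, $B_n-C_n\to0$, whence $\lim_n C_n=\lim_n B_n=\widetilde d^{\epsilon}((\vec 0,0),(t\ell,\nu))$ a.s., the second equality.

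The main obstacle is the uniform-in-$q$ upper bound on $\limsup_n B_n$: the supremum is over an uncountable, environment-dependent family of directions, so the pointwise a.s.\ convergence of Theorem \ref{thm7} has to be funnelled through a finite net, and rounding coordinates up to stay finite inflates $\|q'\|_1$ slightly past $t$ — which is exactly why both the symmetrization bound of Lemma \ref{maximalLemma} at a nearby total mass and the concavity-driven continuity of $s\mapsto\widetilde d^{\epsilon}((\vec 0,0),(s\ell,\nu))$ at $t$ are needed. The two $B_n$-versus-$C_n$ comparisons are deterministic surgery arguments and are routine.
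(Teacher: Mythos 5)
Your proposal is correct, and the second equality is handled essentially as in the paper (group the length-$\lfloor nt\rfloor$ paths by endpoint, note there are only $O(n^{D-1})$ endpoints, and compare each endpoint's partition function to a direction-fixed one via Lemma \ref{lemma12}; your extra step of appending $e_1$-steps to get the reverse inequality is a slightly more careful version of the paper's sandwich). Where you genuinely diverge is the first equality. The paper argues by contradiction: it assumes the $\limsup$ of the supremum exceeds $\widetilde d^{\epsilon}((\vec 0,0),(t\ell,\nu))$ by $7\delta$ on a positive-probability event, extracts an event-dependent sequence of near-maximizing directions $q^{n_i}$, uses compactness of $\{\|q\|_1=t\}$ to pass to a convergent subsequence, approximates the limit from the NE by a rational $q''$, and then combines Lemma \ref{lemma12}, Lemma \ref{maximalLemma} and positive-homogeneity to reach a contradiction; the event-dependence of the extracted subsequence is exactly what forces the contradiction format. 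You instead discretize \emph{deterministically}: rounding coordinates up to a $\delta$-grid gives a finite, environment-independent net $G_\delta$ dominating every $q$ from the SE, so the uncountable supremum is controlled by finitely many directions for which Theorem \ref{thm7} applies, and the only price is that $\|q'\|_1$ creeps up to $t+D\delta$ — which you absorb via the symmetrization argument of Lemma \ref{maximalLemma} at total mass $\|q'\|_1$ together with continuity of the finite concave function $s\mapsto\widetilde d^{\epsilon}((\vec 0,0),(s\ell,\nu))$ at $t>0$. The paper absorbs the same mass inflation through positive-homogeneity and the explicit choice \eqref{EQ15}, which is the same continuity fact in quantitative disguise. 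Your route avoids the event-dependent subsequence extraction entirely and is, in my view, cleaner; both rely on the same two pillars (the SE-perturbation estimate of Lemma \ref{lemma12} and the symmetrization of Lemma \ref{maximalLemma}). One small caveat worth stating explicitly if you write this up: Lemma \ref{maximalLemma} as \emph{stated} assumes $\nu\in\mathcal M_s$ when the supremum is over $\|q\|_1=s$, so you should note (as you implicitly do) that its proof — symmetry of the lattice plus concavity in $q$ with $\nu$ held fixed — never uses $\|\nu\|_{TV}=s$ and hence yields the inequality $\widetilde d^{\epsilon}((\vec 0,0),(q',\nu))\le\widetilde d^{\epsilon}((\vec 0,0),(\|q'\|_1\ell,\nu))$ for arbitrary $\nu$.
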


\begin{proof}
The statement is trivial when $t=0$ so we may assume $t > 0$. We focus on the first equality. By Lemma \ref{maximalLemma} and the trivial fact that  $\sup \limsup \leq \limsup \sup$ in general, we immediately get
\[\widetilde{d}^{\epsilon}((\vec{0},0), (t\ell, \nu)) \leq \limsup_{n \rightarrow \infty} \sup_{q \in \bR^D_{\geq 0}: ||q||_1=t} \frac1n \log \sum_{\pi \in \mathcal{P}(\vec{0}, \lfloor nq \rfloor)}  e^{-\frac{n}{\epsilon} \rho(\frac1n \mu_{\pi}, \nu)} \ \mbox{a.s.}\]
Suppose equality does not hold on some event  of positive probability. Thus there exists $\delta > 0$ s.t.
\begin{equation} \label{EQ13}
\widetilde{d}^{\epsilon}((\vec{0},0), (t\ell, \nu)) + 7\delta  < \limsup_{n \rightarrow \infty} \sup_{q \in \bR^D_{\geq 0}: ||q||_1=t} \frac1n \log \sum_{\pi \in \mathcal{P}(\vec{0}, \lfloor nq \rfloor)}  e^{-\frac{n}{\epsilon} \rho(\frac1n \mu_{\pi}, \nu)}
\end{equation}
with positive probability. Let us intersect this event with the measure 1 event that \\ $\widetilde{d}^{\epsilon}((\vec{0}, 0), (p, \nu))$ exists and is equal to the limit in Theorem \ref{thm7} for every $p \in \bQ^D_{\geq 0}$. Denote this event by $\mathcal{E}$.

Unfortunately, the expression inside the supremum is only continuous when approaching $q$ from the SE, and not necessarily when approaching along $\{r \in \bR^D_{\geq 0}: ||r||_1=t\}$, so we need to work with some rather unpleasant approximations. 

Let $(n_i)$ be the (event-dependent) subsequence corresponding to the $\limsup$. For every  $i \in \bN$ pick some (event-dependent) $q^{n_i} \in \bR^D_{\geq 0}$ with $||q^{n_i}||_1 = t$ s.t. 
\begin{equation}\label{EQ14} \sup_{q \in \bR^D_{\geq 0}: ||q||_1=t} \frac1{n_i} \log \sum_{\pi \in \mathcal{P}(\vec{0}, \lfloor n_iq \rfloor)}  e^{-\frac{n_i}{\epsilon} \rho(\frac1{n_i} \mu_{\pi}, \nu)} \leq \delta + \frac1{n_i} \log \sum_{\pi \in \mathcal{P}(\vec{0}, \lfloor n_iq^{n_i} \rfloor)}  e^{-\frac{n_i}{\epsilon} \rho(\frac1{n_i} \mu_{\pi}, \nu)} 
\end{equation}

By compactness of $\{r \in \bR^D_{\geq 0}: ||r||_1=t\}$, there is a converging subsequence $q^{n_{i_j}} \rightarrow^{L^1} q'$ for some $q' \in \bR^D_{\geq 0}, ||q'||_1=t$. Pick some $q'' \in  \bQ^D_{\geq 0}$ s.t. $q''-q' \in \bR^D_{> 0}$ and
\begin{equation}\label{EQ15}
   \max \bigg(\frac1{\epsilon} ||q''-q'||_1, \bigg(\frac{||q''||_1}t-1\bigg) \widetilde{d}^{\epsilon}((\vec{0}, 0), (t \ell,\nu ) ) ,  \frac1{\epsilon} \bigg| \bigg|\bigg(1-\frac{t}{||q''||_1}\bigg) \nu \bigg| \bigg|_{TV} \bigg) < \delta
\end{equation}

The idea is that since $q^{n_{i_j}} \rightarrow^{L^1} q'$ and the coordinates of $q''-q'$ are \emph{strictly} positive, then there exists $J$ s.t. $q'' - q^{n_{i_j}} \in \bR^D_{\geq 0} \ \forall j \geq J$. For such $j$ we can apply Lemma \ref{lemma12} to get
\begin{equation}\label{EQ16} 
\begin{split}
& \frac1{n_{i_j}} \log \sum_{\pi \in \mathcal{P}(\vec{0}, \lfloor n_{i_j}q^{n_{i_j}} \rfloor)}  e^{-\frac{n_{i_j}}{\epsilon} \rho(\frac1{n_{i_j}} \mu_{\pi}, \nu)} \\
& \leq \frac1{\epsilon} \bigg(||q''-q^{n_{i_j}}||_1+ \frac{D}{n_{i_j}}\bigg)+ \frac1{n_{i_j}} \log \sum_{\pi \in \mathcal{P}(\vec{0}, \lfloor n_{i_j}q'' \rfloor)}  e^{-\frac{n_{i_j}}{\epsilon} \rho(\frac1{n_{i_j}} \mu_{\pi}, \nu)} 
\end{split}
\end{equation}
Recalling that we are in the event $\mathcal{E}$, pick some  $J' \geq J$ large enough so that for any $j \geq J',$
\begin{equation}\label{EQ17}
    \frac1{n_{i_j}} \log \sum_{\pi \in \mathcal{P}(\vec{0}, \lfloor n_{i_j}q'' \rfloor)}  e^{-\frac{n_{i_j}}{\epsilon} \rho(\frac1{n_{i_j}} \mu_{\pi}, \nu)}  \leq \delta +  \widetilde{d}^{\epsilon}((\vec{0}, 0), (q'', \nu))
\end{equation}
and $  \frac1{\epsilon} \bigg(||q'-q^{n_{i_j}}||_1+ \frac{D}{n_{i_j}}\bigg) < \delta \ \mbox{hence}$
\begin{equation}\label{EQ18}
    \frac1{\epsilon} \bigg(||q''-q^{n_{i_j}}||_1+ \frac{D}{n_{i_j}}\bigg) \leq \frac1{\epsilon} \bigg(||q'-q^{n_{i_j}}||_1+ ||q''-q'||_1+ \frac{D}{n_{i_j}}\bigg)<  2\delta 
\end{equation}
by \eqref{EQ15}. Putting everything together, we get 
\begin{align*}
&\widetilde{d}^{\epsilon}((\vec{0},0), (t\ell, \nu)) + 7\delta  \\
&\leq \widetilde{d}^{\epsilon}((\vec{0}, 0), (q'', \nu)) + 4 \delta \ \mbox{by \eqref{EQ13}-\eqref{EQ18}} \\
&= \frac{||q''||_1}t \widetilde{d}^{\epsilon}\bigg((\vec{0}, 0), \bigg(\frac{t}{||q''||_1} q'', \frac{t}{||q''||_1}\nu \bigg) \bigg) + 4 \delta \ \mbox{by positive-homogeneity} 
\end{align*}
By Lemma \ref{maximalLemma}, this is
\begin{align*}
&\leq \frac{||q''||_1}t \widetilde{d}^{\epsilon}\bigg((\vec{0}, 0), \bigg(t \ell, \frac{t}{||q''||_1}\nu \bigg) \bigg) + 4 \delta \\
&\leq \frac{||q''||_1}t \widetilde{d}^{\epsilon}((\vec{0}, 0), (t \ell,\nu ) ) + \frac1{\epsilon} \bigg| \bigg|\bigg(1-\frac{t}{||q''||_1}\bigg) \nu \bigg| \bigg|_{TV} + 4 \delta \ \mbox{by Lemma \ref{lemma12}}\\
&\leq \widetilde{d}^{\epsilon}((\vec{0},0), (t\ell, \nu)) + 6\delta  \ \mbox{by \eqref{EQ15}} 
\end{align*}
which is a contradiction. Note that even though our sequence $q^{n_{i_j}}$, the limit $q''$ and the $J'$ chosen are event-dependent, the upper bound above is not by virtue of Lemma \ref{maximalLemma}. That is what makes this argument work. 

We have thus shown
\begin{align*} \widetilde{d}^{\epsilon}((\vec{0},0), (t\ell, \nu)) = \lim_{n \rightarrow \infty} \sup_{q \in \bR^D_{\geq 0}: ||q||_1=t} \frac1n \log \sum_{\pi \in \mathcal{P}(\vec{0}, \lfloor nq \rfloor)}  e^{-\frac{n}{\epsilon} \rho(\frac1n \mu_{\pi}, \nu)} \ \mbox{a.s.}
\end{align*}

We now  proceed with the second equality. Observe that for any $n$, any length $\lfloor nt \rfloor$ NE path $\pi$ from  $\vec{0}$ ends at $\lfloor nq \rfloor$ for some $q \in \bR^D_{\geq 0}$ with $||q||_1 = t$. Furthermore, the number of possible different endpoints of such paths is precisely $\binom{\lfloor nt \rfloor + D-1 }{ D-1} = O(n^D)$, which is the number of $D$-tuples of non-negative integers summing to $\lfloor nt \rfloor$. Therefore
\begin{align*} 
& \log \sum_{ \pi \in \mathcal{P}_{\lfloor nt \rfloor}(\vec{0})}  e^{-\frac{n}{\epsilon} \rho(\frac1n \mu_{\pi}, \nu )} \\
&\in \bigg[\sup_{||q||_1 = t} \log \sum_{\pi \in \mathcal{P}(\vec{0}, \lfloor nq \rfloor)}  e^{-\frac{n}{\epsilon} \rho(\frac1n\mu_{\pi}, \nu )},    \log \bigg( O(n^D) \sup_{||q||_1 = t} \sum_{\pi \in \mathcal{P}(\vec{0}, \lfloor nq \rfloor)}  e^{-\frac{n}{\epsilon} \rho(\frac1n\mu_{\pi}, \nu )} \bigg ) \bigg] \\
&= \bigg[\sup_{||q||_1 = t} d^{\epsilon}((\vec{0}, n\xi), (nq, n\nu)), O(D\log n) + \sup_{||q||_1 = t} d^{n\epsilon}((\vec{0}, n\xi), (nq, n\nu))\bigg]
\end{align*}
When we divide by $n$ and take the limit, the $O(D \log n)$ term goes to 0 and we get that
\[\lim_{n \rightarrow \infty}\log \sum_{\pi \in \mathcal{P}_{\lfloor nt \rfloor}(\vec{0})}  e^{-\frac{n}{\epsilon} \rho(\frac1n \mu_{\pi}, \nu )}   = \lim_{n \rightarrow \infty} \sup_{q \in \bR^D_{\geq 0}, ||q||_1 = t} \frac{d^{n\epsilon}((\vec{0}, n\xi), (nq, n\nu))}n  \ \mbox{a.s.}\]
as desired.
\end{proof}

Of course, we already established that $\widetilde{d}^{\epsilon}((\vec{0}, 0), (t\ell, \nu))$ is a  directed metric with negative sign.
This means that if we now take the infimum over $\epsilon > 0$ we get that the length $t$ direction-free grid entropy is precisely $||(t\ell, \nu)||$. Since this is $-\infty$ unless $t= ||\nu||_{TV}$ (see Theorem \ref{thm19} in a later section), then we can simply let $t = ||\nu||_{TV}$ to begin with.

\begin{definition}
For $\nu \in \mathcal{M}_+$ let $t := ||\nu||_{TV}$ and define the direction-free grid entropy to be
\[ ||\nu|| := \inf_{\epsilon} \widetilde{d}^{\epsilon}((\vec{0}, 0), (t\ell, \nu)) = ||(t\ell, \nu)|| = \sup_{q \in \bR^D} ||(q, \nu)||\]
\end{definition}

\begin{remark}
The final equality holds by Lemma \ref{maximalLemma} combined with the fact that $||(q,\nu)|| = -\infty$ unless $||q||_1 = ||\nu||_{TV}$.
\end{remark}

\subsection{Equivalence of Approaches}
The next order of business is to  show that these definitions of direction-fixed/direction-free grid entropy are equivalent to our original, more intuitive, definitions \eqref{firstDef}. The novel results in this subsection show the benefit of the fresh approach to grid entropy presented in this paper.

The key idea is that the sum
\[\sum_{\pi \in \mathcal{P}( \vec{0}, \lfloor nq \rfloor)} e^{-\frac{n}{\epsilon} \rho(\frac1n\mu_{\pi}, \nu ) }\]
appearing in the definition of $d^{\epsilon}$ is approximately the number of paths $\vec{0} \rightarrow \lfloor nq \rfloor$ with very small $\rho(\frac1n\mu_{\pi}, \nu )$ and the error should disappear in the limit.

Let us recall how the $\min \limits_{\pi}^k$ are defined. Fix $q \in \bR^D, \nu \in \mathcal{M}_+$. For any $n, k \in \bN$ with $k \leq (\#\pi: \vec{0}  \rightarrow \lfloor nq \rfloor)$ define 
\[\min \limits_{\pi \in \mathcal{P}( \vec{0}, \lfloor nq \rfloor)}^k \rho \bigg(\frac1n \mu_{\pi}, \nu \bigg)\]
to be the order statistics of  $\rho(\frac1n \mu_{\pi}, \nu)$ as we range over all $\pi: \vec{0}  \rightarrow \lfloor nq \rfloor$, and let
\[\min \limits_{\pi: \vec{0} \rightarrow \lfloor nq \rfloor}^k \rho \bigg(\frac1n \mu_{\pi}, \nu \bigg) := +\infty \ \mbox{for} \ k > \# \pi: \vec{0} \rightarrow \lfloor nq \rfloor\]
In this way $\min \limits_{\pi \in \mathcal{P}( \vec{0}, \lfloor nq \rfloor)}^1 \rho(\frac1n \mu_{\pi}, \nu), \min \limits_{\pi \in \mathcal{P}( \vec{0}, \lfloor nq \rfloor)}^{\#\pi} \rho(\frac1n \mu_{\pi}, \nu)$ are just the minimum/maximum values of $\rho(\frac1n \mu_{\pi}, \nu)$ respectively. 

Similarly, in the direction-free case, for $\nu \in \mathcal{M}_+$ we let $t := ||\nu||_{TV}$ and for any $n, k \in \bN$ with $k \leq (\# \ \mbox{length $\lfloor nt \rfloor$ paths $\pi$ from $\vec{0}$})$ define
\[\min \limits_{\pi \in \mathcal{P}_{\lfloor nt \rfloor}( \vec{0})}^k \rho \bigg(\frac1n \mu_{\pi}, \nu \bigg)\]
to be the $k$-th smallest value of $\rho(\frac1n \mu_{\pi}, \nu)$ when we range over \textit{all} length $\lfloor nt \rfloor$ paths from $\vec{0}$.

We usually omit the $\rho(\frac1n \mu_{\pi}, \nu)$ in $\min \limits_{\pi}^j \rho(\frac1n \mu_{\pi}, \nu)$ for the sake of space.

We are now ready to prove that our definitions of grid entropy are equivalent, and moreover that the pertinent limits either converge to 0 a.s. or the corresponding $\liminf$'s are $>0$ a.s.. In terms of notation, we denote by $||(q, \nu)||, ||\nu||$ the direction-fixed/direction-free grid entropy as defined in Theorem \ref{thm16}/Theorem \ref{thmNoDir}.

\begin{theorem}\label{thmEquiv}
Fix $q \in \bR^D, \nu \in \mathcal{M}_+$ and let $t := ||\nu||_{TV}$.
\begin{enumerate}[label=(\roman*)]
\item The grid entropy being finite is  characterized by the  existence of paths with empirical measure arbitrarily close to the target. More specifically,
\[ \lim_{n \rightarrow \infty} \min_{\pi \in \mathcal{P}( \vec{0}, \lfloor nq \rfloor)}^1 \rho \bigg(\frac1{n} \mu_{\pi}, \nu \bigg),  \lim_{n \rightarrow \infty} \min_{\pi \in \mathcal{P}_{\lfloor nt \rfloor}( \vec{0})}^1 \rho \bigg(\frac1{n} \mu_{\pi}, \nu \bigg)   \ \mbox{exist a.s. and} \]
\begin{equation*}
||(q,\nu)|| \neq -\infty \Leftrightarrow \lim_{n \rightarrow \infty} \min_{\pi \in \mathcal{P}( \vec{0}, \lfloor nq \rfloor)}^1 \rho \bigg(\frac1{n} \mu_{\pi}, \nu \bigg) = 0 \ \mbox{a.s.}
\end{equation*}  
\begin{align*}
    ||\nu|| \neq -\infty \Leftrightarrow \lim_{n \rightarrow \infty} \min_{\pi \in \mathcal{P}_{\lfloor nt \rfloor}( \vec{0})}^1 \rho \bigg(\frac1{n} \mu_{\pi}, \nu \bigg) = 0  \ \mbox{a.s.}
\end{align*}
Furthermore, if $||(q,\nu)|| \neq -\infty$ then 
\[||(q,\nu)|| \in [0, H(q)] \ \mbox{where} \ H(q) = \sum_{i=1}^D -q_i \log \frac{q_i}{||q||_1}\]
and if $||\nu|| \neq -\infty$ then $||\nu|| = ||(t\ell, \nu)|| \in [0,  t\log D]$. In particular, 
$$|| \ ||q||_1 \Lambda \ || \geq ||(q, ||q||_1\Lambda)|| \geq 0$$
i.e. $\Lambda$ is always among the distributions observed.
\item We have
\[ \lim_{n \rightarrow \infty} \min_{\pi \in \mathcal{P}( \vec{0}, \lfloor nq \rfloor)}^{\lfloor e^{n \alpha } \rfloor} \rho \bigg(\frac1{n} \mu_{\pi}, \nu \bigg) = 0 \ \mbox{a.s.} \ \forall 0 \leq \alpha < ||(q,\nu)||\]
\[ \liminf_{n \rightarrow \infty} \min_{\pi \in \mathcal{P}( \vec{0}, \lfloor nq \rfloor)}^{\lfloor e^{n \alpha } \rfloor} \rho \bigg(\frac1{n} \mu_{\pi}, \nu \bigg) > 0 \ \mbox{a.s.} \ \forall ||(q,\nu)|| < \alpha \leq H(q)\]
In other words,
\begin{equation}\label{gridEntropy}
\begin{split}
    ||(q, \nu)|| &=\sup \bigg\{\alpha \geq 0: \lim_{n \rightarrow \infty} \min_{\pi \in \mathcal{P}( \vec{0}, \lfloor nq \rfloor)}^{\lfloor e^{n \alpha } \rfloor} \rho\bigg(\frac1{n} \mu_{\pi}, \nu \bigg) = 0  \ \mbox{a.s.} \bigg\} \\
    &= \sup \bigg\{\alpha \geq 0: \liminf_{n \rightarrow \infty} \min_{\pi \in \mathcal{P}( \vec{0}, \lfloor nq \rfloor)}^{\lfloor e^{n\alpha} \rfloor} \rho\bigg(\frac1n \mu_{\pi}, \nu \bigg) = 0  \ \mbox{a.s.} \bigg\} 
\end{split}
\end{equation}
where this is $-\infty$ if the set of $\alpha$'s is empty. Similarly,  
\[ \lim_{n \rightarrow \infty} \min_{\pi \in \mathcal{P}_{\lfloor nt \rfloor}( \vec{0})}^{\lfloor e^{n\alpha} \rfloor} \rho\bigg(\frac1n \mu_{\pi}, \nu \bigg) = 0 \ \mbox{a.s.} \ \forall 0 \leq \alpha < ||\nu||\]
\[ \liminf_{n \rightarrow \infty} \min_{\pi \in \mathcal{P}_{\lfloor nt \rfloor}( \vec{0})}^{\lfloor e^{n\alpha} \rfloor} \rho\bigg(\frac1n \mu_{\pi}, \nu \bigg) > 0 \ \mbox{a.s.} \ \forall ||\nu|| < \alpha \leq t\log D\]
and the direction-free grid entropy is given by
\begin{align*}
||\nu|| &=\sup \bigg\{\alpha \geq 0: \lim_{n \rightarrow \infty} \min_{\pi \in \mathcal{P}_{\lfloor nt \rfloor}( \vec{0})}^{\lfloor e^{n\alpha } \rfloor} \rho\bigg(\frac1{n} \mu_{\pi}, \nu \bigg) = 0  \ \mbox{a.s.} \bigg\} \\
&=\sup \bigg\{\alpha \geq 0: \liminf_{n \rightarrow \infty} \min_{\pi \in \mathcal{P}_{\lfloor nt \rfloor}( \vec{0})}^{\lfloor e^{n\alpha} \rfloor} \rho\bigg(\frac1n \mu_{\pi}, \nu \bigg) = 0  \ \mbox{a.s.} \bigg\} 
\end{align*}
\end{enumerate}
\end{theorem}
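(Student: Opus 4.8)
The plan is to run everything off the single a.s.\ identity
$\tfrac1n\, d^{\epsilon}\big((\vec 0,0),(nq,n\nu)\big)=\tfrac1n\log\sum_{\pi:\vec 0\to\lfloor nq\rfloor}e^{-\frac n\epsilon\rho(\frac1n\mu_\pi,\nu)}\to\widetilde d^{\,\epsilon}((\vec 0,0),(q,\nu))$ furnished by Theorem~\ref{thm9}, together with $\tfrac1n\log N_n\to H(q)$ for $N_n:=|\mathcal P(\vec 0,\lfloor nq\rfloor)|$. We may assume $q\in\bR^D_{\geq0}\setminus\{\vec 0\}$ and, in the direction-free case, $t=\|\nu\|_{TV}>0$, all claims being immediate otherwise. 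Write $D_n^{\epsilon}:=\log\sum_{\pi:\vec0\to\lfloor nq\rfloor}e^{-\frac n\epsilon\rho(\frac1n\mu_\pi,\nu)}$, $m_n:=\min^{1}_{\pi\in\mathcal P(\vec 0,\lfloor nq\rfloor)}\rho(\frac1n\mu_\pi,\nu)$, and for $r>0$ let $C_n(r)$ be the number of paths $\pi:\vec 0\to\lfloor nq\rfloor$ with $\rho(\frac1n\mu_\pi,\nu)\leq r$. The only combinatorial input is the tautology $\min^{k}_{\pi}\rho(\frac1n\mu_\pi,\nu)\leq r\iff C_n(r)\geq k$ (both sides vacuous when $k>N_n$); splitting the partition function over $\{\rho\leq r\}$ and $\{\rho>r\}$ gives $e^{D_n^{\epsilon}}\leq C_n(r)+N_ne^{-nr/\epsilon}$ and, dually, $e^{D_n^{\epsilon}}\geq k\,e^{-nr/\epsilon}$ whenever at least $k$ paths lie within $r$ of $\nu$; retaining only the minimizing path gives $e^{-nm_n/\epsilon}\leq e^{D_n^{\epsilon}}\leq N_ne^{-nm_n/\epsilon}$. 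The direction-free statements are proved verbatim, replacing Theorem~\ref{thm9} by Theorem~\ref{thmNoDir}, $N_n$ by $|\mathcal P_{\lfloor nt\rfloor}(\vec 0)|=D^{\lfloor nt\rfloor}$, and $H(q)$ by $t\log D$.

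\emph{Part (i).} From the last display, $-\epsilon\cdot\tfrac1n D_n^{\epsilon}\leq m_n\leq \epsilon\cdot\tfrac1n\log N_n-\epsilon\cdot\tfrac1n D_n^{\epsilon}$, so on the a.s.\ event where $\tfrac1n D_n^{\epsilon}\to\widetilde d^{\,\epsilon}((\vec 0,0),(q,\nu))$ for all rational $\epsilon$ we obtain, with $g(\epsilon):=-\epsilon\,\widetilde d^{\,\epsilon}((\vec 0,0),(q,\nu))$, that $g(\epsilon)\leq\liminf_n m_n\leq\limsup_n m_n\leq g(\epsilon)+\epsilon H(q)$ for every $\epsilon>0$. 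Sending $\epsilon\downarrow0$ along sequences realizing $\limsup_{\epsilon\to0}g$ and $\liminf_{\epsilon\to0}g$ forces $\limsup_{\epsilon\to0}g(\epsilon)\leq\liminf_n m_n\leq\limsup_n m_n\leq\liminf_{\epsilon\to0}g(\epsilon)$; since $\liminf\leq\limsup$ always, all four agree, so $m_n$ converges a.s.\ to the constant $c(q,\nu):=\lim_{\epsilon\to0}g(\epsilon)\geq0$. Feeding $c(q,\nu)$ back in gives $\widetilde d^{\,\epsilon}((\vec 0,0),(q,\nu))\in[-c(q,\nu)/\epsilon,\ H(q)-c(q,\nu)/\epsilon]$ for all $\epsilon$; hence if $c(q,\nu)=0$ then $\|(q,\nu)\|=\inf_\epsilon\widetilde d^{\,\epsilon}((\vec 0,0),(q,\nu))\in[0,H(q)]$, while if $c(q,\nu)>0$ then $\widetilde d^{\,\epsilon}((\vec 0,0),(q,\nu))\leq H(q)-c(q,\nu)/\epsilon\to-\infty$ and $\|(q,\nu)\|=-\infty$. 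This is precisely the stated dichotomy and range. For $\nu=\|q\|_1\Lambda$, pick a nested family $\pi_n:\vec 0\to\lfloor nq\rfloor$ (possible since $\lfloor nq\rfloor\leq\lfloor(n+1)q\rfloor$ coordinatewise); the labels along $\bigcup_n\pi_n$ are i.i.d.\ $\mathrm{Unif}[0,1]$, so Glivenko--Cantelli (Theorem~\ref{thm1}) gives $\tfrac1{|\pi_n|}\mu_{\pi_n}\Rightarrow\Lambda$, hence $\tfrac1n\mu_{\pi_n}\Rightarrow\|q\|_1\Lambda$, so $m_n\leq\rho(\tfrac1n\mu_{\pi_n},\|q\|_1\Lambda)\to0$ and $\|(q,\|q\|_1\Lambda)\|\geq0$.

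\emph{Part (ii).} For the lower bound on the critical exponent, fix $0\leq\alpha<\|(q,\nu)\|$ (so $\alpha<H(q)$ by Part (i)) and $r>0$. Since $\widetilde d^{\,\epsilon}((\vec 0,0),(q,\nu))\geq\|(q,\nu)\|>\alpha$ for every $\epsilon$ and $\tfrac1n\log(N_ne^{-nr/\epsilon})\to H(q)-r/\epsilon$, choose $\epsilon$ with $H(q)-r/\epsilon<\alpha$. On the a.s.\ event of Part (i), the bound $C_n(r)\geq e^{D_n^{\epsilon}}-N_ne^{-nr/\epsilon}$ has a first term of exponential rate $\widetilde d^{\,\epsilon}((\vec 0,0),(q,\nu))>\alpha$ and a second of rate $H(q)-r/\epsilon<\alpha$, so $C_n(r)\geq\lfloor e^{n\alpha}\rfloor$ for all large $n$, i.e.\ $\min^{\lfloor e^{n\alpha}\rfloor}_{\pi}\rho(\tfrac1n\mu_\pi,\nu)\leq r$ eventually; intersecting over rational $r>0$ and letting $r\downarrow0$ yields $\lim_n\min^{\lfloor e^{n\alpha}\rfloor}_{\pi}\rho(\tfrac1n\mu_\pi,\nu)=0$ a.s.\ (hence also the $\liminf$). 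For the upper bound, fix $\|(q,\nu)\|<\alpha\leq H(q)$ and choose $\epsilon_0$ with $\widetilde d^{\,\epsilon_0}((\vec 0,0),(q,\nu))<\alpha$. With $r_n:=\min^{\lfloor e^{n\alpha}\rfloor}_{\pi}\rho(\tfrac1n\mu_\pi,\nu)$, either $r_n=+\infty$, or there are at least $\lfloor e^{n\alpha}\rfloor$ paths within $r_n$ of $\nu$, whence $e^{D_n^{\epsilon_0}}\geq\lfloor e^{n\alpha}\rfloor e^{-nr_n/\epsilon_0}$ and $r_n\geq\tfrac{\epsilon_0}n(\log\lfloor e^{n\alpha}\rfloor-D_n^{\epsilon_0})$; taking $\liminf_n$ gives $\liminf_n r_n\geq\epsilon_0(\alpha-\widetilde d^{\,\epsilon_0}((\vec 0,0),(q,\nu)))>0$ a.s. Consequently the set $\{\alpha\geq0:\lim_n\min^{\lfloor e^{n\alpha}\rfloor}_\pi\rho(\tfrac1n\mu_\pi,\nu)=0\ \text{a.s.}\}$ contains $[0,\|(q,\nu)\|)$ and is contained in $[0,\|(q,\nu)\|]$ (for $\alpha>H(q)$ one has $\lfloor e^{n\alpha}\rfloor>N_n$ eventually, so the order statistic is $+\infty$), and likewise for the $\liminf$ version; in every case its supremum equals $\|(q,\nu)\|$, with $\sup\emptyset=-\infty$ when $\|(q,\nu)\|=-\infty$. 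This is \eqref{gridEntropy}, and the direction-free identities follow in the same way from Theorem~\ref{thmNoDir}.

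The main obstacle is the lower bound in Part (ii): one must exhibit, for each fixed $r>0$ and for \emph{all} large $n$ (not merely infinitely often), at least $\lfloor e^{n\alpha}\rfloor$ paths whose empirical measure lies within $r$ of $\nu$. The two-sided estimate $e^{D_n^{\epsilon}}\leq C_n(r)+N_ne^{-nr/\epsilon}$ is what makes this work: it lets one trade the closeness parameter $r$ against the temperature parameter $\epsilon$, so that shrinking $\epsilon$ renders the ``far'' contribution $N_ne^{-nr/\epsilon}$ exponentially negligible relative to the full partition function $e^{D_n^{\epsilon}}$, whose rate $\widetilde d^{\,\epsilon}$ stays $\geq\|(q,\nu)\|>\alpha$ uniformly in $\epsilon$. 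Everything else is bookkeeping around the a.s.\ limits supplied by Theorems~\ref{thm9} and~\ref{thmNoDir}.
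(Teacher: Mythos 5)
Your proof is correct and follows essentially the same route as the paper: both parts rest on sandwiching the partition sum $e^{D_n^{\epsilon}}$ between order-statistic quantities, invoking the a.s.\ convergence of $\frac1n D_n^{\epsilon}$ from Theorems \ref{thm9} and \ref{thmNoDir} on a countable (rational-$\epsilon$) family of events, and then trading $\epsilon$ against the closeness parameter. Your reformulation of part (ii) via the counting function $C_n(r)$ and the tautology $\min^{k}_{\pi}\rho(\frac1n\mu_\pi,\nu)\le r\iff C_n(r)\ge k$ is a clean repackaging of the paper's manipulation of the increments $\min^{\lfloor e^{Cn}\rfloor}_{\pi}-\min^{1}_{\pi}$; the underlying estimates are the same.
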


\begin{remark}
Since the $\min \limits_{\pi}^j$ increase in $j$, then the sets we are taking the supremum over in the definition in (ii) are either $\emptyset$ or of the form $[0, C)$ or $[0, C]$ where $C$ is the corresponding grid entropy.
\end{remark}

\begin{remark}
The  limits 
\[\lim_{n \rightarrow \infty} \min_{\pi \in \mathcal{P}( \vec{0}, \lfloor nq \rfloor)}^{\lfloor e^{n \alpha } \rfloor} \rho\bigg(\frac1{n_i} \mu_{\pi}, \nu \bigg)\]
need not  exist for all $\alpha$. What we are saying is that they do exist and are equal to 0 a.s. for $\alpha < ||(q,\nu)||$ and that the corresponding $\liminf$'s are $>0$ a.s. for $\alpha > ||(q,\nu)||$. This along with the existence of the limits in (i) are completely non-trivial facts which will be a direct consequence of the a.s. convergence of $\frac{d^{\epsilon}((0,0),(nq, n\nu))}n$.
\end{remark}

\begin{proof}
We focus on proving the direction-fixed grid entropy statements. The direction-free arguments are analogous.

(i) The strategy is to provide  upper and lower bounds on $d^{\epsilon}$ in terms of
\[\mbox{exp} \bigg[-\frac{n}{\epsilon}\min \limits_{\pi \in \mathcal{P}( \vec{0}, \lfloor nq \rfloor)}^1 \rho \bigg(\frac1{n} \mu_{\pi}, \nu \bigg) \bigg],\]
the leading term in the sum over $\pi$. Since $\frac{d^{\epsilon}((\vec{0},0), (nq, n\nu))}n$ converges a.s. then our analysis gives that $\widetilde{d}^{\epsilon}$ must be in the intersection of two intervals, one in terms of \\ $\limsup \min \limits_{\pi \in \mathcal{P}( \vec{0}, \lfloor nq \rfloor)}^1 \rho (\frac1{n} \mu_{\pi}, \nu)$ and the other in terms of the $\liminf$. The only way these intervals can have non-empty intersection for small enough $\epsilon > 0$ is if the $\limsup$'s equal the $\liminf$'s.

Fix $\epsilon > 0$. We consider the $\min \limits_{\pi \in \mathcal{P}( \vec{0}, \lfloor nq \rfloor)}^j$ over $\pi: \vec{0} \rightarrow \lfloor n q\rfloor$. Since $\min \limits_{\pi \in \mathcal{P}( \vec{0}, \lfloor nq \rfloor)}^1 \leq \min \limits_{\pi \in \mathcal{P}( \vec{0}, \lfloor nq \rfloor)}^j$ for all valid $j$, then
\begin{align*}
    &\frac{d^{\epsilon}((\vec{0},0), (nq, n\nu))}{n} \\
    &= \frac1{n} \log \sum_{\pi \in \mathcal{P}(\vec{0}, \lfloor nq \rfloor)} e^{-\frac{n}{\epsilon} \rho(\frac1{n} \mu_{\pi}, \nu)}\\
    & \in \bigg[\frac1{n} \log \bigg(e^{-\frac{n}{\epsilon} \min \limits_{\pi \in \mathcal{P}(\vec{0}, \lfloor nq \rfloor)}^1}\bigg),  \frac1{n} \log \bigg((\# \pi: \vec{0} \rightarrow \lfloor nq \rfloor) \  e^{-\frac{n}{\epsilon} \min \limits_{\pi \in \mathcal{P}(\vec{0}, \lfloor nq \rfloor)}^1} \bigg)\bigg]\\
    & = \bigg[-\frac1{\epsilon}  \min_{\pi \in \mathcal{P}(\vec{0}, \lfloor nq \rfloor)}^1 \rho \bigg(\frac1{n} \mu_{\pi}, \nu \bigg), \frac1{n} \log(\# \pi: \vec{0} \rightarrow \lfloor nq \rfloor) -\frac1{\epsilon}  \min \limits_{\pi \in \mathcal{P}(\vec{0}, \lfloor nq \rfloor)}^1 \rho \bigg(\frac1{n} \mu_{\pi}, \nu \bigg) \bigg]
\end{align*}
where each $\rho(\frac1{n} \mu_{\pi}, \nu) \geq 0$. Taking the a.s. $\limsup$ or the a.s. $\liminf$ and using Lemma \ref{lemma1} we get that a.s.,
\begin{equation} \label{equation11}
\begin{split}
\widetilde{d}^{\epsilon}((\vec{0}, 0), (q, \nu)) \in  & \bigg[-\frac1{\epsilon} \liminf_{n \rightarrow \infty} \min \limits_{\pi \in \mathcal{P}(\vec{0}, \lfloor nq \rfloor)}^1, H(q) -\frac1{\epsilon} \liminf_{n \rightarrow \infty} \min \limits_{\pi \in \mathcal{P}(\vec{0}, \lfloor nq \rfloor)}^1 \bigg]\\
& \bigcap  \bigg[-\frac1{\epsilon} \limsup_{n \rightarrow \infty} \min \limits_{\pi \in \mathcal{P}(\vec{0}, \lfloor nq \rfloor)}^1, H(q) -\frac1{\epsilon} \limsup_{n \rightarrow \infty} \min \limits_{\pi \in \mathcal{P}(\vec{0}, \lfloor nq \rfloor)}^1 \bigg]
\end{split}
\end{equation}
Since this holds for arbitrarily small $\epsilon > 0$ then we must have
\[\liminf_{n \rightarrow \infty} \min \limits_{\pi \in \mathcal{P}(\vec{0}, \lfloor nq \rfloor)}^1 \rho \bigg(\frac1{n} \mu_{\pi}, \nu \bigg)= \limsup_{n \rightarrow \infty} \min \limits_{\pi \in \mathcal{P}(\vec{0}, \lfloor nq \rfloor)}^1 \rho \bigg(\frac1{n} \mu_{\pi}, \nu \bigg) \ \mbox{a.s.}\]
Furthermore, taking $\epsilon \rightarrow 0^+$ in \eqref{equation11},  we see that 
\begin{align*}
||(q,\nu)|| =\inf_{\epsilon > 0} \widetilde{d}^{\epsilon}((\vec{0}, 0), (q, \nu)) \neq -\infty  \Leftrightarrow \lim_{n \rightarrow \infty} \min_{\pi: \vec{0} \rightarrow \lfloor nq \rfloor}^1 \rho \bigg(\frac1{n} \mu_{\pi}, \nu \bigg) = 0 \ \mbox{a.s.} \end{align*} 
Moreover, if indeed $||(q, \nu)|| \neq -\infty$ then \eqref{equation11} gives $ ||(q, \nu)|| \in [0, H(q)]$.

Finally, if we fix an infinite NE path $\pi$ passing through all $\lfloor nq \rfloor$ then by the Glivenko-Cantelli Theorem (Theorem \ref{thm1}), 
\[ \rho\bigg(\frac1n \mu_{\pi|_{\vec{0} \rightarrow \lfloor nq \rfloor}}, ||q||_1 \Lambda \bigg) = ||q||_1 \rho \bigg(\frac1{|\pi|}\mu_{\pi|_{\vec{0} \rightarrow \lfloor nq \rfloor}}, \Lambda \bigg) \ \rightarrow 0 \]
where $\mu_{\pi|_{\vec{0} \rightarrow \lfloor nq \rfloor}}$ denotes the restriction of the path to $\vec{0} \rightarrow \lfloor nq \rfloor$, so from above it follows that $||(q, ||q||_1\Lambda)|| \geq 0$.

(ii) If $||(q,\nu)|| = -\infty$ then by (i),
\[\lim_{n \rightarrow \infty} \min_{\pi \in \mathcal{P}( \vec{0}, \lfloor nq \rfloor)}^{1} \rho\bigg(\frac1{n_i} \mu_{\pi}, \nu \bigg) \ \mbox{exists a.s. and is $>0$}\]
so the statement holds trivially because the $ \min \limits_{\pi \in \mathcal{P}( \vec{0}, \lfloor nq \rfloor)}^{j}$ are nondecreasing in $j$. Thus it suffices to assume  $||(q,\nu)|| \geq 0$ and hence
\begin{equation}\label{eqn12}
\min \limits_{\pi \in \mathcal{P}(\vec{0}, \lfloor nq \rfloor)}^1 \rho\bigg(\frac1{n} \mu_{\pi}, \nu \bigg) \rightarrow 0 \ \mbox{a.s.}
\end{equation}

Fix $\epsilon > 0$ and consider any $n \in \bN$. By \eqref{eqn12}, the leading term of 
\[\sum_{\pi \in \mathcal{P}(\vec{0}, \lfloor nq \rfloor)} e^{-\frac{n}{\epsilon} \rho(\frac1{n} \mu_{\pi}, \nu)}\]
is $\approx 1$. So we must look at the secondary terms by writing
\begin{equation} \label{eqn13}
\frac{d^{\epsilon}((\vec{0},0), (nq, n\nu))}{n} = -\frac1{\epsilon} \min_{\pi \in \mathcal{P}(\vec{0}, \lfloor nq \rfloor)}^1 +
    \frac1{n} \log \bigg[ 1 + \sum_{j = 2}^{\# \pi} e^{-\frac{n}{\epsilon} [\min \limits_{\pi \in \mathcal{P}(\vec{0}, \lfloor nq \rfloor)}^j - \min \limits_{\pi \in \mathcal{P}(\vec{0}, \lfloor nq \rfloor)}^1 ] } \bigg]
\end{equation}
where the summands are nonincreasing in $j$.

The argument goes similar to the one in (i), in that we compute some upper and lower bounds for this expression. 

Fix an arbitrary $0 \leq C \leq H(q)$. We lower bound \eqref{eqn13} by truncating the sum at $j = \lfloor e^{Cn} \rfloor$ and lower bounding all the summands by the $j =  \lfloor e^{Cn} \rfloor$ summand: 
\begin{equation}\label{equation14}
\begin{split}
&\frac{d^{\epsilon}((\vec{0},0), (nq, n\nu))}{n} \geq\\
&  -\frac1{\epsilon} \min_{\pi \in \mathcal{P}(\vec{0}, \lfloor nq \rfloor)}^1  +
    \frac1{n} \log \bigg[ 1 + (\lfloor e^{Cn} \rfloor-1) \mbox{exp} \bigg[-\frac{n}{\epsilon} \bigg(\min \limits_{\pi \in \mathcal{P}(\vec{0}, \lfloor nq \rfloor)}^{\lfloor e^{Cn} \rfloor}  - \min \limits_{\pi \in \mathcal{P}(\vec{0}, \lfloor nq \rfloor)}^1 \bigg) \bigg] \bigg]
    \end{split}
\end{equation}

We upper bound \eqref{eqn13} by upper bounding summands for $j = 2$ to $\lfloor e^{Cn} \rfloor$ to 1 and upper bounding the rest of the summands to the $j = \lfloor e^{Cn} \rfloor$ summand:
\begin{equation}\label{EQ24}
\begin{split}
    & \frac{d^{\epsilon}((\vec{0},0), (nq, n\nu))}{n} \leq  \\ & -\frac1{\epsilon} \min_{\pi \in \mathcal{P}(\vec{0}, \lfloor nq \rfloor)}^1 +
    \frac1{n} \log \bigg[ 1 + \lfloor e^{Cn} \rfloor \cdot 1  + (\#\pi) \mbox{exp}\bigg[-\frac{n}{\epsilon} \bigg(\min \limits_{\pi \in \mathcal{P}(\vec{0}, \lfloor nq \rfloor)}^{\lfloor e^{Cn} \rfloor}  - \min \limits_{\pi \in \mathcal{P}(\vec{0}, \lfloor nq \rfloor)}^1 \bigg) \bigg] \bigg]
    \end{split}
\end{equation}
Consider the event-dependent sequence
\[a_{n}(C) := \min \limits_{\pi \in \mathcal{P}(\vec{0}, \lfloor nq \rfloor)}^{ \lfloor e^{Cn} \rfloor} \rho \bigg(\frac1{n} \mu_{\pi}, \nu \bigg) - \min \limits_{\pi \in \mathcal{P}(\vec{0}, \lfloor nq \rfloor)}^1 \rho\bigg(\frac1{n} \mu_{\pi}, \nu \bigg) \geq 0  \]
Taking the $\liminf/\limsup$ in \eqref{equation14},\eqref{EQ24} and using \eqref{eqn12} and Lemma \ref{lemma1} we get 
\begin{align*} 
\widetilde{d}^{\epsilon}((0,0),(q,\nu)) &\geq \log \limsup_{n \rightarrow \infty} \bigg( 1 + (\lfloor e^{Cn} \rfloor-1) e^{-\frac{a_n(C)}{\epsilon} n} \bigg)^{\frac1n} \\
&= C- \frac1{\epsilon} \liminf \limits_{n \rightarrow \infty} a_n(C),
\end{align*}
\begin{align*}
 \widetilde{d}^{\epsilon}((0,0),(q,\nu)) &\leq \log \liminf_{n \rightarrow \infty} \bigg( 1 + e^{Cn}+ e^{H(q)n-\frac{a_n(C)}{\epsilon} n} \bigg)^{\frac1n} \\
 &= \max(C, H(q) - \frac1{\epsilon} \limsup \limits_{n \rightarrow \infty} a_n(C))
 \end{align*}
a.s. for this arbitrary $0 \leq C \leq H(q)$.

In particular, for any $0 \leq C < ||(q,\nu)||$ we get
\[0 \leq ||(q,\nu)|| \leq \widetilde{d}^{\epsilon}((0,0),(q,\nu)) \leq  \max(C, H(q) - \frac1{\epsilon} \limsup \limits_{n \rightarrow \infty} a_n(C)) \ \mbox{a.s.} \ \forall \epsilon > 0\]
hence
\[ \lim_{n\rightarrow \infty} \min \limits_{\pi \in \mathcal{P}(\vec{0}, \lfloor nq \rfloor)}^{ \lfloor e^{Cn} \rfloor} \rho \bigg(\frac1{n} \mu_{\pi}, \nu \bigg) =  \lim_{n\rightarrow \infty} a_n(C) = 0 \ \mbox{a.s.} \]

On the other hand, for any $||(q,\nu)||< C \leq H(q)$,
\[||(q,\nu)|| = \inf_{\epsilon > 0}\widetilde{d}^{\epsilon}((0,0),(q,\nu)) \geq C- \inf_{\epsilon > 0} \frac1{\epsilon} \liminf \limits_{n \rightarrow \infty} a_n(C) \ \mbox{a.s.} \]
hence 
\[ \liminf_{n\rightarrow \infty} \min \limits_{\pi \in \mathcal{P}(\vec{0}, \lfloor nq \rfloor)}^{ \lfloor e^{Cn} \rfloor} \rho \bigg(\frac1{n} \mu_{\pi}, \nu \bigg) =  \lim_{n\rightarrow \infty} a_n(C) > 0 \ \mbox{a.s.} \]
Equation \eqref{gridEntropy} follows.
\end{proof}

Thus our approaches to grid entropy are equivalent.

Note that so far we have not made use of the coupling $\tau_e = \tau(U_e)$ or of the compactness of the space of measures on $[0,1]$. We could have developed grid entropy in the original environment, with unnormalized empirical measures $\sigma_{\pi}$ and edge weight distribution $\theta$ and target measures $\nu$ on $\bR$ in the same way. Furthermore, if $\theta$ has a continuous cdf then the value  of the grid entropy would be the same either way, because of the duality between the environments, established in Lemma \ref{lemma5Bates}; that is, for any $q \in \bR^D$ and $\nu \in \mathcal{M}_+$ we would have 
\[||(q, \nu)|| = ||(q, \tau_{*}(\nu))||, ||\nu|| = ||\tau_{*}(\nu)||\]
where the grid entropies are developed on the environments $([0,1], U_e \sim \Lambda), (\bR, \tau_e \sim \theta)$ respectively. Even in the general case where $F_{\theta}$ may not be continuous, \cite[Lemma 6.15]{bates} implies that
\[||(q, \nu)|| \leq ||(q, \tau_{*}(\nu))||, ||\nu|| \leq ||\tau_{*}(\nu)||\]
Again though, this is just a nice fact; for practical purposes we may simply define grid entropy on the space of measures on $\bR$ and our arguments thus far hold.

We now reap the benefits of this coupling, by describing the sets $\mathcal{R}^q, \mathcal{R}^{q, \alpha}, \mathcal{R}^t, \mathcal{R}^{t,\alpha}$ defined in Theorem \ref{thm5Bates} and Corollary \ref{corollary6Bates} in terms of direction-fixed/direction-free grid entropy.

\begin{corollary}\label{corollary18}
Fix $q \in \bR^D$ and $t \geq 0$. 
\begin{enumerate}[label=(\roman*)]
\item The deterministic set $\mathcal{R}^q$ of limits of empirical measures in direction $q$ whose existence is established by Theorem \ref{thm5Bates} (i) is precisely $\{\nu \in \mathcal{M}_+: ||(q, \nu)|| \geq 0\}$. Thus
\begin{align*}
||(q,\nu)|| = -\infty  &\Leftrightarrow \lim_{n \rightarrow \infty} \min_{\pi \in \mathcal{P}( \vec{0}, \lfloor nq \rfloor)}^1 \rho \bigg(\frac1{n} \mu_{\pi}, \nu \bigg) > 0 \ \mbox{a.s.} \\
\mbox{and} \ ||(q,\nu)|| \geq 0  &\Leftrightarrow P \bigg( \exists \pi: \vec{0} \rightarrow \lfloor nq \rfloor \ \mbox{with} \ \rho \bigg(\frac1{n} \mu_{\pi}, \nu \bigg) < \epsilon \ \mbox{i.o.}\bigg) = 1 \ \forall \epsilon > 0
\end{align*} 
Likewise, the deterministic set $\mathcal{R}^t$ of limits of length $t$ empirical measures from direction\hyp{}free paths, whose existence is established by Theorem \ref{thm5Bates}(ii) is precisely 
\[\{\nu \in \mathcal{M}_t: ||\nu|| \geq 0\} = \{\nu \in \mathcal{M}_t: ||(t\ell, \nu)|| \geq 0\} = \mathcal{R}^{t\ell}\]
Thus
\begin{align*}
||\nu|| = -\infty  &\Leftrightarrow \lim_{n \rightarrow \infty} \min_{\pi \in \mathcal{P}_{\lfloor nt \rfloor}( \vec{0})}^1 \rho \bigg(\frac1{n} \mu_{\pi}, \nu \bigg) > 0 \ \mbox{a.s.} \\
\mbox{and} \ ||\nu|| \geq 0  &\Leftrightarrow P \bigg( \exists \ \pi \ \mbox{s.t.} \ |\pi| = \lfloor nt \rfloor \ \mbox{with} \ \rho \bigg(\frac1{n} \mu_{\pi}, \nu \bigg) < \epsilon \ \mbox{i.o.}\bigg) = 1 \ \forall \epsilon > 0
\end{align*} 

\item Fix $0 < C \leq H(q)$. The set of measures with grid entropy in direction $q$ at least/most $C$ can be characterized in terms of the deterministic sets $\mathcal{R}^{q,\alpha}$  defined in Corollary \ref{corollary6Bates} (i):
\[\mathcal{R}^{q,C} = \bigcap_{0 \leq \alpha \leq C} \mathcal{R}^{q, \alpha}  \subseteq \{\nu \in \mathcal{M}_+: ||(q, \nu)|| \geq C\} \subseteq \bigcap_{0 \leq \alpha < C} \mathcal{R}^{q, \alpha}\] 
Thus
\begin{align*}
    &  ||(q,\nu)|| \geq C > 0      \Leftrightarrow  \\
     & \qquad P \bigg( \exists \lfloor e^{n\alpha} \rfloor \ \mbox{paths} \ \pi: \vec{0} \rightarrow \lfloor nq \rfloor \ \mbox{with} \ \rho \bigg(\frac1{n} \mu_{\pi}, \nu \bigg) < \epsilon \ \mbox{i.o.} \bigg) = 1
     \end{align*}
 $ \forall \epsilon > 0 \ \forall \alpha \in (0, C)$, and
\[||(q, \nu)|| < C  \Leftrightarrow  \liminf_{n \rightarrow \infty} \min_{\pi \in \mathcal{P}( \vec{0}, \lfloor nq \rfloor)}^{\lfloor e^{n C} \rfloor} \rho \bigg(\frac1{n} \mu_{\pi}, \nu \bigg) > 0 \ \mbox{a.s.} \]
Now fix $0 < C \leq H(t\ell) = t\log D$.  The set of measures with length $t$ direction-free grid entropy at least/most $C$ can be characterized in terms of the deterministic sets $\mathcal{R}^{t,\alpha}$  defined in Corollary \ref{corollary6Bates} (ii):
\[\mathcal{R}^{t\ell, C} = \mathcal{R}^{t,C} = \bigcap_{0 \leq \alpha \leq C} \mathcal{R}^{t, \alpha}  \subseteq \{\nu \in \mathcal{M}_t: || \nu|| \geq C\} \subseteq \bigcap_{0 \leq \alpha < C} \mathcal{R}^{t, \alpha}\] 
Thus
\begin{align*}
  &   ||\nu|| \geq C > 0   \Leftrightarrow  \\ 
  & \qquad P \bigg( \exists \lfloor e^{n\alpha} \rfloor \ \pi \in \mathcal{P}_{\lfloor nt \rfloor}( \vec{0}) \ \mbox{with} \ \rho \bigg(\frac1{n} \mu_{\pi}, \nu \bigg) < \epsilon \ \mbox{i.o.} \bigg) = 1 \ \forall \epsilon > 0 
     \end{align*}
   $\forall \alpha \in (0, C)$, and
\[ ||\nu|| < C  \Leftrightarrow  \liminf_{n \rightarrow \infty} \min_{\pi \in \mathcal{P}_{\lfloor nt \rfloor}( \vec{0})}^{\lfloor e^{n C} \rfloor} \rho \bigg(\frac1{n} \mu_{\pi}, \nu \bigg) > 0 \ \mbox{a.s.} \]
\end{enumerate}
\end{corollary}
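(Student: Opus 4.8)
The plan is to read this corollary off Theorem~\ref{thmEquiv} together with the determinism supplied by Theorem~\ref{thm5Bates} and Corollary~\ref{corollary6Bates}; the only real content is a bookkeeping bridge between the event-dependent sets and the deterministic ones. Fix a target $\nu$. By the very definitions of $\mathcal{R}^q_\infty$ and $\mathcal{R}^{q,\alpha}_\infty$ in Theorem~\ref{thm5Bates} and Corollary~\ref{corollary6Bates}, combined with the a.s.\ identities $\mathcal{R}^q_\infty=\mathcal{R}^q$ and $\mathcal{R}^{q,\alpha}_\infty=\mathcal{R}^{q,\alpha}$, membership of $\nu$ in the deterministic set $\mathcal{R}^{q,\alpha}$ is equivalent to the almost sure statement
\[
\liminf_{n\to\infty}\ \min\limits_{\pi\in\mathcal{P}(\vec 0,\lfloor nq\rfloor)}^{\lfloor e^{n\alpha}\rfloor}\rho\Big(\frac1n\mu_\pi,\nu\Big)=0
\]
(with $\alpha=0$, i.e.\ $\lfloor e^{n\alpha}\rfloor=1$, for $\mathcal{R}^q$ itself), and analogously in the direction-free case with $\mathcal{P}_{\lfloor nt\rfloor}(\vec 0)$. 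Since these sets are deterministic, for fixed $\nu$ that event has probability $0$ or $1$, so ``$\nu\in\mathcal{R}^{q,\alpha}$'' $\Leftrightarrow$ ``the displayed $\liminf$ is $0$ a.s.''; the ``i.o.'' reformulations in the statement are then just $\liminf_n x_n=0\iff(x_n<\epsilon$ i.o.\ $\forall\epsilon>0)$ together with $\min^{\lfloor e^{n\alpha}\rfloor}\rho<\epsilon\iff(\text{at least }\lfloor e^{n\alpha}\rfloor\text{ paths }\pi\text{ have }\rho(\frac1n\mu_\pi,\nu)<\epsilon)$.

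For part~(i): by Theorem~\ref{thmEquiv}(i), $\lim_n\min^1\rho$ exists a.s.\ and equals $0$ a.s.\ exactly when $||(q,\nu)||\neq-\infty$, i.e.\ $||(q,\nu)||\ge0$; with the bridge this gives $\mathcal{R}^q=\{\nu:||(q,\nu)||\ge0\}$, and the displayed equivalences are restatements (when ``$\lim_n\min^1\rho=0$ a.s.'' fails, the $0/1$ dichotomy forces ``$\lim_n\min^1\rho>0$ a.s.''), while the bounds $||(q,\nu)||\in[0,H(q)]$ and $||\nu||\in[0,t\log D]$ are also from Theorem~\ref{thmEquiv}(i). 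Running the identical argument over $\mathcal{P}_{\lfloor nt\rfloor}(\vec 0)$ gives $\mathcal{R}^t=\{\nu\in\mathcal{M}_t:||\nu||\ge0\}$, and $\{||\nu||\ge0\}=\{||(t\ell,\nu)||\ge0\}=\mathcal{R}^{t\ell}$ follows from the definition $||\nu||=||(t\ell,\nu)||$ plus the case just handled; the identity $\mathcal{R}^t=\bigcup_{||q||_1=t}\mathcal{R}^q$ of Theorem~\ref{thm5Bates}(ii) is the set-level shadow of $||\nu||=\sup_q||(q,\nu)||$.

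For part~(ii): $\mathcal{R}^{q,C}=\bigcap_{0\le\alpha\le C}\mathcal{R}^{q,\alpha}$ is immediate from the monotonicity $\alpha_1\le\alpha_2\Rightarrow\mathcal{R}^{q,\alpha_1}\supseteq\mathcal{R}^{q,\alpha_2}$ of Corollary~\ref{corollary6Bates}. Feeding Theorem~\ref{thmEquiv}(ii) into the bridge: $\alpha<||(q,\nu)||\Rightarrow\lim_n\min^{\lfloor e^{n\alpha}\rfloor}\rho=0$ a.s.\ $\Rightarrow\nu\in\mathcal{R}^{q,\alpha}$, whereas $||(q,\nu)||<\alpha\le H(q)\Rightarrow\liminf_n\min^{\lfloor e^{n\alpha}\rfloor}\rho>0$ a.s.\ $\Rightarrow\nu\notin\mathcal{R}^{q,\alpha}$. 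The first assertion, applied to all $\alpha<C$, gives $\{||(q,\nu)||\ge C\}\subseteq\bigcap_{0\le\alpha<C}\mathcal{R}^{q,\alpha}$; conversely, if $\nu$ lies in every $\mathcal{R}^{q,\alpha}$ with $\alpha<C$ but $||(q,\nu)||<C$, pick $||(q,\nu)||<\alpha<C$ and contradict $\nu\notin\mathcal{R}^{q,\alpha}$ — so in fact $\{||(q,\nu)||\ge C\}=\bigcap_{0\le\alpha<C}\mathcal{R}^{q,\alpha}$, which is the right half of the sandwich and the first displayed ``$\Leftrightarrow$'' (over the open interval $(0,C)$, using that $\mathcal{R}^{q,0}$ is the largest of the $\mathcal{R}^{q,\alpha}$). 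Taking $\alpha=C$ in the second assertion gives the left half $\mathcal{R}^{q,C}\subseteq\{||(q,\nu)||\ge C\}$. The direction-free clauses are verbatim, using $||\nu||=||(t\ell,\nu)||$, the identity $\mathcal{R}^{t,\alpha}=\mathcal{R}^{t\ell,\alpha}$ (by the same reduction to the maximal direction $t\ell$ as in Theorem~\ref{thmNoDir}, the set of endpoints of length-$\lfloor nt\rfloor$ paths being only polynomial in $n$) and the direction-free half of Theorem~\ref{thmEquiv}(ii).

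The main obstacle I anticipate is the boundary case $\alpha=C=||(q,\nu)||$ in the last ``$\Leftrightarrow$'' of part~(ii): to obtain it in full one must still show $||(q,\nu)||\ge C\Rightarrow\liminf_n\min^{\lfloor e^{nC}\rfloor}\rho=0$ a.s., equivalently the reverse inclusion $\{||(q,\nu)||\ge C\}\subseteq\mathcal{R}^{q,C}$, i.e.\ left-continuity $\mathcal{R}^{q,C}=\bigcap_{\alpha<C}\mathcal{R}^{q,\alpha}$, i.e.\ that the supremum defining grid entropy is attained. Theorem~\ref{thmEquiv}(ii) is silent at $\alpha=||(q,\nu)||$, where the $\liminf$ need not agree with the $\lim$ (the defining sup-set can be $[0,C)$ or $[0,C]$). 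I would settle this by revisiting the two-sided estimates on $\widetilde d^\epsilon((\vec 0,0),(q,\nu))$ from the proof of Theorem~\ref{thmEquiv}(ii): writing $a_n(C)=\min^{\lfloor e^{nC}\rfloor}\rho-\min^1\rho$, those estimates already force $||(q,\nu)||=C$ as soon as $\limsup_n a_n(C)>0$ a.s.\ (since then $\widetilde d^\epsilon\le C$ for small $\epsilon$); the remaining task is to upgrade this $\limsup$ to a $\liminf$, which I expect to come from a self-similar concatenation argument splicing the $\lfloor e^{n\alpha}\rfloor$ near-optimal paths available at scales $\alpha<C$ into consecutive blocks so as to manufacture $\lfloor e^{nC}\rfloor$ near-optimal paths at a coarser scale. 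If one is content with the displayed inclusions and the open-interval equivalences, this final step is unnecessary.
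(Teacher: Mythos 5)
Your proposal is correct and is precisely the paper's route: the paper's entire proof of this corollary is the single line ``This follows immediately from Theorem \ref{thmEquiv},'' and your argument is the detailed unpacking of that citation, bridging the event-dependent order statistics to the deterministic sets via Theorem \ref{thm5Bates} and Corollary \ref{corollary6Bates}. The boundary case you flag is a real issue rather than a defect of your write-up: Theorem \ref{thmEquiv}(ii) is indeed silent at $\alpha = ||(q,\nu)||$, so the reverse implication of the last displayed equivalence in part (ii) --- equivalently the equality $\mathcal{R}^{q,C} = \{\nu : ||(q,\nu)|| \geq C\}$ rather than the stated one-sided inclusion --- does not follow from the cited theorem, and the paper's one-line proof does not address it either (the remark after Theorem \ref{thmEquiv} even concedes the defining set of $\alpha$'s may be $[0,C)$ rather than $[0,C]$). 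Your honest isolation of that endpoint, with everything else read off as immediate, is if anything more careful than the original.
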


\begin{proof}
This  follows immediately from Theorem \ref{thmEquiv}.
\end{proof}

Now that we have a firm grasp of what direction-fixed and direction-free grid entropies actually measure, we can move on examining  what information they can give us.

\section{Properties of Grid Entropy}\label{section4}
Since direction-free entropy $||\nu||$ is just $||(t\ell, \nu)||$ for $t:= ||\nu||_{TV}$ then it suffices to study the properties of direction-fixed grid entropy. We are particularly interested in what the grid entropy can tell us about $q$ and $\nu$ when $||(q, \nu)||$ is finite. 

We have already established that it is necessary for $q$ to be in $\bR^D_{\geq 0}$ if we want $||(q, \nu)||$  not to be $-\infty$, and that if $q =\vec{0}$ then $||(q, \nu)|| > -\infty$ if and only if $\nu = 0$. The question now is what sorts of measures $\nu$ are observed by the empirical measures along the direction $q \in \bR^D_{\geq 0} \setminus \{\vec{0}\}$.

By positive-homogeneity of the norm, it suffices to consider $q \in \bR^D_{\geq 0}$ with $||q||_1 = 1$ and study which $\nu \in \mathcal{M}_+$ give rise to finite $||(q, \nu)||$. In this section, it will be more convenient to work with the description of grid entropy given by Corollary \ref{corollary18} (ii).

\begin{theorem}\label{thm19}
Recall that we couple the edge weights with i.i.d. edge labels $U_e \sim \Lambda$ (the Lebesgue measure on $[0,1]$) and that 
\[H(q) := \lim \limits_{n \rightarrow \infty} \frac1n \log (\#\pi: \vec{0} \rightarrow \lfloor nq \rfloor)\]
 Consider any $q \in \bR^D_{\geq 0}$ with $||q||_1 = 1$ and any $\nu \in \mathcal{M}_+$. Suppose $||(q, \nu)|| \neq -\infty$. Then: 
 \begin{enumerate}[label=(\roman*)]
\item $\nu$ is a probability measure on $[0,1]$.
\item For any $G_{\delta}$ or $F_{\sigma}$ set $A \subseteq [0,1]$,  we have
\[  \Lambda(A)^{\nu(A)} \Lambda(A^C)^{\nu(A^C)} \geq  \nu(A)^{\nu(A)}\nu(A^C)^{\nu(A^C)} e^{||(q,\nu)||-H(q)} \]
\item From Section \ref{section2.6} the Kullback-Leibler divergence is defined as
\[D_{KL}(\nu||\Lambda) = \begin{cases} \int f \log f \ dx \ \mbox{with} \ f:= \frac{d\nu}{dx}, & \nu \ll \Lambda \\ +\infty, & \mbox{otherwise} \end{cases}\]
 Then
\[ ||(q, \nu)|| + D_{KL}(\nu || \Lambda) \leq H(q) \]
so in particular, $\nu$ is absolutely continuous with respect to the Lebesgue measure on $[0,1]$.
\end{enumerate}
\end{theorem}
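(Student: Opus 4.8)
The plan is to establish the three parts in the order (i), (iii), (ii): part (i) is logically a prerequisite — it lets us treat $\nu$ as a probability measure, which is what makes the two‑point coarse‑graining in (ii) meaningful — part (iii) carries essentially all of the analytic work via Sanov's theorem, and part (ii) then drops out of (iii) by a data‑processing inequality.

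\emph{Part (i).} Since $||(q,\nu)|| \neq -\infty$, Theorem~\ref{thmEquiv}(i) furnishes a probability‑one event on which one can choose paths $\pi_n : \vec{0} \to \lfloor nq \rfloor$ with $\rho(\frac1n\mu_{\pi_n},\nu)\to 0$, so $\frac1n\mu_{\pi_n}\Rightarrow\nu$. The measure $\frac1n\mu_{\pi_n}$ has total mass $\frac1n||\lfloor nq\rfloor||_1\to||q||_1=1$, and taking $A=[0,1]$ in the definition of $\rho$ (so $A^\epsilon=[0,1]$) shows $|\mu([0,1])-\nu([0,1])|\le\rho(\mu,\nu)$ for all $\mu,\nu\in\mathcal{M}_+$; hence $\nu([0,1])=1$ and $\nu$ is a Borel probability measure on $[0,1]$.

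\emph{Part (iii).} Fix $\epsilon>0$ and set $N_n(\epsilon):=\#\{\pi:\vec{0}\to\lfloor nq\rfloor,\ \rho(\frac1n\mu_\pi,\nu)<\epsilon\}$. The first step is a \emph{quenched‑to‑annealed bridge}. By Corollary~\ref{corollary18}(ii) (or Theorem~\ref{thmEquiv}(i) in the degenerate case $||(q,\nu)||=0$), for every $\alpha$ below $||(q,\nu)||$ the event $\{N_n(\epsilon)\ge\lfloor e^{\alpha n}\rfloor\}$ occurs for infinitely many $n$ almost surely; were $\limsup_n\frac1n\log E[N_n(\epsilon)]<\alpha$, then $E[N_n(\epsilon)]\le e^{(\alpha-\eta)n}$ eventually for some $\eta>0$, so $P(N_n(\epsilon)\ge\lfloor e^{\alpha n}\rfloor)\le 2e^{-\eta n}$ by Markov, a summable bound contradicting Borel--Cantelli. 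Hence $\limsup_n\frac1n\log E[N_n(\epsilon)]\ge||(q,\nu)||$. The second step bounds $E[N_n(\epsilon)]$ from above. Fixing one path $\pi_0$ of length $L_n:=||\lfloor nq\rfloor||_1$, the labels $(U_e)_{e\in\pi_0}$ are i.i.d.\ uniform, so $E[N_n(\epsilon)]=|\mathcal{P}(\vec{0},\lfloor nq\rfloor)|\cdot P(\rho(\tfrac1n\mu_{\pi_0},\nu)<\epsilon)$. Since $L_n/n\to1$, Lemma~\ref{lemma3} gives $\rho(\frac1n\mu_{\pi_0},\widehat\nu_{L_n})\le|1-L_n/n|\to0$, where $\widehat\nu_{L_n}$ is the mass‑one empirical measure of those $L_n$ uniforms; so for large $n$ the event $\{\rho(\frac1n\mu_{\pi_0},\nu)<\epsilon\}$ is contained in $\{\widehat\nu_{L_n}\in F_\epsilon\}$ with $F_\epsilon:=\{\xi\in\mathcal{M}_1:\rho(\xi,\nu)\le2\epsilon\}$ weakly closed. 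Sanov's theorem (Theorem~\ref{thmSanov}, with $\theta=\Lambda$) together with $|\mathcal{P}(\vec{0},\lfloor nq\rfloor)|=e^{H(q)n+o(n)}$ (the remark after Lemma~\ref{lemma1}) yields $\limsup_n\frac1n\log E[N_n(\epsilon)]\le H(q)-\inf_{\xi\in F_\epsilon}D_{KL}(\xi\|\Lambda)$. Combining the two steps, $||(q,\nu)||\le H(q)-\inf_{\rho(\xi,\nu)\le2\epsilon}D_{KL}(\xi\|\Lambda)$; letting $\epsilon\downarrow0$ and using lower semicontinuity of $D_{KL}$ (the remark after its definition, \cite{posner}) gives $||(q,\nu)||+D_{KL}(\nu\|\Lambda)\le H(q)$. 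Since $||(q,\nu)||\ge0$ by Theorem~\ref{thmEquiv}(i), $D_{KL}(\nu\|\Lambda)\le H(q)<\infty$, so $\nu\ll\Lambda$.

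\emph{Part (ii).} For any Borel $A\subseteq[0,1]$, the data‑processing inequality for relative entropy applied to the two‑valued map $\mathbf{1}_A$ gives $D_{KL}(\nu\|\Lambda)\ge\nu(A)\log\frac{\nu(A)}{\Lambda(A)}+\nu(A^C)\log\frac{\nu(A^C)}{\Lambda(A^C)}$ (using $\nu(A)+\nu(A^C)=1$ from (i)); substituting into (iii) and exponentiating is exactly the asserted inequality, valid for every Borel $A$ and hence for $G_\delta$ and $F_\sigma$ sets. (Alternatively one can prove (ii) directly for open sets by running the same first‑moment/Borel--Cantelli scheme with Sanov replaced by Cramér's theorem for the Bernoulli$(\Lambda(A))$ variables $\mathbf{1}\{U_e\in A\}$, then passing to $G_\delta$/$F_\sigma$ sets by monotone limits using continuity of $(a,p)\mapsto a\log\frac ap+(1-a)\log\frac{1-a}{1-p}$; but the route through (iii) is shorter and gives a stronger conclusion.)

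\emph{Main obstacle.} The crux is the quenched‑to‑annealed bridge in (iii): grid entropy is a quenched object (a $\limsup$ of random path‑counts), and one must dominate it by the annealed exponential rate $\limsup_n\frac1n\log E[N_n(\epsilon)]$ — this is precisely where the order‑statistics characterization of grid entropy (Corollary~\ref{corollary18}(ii)) feeding a Borel--Cantelli argument is indispensable; without the equivalence results of Section~\ref{section3} this step would not be available. A secondary nuisance is the mismatch between $||\lfloor nq\rfloor||_1/n$ and $1$, which must be absorbed into the radius of the Sanov ball before the $\epsilon\downarrow0$ limit can identify the rate function as $D_{KL}(\cdot\|\Lambda)$.
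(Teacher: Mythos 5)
Your proposal is correct. Parts (i) and (iii) follow essentially the same route as the paper: for (i) both arguments reduce to the fact that the Levy--Prokhorov distance dominates the gap in total masses, and for (iii) your ``quenched-to-annealed bridge'' (Markov's inequality on the path count plus Borel--Cantelli, fed by the order-statistics characterization of $||(q,\nu)||$) is exactly the Claim the paper proves inside part (ii) and then combines with Sanov's theorem and lower semicontinuity of $D_{KL}$, including the same device of enlarging the Sanov ball from radius $\epsilon$ to $2\epsilon$ to absorb the $||\lfloor nq\rfloor||_1/n\to 1$ mismatch. Where you genuinely diverge is part (ii): the paper proves it \emph{directly} and independently of (iii), by unpacking the definition of $\rho$ to count how many edge labels of a path must fall in $A^{\epsilon}$ versus $(A^{\epsilon})^{C}$, estimating the resulting binomial coefficient via Lemma \ref{lemma1}, and only then passing to $G_{\delta}$/$F_{\sigma}$ sets by monotone limits; you instead deduce (ii) from (iii) in one line via the data-processing (coarse-graining) inequality $D_{KL}(\nu\|\Lambda)\geq \nu(A)\log\frac{\nu(A)}{\Lambda(A)}+\nu(A^{C})\log\frac{\nu(A^{C})}{\Lambda(A^{C})}$. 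Your route is shorter and strictly stronger --- it yields the inequality for \emph{all} Borel $A$, not just $G_{\delta}$ or $F_{\sigma}$ sets --- at the cost of invoking the data-processing inequality, a standard fact about relative entropy that the paper does not state in its preliminaries and which you should cite or prove (it follows from the log-sum inequality). The paper's direct computation, by contrast, is self-contained and produces as a by-product the Claim that both you and the paper need for (iii), which is why the paper orders the parts as it does.
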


\begin{remark}
If we take $q = \ell$ and recall that $H(\ell)=\log D$ we get the analogous statements for length 1 direction-free grid entropy.
\end{remark}

\begin{remark}
(i) is exactly what we expect because the empirical measures $\frac1n \mu_{\pi}$ for paths \\ $\pi: \vec{0} \rightarrow \lfloor nq \rfloor$ are non-negative Borel measures with total mass $\frac{||\lfloor nq \rfloor||}n \approx 1$ so in the limit we expect to only observe $\nu$ which are probability measures.
\end{remark}

\begin{remark}
Recalling Theorem \ref{thm5Bates}, (i) implies that $\mathcal{R}^q = \{\nu \in \mathcal{M}_+: ||(q,\nu)|| \geq 0\}$ (and hence $\mathcal{R}^1 = \mathcal{R}^{\ell}$) is weakly compact.
\end{remark}

\begin{remark}
Weaker versions of (iii) previously appeared in \cite{bates} and \cite{rassoul2014quenched}. To be specific, Bates shows that $D_{KL}(\nu||\Lambda) \leq H(q)$ for $\nu \in \mathcal{R}^q$ and it is immediate from the work of Rassoul-Agha and Sepp{\"a}l{\"a}inen that grid entropy $||(q,\nu)||$ is upper bounded by $H(q)$. Here (iii) combines both of these into an amazing inequality.
\end{remark}

\begin{remark}
If we do not make use of our coupling and define grid entropy directly on the space of measures on $\bR$,
(i)-(iii) hold analogously. This is trivial since we do not use weak compactness in the proof of this theorem. 
\end{remark}

\begin{proof}
(i) The intuition is that if $||\nu||_{TV} \neq 1$ then each $\rho(\frac1n \mu_{\pi}, \nu)$ is bounded below away from 0 hence it does not converge to 0.

Suppose $\nu \in \mathcal{M}_+$ is not a probability measure on $[0,1]$. By the reverse triangle inequality for $\rho$, for any $n$ and any path $\pi: \vec{0} \rightarrow \lfloor nq \rfloor$,
\[\rho\bigg(\frac1n \mu_{\pi}, \nu\bigg) \geq | \rho(\mu_{\pi}, 0) - \rho(n\nu, 0)| =  | \ ||\mu_{\pi}||_{TV} - n||\nu||_{TV} \ | =  | \ ||\lfloor nq \rfloor||_1 - n||\nu||_{TV} \ |\]
Since $\frac{||\lfloor nq \rfloor||_1}n \rightarrow ||q||_1 = 1$ and $||\nu||_{TV} \neq 1$ then there is $\delta > 0$ s.t. for large enough $n$, 
\[\rho\bigg(\frac1n \mu_{\pi}, \nu \bigg) \geq \frac1n | \ ||\lfloor nq \rfloor||_1 - n||\nu||_{TV} \ | > \delta\]
It follows that $\min \limits_{\pi}^1 \rho(\frac1n \mu_{\pi}, \nu) \not \rightarrow 0$ a.s. so $||(q, \nu)|| = -\infty$. Contradiction. Therefore $\nu$ must be a probability measure on $[0,1]$.

(ii)  Before proceeding with (ii) we prove a quick claim that will also help with the proof of (iii).

\textit{Claim:} Fix $\alpha \in [0, H(q)]$ and a nonempty set of measures $T \subseteq \mathcal{M}_+$. Then
\begin{align*}
     & \limsup_{n \rightarrow \infty} \frac1{n} \log P\bigg(\exists \lfloor e^{n\alpha} \rfloor \ \mbox{paths} \ \pi: \vec{0} \rightarrow \lfloor nq \rfloor \ \mbox{s.t.} \ \frac1n \mu_{\pi} \in T \bigg) \\
    & \leq H(q)  - \alpha + \limsup_{n \rightarrow \infty} \frac1n \log P \bigg( \frac1n \mu_{\pi} \in T \bigg) 
\end{align*}
In particular, if $\alpha \geq 0$ and
\[ \alpha > H(q) +\limsup_{n \rightarrow \infty} \frac1n \log P \bigg( \frac1n \mu_{\pi} \in T \bigg)  \]
then
\[P\bigg(\exists \lfloor e^{n\alpha} \rfloor \ \mbox{paths} \ \pi: \vec{0} \rightarrow \lfloor nq \rfloor \ \mbox{s.t.} \ \frac1n \mu_{\pi} \in T \ \mbox{i.o.} \bigg) = 0 \]

\textit{Proof.} Indeed, for any $n \in \bN$ we have by Markov's Inequality
\begin{align*}
    P\bigg(\sum_{\pi \in \mathcal{P}(0, \lfloor nq \rfloor)} \mathbf{1}_{\frac1n\mu_{\pi} \in T}  \geq \lfloor e^{n\alpha} \rfloor \bigg) & \leq \frac1{\lfloor e^{n\alpha} \rfloor} E\bigg[ \sum_{\pi \in \mathcal{P}(\vec{0}, \lfloor nq \rfloor)} \mathbf{1}_{\frac1n\mu_{\pi} \in T}  \bigg] \\
    & = \frac{|\mathcal{P}(\vec{0}, \lfloor nq \rfloor)|}{\lfloor e^{n\alpha} \rfloor} P \bigg( \frac1n \mu_{\pi} \in T \bigg) 
\end{align*}
Taking $\limsup \limits_{n \rightarrow \infty} \frac1n \log$ of both sides and using the definition of $H(q)$ we get  the desired inequality. 

Now suppose
\[ \alpha > \alpha -\delta >  H(q) +\limsup_{n \rightarrow \infty} \frac1n \log P \bigg( \frac1n \mu_{\pi} \in T \bigg)  \]
Then for large enough $n$,
\[P\bigg(\sum_{\pi \in \mathcal{P}(0, \lfloor nq \rfloor)} \mathbf{1}_{\frac1n\mu_{\pi} \in T} \geq \lfloor e^{n\alpha} \rfloor \bigg) \leq \mbox{exp}\bigg[n \bigg( H(q) - \alpha + \delta + \limsup_{n \rightarrow \infty} \frac1n \log P \bigg( \frac1n \mu_{\pi} \in T \bigg) \bigg) \bigg] \]
where the exponent is a strictly negative constant multiple of $n$. Hence, by the Borel\hypersetup{}Cantelli Lemma,
\[P\bigg(\exists \lfloor e^{n\alpha} \rfloor \ \mbox{paths} \ \pi: \vec{0} \rightarrow \lfloor nq \rfloor \ \mbox{s.t.} \ \frac1n \mu_{\pi} \in T \ \mbox{i.o.} \bigg) = 0 \]
$\square$ (Claim)

\bigskip

Now we begin the proof of (ii). The idea is to use the definition of the Levy-Prokhorov metric to unpack what $\rho (\frac1n \mu_{\pi}, \nu) < \epsilon$ means in terms of the values of the labels along $\pi$.

First, consider any closed $A \subseteq [0,1]$. Observe that $A = \overline{A} = \bigcap \limits_{\epsilon > 0} A^{\epsilon}$ hence by continuity from above, $\nu(A^{\epsilon}) \downarrow \nu(A)$ and $\Lambda(A^{\epsilon}) \downarrow \Lambda(A)$ as $\epsilon \rightarrow 0^+$.

We compute the probability that $\rho(\frac1n \mu_{\pi}, \nu) < \epsilon $ for a path $\pi: \vec{0} \rightarrow \lfloor nq \rfloor$ directly, using the definitions of $\rho$ and $\mu_{\pi}$, and then we will apply the Claim.

Consider $n \in \bN$. For any path $\pi: \vec{0} \rightarrow \lfloor nq \rfloor$, by definition of the Levy-Prokhorov metric and the  empirical measure $\mu_{\pi}$,
\[\rho \bigg(\frac1n \mu_{\pi}, \nu \bigg) < \epsilon \Rightarrow \nu(A) \leq \frac1n \mu_{\pi} (A^{\epsilon}) + \epsilon \ \mbox{and} \ \frac1n\mu_{\pi} (A^{\epsilon}) \leq \nu((A^{\epsilon})^{\epsilon})  + \epsilon\]
\begin{align*}
& \Rightarrow \mbox{$\pi$ has $\geq \lfloor n(\nu(A)-\epsilon) \rfloor$ edge labels in $A^{\epsilon}$}\\
& \qquad  \mbox{and $\leq \lfloor n(\nu(A^{2\epsilon})+\epsilon) \rfloor$  edge labels in $A^{\epsilon}$} \\
&\Rightarrow \mbox{$\pi$ has $\geq \lfloor n(\nu(A)-\epsilon) \rfloor$ edge labels in $A^{\epsilon}$}  \\
& \qquad  \mbox{and $\geq ||\lfloor nq \rfloor ||_1 - \lfloor n(\nu(A^{2\epsilon})-\epsilon) \rfloor$ edge labels in $(A^{\epsilon})^C$}
\end{align*}
If $\nu(A) = 0$ or $\nu(A) = 1$ then  we may completely omit the first/second half of the statement above as it is trivial. Otherwise, we take $\epsilon > 0$ small enough so that $\nu(A^{2\epsilon})+\epsilon < 1$ and $n$ large enough so that $n(\nu(A^{2\epsilon})+\epsilon) < ||\lfloor nq \rfloor||_1$. For the sake of convenience we only show the computation in the latter case; the $\nu(A) = 0$ or $\nu(A) = 1$ case is merely a simplified version of it.

Since $U_e$ are i.i.d. Unif$[0,1]$, then
\begin{equation}\label{label14}
\begin{split}
   P \bigg(\rho \bigg(\frac1n \mu_{\pi}, \nu\bigg) < \epsilon \bigg) &\leq P \bigg(\begin{array}{l}
    \mbox{$\pi$ has $\geq \lfloor n(\nu(A)-\epsilon) \rfloor$ edge labels in $A^{\epsilon}$,}  \\
    \mbox{$\geq ||\lfloor nq \rfloor ||_1 - \lfloor n(\nu(A^{2\epsilon})-\epsilon) \rfloor$ labels in $(A^{\epsilon})^C$} 
  \end{array} \bigg) \\
& \leq \binom{||\lfloor nq \rfloor ||_1} {\lfloor n(\nu(A)-\epsilon) \rfloor, ||\lfloor nq \rfloor ||_1 - \lfloor n(\nu(A^{2\epsilon})-\epsilon) \rfloor } \\
& \qquad \qquad  \cdot \Lambda(A^{\epsilon})^{n(\nu(A) - \epsilon)} \Lambda((A^{\epsilon})^C)^{||\lfloor nq \rfloor ||_1 - n(\nu(A^{2\epsilon})-\epsilon)} 
\end{split}
\end{equation}
by the union bound. We  get the asymptotics of this binomial coefficient from Lemma \ref{lemma1}:
\begin{equation}\label{label16}
\begin{split}
        &\binom{||\lfloor nq \rfloor ||_1} {\lfloor n(\nu(A)-\epsilon) \rfloor, ||\lfloor nq \rfloor ||_1 - \lfloor n(\nu(A^{2\epsilon})-\epsilon) \rfloor } \\
      &= \bigg(\frac{1}{ (\nu(A)-\epsilon)^{\nu(A)-\epsilon}  (1-\nu(A^{2\epsilon})+\epsilon)^{1-\nu(A^{2\epsilon})+\epsilon} (\nu(A^{2\epsilon})- \nu(A))^{\nu(A^{2\epsilon})- \nu(A)}}  + o(1) \bigg)^n 
\end{split}
\end{equation}
Therefore
\begin{align*}
&\limsup_{n \rightarrow \infty} \frac1n \log P \bigg(\rho \bigg(\frac1n \mu_{\pi}, \nu\bigg) < \epsilon \bigg) \leq \\
        & \log  \frac{\Lambda(A^{\epsilon})^{\nu(A) - \epsilon} \Lambda((A^{\epsilon})^C)^{1 - \nu(A^{2\epsilon})+\epsilon} }{ (\nu(A)-\epsilon)^{\nu(A)-\epsilon}  (1-\nu(A^{2\epsilon})+\epsilon)^{1-\nu(A^{2\epsilon})+\epsilon} (\nu(A^{2\epsilon})- \nu(A))^{\nu(A^{2\epsilon})- \nu(A)}}
\end{align*}
Combining this with  the Claim with $T = B_{\epsilon}(\nu)$, we get that 
\begin{align*}
       & P\bigg(\exists \lfloor e^{n\alpha} \rfloor \ \mbox{paths} \ \pi: \vec{0} \rightarrow \lfloor nq \rfloor \ \mbox{with} \ \rho \bigg(\frac1n \mu_{\pi}, \nu \bigg) < \epsilon \ \mbox{i.o.} \bigg) = 0 \\ 
       &\forall \alpha >  H(q)  + \log \frac{\Lambda(A^{\epsilon})^{\nu(A) - \epsilon} \Lambda((A^{\epsilon})^C)^{1 - \nu(A^{2\epsilon})+\epsilon} }{ (\nu(A)-\epsilon)^{\nu(A)-\epsilon}  (1-\nu(A^{2\epsilon})+\epsilon)^{1-\nu(A^{2\epsilon})+\epsilon} (\nu(A^{2\epsilon})- \nu(A))^{\nu(A^{2\epsilon})- \nu(A)}} 
\end{align*}
By Corollary \ref{corollary18} (ii), it follows that 
\begin{align*} & ||(q,\nu)|| \leq \\
& H(q)  + \log \frac{\Lambda(A^{\epsilon})^{\nu(A) - \epsilon} (1-\Lambda(A^{\epsilon}))^{1 - \nu(A^{2\epsilon})+\epsilon} }{ (\nu(A)-\epsilon)^{\nu(A)-\epsilon}  (1-\nu(A^{2\epsilon})+\epsilon)^{1-\nu(A^{2\epsilon})+\epsilon} (\nu(A^{2\epsilon})- \nu(A))^{\nu(A^{2\epsilon})- \nu(A)}}  
\end{align*}
This holds for arbitrary $ \epsilon > 0$. Taking $\epsilon \rightarrow 0^+$ and using $\nu(A^{\epsilon}) \downarrow \nu(A)$ and $\Lambda(A^{\epsilon}) \downarrow \Lambda(A)$, 
\[ 1 \leq e^{H(q)-||(q,\nu)||} \frac{ \Lambda(A)^{\nu(A)} \Lambda(A^C)^{\nu(A^C) }}{ \nu(A)^{\nu(A)}  \nu(A^C)^{\nu(A^C)}} \]
Therefore
\begin{equation} \label{14}  \Lambda(A)^{\nu(A)} \Lambda(A^C)^{\nu(A^C)} \geq  \nu(A)^{\nu(A)}\nu(A^C)^{\nu(A^C)} e^{||(q,\nu)|| -H(q)}
\end{equation}
This equation is symmetric in $A, A^C$ hence it holds for all $A$ open or closed. By continuity from below/above it holds for all $G_{\delta}$ and $F_{\sigma}$ subsets of $[0,1]$.

(iii) The idea is to  again use the Claim, this time to compute $\limsup \limits_{n \rightarrow \infty} \frac1n \log P (\rho (\frac1n \mu_{\pi}, \nu) < \epsilon )$  using Sanov's Theorem. 

 Fix $\epsilon > 0$. By Sanov's Theorem (Theorem \ref{thmSanov}), since $\overline{B_{2\epsilon}}(\nu)$ is weakly closed then
\[\limsup_{n \rightarrow \infty} \frac1n \log P \bigg(\frac1{|\pi|} \mu_{\pi} \in \overline{B_{2\epsilon}}(\nu) \bigg) \leq -\inf_{\xi \in \overline{B_{2\epsilon}}(\nu)} D_{KL} (\xi ||\Lambda)\]
where the right hand side might a priori be $-\infty$. 

Since $\rho(\frac1{|\pi|}\mu_{\pi}, \frac1n \mu_{\pi}) \leq |1- \frac{||\lfloor nq \rfloor||_1}n| \rightarrow 0$ as $n \rightarrow \infty$, it follows that
\begin{equation*}
\limsup_{n \rightarrow \infty} \frac1n \log P \bigg(\frac1{n} \mu_{\pi} \in B_{\epsilon}(\nu) \bigg) \leq \limsup_{n \rightarrow \infty} \frac1n \log P \bigg(\frac1{|\pi|} \mu_{\pi} \in \overline{B_{2\epsilon}}(\nu) \bigg) \leq -\inf_{\xi \in \overline{B_{2\epsilon}}(\nu)} D_{KL} (\xi ||\Lambda)
\end{equation*}
By the Claim from (ii) with $T = B_{\epsilon}(\nu)$, 
\begin{equation*}
        P\bigg(\exists \lfloor e^{n\alpha} \rfloor \ \mbox{paths} \ \pi: \vec{0} \rightarrow \lfloor nq \rfloor \ \mbox{with} \ \rho \bigg(\frac1n \mu_{\pi}, \nu \bigg) < \epsilon \ \mbox{i.o.} \bigg) = 0 \ \forall \alpha >  H(q)  -\inf_{\xi \in \overline{B_{2\epsilon}}(\nu)} D_{KL} (\xi ||\Lambda)
\end{equation*}
Hence, by Corollary \ref{corollary18} (ii),
\[ ||(q,\nu)|| \leq H(q)  -\inf_{\xi \in \overline{B_{2\epsilon}}(\nu)} D_{KL} (\xi ||\Lambda) \]

Take $\epsilon = \frac1k \downarrow 0$ and let $\xi_k = \argmin \limits_{\xi \in \overline{B_{\frac{2}k}}(\nu)} D_{KL}(\xi ||\Lambda)$ (the infimum over the weakly closed $ \overline{B_{\frac{2}k}}(\nu)$ is achieved since relative entropy is lower semicontinuous). Then $\xi_k \Rightarrow \nu$ hence 
\[ D_{KL}(\nu || \Lambda) + ||(q,\nu)|| \leq \liminf_{k \rightarrow \infty} D_{KL}(\xi_k || \Lambda) + ||(q,\nu)|| \leq H(q)\]
as desired. 

Finally, recall that $\nu \in \mathcal{R}^q$ hence $||(q,\nu)|| \geq 0$ so $D_{KL}(\nu || \Lambda)$ is finite and $\nu \ll \Lambda$.
\end{proof}

It is not clear in general whether any probability measure $\nu$ satisfying $D_{KL}(\nu || \Lambda) \leq H(q)$ results in finite grid entropies $||(q, \nu)||$ for $q \in \bR^D_{\geq 0}, ||q||_1 = 1$.  

In one specific case, however, this does hold trivially. When $q$ is a permutation of \\ $(1,0,\ldots, 0)$, so that there is exactly one NE path $\vec{0} \rightarrow nq$, we get
\[||(q, \nu)|| > -\infty \Leftrightarrow D_{KL} (\nu || \Lambda) = 0 \Leftrightarrow \nu = \Lambda\]
with the value of $||(q, \Lambda)||$ being 0. Note that this completely covers the $D=1$ case. 

\bigskip

Also, in general we cannot explicitly compute grid entropy. But when the target measure $\nu$ is $\Lambda$, we can easily show that grid entropy is maximal. This makes sense, since the Glivenko-Cantelli Theorem says we expect that most empirical measures observed are very close to the edge label distribution.

\begin{corollary}
Let $q \in \bR^D_{\geq 0} \setminus \{\vec{0}\}$. Then $||(q, q \Lambda)|| = H(q)$.
\end{corollary}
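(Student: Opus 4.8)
The plan is to obtain the inequality $||(q,q\Lambda)||\le H(q)$ for free from the equivalence theorem already proved, and to establish the matching lower bound by a first–moment count of the NE paths whose empirical measure is $\rho$-close to $\Lambda$. By positive-homogeneity of grid entropy (Theorem~\ref{thm16}) and of $H$, it suffices to treat the case $||q||_1=1$, in which $q\Lambda$ is the probability measure $\Lambda$: writing $q':=q/||q||_1$ we have $||(q,||q||_1\Lambda)||=||q||_1\,||(q',\Lambda)||$ and $H(q)=||q||_1 H(q')$. By Theorem~\ref{thmEquiv}(i), Glivenko--Cantelli along a fixed infinite NE path through all $\lfloor nq'\rfloor$ shows $||(q',\Lambda)||\ge 0>-\infty$, and then the same theorem gives $||(q',\Lambda)||\in[0,H(q')]$; in particular $||(q',\Lambda)||\le H(q')$ (alternatively this is Theorem~\ref{thm19}(iii), since $D_{KL}(\Lambda||\Lambda)=0$).

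For the reverse inequality I will show, via the description \eqref{gridEntropy}, that $\lim_{n\to\infty}\min_{\pi\in\mathcal{P}(\vec 0,\lfloor nq'\rfloor)}^{\lfloor e^{\alpha n}\rfloor}\rho(\frac1n\mu_\pi,\Lambda)=0$ a.s.\ for every $0\le\alpha<H(q')$, which forces $||(q',\Lambda)||\ge H(q')$. Fix such an $\alpha$ and fix $\delta>0$. For a single path $\pi:\vec 0\to\lfloor nq'\rfloor$ the $m:=||\lfloor nq'\rfloor||_1$ edge labels along it are i.i.d.\ $\mathrm{Unif}[0,1]$, and $\rho(\frac1n\mu_\pi,\frac1m\mu_\pi)\le\big|\frac{m}{n}-1\big|\to 0$ by Lemma~\ref{lemma3}. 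Applying Sanov's Theorem (Theorem~\ref{thmSanov}) to the weakly closed set $F_\delta:=\{\xi\in\mathcal{M}_1:\rho(\xi,\Lambda)\ge\delta/2\}$, which does not contain $\Lambda$, we get a constant $c=c(\delta)>0$ with
\[
P\!\left(\rho\big(\tfrac1n\mu_\pi,\Lambda\big)\ge\delta\right)\le e^{-cn+o(n)},
\]
where $c=\inf_{\xi\in F_\delta}D_{KL}(\xi||\Lambda)>0$ because $D_{KL}(\cdot||\Lambda)$ is lower semicontinuous, $\mathcal{M}_1$ on $[0,1]$ is weakly compact, and $\Lambda$ is its unique zero.

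Since $|\mathcal{P}(\vec 0,\lfloor nq'\rfloor)|=e^{H(q')n+o(n)}$ by Lemma~\ref{lemma1}, the expected number of paths $\pi$ with $\rho(\frac1n\mu_\pi,\Lambda)\ge\delta$ is at most $e^{(H(q')-c)n+o(n)}$, so by Markov's inequality $P\big(\#\{\delta\text{-bad }\pi\}>\tfrac12|\mathcal{P}(\vec 0,\lfloor nq'\rfloor)|\big)\le 2e^{-cn+o(n)}$, which is summable in $n$. Borel--Cantelli then gives that a.s.\ for all large $n$ more than $\tfrac12|\mathcal{P}(\vec 0,\lfloor nq'\rfloor)|=e^{H(q')n+o(n)}>\lfloor e^{\alpha n}\rfloor$ paths satisfy $\rho(\frac1n\mu_\pi,\Lambda)<\delta$ (using $\alpha<H(q')$), hence $\min_\pi^{\lfloor e^{\alpha n}\rfloor}\rho(\frac1n\mu_\pi,\Lambda)<\delta$ eventually. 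Running $\delta$ through a sequence $\downarrow 0$ yields the claimed a.s.\ limit for each $\alpha<H(q')$, so by \eqref{gridEntropy} we get $||(q',\Lambda)||\ge H(q')$, and combined with the upper bound $||(q,q\Lambda)||=H(q)$.

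The only substantive step is the single-path large-deviation bound in the second paragraph — specifically verifying that the Sanov rate $c(\delta)$ is strictly positive and that passing from $\frac1{|\pi|}\mu_\pi$ to $\frac1n\mu_\pi$ costs nothing; everything else is routine bookkeeping with homogeneity, Lemma~\ref{lemma1}, and the equivalence theorems. (One could also feed the same count directly into $d^\epsilon$: for fixed $\epsilon>0$, $\frac1n d^\epsilon((\vec 0,0),(nq',n\Lambda))\ge\frac1n\log\#\{\pi:\rho(\frac1n\mu_\pi,\Lambda)<\delta\}-\delta/\epsilon\to H(q')-\delta/\epsilon$ a.s.; letting $\delta\downarrow 0$ gives $\widetilde d^\epsilon\ge H(q')$, and then $\inf_\epsilon$ gives the lower bound.)
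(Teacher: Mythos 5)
Your proof is correct and follows essentially the same route as the paper: Sanov's Theorem gives a strictly positive exponential rate for a single path's empirical measure to be $\delta$-far from $\Lambda$, and a first-moment (Markov) bound over the $e^{H(q)n+o(n)}$ paths plus Borel--Cantelli shows that all but an exponentially negligible fraction of paths are $\delta$-good, which via the order-statistic characterization \eqref{gridEntropy} forces $||(q,q\Lambda)||\geq H(q)$. The paper packages the counting step as the ``Claim'' in the proof of Theorem \ref{thm19} and argues by contradiction, whereas you argue directly, but the content is the same.
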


\begin{proof}
Since $H(q)$ and $||(q,\nu)||$ are positive-homogeneous (in the sense that $H(cq) = cH(q)$ and $||(cq, c\nu)|| = c||(q,\nu)|| \ \forall c > 0$), we may without loss of generality assume $||q||_1 = 1$.

Suppose $||(q, \Lambda)|| < H(q)$. Consider any $\epsilon > 0$. By Sanov's Theorem  (Theorem \ref{thmSanov}), 
 \begin{equation}\label{eq10} \limsup_{n \rightarrow \infty} \frac1n \log P\bigg(\frac1{n} \mu_{\pi} \in B_{\epsilon}(\Lambda)^C \bigg) \leq -\inf_{\xi \in B_{\epsilon}(\Lambda)^C} D_{KL}(\xi || \Lambda) 
 \end{equation}
where $B_{\epsilon}(\Lambda)^C$ is weakly closed and relative entropy is lower semicontinuous hence the infimum is achieved by some $\xi \in B_{\epsilon}(\Lambda)^C$. But $\xi \neq \Lambda$ implies $-D_{KL}(\xi||\Lambda) < 0$. Fix any 
\[ \max(||(q,\Lambda)||, H(q) - D_{KL}(\xi||\Lambda)) < \alpha < H(q) \]

By the Claim from the proof of (ii) in the previous theorem with $T = B_{\epsilon}(\Lambda)^C$,  we have
\[P\bigg(\exists \lceil e^{n\alpha} \rceil \ \mbox{paths} \ \pi: \vec{0} \rightarrow \lfloor nq \rfloor \ \mbox{s.t.} \ \rho\bigg(\frac1n\mu_{\pi}, \nu\bigg) \geq \epsilon \ \mbox{i.o.}\bigg) = 0 \]
hence
\[P\bigg(\exists  (\#\pi: \vec{0} \rightarrow \lfloor nq \rfloor) - \lceil e^{n\alpha} \rceil \ \mbox{paths} \ \pi: \vec{0} \rightarrow \lfloor nq \rfloor \ \mbox{s.t.} \ \rho\bigg(\frac1n\mu_{\pi}, \nu\bigg) < \epsilon \ \mbox{eventually}\bigg) = 1 \]

But $(\#\pi: \vec{0} \rightarrow \lfloor nq \rfloor) - \lceil e^{n\alpha} \rceil \geq \lfloor e^{n\alpha} \rfloor$ for large enough $n$ since $\alpha < H(q)$, so
\[P\bigg(\exists \lfloor e^{n\alpha} \rfloor \ \mbox{paths} \ \pi: \vec{0} \rightarrow \lfloor nq \rfloor \ \mbox{s.t.} \ \rho\bigg(\frac1n\mu_{\pi}, \nu\bigg) < \epsilon \ \mbox{eventually} \bigg) = 1\]
for this arbitrary $\epsilon > 0$, hence $\alpha \geq ||(q, \Lambda)||$. Contradiction.
\end{proof}

Continuity is another property of interest. It turns out that grid entropy is upper semicontinuous in its argument(s); this follows immediately from \eqref{translationEqn} and the lower semicontinuity of relative entropy. However, we have not yet proved \eqref{translationEqn} so we prove a slightly weaker statement using the tools developed in this paper. We also note that grid entropy is lower semicontinuous (hence fully continuous) in some cases.

 Note that here we no longer restrict ourselves to the case when $||q||_1 = 1$.

\begin{theorem}\label{thm20}
Let $(q, \nu), (q^k, \nu^k) \in \bR^D \times \mathcal{M}_+$ s.t. $q^k \rightarrow^{L^1} q, \nu^k \Rightarrow \nu$.
\begin{enumerate}[label=(\roman*)]
\item If either $q = \vec{0}$ or  for large enough $k$ we have  $q - q^k \in \bR^D_{\geq 0} $ or for large enough $k$ we have $(q^k)_i = 0$ for all $i$ s.t. $q_i = 0$  then
\[ \limsup_{k \rightarrow \infty} ||(q^k, \nu^k)|| \leq ||(q, \nu)|| \]
\item If $||(q^k-q, \nu^k-\nu)|| \neq - \infty$, for large enough $k$ then
\[||(q, \nu)||  \leq \liminf_{k \rightarrow \infty} ||(q^k, \nu^k)||  \]
\item If there exist constants $ C_k > 0$ s.t. $C_k \rightarrow 0$ and s.t. for large enough $k$,
\[ \bigg|\bigg|\bigg(q-\frac{q-q^k}{C_k}, \nu-\frac{\nu-\nu^k}{C_k} \bigg)\bigg|\bigg| \neq - \infty\]
then
\[ ||(q, \nu)|| \leq \liminf_{k \rightarrow \infty} ||(q^k, \nu^k)||\]
\end{enumerate}
\end{theorem}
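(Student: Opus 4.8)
The three parts reduce, respectively, to Lemma~\ref{lemma12}, the reverse triangle inequality for grid entropy (Theorems~\ref{thm8} and~\ref{thm16}), and the concavity of grid entropy (the remark following Theorem~\ref{thm16}); in each case the only other ingredient is the elementary fact that a \emph{finite} grid entropy is automatically nonnegative, since grid entropy takes values in $\{-\infty\}\cup[0,H(q)]$ (Theorem~\ref{thmEquiv}).

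\textbf{Part (i).} I would first treat the case $q-q^k\in\bR^D_{\geq 0}$ for large $k$ (we may assume $q\in\bR^D_{\geq 0}$, otherwise $q^k\notin\bR^D_{\geq 0}$ eventually and everything is $-\infty$; likewise we may discard those $k$ with $q^k\notin\bR^D_{\geq 0}$). Apply Lemma~\ref{lemma12} with $p=q^k$, $\xi=\nu^k$, using the \emph{middle} bound — the one featuring $n\rho(\nu,\nu^k)$ rather than $n\|\nu-\nu^k\|_{TV}$, which is essential since weak convergence does not control total variation. Dividing by $n$ and letting $n\to\infty$ gives
\[ \widetilde d^{\,\epsilon}\big((\vec 0,0),(q^k,\nu^k)\big) - \frac{1}{\epsilon}\big(\|q-q^k\|_1 + \rho(\nu,\nu^k)\big) \leq \widetilde d^{\,\epsilon}\big((\vec 0,0),(q,\nu)\big) . \]
Since $\|(q^k,\nu^k)\|\leq\widetilde d^{\,\epsilon}((\vec 0,0),(q^k,\nu^k))$, taking $\limsup_k$ (using $\|q-q^k\|_1\to 0$ and $\rho(\nu,\nu^k)\to 0$, as $\rho$ metrizes weak convergence) and then $\inf_{\epsilon>0}$ yields $\limsup_k\|(q^k,\nu^k)\|\leq\|(q,\nu)\|$. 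The case $q=\vec 0$ is separate: if $\nu=0$ use $\|(q^k,\nu^k)\|\leq H(q^k)\leq\|q^k\|_1\log D\to 0=\|(\vec 0,0)\|$; if $\nu\neq 0$, finiteness of $\|(q^k,\nu^k)\|$ would force $\|q^k\|_1=\|\nu^k\|_{TV}=\nu^k([0,1])\to\nu([0,1])>0$ (Theorem~\ref{thm19} and positive-homogeneity), impossible since $\|q^k\|_1\to 0$, so $\|(q^k,\nu^k)\|=-\infty$ eventually. Finally, for the third case let $S=\{i:q_i>0\}$; for large $k$ one has $(q^k)_i>0\iff i\in S$, so $c_k:=\max_{i\in S}(q^k)_i/q_i$ satisfies $c_k\to 1$, and with $r^k:=q^k/c_k$ we get $q-r^k\in\bR^D_{\geq 0}$ and $r^k\to q$, while $\mu^k:=\nu^k/c_k\Rightarrow\nu$. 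Since $(q^k,\nu^k)=c_k(r^k,\mu^k)$, positive-homogeneity and the first case give $\limsup_k\|(q^k,\nu^k)\|=\limsup_k c_k\|(r^k,\mu^k)\|\leq\|(q,\nu)\|$ (with the routine care about the value $-\infty$).

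\textbf{Parts (ii) and (iii).} For (ii), the reverse triangle inequality applied to $(q,\nu)$ and $(q^k-q,\nu^k-\nu)$ gives $\|(q,\nu)\|+\|(q^k-q,\nu^k-\nu)\|\leq\|(q^k,\nu^k)\|$; since $\|(q^k-q,\nu^k-\nu)\|\neq-\infty$ for large $k$ it is $\geq 0$, whence $\liminf_k\|(q^k,\nu^k)\|\geq\|(q,\nu)\|$ (and if $\|(q,\nu)\|=-\infty$ there is nothing to prove). For (iii), put $p^k:=q-\frac{q-q^k}{C_k}$ and $\xi^k:=\nu-\frac{\nu-\nu^k}{C_k}$; the hypothesis guarantees $(p^k,\xi^k)\in\bR^D\times\mathcal M_+$ with $\|(p^k,\xi^k)\|\geq 0$, and one checks $(q^k,\nu^k)=(1-C_k)(q,\nu)+C_k(p^k,\xi^k)$, a genuine convex combination for large $k$ since $C_k\to 0$. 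Concavity then gives $\|(q^k,\nu^k)\|\geq(1-C_k)\|(q,\nu)\|+C_k\|(p^k,\xi^k)\|\geq(1-C_k)\|(q,\nu)\|$, and letting $k\to\infty$ finishes.

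\textbf{Main obstacle.} The only delicate part is (i): one must invoke the $\rho$-version of Lemma~\ref{lemma12} (not the $\|\cdot\|_{TV}$-version), dispose of the degenerate direction $q=\vec 0$ by hand, and, in the third case, verify that the rescaling $q^k\mapsto r^k$ legitimately reduces to the first case while perturbing $\|(q^k,\nu^k)\|$ only by the harmless factor $c_k\to 1$. Parts (ii) and (iii) are then immediate from the algebraic structure (reverse triangle inequality / concavity) together with nonnegativity of finite grid entropy.
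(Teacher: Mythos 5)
Your proposal is correct and follows essentially the same route as the paper: the same three-case split for (i) (the $\rho$-version of Lemma~\ref{lemma12} for the SE-perturbation case, the mass-mismatch argument via Theorem~\ref{thm19} when $q=\vec 0$ and $\nu\neq 0$, and a rescaling constant tending to $1$ to reduce the zero-coordinate case to the SE case), and the reverse triangle inequality plus nonnegativity of finite grid entropy for (ii) and (iii) (the paper phrases (iii) directly through homogeneity and the reverse triangle inequality rather than through the word "concavity", but the computation is identical). No gaps.
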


\begin{remark}
In the case when $q^k = q$ for large enough $k$, (i) shows that direction-fixed grid entropy $||(q,\nu)||$ is always upper semicontinuous in $\nu$. Taking $q^k = ||\nu^k||_{TV} \ell \rightarrow ||\nu||_{TV} \ell$ we get that
\[ \limsup_{k \rightarrow \infty} ||\nu^k|| = \limsup_{k \rightarrow \infty} ||(||\nu^k||_{TV} \ell, \nu^k)|| \leq ||(||\nu||_{TV} \ell, \nu) || = ||\nu|| \]
so direction-free grid entropy is upper semicontinuous.
\end{remark}

\begin{remark}
In the assumptions in (ii) and (iii) it is implicit that the parameters lie in \\ $\bR^D_{\geq 0} \times \mathcal{M}_+$ because otherwise the grid entropy would be $-\infty$.
\end{remark}

\begin{remark}
The conditions in (i), (ii), (iii) are not mutually exclusive so for some sequences we may combine the statements to obtain 
\[ ||(q, \nu)|| = \lim_{k \rightarrow \infty} ||(q^k, \nu^k)||\]
\end{remark}

\begin{proof}
For all three parts, we may assume the conditions hold for all $k \in \bN$.\\
(i) \textbf{Case 1}: $q =\vec{0}$\\
First suppose $\nu = 0$ so that $||(q, \nu)|| = 0$. By Theorem \ref{thmEquiv}, 
\[||(q^k, \nu^k)|| \in \{-\infty\} \cup [0, H(q^k)] \ \forall k\]
hence
\begin{align*}
\limsup_{k \rightarrow \infty} ||(q^k, \nu^k)|| &\leq \limsup_{k \rightarrow \infty} H(q^k) \\
&= \limsup_{k \rightarrow \infty} ||q^k||_1 \log ||q^k||_1 - \sum_{i=1}^D (q^k)_i \log (q^k)_i \\
&=0 
\end{align*}
since $||q^k-q||_1 = ||q^k||_1 \rightarrow 0$.

On the other hand, if $\nu \neq 0$ then for large enough $k$ we have $||q^k||_1 \neq ||\nu^k||_{TV}$ (because otherwise $||\nu^{k}||_{TV} \rightarrow 0$ and hence $\nu$, the weak limit of the $\nu^k$, would have to be 0). But then
\[\limsup_{k \rightarrow \infty} ||(q^k, \nu^k)|| = -\infty = ||(q, \nu)|| \]
by Theorem \ref{thm19} (i).

\textbf{Case 2}: $q- q^k \in \bR^D_{\geq 0} \ \forall k$\\
Recall that the infimum of a family of upper semicontinuous functions is upper semicontinuous. Thus it suffices to fix $\epsilon > 0$ and show that
\begin{equation} \label{equation21} \limsup_{k \rightarrow \infty} \widetilde{d}^{\epsilon}((\vec{0},0), (q^k, \nu^k)) \leq \widetilde{d}^{\epsilon}((\vec{0},0), (q, \nu)) \ \mbox{a.s.}
\end{equation}
But for every $k, n \in \bN$ we have by Lemma \ref{lemma12} and the fact that $q- q^k \in \bR^D_{\geq 0}$,
\[ \frac{d^{\epsilon}((\vec{0},0), (nq^k, n\nu^k))}n - \frac1{\epsilon} \bigg(\frac1n ||\lfloor nq \rfloor - \lfloor nq^k \rfloor||_1 + \rho(\nu,\nu^k)\bigg) \leq \frac{d^{\epsilon}((\vec{0},0), (nq^k, n\nu^k))}n  \]
For a fixed $k$, taking $n \rightarrow \infty$ we get
\[ \widetilde{d}^{\epsilon}((\vec{0},0), (q^k, \nu^k))  - \frac1{\epsilon} (||q-q^k||_1 + \rho(\nu,\nu^k)) \leq \widetilde{d}^{\epsilon}((\vec{0},0), (q, \nu)) \ \mbox{a.s.} \]
\eqref{equation21} follows immediately.

\textbf{Case 3}: $(q^k)_i = 0$ whenever $q_i = 0$ for all $k \in \bN$\\
We may assume $q \neq \vec{0}$ (because this scenario was covered by Case 1). In particular, we may assume without loss of generality that $q^k \neq \vec{0} \ \forall k$.

To show 
\[ \limsup_{k \rightarrow \infty} ||(q^k, \nu^k)|| \leq  ||(q, \nu)||\]
it suffices to prove this when $||(q^k, \nu^k)|| \geq 0 \ \forall k$. In particular, this implies that \\ $(q^k)_i \geq 0 \ \forall k \in \bN, 1 \leq i \leq D$ and hence $q_i \geq 0 \ \forall 1 \leq i \leq D$.

Now, the trick is to rescale the $q^k$ by a factor $B_k \geq 1$ converging to 1 in $k$ s.t. we land back in Case 2. More precisely, define
\[ B_k := \max \bigg(\max_{1 \leq i \leq D \ \mbox{s.t.} \ (q^k)_i \neq 0} \frac{q_i}{(q^k)_i}, \max_{1 \leq i \leq D \ \mbox{s.t.} \ q_i \neq 0} \frac{(q^k)_i}{q_i}\bigg) \]
Note that $B_k \geq 0$ because $q_i, (q^k)_i \geq 0$. Since $q^k \neq \vec{0}$ then there is some $i$ s.t. $(q^k)_i > 0$ so by the assumption of Case 3, $q_i > 0$. Thus $B_k \geq 1$.

Furthermore, $||q-q^k||_1 \rightarrow 0$ implies $B_k \rightarrow 1$. In particular, $\frac1{B_k} q_k \rightarrow^{L^1} q, \frac1{B_k} \nu^k \Rightarrow \nu$. By the choice of $B_k$ we have for all $1 \leq i \leq D$, either $q_i = 0$ hence $B_k q_i = 0 = (q^k)_i$ or $q_i \neq 0$ so $B_k q_i \geq (q^k)_i$. Thus
\[B_kq - q^k \in \bR^D_{\geq 0} \Rightarrow q - \frac1{B_k} q^k \in \bR^D_{\geq 0} \]
By Case 2 and positive-homogeneity of grid entropy,
\[ \limsup_{k \rightarrow \infty} ||( q^k, \nu^k)|| = \limsup_{k \rightarrow \infty} \bigg|\bigg|\bigg(\frac1{B_k} q^k, \frac1{B_k}\nu^k\bigg)\bigg|\bigg| \leq  ||(q,  \nu)|| \]
as desired.

(ii) By the triangle inequality for a directed norm with negative sign,
\[||(q^k, \nu^k) || \geq ||(q, \nu)|| + ||(q^k-q, \nu^k-\nu)|| \geq ||(q, \nu)|| \ \forall k\]
since $||(q^k-q, \nu^k-\nu)|| \neq -\infty$. Therefore
\[ \liminf_{k \rightarrow \infty} ||(q^k, \nu^k)|| \geq ||(q,\nu)||\]

(iii) Now suppose
\[  \bigg|\bigg|\bigg(q-\frac{q-q^k}{C_k}, \nu-\frac{\nu-\nu^k}{C_k} \bigg)\bigg|\bigg|  \geq 0 \ \mbox{and} \  C_k \geq 0 \ \forall k, C_k \rightarrow 0\]
By the reverse triangle inequality and positive-homogeneity,
\begin{align*}
    (1-C_k)||(q, \nu) || &= ||(q^k - (C_kq - (q-q^k)), \nu^k - (C_k \nu - (\nu-\nu^k)))||  \\
    & \leq ||(q^k, \nu^k)|| - C_k \bigg|\bigg|\bigg(q-\frac{q-q^k}{C_k}, \nu-\frac{\nu-\nu^k}{C_k} \bigg)\bigg|\bigg|\\
    & \leq ||(q^k, \nu^k)|| 
\end{align*}
But $C_k \rightarrow 0$ hence
\[ ||(q, \nu)|| \leq \liminf_{k \rightarrow \infty} ||(q^k, \nu^k)||\]

\end{proof}

In Lemma \ref{maximalLemma} we made a simple observation that is worth mentioning explicitly:  $||(q, \nu)||$ is invariant under permutations of the coordinates of $q$ because of the symmetry of the grid lattice $\bZ^D$.

A last property we are interested in is the convexity of $\mathcal{R}^q, \mathcal{R}^t$.

\begin{lemma}\label{lemma22}
Let $q \in \bR^D_{\geq 0}, t \geq 0$. The sets $\mathcal{R}^q = \{(q, \nu) \in \bR^D_{\geq 0} \times \mathcal{M}_+: ||(q, \nu)|| \geq 0\}$, $\mathcal{R}^t = \{\nu \in \mathcal{M}_t: ||\nu|| \geq 0\}$ are convex.
\end{lemma}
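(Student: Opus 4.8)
The plan is to deduce convexity directly from the fact, already recorded in Theorem \ref{thm16} and the remark following it, that grid entropy $||(\cdot,\cdot)||$ is a directed norm with negative sign, hence a concave function on $\bR^D \times \mathcal{M}_+$. Concretely, both $\mathcal{R}^q$ and $\mathcal{R}^t$ are superlevel sets $\{x : f(x)\geq 0\}$ of concave functions, and such sets are always convex; the only point requiring attention is that the convex combinations we form remain in the relevant ambient space — $\mathcal{M}_+$ in the direction-fixed case, $\mathcal{M}_t$ in the direction-free case.

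First, for $\mathcal{R}^q$: fix $q\in\bR^D_{\geq 0}$, take $\nu_1,\nu_2\in\mathcal{R}^q$ (so $||(q,\nu_1)||,||(q,\nu_2)||\geq 0$ by Corollary \ref{corollary18}) and $w\in[0,1]$; the cases $w\in\{0,1\}$ are trivial, so assume $w\in(0,1)$. Write the element $(q, w\nu_1+(1-w)\nu_2)\in\bR^D_{\geq 0}\times\mathcal{M}_+$ as the sum $(wq, w\nu_1) + ((1-w)q, (1-w)\nu_2)$ in the vector space $\bR^D\times\mathcal{M}_+$. The reverse triangle inequality for the directed norm with negative sign together with its positive-homogeneity (both in Theorem \ref{thm16}) give
\[ ||(q, w\nu_1+(1-w)\nu_2)|| \geq ||(wq, w\nu_1)|| + ||((1-w)q, (1-w)\nu_2)|| = w\,||(q,\nu_1)|| + (1-w)\,||(q,\nu_2)|| \geq 0. \]
Since $w\nu_1+(1-w)\nu_2$ is again a finite non-negative Borel measure on $[0,1]$, we conclude $w\nu_1+(1-w)\nu_2\in\mathcal{R}^q$.

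For $\mathcal{R}^t$ the argument is identical but applied to the direction-free grid entropy, using $||\nu|| = ||(t\ell,\nu)||$ for $\nu\in\mathcal{M}_t$: if $\nu_1,\nu_2\in\mathcal{R}^t\subseteq\mathcal{M}_t$ then $w\nu_1+(1-w)\nu_2$ still has total mass $t$, hence lies in $\mathcal{M}_t$, and the reverse triangle inequality plus positive-homogeneity for $||\cdot||$ give $||w\nu_1+(1-w)\nu_2|| \geq w\,||\nu_1|| + (1-w)\,||\nu_2|| \geq 0$, i.e. $w\nu_1+(1-w)\nu_2\in\mathcal{R}^t$. Equivalently, one may simply invoke the direction-fixed case with $q = t\ell$ together with the identification $\mathcal{R}^t = \mathcal{R}^{t\ell}$ from Corollary \ref{corollary18}. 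There is essentially no obstacle here: the content is just the concavity of grid entropy, which is already in hand, and the only subtlety worth stating explicitly is that a convex combination of measures of total mass $t$ again has total mass $t$, which keeps the argument inside $\mathcal{M}_t$, and that the sum defining the convex combination is taken in $\bR^D\times\mathcal{M}_+$ so that homogeneity and the triangle inequality apply verbatim.
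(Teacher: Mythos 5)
Your proof is correct and follows exactly the paper's argument: the paper's own (one-line) proof also deduces convexity from concavity of grid entropy, which in turn comes from positive-homogeneity and the reverse triangle inequality of the directed norm with negative sign. You have merely spelled out the decomposition $(q, w\nu_1+(1-w)\nu_2) = (wq,w\nu_1)+((1-w)q,(1-w)\nu_2)$ and the mass-preservation check explicitly, which the paper leaves implicit.
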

\begin{proof}
Grid entropy is concave in  by positive-homogeneity and by the reverse triangle-inequality it satisfies.
\end{proof}

\section{Application: Directed Polymers}\label{section5}
We now apply our results to give variational formulas for the point-to-point/point-to-level Gibbs Free Energy  in terms of direction-fixed/direction-free grid entropy and  last passage time.  As mentioned, these variational formulas have previously appeared in \cite{rassoul2014quenched}, \cite{georgiou}. Our aim is to arrive at these from the framework presented in this paper with the hope of providing fresh insights.

\subsection{Setup and Known Results}
Recall that the measurable function $\tau: [0,1] \rightarrow \bR$ determines the edge weights \\ $\tau_e = \tau(U_e)$. We will be dealing with the Gibbs Free Energy so to make our life easy we restrict ourselves in Section \ref{section5} to $\tau$ bounded and measurable. This could be extended to measurable but possibly unbounded $\tau$ by truncating $\tau$ at constants $C > 0$ and taking a supremum over $C > 0$.

Define the last passage time between points $p, q \in \bR^D$ to be
\[ T(p, q) = \sup_{\pi: \lfloor p \rfloor \rightarrow \lfloor q \rfloor} T(\pi)  = \sup_{\pi: \lfloor p \rfloor \rightarrow \lfloor q \rfloor} \sum_{e \in \pi} \tau(U_e)  \]
where this is $-\infty$ if $q-p \notin \bR^D_{\geq 0}$. A standard result (see \cite{martin} for example) is the existence of a last passage time constant, or in other words a deterministic limiting value for the last passage time in a fixed direction $q \in \bR^D_{\geq 0}$. 

\begin{theorem}
 Let $q \in \bR^D_{\geq 0}$. Then there is a $\tau$-dependent time constant $\lambda_q(\tau) \in \bR$  s.t.
\[ \frac{T(\vec{0}, nq)}n \rightarrow \lambda_q(\tau)\]
$\lambda_q$ is homogeneous and concave in $q$.
\end{theorem}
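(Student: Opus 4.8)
The plan is to deduce the existence of $\lambda_q(\tau)$ from Kingman's Subadditive Ergodic Theorem, exactly in the style already used for $d^\epsilon$ in Section~\ref{section3}, and then to read off homogeneity and concavity from the structure of the definition. First I would treat the case $q \in \bZ^D_{\geq 0}$ by setting $Y_{m,n} := -T(mq, nq)$ for $m \le n$. Superadditivity of $T$ along a fixed direction is immediate from concatenation: given a geodesic $\pi : \lfloor mq \rfloor \to \lfloor (m+\ell)q\rfloor$ realizing $T(mq,(m+\ell)q)$ (well, the sup over paths) and one $\pi' : \lfloor(m+\ell)q\rfloor \to \lfloor(m+2\ell)q\rfloor$, the concatenation $\pi\cdot\pi'$ is an admissible path from $\lfloor mq\rfloor$ to $\lfloor(m+2\ell)q\rfloor$ with $T(\pi\cdot\pi') = T(\pi) + T(\pi')$, so $T(\vec 0,(m+n)q) \ge T(\vec 0, mq) + T(mq,(m+n)q)$, i.e. $Y_{0,m+n} \le Y_{0,m} + Y_{m,m+n}$. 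Stationarity of $\{Y_{nk,(n+1)k}\}$ and the $m$-independence of the joint laws of $\{Y_{m,m+k}\}$ follow because the edge labels $U_e$ are i.i.d.\ and a translated lattice rectangle has the same distribution of passage times; as in Section~\ref{section2.5} the i.i.d.\ structure gives ergodicity. The integrability bound (i) holds since $\tau$ is bounded: $|T(\vec 0,nq)| \le n\|q\|_1 \|\tau\|_\infty$, so $E|Y_{0,n}| < \infty$ and $EY_{0,n} \ge -n\|q\|_1\|\tau\|_\infty$. Kingman's theorem then yields $T(\vec 0,nq)/n \to \lambda_q(\tau)$ a.s.\ and in $L^1$, with $\lambda_q(\tau) = \sup_n ET(\vec 0, nq)/n \in \bR$.

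Next I would pass from $\bZ^D_{\geq 0}$ to $\bQ^D_{\geq 0}$ and then to $\bR^D_{\geq 0}$ using the same machinery as Lemmas~\ref{lemma11}--\ref{lemma13} and Theorem~\ref{thm7}: the key estimate is that perturbing the endpoint by a bounded number of edges changes the passage time by at most $\|\tau\|_\infty$ times the number of altered edges, which is $O(1)$ for rational $q$ along the subsequence of common-denominator multiples and $O(1)$ in general because $\|(\lfloor nq\rfloor - \lfloor np\rfloor) - \lfloor n(q-p)\rfloor\|_1 \le D$ (equation~\eqref{5}). One then runs the staircase approximation and Lemmas~\ref{lemma14}--\ref{lemma15} verbatim, replacing $d^\epsilon$ by $T$; boundedness of $\tau$ gives the uniform boundedness of the normalized random variables that those lemmas require. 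This establishes the a.s.\ limit for all $q \in \bR^D_{\geq 0}$ and that $\lambda_q(\tau)$ is the deterministic constant $\lim_n ET(\vec 0, nq)/n = \sup_n ET(\vec0,nq)/n$ (for integer $q$; for general $q$ it is the limit obtained via the approximation).

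For homogeneity: for $c = a/b \in \bQ_{>0}$, restrict to the subsequence $n = mb$, so $T(\vec0, cnq)/n = (a/b)\cdot T(\vec0, amq)/(am) \to (a/b)\lambda_q(\tau)$, giving $\lambda_{cq}(\tau) = c\,\lambda_q(\tau)$; extend to $c \in \bR_{>0}$ by squeezing between rational factors using the $O(\|\tau\|_\infty \cdot n|c-c'|\|q\|_1)$ perturbation bound, just as in the proof of Theorem~\ref{thm16}. For concavity: given $q_1, q_2 \in \bR^D_{\geq 0}$ and $w \in [0,1]$, concatenate a near-geodesic $\vec 0 \to \lfloor wnq_1\rfloor$ with one $\lfloor wnq_1\rfloor \to \lfloor wnq_1 + (1-w)nq_2\rfloor = \lfloor n(wq_1 + (1-w)q_2)\rfloor + O(1)$; this admissible path witnesses $T(\vec0, n(wq_1+(1-w)q_2)) \ge T(\vec0, \lfloor wnq_1\rfloor) + T(\lfloor wnq_1\rfloor, \lfloor wnq_1+(1-w)nq_2\rfloor) + O(\|\tau\|_\infty)$, and dividing by $n$, taking $n\to\infty$, and using homogeneity gives $\lambda_{wq_1+(1-w)q_2}(\tau) \ge w\lambda_{q_1}(\tau) + (1-w)\lambda_{q_2}(\tau)$.

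The main obstacle is not any single step but the bookkeeping in the rational-to-real extension — handling floors of irrational coordinates and ensuring the perturbation errors are genuinely $o(n)$ along the right subsequences. Since this is mechanically identical to the already-completed arguments for $d^\epsilon$ (Lemmas~\ref{lemma10}--\ref{lemma15}, Theorems~\ref{thm7},~\ref{thm16}) with the cost functional $e^{-\rho/\epsilon}$ replaced by the additive functional $T(\pi) = \sum_{e\in\pi}\tau(U_e)$ and boundedness of $\tau$ supplying all needed uniform bounds, I would simply invoke those lemmas rather than reprove them, and concentrate the written proof on the concatenation inequalities that give superadditivity and concavity.
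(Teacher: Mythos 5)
Your proposal is correct, and it is essentially the argument the paper is implicitly relying on: the paper does not prove this theorem but cites it as a standard result (the reference is Martin's survey), and the standard proof is exactly the Kingman subadditivity argument you give, with the integer-to-rational-to-real extension handled by the endpoint-perturbation bound. Your observation that the machinery of Lemmas~\ref{lemma10}--\ref{lemma15} and Theorems~\ref{thm7},~\ref{thm16} transfers verbatim once $e^{-\rho/\epsilon}$ is replaced by the additive functional $T(\pi)$ is accurate, and the superadditivity and concavity concatenation inequalities are stated correctly (for concavity one should, as you implicitly do, note that $\lfloor wnq_1\rfloor+\lfloor(1-w)nq_2\rfloor$ and $\lfloor n(wq_1+(1-w)q_2)\rfloor$ need not be NE-comparable, but both are within $O(1)$ edges of a common dominating point, so the $\|\tau\|_\infty\cdot O(1)$ correction absorbs this). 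The only caveat worth recording is that your integrability and perturbation bounds all use $\|\tau\|_\infty<\infty$; this is legitimate here because Section~\ref{section5} standing-assumes $\tau$ bounded, but the theorem as classically stated holds under weaker moment hypotheses, which your argument would not reach without replacing the sup-norm bounds by moment estimates.
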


\begin{remark}
As was the case with grid entropy, the symmetries of the grid and concavity of passage time imply that the time constant is maximized along the direction $\ell = (\frac1D,\ldots, \frac1D)$:
\[\sup \limits_{q \in \bR^D, ||q||_1 =1} \lambda_q = \lambda_{\ell}\]
\end{remark}

\begin{remark}
Thus far the time constant is only known to be explicitly computable when the underlying distribution $\theta$ is exponential.
\end{remark}

It is useful to view last passage time as a linear functional. For any NE path $\pi: \lfloor np \rfloor \rightarrow \lfloor nq \rfloor$, note that
\[T(\pi) = \sum_{e \in \pi} \tau(U_e) =\int_0^{1} \tau(u) d(\mu_{\pi}) :=  \langle \tau, \mu_{\pi} \rangle \ \mbox{so} \ \frac{T(\pi)}n = \bigg \langle \tau, \frac1n \mu_{\pi} \bigg \rangle  \]
where $\langle f, \nu \rangle$ denotes the linear functional that is integration of a measurable function \\ $f: [0,1] \rightarrow \bR$ against a measure $\nu \in \mathcal{M}_+$. In this language, weak convergence $\nu_k \Rightarrow \nu$ is equivalent to
\[ \lim_{k \rightarrow \infty} \langle f, \nu_k \rangle = \langle f, \nu \rangle \ \forall \ \mbox{bounded, continuous functions $f$}\]
Of course, $\tau$ is likely not continuous. However, recalling  \cite[Lemma 6.15]{bates} , we see that there is an a.s. event on which $\frac1n \mu_{\pi_n}$ converging weakly to $\nu$ implies $(\tau)_{\ast}(\frac1n \mu_{\pi_n})$ converges weakly to $(\tau)_{\ast}(\nu)$ hence
\[ \bigg\langle \tau, \frac1n \mu_{\pi_n} \bigg\rangle = \bigg\langle Id, (\tau)_{\ast}\bigg(\frac1n \mu_{\pi_n}\bigg) \bigg\rangle \rightarrow \bigg \langle Id, (\tau)_{\ast}(\nu) \bigg\rangle =\langle \tau, \nu\rangle   \]
where $Id(x) =x$ is the identity function on [0,1].

By using the compactness of $\mathcal{R}^q$ and Theorem \ref{thm5Bates}, Bates establishes a variational formula for the direction-fixed limit shape. See \cite[Theorem 2.1]{bates} for details. We restate this theorem here in our LPP setting, which can be proved by tweaking Bates' original argument. We give our own proof in the next section.

\begin{theorem}\label{thm27}
Let $q \in \bR^D_{\geq 0}$. Recall that Theorem  \ref{thm5Bates} gives a deterministic weakly closed set $\mathcal{R}^q$ which a.s. consists of all weak limits of subsequences of $\mu_{\pi}$ for paths $\pi$ of increasing length. Then for any measurable $\tau: [0,1] \rightarrow \bR$, the corresponding limit shape is given by
\[ \lambda_q(\tau) = \sup_{\nu \in \mathcal{R}^q} \langle \tau, \nu \rangle \]
The direction-free analogue holds as well.
\end{theorem}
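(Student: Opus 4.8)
The plan is to prove the two inequalities $\lambda_q(\tau) \leq \sup_{\nu \in \mathcal{R}^q} \langle \tau, \nu \rangle$ and $\lambda_q(\tau) \geq \sup_{\nu \in \mathcal{R}^q} \langle \tau, \nu \rangle$ separately, exploiting the fact that $T(\pi) = \langle \tau, \mu_\pi \rangle$ and that $\mathcal{R}^q$ is (almost surely) the set of subsequential weak limits of $\frac1n \mu_{\pi_n}$. By the truncation remark it suffices to treat bounded measurable $\tau$.

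For the upper bound $\lambda_q(\tau) \leq \sup_{\nu \in \mathcal{R}^q}\langle \tau, \nu\rangle$: for each $n$ let $\pi_n : \vec{0} \to \lfloor nq \rfloor$ be a geodesic, so $\frac{T(\vec{0}, nq)}{n} = \langle \tau, \frac1n \mu_{\pi_n}\rangle$. By weak compactness of the space of probability measures on $[0,1]$ (after noting $\frac1n \mu_{\pi_n}$ has total mass $\frac{\|\lfloor nq\rfloor\|_1}{n} \to \|q\|_1$), pass to a subsequence along which $\frac1{n_k}\mu_{\pi_{n_k}} \Rightarrow \nu$ for some $\nu \in \mathcal{R}^q$. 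Using Bates' Lemma 6.15 (invoked in the excerpt) to push $\tau$ through the weak limit on an a.s. event, $\langle \tau, \frac1{n_k}\mu_{\pi_{n_k}}\rangle \to \langle \tau, \nu\rangle \leq \sup_{\xi \in \mathcal{R}^q}\langle \tau, \xi\rangle$. Since the left-hand side converges to $\lambda_q(\tau)$ along the full sequence, the bound follows.

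For the lower bound: fix any $\nu \in \mathcal{R}^q$. Since $\mathcal{R}^q$ is deterministic and equals the set of subsequential limits a.s., there is an a.s. event on which we can find paths $\pi_{n_k} : \vec{0} \to \lfloor n_k q\rfloor$ with $\frac1{n_k}\mu_{\pi_{n_k}} \Rightarrow \nu$; on the same a.s. event (Lemma 6.15 again) $\langle \tau, \frac1{n_k}\mu_{\pi_{n_k}}\rangle \to \langle \tau, \nu\rangle$. Since $\frac{T(\vec{0}, n_k q)}{n_k} \geq \langle \tau, \frac1{n_k}\mu_{\pi_{n_k}}\rangle$ for each $k$, letting $k \to \infty$ gives $\lambda_q(\tau) \geq \langle \tau, \nu\rangle$. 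Taking the supremum over $\nu \in \mathcal{R}^q$ finishes this direction, and combining the two yields the theorem. The direction-free statement follows identically using $\mathcal{R}^t$ in place of $\mathcal{R}^q$, or alternatively by noting $\lambda_{\ell}(\tau) = \sup_{\|q\|_1 = 1}\lambda_q(\tau)$ and $\mathcal{R}^1 = \bigcup_{\|q\|_1=1}\mathcal{R}^q$.

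The main obstacle is the discontinuity of $\tau$: weak convergence $\frac1n\mu_{\pi_n} \Rightarrow \nu$ does not by itself give $\langle \tau, \frac1n\mu_{\pi_n}\rangle \to \langle \tau, \nu\rangle$ since $\tau$ is only measurable. This is exactly what Bates' Lemma 6.15 (cited in the excerpt's discussion preceding the theorem) is for — it provides an a.s. event on which $\tau_*(\frac1n\mu_{\pi_n}) \Rightarrow \tau_*(\nu)$, so that $\langle \tau, \frac1n\mu_{\pi_n}\rangle = \langle \mathrm{Id}, \tau_*(\frac1n\mu_{\pi_n})\rangle \to \langle \mathrm{Id}, \tau_*(\nu)\rangle = \langle \tau, \nu\rangle$, where boundedness of $\tau$ ensures $\mathrm{Id}$ is integrated against measures supported on a bounded set. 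One must be slightly careful that the a.s. events for all relevant sequences of paths can be taken inside a single a.s. event (Lemma 6.15 is uniform over paths since it concerns all sequences simultaneously), and that the geodesic-choosing and $\mathcal{R}^q$-realizing events are compatible; this is routine given the deterministic nature of $\mathcal{R}^q$ from Theorem \ref{thm5Bates}.
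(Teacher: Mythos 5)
Your proof is correct, but it takes a genuinely different route from the paper. You give the direct compactness argument: bound $\lambda_q(\tau)$ above by extracting a weak subsequential limit of the geodesics' empirical measures (which lands in $\mathcal{R}^q$ by Theorem \ref{thm5Bates}), and below by realizing each $\nu \in \mathcal{R}^q$ along some path sequence and using $T(\vec{0}, n_k q) \geq \langle \tau, \mu_{\pi_{n_k}}\rangle$; both directions lean on Bates' Lemma 6.15 to push the merely measurable $\tau$ through weak limits, and you correctly observe that the lower bound, being an inequality between deterministic constants for each fixed $\nu$, survives the supremum over the uncountable set $\mathcal{R}^q$. This is essentially Bates' original argument transplanted to LPP, which the paper explicitly acknowledges works ("can be proved by tweaking Bates' original argument") but deliberately does not follow. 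Instead, the paper derives Theorem \ref{thm27} as the zero-temperature limit of the entropy variational formula of Theorem \ref{theorem28}: it divides $G_q^{\beta}(\tau) = \sup_{\nu}[\beta\langle\tau,\nu\rangle + \|(q,\nu)\|]$ by $\beta$, uses the standard facts that $\beta^{-1}\log Z_{n,q}^{\beta} \uparrow T(\vec{0},nq)$ and $\beta^{-1}G_q^{\beta} \to \lambda_q$, and kills the entropy term via the uniform bound $\|(q,\nu)\| \in [0, H(q)]$. Your approach is more elementary and entirely independent of the grid-entropy machinery; the paper's approach is longer in total (it requires the full strength of Theorem \ref{theorem28} first) but exhibits Bates' formula as the $\beta \to \infty$ degeneration of the positive-temperature variational formula, which is the conceptual point the paper wants to make. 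Both are valid proofs of the statement.
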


\begin{remark} \label{thm27REM}
Since $\mathcal{R}^q$ is weakly closed and nonempty (because $||q||_1 \Lambda \in \mathcal{R}^q$ from before), then the  set of maximizers is nonempty:
\[ \mathcal{R}^q_{\tau} := \{\nu \in \mathcal{R}^q: \langle \tau, \nu \rangle = \lambda_q(\tau)\} \neq \emptyset\]
As Bates remarks, when $\mathcal{R}^q_{\tau}$ is a singleton, empirical measures along geodesics converge weakly to this unique maximizer (or analogously unique minimizer in FPP), answering in the affirmative Hoffman's question. It is further pointed out that $\mathcal{R}^q_{\tau}$ is a singleton for a dense family of functions $\tau$. See \cite{bates} for more details.
\end{remark}

\begin{remark}
Noting that the limit shape is the zero temperature analogue of $\beta$-Gibbs Free Energy, these direction-fixed/direction-free variational formulas for the limit shape are also given as (7.6), (7.5) in \cite{georgiou}. 
\end{remark}

Finally, let us recall the directed polymer model, still restricting ourselves to the LPP on $\bR^D$ setting. 

\begin{definition}
Fix a direction $q \in \bR^D_{\geq 0}$, an inverse temperature $\beta > 0$ and a bounded measurable $\tau:[0,1] \rightarrow \bR$. The  point-to-point $\beta$-partition function in direction $q$ is defined to be
\[ Z_{n, q}^{\beta} = \sum_{\pi \in \mathcal{P}( \vec{0}, \lfloor nq \rfloor)} e^{\beta T(\pi)} \]
The corresponding point-to-point $\beta$-polymer measure on the set of paths $\pi: \vec{0} \rightarrow \lfloor nq \rfloor$ is
\[ \rho_{n,q}^{\beta}(d\pi) = \frac1{ Z_{n, q}^{\beta}} e^{\beta T(\pi)}\]
and the corresponding  point-to-point $\beta$- Gibbs Free Energy  is defined to be
\[ G_{q}^{\beta}  = \lim_{n \rightarrow \infty} \frac1n \log Z_{n,q}^{\beta}\]
Suppressing the direction $q$ we get point-to-level analogues (to the "level" $x_1+\ldots+x_D = n$):
\[ Z_{n}^{\beta} = \sum_{\pi \in \mathcal{P}_n(\vec{0})} e^{\beta T(\pi)},\ \rho_{n}^{\beta}(d\pi) = \frac1{ Z_{n}^{\beta}} e^{\beta T(\pi)},\ G^{\beta}  = \lim_{n \rightarrow \infty} \frac1n \log Z_{n}^{\beta}\]
\end{definition}

\subsection{Variational Formulas for PTP/PTL Gibbs Free Energies}
We  derive a formula for the point-to-point Gibbs Free Energy as the supremum over measures $\nu$ of the sum of the integral $\beta \langle \tau, \nu \rangle$ and grid entropy $||(q, \nu)||$. Since direction-free grid entropy is just grid entropy in the $\ell$ direction, then we arrive at a similar formula for the point-to-level Gibbs Free Energy.

\begin{theorem}\label{theorem28}
Fix an inverse temperature $\beta > 0$ and a bounded measurable \\ $\tau: [0,1] \rightarrow \bR$.
\begin{enumerate}[label=(\roman*)]
\item  Fix $q \in \bR^D_{\geq 0}$. The point-to-point Gibbs Free Energy is given by
\[ G_{q}^{\beta}(\tau) = \sup_{\nu \in \mathcal{M}_+} [ \beta \langle \tau, \nu \rangle + ||(q, \nu)||] \ \mbox{a.s.} \]
Moreover, this supremum is achieved by some $\nu \in \mathcal{R}^q$.
\item The point-to-level Gibbs Free Energy is given by 
\[ G^{\beta}(\tau) = \sup_{\nu \in \mathcal{M}_1} [ \beta \langle \tau, \nu \rangle + || \nu||] = G_{\ell}^{\beta} = \sup_{q \in \bR^D_{\geq 0}, ||q||_1=1} G^{\beta}_{q} \ \mbox{a.s.} \]
\end{enumerate}
\end{theorem}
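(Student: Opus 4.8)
The plan is to prove part (i) and then deduce part (ii) from Lemma \ref{maximalLemma}, the characterization of $||\nu||$ as $||(||\nu||_{TV}\ell,\nu)||$, and the analogue of Theorem \ref{thmNoDir} at the level of partition functions. For part (i), the heart of the matter is to identify
\[
\frac1n\log Z_{n,q}^\beta = \frac1n\log\sum_{\pi:\vec 0\to\lfloor nq\rfloor} e^{\beta T(\pi)} = \frac1n\log\sum_{\pi:\vec 0\to\lfloor nq\rfloor} e^{n\beta\langle\tau,\frac1n\mu_\pi\rangle}
\]
as a Laplace/Varadhan-type integral against the empirical measures, and to match it against the grid entropy $||(q,\nu)|| = \inf_{\epsilon>0}\widetilde d^\epsilon((\vec 0,0),(q,\nu))$, which we proved in Theorem \ref{thm7}/Theorem \ref{thmEquiv} counts (exponentially) the number of paths with $\frac1n\mu_\pi$ Levy--Prokhorov-close to $\nu$. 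Concretely, I would first establish the \emph{lower bound}: fix any $\nu\in\mathcal M_+$ and any $\delta>0$; since $\tau$ is bounded, $\nu\mapsto\langle\tau,\nu\rangle$ is (after the pushforward trick via \cite[Lemma 6.15]{bates}) continuous along weakly convergent empirical measures, so for small $\epsilon>0$ any path with $\rho(\frac1n\mu_\pi,\nu)<\epsilon$ has $\langle\tau,\frac1n\mu_\pi\rangle \ge \langle\tau,\nu\rangle-\delta$. Restricting the sum defining $Z_{n,q}^\beta$ to such paths and using Corollary \ref{corollary18}(ii) (there are $\ge e^{n\alpha}$ such paths i.o. for any $\alpha<||(q,\nu)||$) gives
\[
G_q^\beta(\tau) \ge \beta(\langle\tau,\nu\rangle-\delta) + \alpha
\]
for all $\alpha<||(q,\nu)||$ and all $\delta>0$, hence $G_q^\beta(\tau)\ge\beta\langle\tau,\nu\rangle+||(q,\nu)||$; taking the supremum over $\nu$ gives one inequality.

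For the \emph{upper bound} I would discretize the space of target measures. Since $\mathcal M_{||q||_1}$ (with the Levy--Prokhorov metric) is compact, for each $k$ cover it by finitely many balls $B_{1/k}(\nu_1^k),\dots,B_{1/k}(\nu_{N_k}^k)$. Split the path sum according to which ball contains $\frac1n\mu_\pi$ (all empirical measures of paths $\vec 0\to\lfloor nq\rfloor$ lie in a fixed compact set by Lemma \ref{lemma3} and the total-mass bound). On the $j$-th piece, $\langle\tau,\frac1n\mu_\pi\rangle\le\langle\tau,\nu_j^k\rangle + \|\tau\|_\infty\cdot(\text{small in }1/k)$ plus an $O(1/n)$ correction from the floors, while the number of paths in that piece is $\exp(n(\widetilde d^{1/k}((\vec 0,0),(q,\nu_j^k)) + o(1)))$ — this is exactly what the directed metric $\widetilde d^\epsilon$ controls, via the trivial bound $\#\{\pi:\frac1n\mu_\pi\in B_\epsilon(\nu)\}\le e^{\frac{n}{\epsilon}\epsilon}\sum_\pi e^{-\frac n\epsilon\rho(\frac1n\mu_\pi,\nu)} = e^{n}\,e^{d^\epsilon((\vec0,0),(nq,n\nu))}$, whose normalized limit is $1/\epsilon$-shifted from $\widetilde d^{\,\epsilon}$. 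Summing the $N_k = O(1)$-in-$n$ pieces and taking $\frac1n\log$ gives
\[
G_q^\beta(\tau) \le \max_{1\le j\le N_k}\big[\beta\langle\tau,\nu_j^k\rangle + \widetilde d^{\,1/k}((\vec0,0),(q,\nu_j^k))\big] + \text{error}(k),
\]
and letting $k\to\infty$, using upper semicontinuity of $||(q,\cdot)||$ (Theorem \ref{thm20}(i)) together with $\inf_\epsilon\widetilde d^\epsilon = ||(q,\cdot)||$ and compactness to extract a convergent maximizing sequence $\nu_{j_k}^k\Rightarrow\nu_\ast$, yields $G_q^\beta(\tau)\le\beta\langle\tau,\nu_\ast\rangle+||(q,\nu_\ast)||\le\sup_\nu[\cdots]$. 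Since the supremum is $-\infty$ unless $\nu\in\mathcal R^q$ (grid entropy is $-\infty$ off $\mathcal R^q$ by Corollary \ref{corollary18}(i)) and $\mathcal R^q$ is weakly compact (remark after Theorem \ref{thm19}), the supremum is attained on $\mathcal R^q$; this also forces $\nu_\ast\in\mathcal R^q$.

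For part (ii): applying part (i) with $q=\ell$ and recalling $||\nu|| = ||(||\nu||_{TV}\ell,\nu)||$ with $\mathcal R^1 = \mathcal R^\ell$ gives $G_\ell^\beta(\tau) = \sup_{\nu\in\mathcal M_1}[\beta\langle\tau,\nu\rangle+||\nu||]$. The identity $G^\beta = G_\ell^\beta = \sup_{||q||_1=1}G_q^\beta$ then follows exactly as in Theorem \ref{thmNoDir}: writing $Z_n^\beta = \sum_{\pi\in\mathcal P_n(\vec 0)}e^{\beta T(\pi)}$ and grouping paths by endpoint $\lfloor nq\rfloor$, there are only $\binom{n+D-1}{D-1}=O(n^D)$ endpoints, so $\sup_{||q||_1=1}Z_{n,q}^\beta \le Z_n^\beta \le O(n^D)\sup_{||q||_1=1}Z_{n,q}^\beta$; after $\frac1n\log$ the polynomial factor vanishes and one gets $G^\beta = \lim_n\sup_{||q||_1=1}\frac1n\log Z_{n,q}^\beta$, which equals $\sup_{||q||_1=1}G_q^\beta = G_\ell^\beta$ by Lemma \ref{maximalLemma} (the $\sup\limsup\le\limsup\sup$ direction is trivial and the reverse is the same event-independence-of-the-bound argument used in the proof of Theorem \ref{thmNoDir}, replacing $d^\epsilon$ by $\beta T$).

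\emph{Main obstacle.} The delicate point is the upper bound in part (i): passing from the finite cover to a clean $\sup_\nu$ requires interchanging the limits $n\to\infty$ and $k\to\infty$ and controlling, uniformly, both the continuity defect of $\langle\tau,\cdot\rangle$ near non-continuity points of $\tau$ (handled only after the pushforward via $\tau_*$, so one must be careful that the relevant convergence holds on a single a.s. event) and the gap between $\widetilde d^{1/k}$ and its $\epsilon\downarrow 0$ infimum $||(q,\cdot)||$, which is exactly where upper semicontinuity of grid entropy is essential. Everything else is bookkeeping with the floor corrections and Lemma \ref{lemma12}.
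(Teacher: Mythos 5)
Your overall strategy matches the paper's: lower bound by restricting $Z_{n,q}^\beta$ to paths with $\frac1n\mu_\pi$ Levy--Prokhorov-close to $\nu$ (using Corollary~\ref{corollary18}(ii) for the exponential count and continuity of $\langle\tau,\cdot\rangle$ to transfer the closeness), upper bound by a finite cover of a weakly compact set of measures plus the Laplace principle, compactness to extract a maximizing limit $\nu_\ast$, and part (ii) by reduction to the direction $\ell$ just as in Theorem~\ref{thmNoDir}.

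However, there is a genuine gap in your upper bound. Your ``trivial bound''
\[
\#\Big\{\pi:\tfrac1n\mu_\pi\in B_\epsilon(\nu)\Big\}\le e^{\frac{n}{\epsilon}\cdot\epsilon}\sum_\pi e^{-\frac{n}{\epsilon}\rho(\frac1n\mu_\pi,\nu)}=e^{n}\,e^{d^\epsilon((\vec0,0),(nq,n\nu))}
\]
uses the same parameter $\epsilon$ for both the ball radius and the exponential penalty scale. After taking $\frac1n\log$, this contributes a shift of exactly $1$ (your ``$1/\epsilon$-shifted'' annotation is a miscalculation, since $e^{\frac n\epsilon\cdot\epsilon}=e^n$), and this $+1$ does not vanish in any limit you take afterward. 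You would only obtain $G_q^\beta\le\sup_\nu[\beta\langle\tau,\nu\rangle+\|(q,\nu)\|]+1$, which is not the claim. The paper's Lemma~\ref{lemma32} is precisely the fix: it decouples the ball radius $1/m$ from the exponential scale $\gamma$, so the shift becomes $\frac{1/m}{\gamma}$; choosing $\gamma=1/\sqrt m$ makes this $1/\sqrt m\to0$ while still letting $\widetilde d^{1/\sqrt m}\downarrow\|(q,\cdot)\|$. Upper semicontinuity of grid entropy (which you flag as the ``main obstacle'') does not by itself repair this, because the $+1$ is an additive error coming from the indicator bound, not from the $\epsilon\downarrow 0$ limit of $\widetilde d^\epsilon$.

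A secondary imprecision: you claim $|\langle\tau,\tfrac1n\mu_\pi\rangle-\langle\tau,\nu_j^k\rangle|\lesssim\|\tau\|_\infty\cdot(\text{small in }1/k)$, i.e.\ a Lipschitz bound of $\langle\tau,\cdot\rangle$ in Levy--Prokhorov distance with constant $\|\tau\|_\infty$. This is false for general bounded measurable $\tau$ (take $\tau=\mathbf 1_A$ with $\rho$-close measures that assign very different mass to $A$). The paper instead uses only uniform continuity of $\beta\langle\tau,\cdot\rangle$ on the weakly compact $S$ (equation~\eqref{equation18}); you should match that, rather than a quantitative Lipschitz estimate, and be careful (as you already note) that the continuity used is only valid on the a.s.\ event coming from \cite[Lemma 6.15]{bates} or via the compactness of $S$.

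Everything else — the countable intersection of measure-$1$ events for the lower bound, the pigeonhole/compactness extraction of a single dominant ball in the upper bound, and the reduction of part (ii) to $G_\ell^\beta$ via grouping by endpoints and the $O(n^D)$ polynomial factor — is the same as the paper's argument and looks correct once the decoupling in the indicator bound is fixed.
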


\begin{remark}
These entropy variational formulas appear as (7.12), (7.10) in \cite{georgiou} respectively, with a different normalization. The point-to-point formula originally appears as (5.4) in \cite{rassoul2014quenched}.
\end{remark}

\begin{remark}
If $\tau$ is non-negative then the variational formulas are trivially positive\hyp{}homogeneous.
\end{remark}

\begin{remark}
Again, we may extend these formulas to a supremum over all measurable $\tau$ by truncating $\tau$ in $\langle \tau, \nu \rangle$ at some $C > 0$ and then taking a supremum over $C>0$ of the variational formula. 
\end{remark}

We require a short lemma in order to prove our variational formula. The main technical difficulty is that each direction-$q$ grid entropy is an almost sure limit:
\[ ||(q,\nu)|| = \inf_{\epsilon > 0} \lim_{n \rightarrow \infty} \frac{d^{\epsilon}((\vec{0},0), (nq, n\nu))}n \ \mbox{a.s.}\]
but we are now dealing with a whole space of measures, which is uncountable. Thus we need to assume these limits exist for a countable dense family of $\nu$'s and approximate  the sums of $e^{\beta T(\pi)}$ over  paths with empirical measure in $\epsilon$-balls.

\begin{lemma}\label{lemma32}
Fix $q \in \bR^D_{\geq 0}, \chi \in \mathcal{M}_+, \gamma > 0$ and  $m \in \bN$. For any $n \in \bN$,
\begin{align*}
& \log \bigg(\sum_{\pi: \vec{0} \rightarrow \lfloor nq \rfloor \ \mbox{s.t.} \ \frac1{n}\mu_{\pi} \in B_{\frac1m}(\chi)} e^{\beta T(\pi)} \bigg) \\
&\leq \beta n \bigg( \sup_{\xi \in B_{\frac1m}(\chi)} \langle \tau, \xi \rangle \bigg) + n \frac{1/m}{\gamma} + d^{\gamma}((\vec{0}, 0),(nq, n \chi))
\end{align*}
\end{lemma}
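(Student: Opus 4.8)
The plan is to bound the partition-function-type sum directly by the quantities appearing on the right-hand side, term by term. The key observation is that the sum is only over paths $\pi:\vec 0\to\lfloor nq\rfloor$ whose normalized empirical measure $\frac1n\mu_\pi$ lies in the Levy-Prokhorov ball $B_{1/m}(\chi)$, and for every such $\pi$ we can control $T(\pi)$, control $\rho(\mu_\pi, n\chi)$, and observe that the number of such paths is at most the full exponential-cost sum defining $d^\gamma$.

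First I would rewrite $T(\pi)=\langle\tau,\mu_\pi\rangle = n\langle\tau,\frac1n\mu_\pi\rangle$. Since $\frac1n\mu_\pi\in B_{1/m}(\chi)$, we immediately get $\beta T(\pi)\le \beta n\sup_{\xi\in B_{1/m}(\chi)}\langle\tau,\xi\rangle$. This handles the first term on the right-hand side, and it is a uniform bound over all paths in the sum, so it factors out. Next I would note that for each such $\pi$, $\rho(\frac1n\mu_\pi,\chi)<\frac1m$, and by positive-homogeneity-type scaling of $\rho$ (or directly from the definition applied after multiplying by $n$, using Lemma~\ref{lemma4}-style reasoning), $\rho(\mu_\pi, n\chi)= n\rho(\frac1n\mu_\pi,\chi)<\frac nm$; hence $e^{-\frac1\gamma\rho(\mu_\pi,n\chi)} > e^{-\frac1\gamma\cdot\frac nm}$, i.e. $1 < e^{\frac{n/m}{\gamma}}\,e^{-\frac1\gamma\rho(\mu_\pi,n\chi)}$.

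Combining these:
\[
\sum_{\substack{\pi:\vec 0\to\lfloor nq\rfloor\\ \frac1n\mu_\pi\in B_{1/m}(\chi)}} e^{\beta T(\pi)} \le e^{\beta n\sup_{\xi\in B_{1/m}(\chi)}\langle\tau,\xi\rangle}\sum_{\substack{\pi:\vec 0\to\lfloor nq\rfloor\\ \frac1n\mu_\pi\in B_{1/m}(\chi)}} 1 \le e^{\beta n\sup_{\xi}\langle\tau,\xi\rangle}\, e^{\frac{n/m}{\gamma}}\sum_{\pi:\vec 0\to\lfloor nq\rfloor} e^{-\frac1\gamma\rho(\mu_\pi,n\chi)},
\]
where in the last step I dropped the restriction $\frac1n\mu_\pi\in B_{1/m}(\chi)$ on the sum (all summands are nonnegative) and used the bound $1\le e^{\frac{n/m}{\gamma}}e^{-\frac1\gamma\rho(\mu_\pi,n\chi)}$ valid on the restricted index set. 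The remaining sum is exactly $e^{d^\gamma((\vec 0,0),(nq,n\chi))}$ by the definition of $d^\gamma$ (with $p=\vec 0$, $\xi=0$, $\nu=n\chi$, scaled endpoint $\lfloor nq\rfloor$). Taking logarithms gives precisely the claimed inequality.

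I expect no serious obstacle here; the only point requiring a little care is the scaling identity $\rho(\mu_\pi, n\chi)=n\,\rho(\frac1n\mu_\pi,\chi)$, which follows since scaling both measures by the same positive factor $n$ scales every witness $\epsilon$ in the definition of $\rho$ by $n$ (the conditions $\mu(A)\le\nu(A^\epsilon)+\epsilon$ become $n\mu(A)\le n\nu(A^{n\cdot\epsilon/n})+n\epsilon$ after multiplying by $n$ and relabeling $n\epsilon$), and the mild bookkeeping that dropping the ball-restriction on the sum only increases it. Everything else is a direct substitution of definitions.
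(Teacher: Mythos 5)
Your proof is correct and is essentially the paper's own argument: both bound the indicator of the ball by $e^{\frac{n/m}{\gamma}}e^{-\frac{n}{\gamma}\rho(\frac1n\mu_\pi,\chi)}$, pull out the uniform bound $\beta n\sup_{\xi\in B_{1/m}(\chi)}\langle\tau,\xi\rangle$ on $\beta T(\pi)$, drop the restriction on the index set, and recognize the remaining sum as $e^{d^\gamma((\vec0,0),(nq,n\chi))}$. The only nitpick is that $\rho(\mu_\pi,n\chi)=n\rho(\frac1n\mu_\pi,\chi)$ is not an exact identity (the neighborhood parameter $A^\epsilon$ in the Levy--Prokhorov definition does not scale with the measures), but the inequality $\rho(\mu_\pi,n\chi)\le n\rho(\frac1n\mu_\pi,\chi)$, which is all you use, follows from $n$ applications of Lemma~\ref{lemma4}.
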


\begin{proof}
Let us look closer at this sum. Observe that we can upper bound the indicator of the set we are summing over by a smooth function:
\[\mathds{1} \bigg\{\frac1{n}\mu_{\pi} \in B_{\frac1m}(\chi) \bigg\} \leq \mbox{exp}\bigg(n\frac{1/m}{\gamma}\bigg) \mbox{exp}\bigg(-\frac{n}{\gamma} \rho\bigg(\frac1{n} \mu_{\pi}, \chi \bigg)\bigg)\]
Thus
\begin{align*}
    &\log \bigg(\sum_{\pi \ \mbox{s.t.} \ \frac1{n}\mu_{\pi} \in B_{\frac1m}(\chi)} e^{\beta T(\pi)} \bigg)  \\
    &\leq \beta \bigg( \sup_{\pi \ \mbox{s.t.} \ \frac1{n}\mu_{\pi} \in B_{\frac1m}(\chi)} T(\pi) \bigg) + \log \bigg( \sum_{\pi \ \mbox{s.t.} \ \frac1{n}\mu_{\pi} \in B_{\frac1m}(\chi)} 1 \bigg) \\
    & \leq \beta n \bigg( \sup_{\pi \ \mbox{s.t.} \ \frac1{n}\mu_{\pi} \in B_{\frac1m}(\chi)} \langle \tau, \frac1{n} \mu_{\pi} \rangle \bigg) + \log  \bigg[ \mbox{exp}\bigg(n\frac{1/m}{\gamma}\bigg) \sum_{\pi \in \mathcal{P}( \vec{0}, \lfloor nq \rfloor)} \mbox{exp}\bigg(-\frac{n}{\gamma} \rho\bigg(\frac1{n} \mu_{\pi}, \chi \bigg)\bigg) \bigg] \\
    &\leq \beta n \bigg( \sup_{\xi \in B_{\frac1m}(\chi)} \langle \tau, \xi \rangle \bigg) + n \frac{1/m}{\gamma} + d^{\gamma}((\vec{0}, 0),(nq, n \chi))
\end{align*}
\end{proof}

\begin{proof}[Proof of Theorem \ref{theorem28}]
We focus on showing (i). The proof of the direction-free case (ii) is analogous. 

(i) Note that if $q = \vec{0}$ then $Z_{n,q}^{\beta} = 1$ hence the variational formula holds trivially. Thus we may assume $||q||_1 > 0$.

Recall that the Levy-Prokhorov metric metrizes weak convergence of measures and that $\{\xi \in \mathcal{M}_+: \frac12 ||q|_1 \leq ||\xi||_{TV} \leq \frac32 ||q||_1\} := S$ is weakly compact. It follows that $\beta \langle \tau, \cdot \rangle$ is uniformly continuous on $S$. More precisely,
\begin{equation} \label{equation18}
\forall \epsilon > 0 \ \exists \delta > 0 \ \mbox{s.t.} \ \forall  \xi, \xi' \in S, \ \rho(\xi, \xi') < \delta \Rightarrow |\beta\langle \tau, \xi \rangle - \beta \langle \tau, \xi' \rangle| < \epsilon
\end{equation}

 We split the argument into two claims.

\textbf{Claim 1}:
\[\lim_{n \rightarrow \infty} \frac1n \log Z_{n,q}^{\beta} \geq \sup_{\nu \in \mathcal{M}_+} [ \beta \langle \tau, \nu \rangle + ||(q, \nu)||] \ \mbox{a.s.} \]

\emph{Proof of Claim 1}\\
Note that $||(q,\nu)|| = -\infty$ unless $\nu \in \mathcal{R}^q$. Also $\mathcal{R}^q$ is weakly closed and $||(q,\nu)||$ is upper semicontinuous in $\nu$ so there exists some $\nu \in \mathcal{R}^q $ that achieves the supremum
\[\sup_{\nu \in \mathcal{M}_+} [ \beta \langle \tau, \nu \rangle + ||(q, \nu)||] \]
We prove the claim when $||(q,\nu)|| > 0$; the $||(q,\nu)||=0$ case is very similar.

For Claim 1, we restrict ourselves to the measure 1 event  
\begin{equation} \label{event1}
\bigg\{\lim_{n \rightarrow \infty} \min_{\pi}^{\lfloor e^{n(||(q, \nu)|| - \epsilon)} \rfloor} \rho \bigg(\frac1{n} \mu_{\pi}, \nu \bigg) = 0 \ \forall \epsilon \in \bQ_+ \ \mbox{with} \ \epsilon < ||(q,\nu)|| \bigg \}
\end{equation}
(this has  measure 1 by Theorem \ref{thmEquiv}). We wish to show that in this event,
\[ \liminf_{n \rightarrow \infty} \frac1n \log Z_{n,q}^{\beta} \geq  \beta \langle \tau, \nu \rangle + ||(q, \nu)|| - 2\epsilon \ \mbox{for $\epsilon > 0$ arbitrarily small}  \]

Fix $0 < \epsilon < ||(q, \nu)||$ in $\bQ_+$ and let $\delta > 0$ satisfy \eqref{equation18}.  We are in the event \eqref{event1} so
\[\lim_{n \rightarrow \infty} \min_{\pi}^{\lfloor e^{n(||(q, \nu)|| - \epsilon)} \rfloor} \rho \bigg(\frac1{n} \mu_{\pi}, \nu \bigg) = 0 \]
Denote by $\pi_n$ the (event-depedent) paths  corresponding to these order statistics. For large enough $n$ we have $\rho (\frac1{n} \mu_{\pi_n}, \nu ) < \delta$ hence there are $\lfloor e^{n(||(q, \nu)|| - \epsilon)} \rfloor$ paths $\pi: \vec{0} \rightarrow \lfloor nq \rfloor$ satisfying 
\begin{equation}\label{equation19}
\rho \bigg(\frac1{n} \mu_{\pi}, \nu \bigg) \leq \rho \bigg(\frac1{n} \mu_{\pi_n}, \nu \bigg)  < \delta \ \mbox{so} \ \bigg|\beta \langle \tau, \frac1n\mu_{\pi} \rangle - \beta \langle \tau, \nu \rangle \bigg|< \epsilon 
\end{equation}
by \eqref{equation18}. But then 
\[ Z_{n,q}^{\beta} \geq \sum_{\mbox{the $\lfloor e^{n(||(q, \nu)|| - \epsilon)} \rfloor$ paths $\pi$ from above}} e^{\beta n \langle \tau, \frac1n \mu_{\pi} \rangle} \geq \lfloor e^{n(||(q, \nu)|| - \epsilon)} \rfloor e^{ n (\beta \langle \tau, \nu \rangle - \epsilon)} \]
by \eqref{equation19}. It follows that
\[ \liminf_{n \rightarrow \infty} \frac1n \log Z_{n,q}^{\beta} \geq  \beta\langle \tau, \nu \rangle - \epsilon + ||(q, \nu)|| - \epsilon\]
Since $\epsilon > 0$ was arbitrarily small, then taking $\epsilon \rightarrow 0^+$ we get
\begin{equation}\label{eqnGT}
\liminf_{n \rightarrow \infty} \frac1n \log Z_{n,q}^{\beta} \geq \sup_{\nu \in \mathcal{R}^q} [\beta \langle \tau, \nu \rangle  + ||(q, \nu)||] \ \mbox{a.s.} \
\end{equation}
as desired. $\square$[Claim 1]

For the second half, we actually show a slightly stronger claim, which we use in a later proof.

\textbf{Claim 2}: Let $W \subseteq \mathcal{M}_+$ be a weakly open, possibly empty set s.t. $\mathcal{R}^q \setminus W \neq \emptyset$. Define
\[ Y_{n,q}^{\beta} := \sum_{\pi: \vec{0} \rightarrow \lfloor nq \rfloor \ \mbox{s.t.} \ \frac1n \mu_{\pi} \notin W} e^{\beta T(\pi)} = \sum_{\pi: \vec{0} \rightarrow \lfloor nq \rfloor \ \mbox{s.t.} \ \frac1n \mu_{\pi} \notin W} e^{\beta n \langle \tau, \frac1n \mu_{\pi} \rangle}\]
Then
\begin{equation} \label{claim}
 \limsup_{n \rightarrow \infty} \frac1n \log Y_{n,q}^{\beta} \leq \sup_{\nu \in \mathcal{M}_+ \setminus W} [ \beta \langle \tau, \nu \rangle + ||(q, \nu)||] \ \mbox{a.s.} 
\end{equation}
Of course, in our case $W = \emptyset$ and $Y_{n,q}^{\beta} = Z_{n,q}^{\beta}$. 

\emph{Proof of Claim 2}\\
We must be extra careful to only specify  countably many measure zero events we exclude. We do this by considering a countable dense family of target measures.

$S \setminus W$ is weakly compact. So for every $m \in \bN$ there is some $M = M(m) \in \bN$ and a finite $\frac1m$-net $B_{\frac1m}(\nu_1^m),\ldots, B_{\frac1m}(\nu_M^m)$ covering $S \setminus W$ with the property that $\nu_i^m \notin W$ (but the balls themselves may intersect $W$). From now on we restrict ourselves to the event 
\begin{equation}\label{event2}
\bigg\{\lim_{n \rightarrow \infty} \frac{d^{\frac1{\sqrt{m}}}((\vec{0}, 0), (nq, n \nu_i^m))}n = \widetilde{d}^{\frac1{\sqrt{m}}}((\vec{0},0), (q, \nu^m_i))  \ \forall m \in \bN, 1 \leq i \leq M(m) \bigg\}
\end{equation}
which is a countable intersection of measure 1 events by Theorem  \ref{thm9} hence has measure 1. It is important to note that for any arbitrary measure $\xi \in \mathcal{M}_+$ we still have 
\[||(q,\xi)|| = \inf_{\epsilon > 0} \widetilde{d}^{\epsilon}((\vec{0},0), (q,\xi))\]
because this is a non-probabilistic statement about constants; we do not however assume that these directed metrics $\widetilde{d}^{\epsilon}((\vec{0},0), (q,\xi))$ are limits of the $\frac{d^{\epsilon}((\vec{0},0),(nq,n\xi))}n$. 

Now we wish to fix an arbitrary $\epsilon > 0$ and prove that in the measure 1 event \eqref{event2},
\[ L:= \limsup_{n \rightarrow \infty} \frac1n \log Y_{n,q}^{\beta} \leq \sup_{\nu \in \mathcal{M}_+ \setminus W} [\beta \langle \tau, \nu \rangle  + ||(q, \nu)||] + 5\epsilon\ \]
It suffices to consider a convergent subsequence $\frac1{n_k} \log Y_{n_k,q}^{\beta}  \rightarrow L$ and show $L \leq $ the right hand side.

Fix any $m \in \bN$. For large $k$,
\[\frac{||\lfloor n_kq \rfloor||_1}{n_k}  \in \bigg[\frac12 ||q||_1, \frac32 ||q||_1 \bigg]\]
so any path $\pi: \vec{0} \rightarrow \lfloor n_kq \rfloor$ we are summing over in $Y_{n_k,q}^{\beta}$ has $\frac1{n_k} \mu_{\pi} \in S \setminus W$ hence $\frac1{n_k} \mu_{\pi} $  is in one of the $B_{\frac1m}(\nu_i^m)$. Therefore
\[ Y_{n_k,q}^{\beta} \leq \sum_{i=1}^{M(m)} \sum_{\pi \ \mbox{s.t.} \ \frac1{n_k}\mu_{\pi} \in B_{\frac1m}(\nu_i^m)} e^{\beta T(\pi)} \leq M(m) \max_{i=1}^{M(m)} \bigg( \sum_{\pi \ \mbox{s.t.} \ \frac1{n_k}\mu_{\pi} \in B_{\frac1m}(\nu_i^m)} e^{\beta T(\pi)} \bigg) \]
Note that $M(m)$ is a constant with respect to $k$, whereas the max above is both event-dependent and $k$-dependent. Thus there is some  event-dependent $1 \leq I(m) \leq M(m)$ and some subsequence $n_{k_j}$ s.t. the above maximum is achieved by the \emph{same} $i=I(m)$ for every $n_k = n_{k_j}$. (Finite pigeonholes, infinite pigeons if you will.) Therefore
\begin{equation}\label{equation20} L = \limsup_{j \rightarrow \infty} \frac1{n_{k_j}} \log Y_{n_{k_j}, q}^{\beta} \leq \limsup_{j \rightarrow \infty} \frac1{n_{k_j}} \log \bigg( \sum_{\pi \ \mbox{s.t.} \ \frac1{n_{k_j}}\mu_{\pi} \in B_{\frac1m}(\nu_{I(m)}^m)} e^{\beta T(\pi)} \bigg) 
\end{equation}
This holds for \emph{any} $m$. Our strategy will be to choose a very large, event-dependent $m$ that gives a nice event-\emph{independent} upper bound.

Let us upper bound the expression in the limit on the right-hand side of \eqref{equation20}. Lemma \ref{lemma32} with $\chi = \nu_{I(m)}^m, \gamma = \frac1{\sqrt{m}}$ establishes that  for any $j \in \bN$,
\begin{equation} \label{Equation28}
\begin{split}
&\log \bigg(\sum_{\pi: \vec{0} \rightarrow \lfloor n_{k_j}q \rfloor \ \mbox{s.t.} \ \frac1{n_{k_j}}\mu_{\pi} \in B_{\frac1m}(\nu_{I(m)}^m)} e^{\beta T(\pi)} \bigg) \\
&\leq \beta n_{k_j} \bigg( \sup_{\xi \in B_{\frac1m}(\nu_{I(m)}^m)} \langle \tau, \xi \rangle \bigg) + n_{k_j} \frac{1/m}{1/\sqrt{m}} + d^{\frac1{\sqrt{m}}}((\vec{0}, 0),(n_{k_j}q, n_{k_j} \nu_{I(m)}^m)))
\end{split}
\end{equation}
This holds for any arbitrary $m \in \bN$. The next step is to specify a sufficiently large $m$.

By compactness of $S \setminus W$, the measures $\nu_{I(m)}^m$ have a weakly convergent subsequence \\ $\nu_{I(m_l)}^{m_l} \Rightarrow \nu' \in S \setminus W$.

Fix some $l_0 \in \bN$ large enough so that 
\begin{equation}\label{bounds}
\frac1{m_{l_0}} + \frac{\epsilon}{\sqrt{m_{l_0}}} < \delta, \ \frac1{\sqrt{m_{l_0}}} < \epsilon, \ \widetilde{d}^{\ \frac1{\sqrt{m_{l_0}}}}((\vec{0},0), (q, \nu'))< ||(q,\nu')||+\epsilon 
\end{equation}
 Consider some event-dependent $l \geq l_0$ large enough s.t.
\begin{equation} \label{bound2}
 \rho(\nu_{I(m_l)}^{m_l}, \nu') < \frac{\epsilon}{\sqrt{m_{l_0}}}
\end{equation}

Since 
\[ \xi \in \overline{B_{\frac1{m_l}}(\nu_{I(m_l)}^{m_l})} \Rightarrow \rho(\xi, \nu') \leq \rho(\xi,\nu_{I(m_l)}^{m_l}) + \rho(\nu_{I(m_l)}^{m_l}, \nu') \leq \frac1{m_{l}} + \frac{\epsilon}{\sqrt{m_{l_0}}}  \leq  \frac1{m_{l_0}} + \frac{\epsilon}{\sqrt{m_{l_0}}}  < \delta \]
and $\delta$ satisfies \eqref{equation18}  then 
\begin{equation}\label{EQ31}
    \sup_{\xi \in B_{\frac1{m_l}}(\nu_{I(m_l)}^{m_l})} \beta \langle \tau, \xi \rangle \leq \sup_{\xi \in \overline{B_{\frac1{m_l}}(\nu_{I(m_l)}^{m_l})}} \beta \langle \tau, \xi \rangle < \beta \langle \tau, \nu' \rangle + \epsilon
\end{equation} 
Next, we have 
\begin{equation}\label{EQ32}
\frac1{\sqrt{m_l}} \leq \frac1{\sqrt{m_{l_0}}} < \epsilon
\end{equation}
Finally, we are in the event \eqref{event2} so for large enough $j$ we have 
\begin{equation} \label{EQ33}
\begin{split}
   & d^{\frac1{\sqrt{m_l}}}((\vec{0}, 0), (n_{k_j} q, n_{k_j} \nu^{m_l}_{I(m_l)})) \\
    & < n_{k_j} \widetilde{d}^{\ \frac1{\sqrt{m_l}}} ((\vec{0}, 0), (q, \nu^{m_l}_{I(m_l)})) + n_{k_j} \epsilon  \\
    & \leq n_{k_j} \widetilde{d}^{\ \frac1{\sqrt{m_{l_0}}}} ((\vec{0}, 0), (q, \nu^{m_l}_{I(m_l)})) + n_{k_j} \epsilon  \\
    & \leq n_{k_j} \widetilde{d}^{\ \frac1{\sqrt{m_{l_0}}}} ((\vec{0}, 0), (q, \nu')) + \frac{n_{k_j}}{1/\sqrt{m_{l_0}}} \rho( \nu^{m_l}_{I(m_l)}, \nu') + n_{k_j} \epsilon
   \end{split}
\end{equation}
by Lemma \ref{lemma12}. By \eqref{bounds},\eqref{bound2} this last expression is
    \[ < n_{k_j} ||(q,\nu')||  + 3n_{k_j} \epsilon  \]
Combining the inequalities \eqref{EQ31},\eqref{EQ32},\eqref{EQ33} with \eqref{Equation28} we get that for large enough $j$,
\begin{equation}\label{32}
\log \bigg(\sum_{\pi: \vec{0} \rightarrow \lfloor n_{k_j}q \rfloor \ \mbox{s.t.} \ \frac1{n_{k_j}}\mu_{\pi} \in B_{\frac1{m_l}}(\nu_{I(m_l)}^{m_l})} e^{\beta T(\pi)} \bigg) \leq n_{k_j} \bigg( \beta \langle \tau, \nu' \rangle  + ||(q,\nu')||  + 5\epsilon \bigg)
\end{equation}

Substituting this in \eqref{equation20}, we get
\[ L \leq  \beta \langle \tau, \nu' \rangle  + ||(q,\nu')||  + 5\epsilon \leq \sup_{\nu \in \mathcal{M}_+ \setminus W}  \beta \langle \tau, \nu \rangle  + ||(q,\nu)||  + 5\epsilon\]
as desired. Note that even though $m_l$ and $\nu'$ in \eqref{32} are event-dependent, the upper bound we get for $L$ is \emph{not} event-dependent, makes our argument work. \ $\square$[Claim 2]

This finishes the proof of the variational formula in the fixed-direction case.

(ii) An analogous argument gives
\[G^{\beta} = \sup_{\nu \in \mathcal{M}_1} [\beta \langle \tau, \nu \rangle + ||\nu||] \]
But direction-free grid entropy is just grid entropy in the direction $\ell$. Furthermore, \\ $||(\ell, \nu)|| = -\infty$ if $\nu \notin \mathcal{M}_1$. Thus
\begin{align*}
    G^{\beta} &= \sup_{\nu \in \mathcal{M}_1} [\beta \langle \tau, \nu \rangle + ||\nu||] \\
    &= \sup_{\nu \in \mathcal{M}_+} [\beta \langle \tau, \nu \rangle + ||(\ell,\nu)||]\\
    &= G^{\beta}_{\ell}\\
    &= \sup_{q \in \bR^D_{\geq 0}, ||q||_1=1}\sup_{\nu \in \mathcal{M}_+} [\beta \langle \tau, \nu \rangle + ||(q,\nu)||] \ \mbox{by Lemma \ref{maximalLemma}}\\
    &= \sup_{q \in \bR^D_{\geq 0}, ||q||_1=1} G^{\beta}_{q}
\end{align*}
\end{proof}

As an immediate corollary, we get another proof of Bates' Variational Formula (Theorem \ref{thm27}), as well as its direction-free analogue. We stress that our proof as written works only under the assumption that $\tau$ is bounded, but can easily be extended to the general case of measurable $\tau$.

\textit{Proof of Theorem \ref{thm27}.} Fix $q \in \bR^D_{\geq 0}$ and bounded measurable $\tau$. It is standard (e.g. see \cite{georgiou}) that  for every $n$,
\begin{equation}\label{label25} 
\lim_{\beta \rightarrow \infty} \beta^{-1}  \log Z_{n,q}^{\beta} = Z_{n,q} = \sup_{\beta > 0} \beta^{-1}  \log Z_{n,q}^{\beta} = T(\vec{0}, nq) := \log Z_{n,q}^{\infty} \ \mbox{a.s.}
\end{equation}
which we recall  is the last passage time between $\vec{0}$ and $\lfloor nq \rfloor$. That is, the last passage time is the zero temperature analogue of $\log Z_{n,q}^{\beta}$. Thus the  the zero temperature point-to-point Gibbs Free Energy  is the last passage time constant $\lambda_q$:
\[ G^{\infty}_q := \lim_{n \rightarrow \infty} n^{-1} \log Z_{n,q}^{\infty} = \lim_{n \rightarrow \infty}  n^{-1} T(\vec{0}, nq) = \lambda_q \ \mbox{a.s.}\]

On the other hand, another standard result (see \cite{georgiou}) says that the zero temperature point-to-point Gibbs Free Energy is a scaled limit of the positive temperature free energies:
\begin{equation}\label{label27}
    \lim_{\beta \rightarrow \infty} \beta^{-1} G^{\beta}_q = \sup_{\beta > 0} \beta^{-1} G^{\beta}_q = G^{\infty}_q = \lambda_q \ \mbox{a.s}
\end{equation}
Fix $N \in \bN$. Dividing our variational formula by $\beta$ and taking the supremum over $\beta \geq N$  we get by \eqref{label27}
\begin{align*}
    \lambda_q &=     \sup_{\beta \geq N} \beta^{-1} G^{\beta}_q \\
    &= \sup_{\beta \geq N} \sup_{\nu \in \mathcal{R}^q} \bigg(\langle \tau, \nu \rangle + \beta^{-1} ||(q,\nu)|| \bigg) \\
    &= \sup_{\nu \in \mathcal{R}^q} \sup_{\beta \geq N}  \bigg(\langle \tau, \nu \rangle + \beta^{-1} ||(q,\nu)|| \bigg) \\
    &= \sup_{\nu \in \mathcal{R}^q} \bigg(\langle \tau, \nu \rangle + N^{-1} ||(q,\nu)|| \bigg) \\
\end{align*}
Taking $N \rightarrow \infty$ and noting that $\nu \in \mathcal{R}^q$ implies $|(q, \nu)|| \in [0, H(q)]$ is bounded, we get Bates' variational formula:
\[ \lambda_q = \sup_{\nu \in \mathcal{R}^q} \langle \tau, \nu \rangle \]
a.s.. \ $\square$

In other words, Bates' variational formula is nothing more than the zero temperature analogue of the entropy variational formula for Gibbs Free Energy.

The other corollary of note is the answer to Hoffman's question in the directed polymer model when our variational formula has a unique maximizer. As Bates observes, this happens for a dense family of measurable functions $\tau$ (recall Remark \ref{thm27REM}).

\begin{corollary}
Fix an inverse temperature $\beta > 0$ and a bounded measurable \\ $\tau: [0,1] \rightarrow \bR$. 
\begin{enumerate}[label=(\roman*)]
\item Fix $q \in \bR^D_{\geq 0}$ and suppose $\beta \langle \tau, \nu \rangle + ||(q,\nu)||$ has a unique maximizer $\nu \in \mathcal{R}^q$. For every $n$ pick a path $\pi_n: \vec{0} \rightarrow \lfloor nq \rfloor$ independently and at random according to the probabilities prescribed the corresponding point-to-point $\beta$-polymer measure $\rho_{n,q}^{\beta}$. Then the empirical measures $\frac1n \mu_{\pi_n}$ converge weakly to $\nu$ a.s.
\item Suppose $\beta \langle \tau, \nu \rangle + ||\nu||$ has a unique maximizer $\nu \in \mathcal{M}_1$. For every $n$ pick a length $n$ path $\pi_n$ from $\vec{0}$ independently and at random according to the probabilities prescribed the corresponding point-to-level $\beta$-polymer measure $\rho_{n}^{\beta}$. Then the empirical measures $\frac1n \mu_{\pi_n}$ converge weakly to $\nu$ a.s.
\end{enumerate}
\end{corollary}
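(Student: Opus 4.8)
The plan is to read this off from the variational formula (Theorem~\ref{theorem28}) together with the exponential-decay bound \textbf{Claim 2} established inside its proof. I will write out part~(i); part~(ii) is identical once $||(q,\cdot)||$, $\mathcal{R}^q$, $Z_{n,q}^\beta$, $\rho_{n,q}^\beta$ are replaced by the direction-free objects $||\cdot||$, $\mathcal{R}^1=\mathcal{R}^\ell$, $Z_n^\beta$, $\rho_n^\beta$, one uses $\mathcal{R}^1\subseteq\mathcal{M}_1$ (weakly compact), and one invokes the direction-free analogue of Claim~2 together with Theorem~\ref{theorem28}(ii).

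Let $\nu^\ast\in\mathcal{R}^q$ be the unique maximizer, set $F(\nu):=\beta\langle\tau,\nu\rangle+||(q,\nu)||$ and $m^\ast:=F(\nu^\ast)=G_q^\beta$ (Theorem~\ref{theorem28}(i)). Since weak convergence of measures of bounded total mass is metrized by $\rho$, it suffices to show that for each fixed $\epsilon>0$ the random empirical measures satisfy $\rho(\tfrac1n\mu_{\pi_n},\nu^\ast)<\epsilon$ for all large $n$, almost surely; intersecting these almost-sure events over $\epsilon=\tfrac1k$, $k\in\bN$, then gives $\tfrac1n\mu_{\pi_n}\Rightarrow\nu^\ast$ a.s. Fix $\epsilon>0$ and let $W:=B_\epsilon(\nu^\ast)$. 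If $\mathcal{R}^q\subseteq W$, then --- since the set of measures of total mass at most $2\|q\|_1$ is weakly compact and $W$ is open --- any sequence of paths $\pi_n:\vec 0\to\lfloor nq\rfloor$ with $\tfrac1n\mu_{\pi_n}\notin W$ would have a subsequential weak limit in $\mathcal{R}^q\setminus W=\emptyset$, so for large $n$ \emph{every} path has $\tfrac1n\mu_\pi\in W$ and there is nothing to prove. Otherwise $\mathcal{R}^q\setminus W\neq\emptyset$, and Claim~2 applies with this $W$: with $Y_n:=\sum_{\pi:\ \frac1n\mu_\pi\notin W}e^{\beta T(\pi)}$ we obtain, almost surely,
\[
\limsup_{n\to\infty}\tfrac1n\log Y_n\ \le\ \sup_{\nu\in\mathcal{M}_+\setminus W}F(\nu)\ =\ \sup_{\nu\in\mathcal{R}^q\setminus W}F(\nu),
\]
the last equality because $F\equiv-\infty$ off $\mathcal{R}^q$.

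The crux is the strict gap $\sup_{\nu\in\mathcal{R}^q\setminus W}F(\nu)<m^\ast$. The set $\mathcal{R}^q\setminus W$ is weakly closed and contained in the weakly compact set $\mathcal{M}_{\|q\|_1}$ (Theorem~\ref{thm19}(i)), hence weakly compact; $||(q,\cdot)||$ is upper semicontinuous (Theorem~\ref{thm20}) and $\langle\tau,\cdot\rangle$ is weakly continuous on mass-constrained compacta (as used in the proof of Theorem~\ref{theorem28}), so $F$ attains its supremum over $\mathcal{R}^q\setminus W$ at some $\nu'\neq\nu^\ast$, and uniqueness of the maximizer forces $F(\nu')<m^\ast$. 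Put $\delta_\epsilon:=m^\ast-F(\nu')>0$, a \emph{deterministic} constant. Since $\tfrac1n\log Z_{n,q}^\beta\to G_q^\beta=m^\ast$ a.s., on an almost-sure event we have, for all large $n$,
\[
\rho_{n,q}^\beta\Big(\big\{\pi:\ \rho(\tfrac1n\mu_\pi,\nu^\ast)\ge\epsilon\big\}\Big)\ =\ \frac{Y_n}{Z_{n,q}^\beta}\ \le\ e^{-\delta_\epsilon n/2}.
\]
Hence on this environment-event $\sum_n\rho_{n,q}^\beta\big(\rho(\tfrac1n\mu_\pi,\nu^\ast)\ge\epsilon\big)<\infty$; conditioning on the environment and applying the first Borel--Cantelli lemma to the conditionally chosen paths $\pi_n$ (their independence across $n$ is not even needed here) gives $\rho(\tfrac1n\mu_{\pi_n},\nu^\ast)<\epsilon$ for all large $n$, conditionally almost surely, and integrating over the environment completes the $\epsilon$-step.

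The main obstacle is precisely this strict-gap step: it relies on the weak compactness of $\mathcal{R}^q$ (equivalently, that measures of finite grid entropy all have total mass $\|q\|_1$), on the upper semicontinuity of $F$ on $\mathcal{R}^q$, and on transferring ``unique maximizer over $\mathcal{M}_+$'' into ``unique maximizer over the compact set $\mathcal{R}^q$''. Everything else --- the Laplace-type comparison $Y_n/Z_{n,q}^\beta$, the Borel--Cantelli bookkeeping across $\epsilon=1/k$, and the trivial sub-case $\mathcal{R}^q\subseteq B_\epsilon(\nu^\ast)$ --- is routine.
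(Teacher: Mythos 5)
Your proposal is correct and follows essentially the same route as the paper's proof: both invoke Claim~2 from the proof of Theorem~\ref{theorem28} with $W$ an open ball around the maximizer, extract a deterministic strict gap from uniqueness together with weak compactness of $\mathcal{R}^q$ and upper semicontinuity of $\beta\langle\tau,\cdot\rangle+||(q,\cdot)||$, and finish by comparing $Y_n/Z_{n,q}^{\beta}$ and applying Borel--Cantelli conditionally on the environment. The only cosmetic differences are that you phrase the conclusion as ``eventually within $\epsilon$'' and intersect over $\epsilon=1/k$ rather than ruling out accumulation points other than $\nu$ via continuity from below, and you dispose of the degenerate case by checking $\mathcal{R}^q\subseteq B_\epsilon(\nu^{\ast})$ instead of $\mathcal{R}^q$ being a singleton.
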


\begin{remark}
In fact, it follows from our argument below combined with the compactness of $\mathcal{M}_1$ that even in the general case when the variational formula might not have a unique maximizer, all of the accumulation points of the empirical measures along random paths chosen according to the $\beta$-polymer measure are maximizers of the variational formula. Thus, one interesting open question for the future is whether we can more precisely describe these maximizers. 
This information could help distinguish between weak and strong disorder regimes; see \cite{rassoul2017variational} for a recent paper on the role of minimizers of a "cocycle" variational formula for the Gibbs Free Energy in determining disorder regimes.
\end{remark}

\begin{remark}
This partial answer to Hoffman's question in the positive temperature case does also follow from the development of grid entropy by Rassoul-Agha and Sepp{\"a}l{\"a}inen \cite{rassoul2014quenched} as the rate function of the LDP of empirical measures. However, it seems it has not yet been formulated explicitly as we have done here.
\end{remark}

\begin{proof}
We focus on proving (i). The direction-free argument (ii) is analogous.

(i) If $q = \vec{0}$ then the paths $\pi_n$ are the trivial path $\vec{0} \rightarrow \vec{0}$  and $\mathcal{R}^{\vec{0}} = \{0\}$ so $\nu = 0$, hence the result holds trivially. Thus we may assume $||q||_1 > 0$.

Since the empirical measures $\frac1n \mu_{\pi^n}$ eventually lie in the weakly compact set \\ $S = \{\xi \in \mathcal{M}_+: \frac12 ||q||_1 \leq ||\xi||_{TV} \leq \frac32 ||q||_1\}$ then by compactness the existence of an accumulation point is guaranteed so it suffices to show that the sequence $\frac1n \mu_{\pi_n}$ cannot have any accumulation point other than $\nu$.

If $\mathcal{R}^q$ is a singleton, i.e. $\mathcal{R}^q = \{\nu\} = \{||q||_1 \Lambda\}$, then by the characterization of $\mathcal{R}^q $ as the set of accumulation points of empirical measures (see Theorem \ref{thm5Bates}), $\nu$ is the only possible accumulation point of $\frac1n \mu_{\pi_n}$ so we would be done. We can thus assume $\mathcal{R}^q$ is not a singleton. Thus there exists some $\eta_0 > 0$ s.t. $B_{\eta_0}(\nu)^C \cap \mathcal{R}^q \neq \emptyset$.

Suppose we are in the measure 1 event 
\begin{equation} \label{Event3}
\begin{split}
    &\Bigg\{\begin{array}{l}
    \limsup \limits_{n \rightarrow \infty} \frac1n \log \ \sum \limits_{\substack{\pi: \vec{0} \rightarrow \lfloor nq \rfloor \\  \frac1n \mu_{\pi} \notin B_{\eta}(\nu)}} e^{\beta T(\pi)} \leq \sup \limits_{\xi \in \mathcal{M}_+ \setminus B_{\eta}(\nu)} [\beta \langle \tau, \xi \rangle + ||(q,\xi)|| ] \ \forall \eta \in \bQ_+ \cap (0, \eta_0) 
  \end{array} \Bigg\}  \\
&\cap \bigg\{\lim_{n \rightarrow \infty} \frac1n \log Z_{n,q}^{\beta} = \beta \langle \tau, \nu \rangle + ||(q, \nu)|| \bigg\}
\end{split}
\end{equation}
This is the intersection of countably many measure 1 events by Claim 2 in the proof of Theorem \ref{theorem28}. Claim 2 applies because  
\[B_{\eta}(\nu)^C \cap \mathcal{R}^q \supseteq B_{\eta_0}(\nu)^C \cap \mathcal{R}^q \neq \emptyset \ \forall 0 < \eta < \eta_0\]

We need to prove that in the measure 1 event \eqref{Event3}, the probability that the empirical measures $\frac1n \mu_{\pi^n}$ of the paths chosen independently at random according to the polymer measures do not have an accumulation point other than $\nu$ is 0. 

Fix any $0 < \eta < \eta_0$ in $\bQ_+$. We wish to show that the probability that  $\frac1n \mu_{\pi_n}$ is not in $B_{\eta}(\nu)^C$ infinitely often is 0. 

Let $\xi_{\eta} \in \mathcal{M}_+ \setminus B_{\eta}(\nu)$ achieve the supremum
\[\sup_{\xi \in \mathcal{M}_+ \setminus B_{\eta}(\nu)} [\beta \langle \tau, \xi \rangle + ||(q,\xi)|| ] \]
By the maximality of $\nu$, there is some $\epsilon > 0$ s.t.
\begin{equation}\label{eqn38}
\beta \langle \tau, \xi_{\eta} \rangle + ||(q,\xi_{\eta})|| < \beta \langle \tau, \nu \rangle + ||(q,\nu)|| - 3\epsilon  
\end{equation}

Consider any $n \in \bN$. Recall the definition of the point-to-point $\beta$-polymer measure $\rho_{n,q}^{\beta}$ on the set of paths $\pi: \vec{0} \rightarrow \lfloor nq \rfloor$:
 \[ \rho_{n,q}^{\beta} (d\pi) = \frac1{Z_{n,q}^{\beta}} e^{\beta T(\pi)} \]
Thus the probability that $\frac1n \mu_{\pi_n} \notin B_{\eta}(\nu)$ (with respect to the $\beta$-polymer measure $\rho_{n,q}^{\beta}$) is 
\[ \frac1{Z_{n,q}^{\beta}} \sum_{\pi: \vec{0} \rightarrow \lfloor nq \rfloor \ \mbox{s.t.} \ \frac1n \mu_{\pi} \notin B_{\eta}(\nu)} e^{\beta T(\pi)}   \]
We are in the event \eqref{Event3}, so for large enough $n$, this is
\[ \leq \mbox{exp} \bigg(-n\beta \langle \tau, \nu \rangle -n||(q,\nu)|| + n\epsilon \bigg) \mbox{exp} \bigg(n\beta \langle \tau, \xi_{\eta} \rangle + n||(q,\xi_{\eta})|| + n\epsilon \bigg) \leq e^{-n\epsilon}\]
by \eqref{eqn38}. $\sum \limits_{n =1}^{\infty} e^{-n\epsilon} < \infty$ so by the Borel-Cantelli Lemma,
\begin{align*}
&P\bigg(\frac1n \mu_{\pi_n} \ \mbox{has an accumulation point in $B_{2\eta}(\nu)^C$}\bigg) \\
&\leq P\bigg(\frac1{n} \mu_{\pi_{n}}\in B_{\eta}(\nu)^C \ \mbox{for infinitely many $n$}\bigg) = 0
\end{align*}

By continuity from below, 
\begin{align*} 
&P\bigg(\frac1n \mu_{\pi_n} \ \mbox{has an acc. point other than $\nu$} \bigg) \\
&= \lim_{\eta \rightarrow 0^+}P \bigg(\frac1n \mu_{\pi_n} \ \mbox{has an acc. point in $B_{2\eta}(\nu)^C$} \bigg) =0 
\end{align*}
Therefore the empirical measures $\frac1n \mu_{\pi_n} $ converge weakly to $\nu$ a.s..
\end{proof}

\subsection{Grid Entropy as  The Negative Convex Conjugate of  Gibbs Free Energy}
Another way of viewing the variational formulas in Theorem \ref{theorem28} is that the point-to-point/point-to-level $\beta$-Gibbs Free Energies as  functions of the bounded measurable function $\tau$ are the convex conjugates of the functions $\nu \mapsto -||(q, \nu)||, \nu \mapsto -||\nu||$  on $\mathcal{M}_+, \mathcal{M}_1$ respectively.

Let us briefly recall what that means (see \cite{zalinescu} for more details).

\begin{definition}
Let $X$ be a locally convex Hausdorff space and let $X^{\ast}$ be its dual with respect to an inner product $\langle \cdot, \cdot \rangle$. A convex function $f: X \rightarrow [-\infty, \infty]$ is said to be proper if $f > -\infty$ and $f \not \equiv \infty$. For any proper convex function $f: X \rightarrow [-\infty, \infty]$, its convex conjugate is the function $f^{\ast}: X^{\ast} \rightarrow [-\infty, \infty] $ given by
\[ f^{\ast}(x^{\ast}) = \sup_{x \in X} [\langle x, x^{\ast} \rangle - f(x)]\]
\end{definition}

In our case, we have $X = \mathcal{M}_+$ (finite Borel measures on [0,1]) and $X^{\ast}$ is the set of bounded measurable functions $\tau: [0,1] \rightarrow \bR$ with inner product given by integration.

Also, recall that direction-fixed/direction-free grid entropy is concave  hence the maps $\nu \mapsto -||(q, \nu)||, \nu \mapsto -||\nu||$ are convex and trivially proper. Furthermore, by our continuity theorem (Thm \ref{thm20}), $-||(q, \cdot)||$ and $-||\nu||$ are lower semicontinuous in $\nu$.

With this setup in mind, it is evident that Theorem \ref{theorem28} establishes that the point-to-point $\beta$-Gibbs Free Energy is precisely the convex conjugate of $-||(q, \cdot)||$ and the point-to-level $\beta$-Gibbs Free Energy is the convex conjugate of $-||\nu||$.

The convex conjugate has various important properties. We focus on those that are the most relevant and interesting in regards to grid entropy and Gibbs Free Energy. See \cite[Sect 2.3]{zalinescu} for the full list of properties.

\begin{corollary}\label{biconj}
Fix $q \in \bR^D_{\geq 0}$, an inverse temperature $\beta > 0$. Then viewing Gibbs Free Energies as functions of bounded measurable $\tau$ we have the following.
\begin{enumerate}[label=(\roman*)]
\item Gibbs Free Energies $G_{q}^{\beta}(\tau), G^{\beta}(\tau)$ are convex and lower semicontinuous in $\tau$ 
\item (Order-preservation) If $q' \in \bR^D_{\geq 0}$ with $||(q, \cdot)|| \leq ||(q', \cdot)||$ then \\ $G_q^{\beta}(\cdot) \leq G_{q'}^{\beta}(\cdot)$
\item (Biconjugate Duality) The convex conjugate of $\beta$-Gibbs Free Energy (as a function of $\tau$) is minus grid entropy. That is,
\begin{equation*} (G_q^{\beta})^{\ast} = -||(q, \cdot)|| \ \mbox{and} \ (G^{\beta})^{\ast} = -|| \cdot|| 
\end{equation*}
In other words, grid entropy is equal to its biconjugate.
\item (Fenchel's Duality Theorem) Let $g: \mathcal{M}_+ \rightarrow [-\infty, \infty], h: \mathcal{M}_1 \rightarrow [-\infty, \infty]$ be  proper convex functions s.t.  $\exists \nu \in \mathcal{R}^q, \nu' \in \mathcal{R}^1$ with $g(\nu) < \infty, h(\nu') < \infty$ and one of $g, ||(q, \cdot)||$ is continuous at $\nu$ and one of $h, ||\cdot||$ is continuous at $\nu'$. Then
\[ \inf_{\nu \in \mathcal{M}_+ } [g(\nu) - ||(q, \nu)||] = \sup_{\tau} [G_q^{\beta}(\tau) - g^{\ast}(\tau) ]\]
\[ \inf_{\nu \in \mathcal{M}_1 } [h(\nu) - ||\nu||] = \sup_{\tau} [G^{\beta}(\tau) - h^{\ast}(\tau) ]\]
where $g^{\beta \ast}, h^{\beta \ast}$ denote the $\beta$-convex conjugates of $g,h$ respectively:
\[ g^{\beta \ast}(\tau) = \sup_{\nu \in \mathcal{M}_+} [\beta \langle \tau, \nu \rangle - g(\nu) ], \ h^{\beta \ast}(\tau) = \sup_{\nu \in \mathcal{M}_+} [\beta \langle \tau, \nu \rangle - h(\nu) ] \]
\end{enumerate}
\end{corollary}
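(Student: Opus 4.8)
The plan is to obtain all four items from the variational formula of Theorem~\ref{theorem28} together with standard convex-duality machinery (\cite{zalinescu}), taking (iii) as the central computation and deducing the rest. For (iii): Theorem~\ref{theorem28} asserts precisely that, with respect to the bilinear pairing $(\tau,\nu)\mapsto\beta\langle\tau,\nu\rangle$ between bounded measurable functions and $\mathcal{M}_+$, the map $G_q^\beta$ is the convex conjugate of $\psi_q:=-||(q,\cdot)||$, and likewise $G^\beta=\psi^{\ast}$ for $\psi:=-||\cdot||$. Hence $(G_q^\beta)^{\ast}=\psi_q^{\ast\ast}$, and it remains to verify the hypotheses of the Fenchel--Moreau biconjugation theorem for $\psi_q$. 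It is proper: since $||(q,\nu)||\in\{-\infty\}\cup[0,H(q)]$ the function $\psi_q$ never takes the value $-\infty$, and $\psi_q(||q||_1\Lambda)\le 0<\infty$ because $||q||_1\Lambda\in\mathcal{R}^q$. It is convex because grid entropy is concave (positive-homogeneity plus the reverse triangle inequality, Theorem~\ref{thm16}; see also Lemma~\ref{lemma22}). It is lower semicontinuous by the upper semicontinuity of $||(q,\cdot)||$ in $\nu$ established in Theorem~\ref{thm20}, together with the fact that $\psi_q\equiv+\infty$ off the weakly compact set $\mathcal{R}^q$. Fenchel--Moreau then gives $\psi_q^{\ast\ast}=\psi_q$, i.e.\ $(G_q^\beta)^{\ast}=-||(q,\cdot)||$, and the identical argument with $\mathcal{R}^1$ in place of $\mathcal{R}^q$ gives $(G^\beta)^{\ast}=-||\cdot||$.

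For (i) and (ii): once (iii) is known, $G_q^\beta$ and $G^\beta$ are convex conjugates and hence automatically convex and (weak-$\ast$) lower semicontinuous in $\tau$; alternatively one reads this directly off Theorem~\ref{theorem28}, which exhibits $G_q^\beta(\tau)=\sup_{\nu\in\mathcal{R}^q}[\beta\langle\tau,\nu\rangle+||(q,\nu)||]$ as a pointwise supremum of functions that are affine and continuous in $\tau$. For (ii), if $||(q,\cdot)||\le||(q',\cdot)||$ pointwise, then for every $\tau$ the supremand defining $G_q^\beta(\tau)$ is dominated termwise by that defining $G_{q'}^\beta(\tau)$, so $G_q^\beta\le G_{q'}^\beta$; the same works for the point-to-level versions.

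For (iv): apply the Fenchel--Rockafellar duality theorem (\cite[Sect.~2.3]{zalinescu}) to the pair of proper convex functions $f_1:=g$ and $f_2:=\psi_q=-||(q,\cdot)||$ on $\mathcal{M}_+$ (with the identity as linking map). The hypothesis that some $\nu\in\mathcal{R}^q$ has $g(\nu)<\infty$ — noting $\psi_q(\nu)<\infty$ automatically there — and that one of $g$, $||(q,\cdot)||$ is continuous at $\nu$ is exactly the constraint qualification the theorem requires, and it yields the duality equality with the dual supremum attained. Substituting $f_2^{\ast}=G_q^\beta$ from (iii)/Theorem~\ref{theorem28} and $f_1^{\ast}=g^{\beta\ast}$, and tracking the $\beta$-scaling of the pairing and the sign conventions, produces $\inf_{\nu}[g(\nu)-||(q,\nu)||]=\sup_\tau[G_q^\beta(\tau)-g^{\beta\ast}(\tau)]$; the point-to-level identity follows identically with $-||\cdot||$ and $G^\beta$.

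The genuinely routine parts are (ii) and the affine-supremum half of (i). The main obstacle is a topological one: ensuring that ``weakly lower semicontinuous'', as delivered by Theorem~\ref{thm20} (upper semicontinuity of grid entropy against weak convergence of measures), really gives lower semicontinuity of $\psi_q$ in the locally convex topology $\sigma(\mathcal{M}_+,\{\text{bounded measurable }\tau\})$ that Fenchel--Moreau and Fenchel--Rockafellar are stated in. This is where the weak compactness of $\mathcal{R}^q$ (so $\psi_q$ is $+\infty$ outside a compact set, with weakly compact sublevel sets) and the uniform bound $||(q,\nu)||\le H(q)$ carry the weight; one must also confirm that the continuity hypothesis in (iv) is phrased in precisely the form the cited theorem consumes.
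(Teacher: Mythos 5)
Your proposal is correct and follows essentially the same route as the paper, which likewise derives (iii) from Fenchel--Moreau applied to the proper, convex, lower semicontinuous function $-\|(q,\cdot)\|$ and obtains (i), (ii), (iv) from standard properties of convex conjugates and Fenchel duality (the paper's own proof is little more than these citations). Your version additionally verifies the Fenchel--Moreau hypotheses explicitly and correctly flags the topology-matching point (lower semicontinuity against weak convergence transfers to the finer pairing topology $\sigma(\mathcal{M},\{\text{bounded measurable }\tau\})$), which the paper leaves implicit.
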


\begin{remark}
(iii) gives yet another definition of direction-fixed/direction-free grid entropy. This one is perhaps the most remarkable of the three, as it connects it to the seemingly unrelated quantity that is Gibbs Free Energy. This more than justifies the importance and canonical nature of grid entropy.
\end{remark}

\begin{proof}
(iii) is an immediate consequence of the Fenchel-Moreau Theorem. The rest follow from the properties of convex conjugates of proper convex, lower semicontinuous functions. See \cite[Sect 2.3]{zalinescu} for more details on (i), (ii) and see \cite[Sect 7.15]{van} for more details on (iv).
\end{proof}

The key takeaway is this: grid entropy and  Gibbs Free Energy are intricately connected via convex duality.

\subsection{Connection to Previous Literature on Grid Entropy} \label{translationSection}
We end our discussion of these variational formulas by explicitly deriving the formulas \eqref{translationEqn} which link our notion of grid entropy to that presented previously in \cite{rassoul2014quenched}. For the sake of conciseness we focus on the direction-fixed (a.k.a. point-to-point) case.

Let us first recalibrate our setting to align with the framework of Rassoul-Agha and \\ Sepp{\"a}l{\"a}inen. We view each edge label as a value $\omega_{x,z}$ associated to the "anchor" vertex $x \in \bZ^D$ and the unit $NE$ direction $z$ of the step (with respect to $x$). Thus the environment becomes $\Omega := [0,1]^{\bZ^D \times  \mathcal{G}}$ where $\mathcal{G} = \{e_1, \ldots, e_D\} $ is the set of $D$ unit NE steps. For each $y \in \bZ^D$, we define a shift $T_y: \Omega \rightarrow \Omega$ in the natural way, by shifting the anchor: $(T_y \omega)_{x,z} = \omega_{x+y,z}$. We let $f: \Omega \times \mathcal{G} \rightarrow \bR$ be a potential.

We initially work on the space $\Omega \times \mathcal{G}$ of pairs of the environment and one unit step; that is we consider the empirical measures 
\[ \frac1n \chi_{\pi_n}  = \frac1n \sum_{i=0}^{n-1} \delta_{(T_{X_i}\omega, X_{i+1}-X_i)}\]
of the polymer paths $\pi_n: (X_0 \rightarrow X_1 \rightarrow \cdots \rightarrow X_{n-1})$. It is established in \cite[Theorem 3.1]{rassoul2013quenched} that the rate function $I_1(\mu, q)$ of these empirical measures in the direction-$q$ case is precisely the lower semicontinuous regularization of
\begin{equation} \label{i1} 
H_1(\mu) - \beta \langle f, \mu \rangle + \Lambda_q^{\beta}(f) 
\end{equation}
where $H_1$ is the infimum of certain relative entropies (see (5.2) of \cite{rassoul2013quenched}), and  
\[ \Lambda_q^{\beta}(f) := \lim_{n \rightarrow \infty} \frac1n \log E \bigg[\mbox{exp} \bigg(\beta \sum \limits_{i=0}^{n-1} f(T_{X_i}\omega, X_{i+1}-X_i)\bigg) \textbf{1}_{X_{n-1} = \lfloor n q \rfloor}\bigg] \]
is the (normalized) $\beta$-Gibbs Free Energy in direction $q$ with potential $f$ defined in (2.3) of the same paper. Here $\mu$ is a probability measure on $\Omega \times \mathcal{G}$ s.t. the 1st step $X_1-X_0 \in \mathcal{G}$ has $\mu$-mean $E^{\mu}[X_1-X_0]=q$. This last condition is imposed because these paths go in the direction $q$ hence any limit points of their empirical measures must average to $q$ in the step coordinate; any $\mu$ not satisfying this condition will trivially have an infinite rate function.

We consider the "edge label potential"  $\phi: \Omega \times \mathcal{G} \rightarrow [0,1]$ given by $\phi(\omega, z) = \omega_{0, z}$ which maps an (environment, unit direction) pair to the edge label of the corresponding unit direction anchored at the origin. If $\pi_n = (X_0 \rightarrow \cdots \rightarrow X_{n-1})$ is a polymer path then our empirical measures from this paper are just the $\phi$-pushforwards of $\frac1n \chi_{\pi_n}$:
\[\frac1n \mu_{\pi_n} = \frac1n \sum_{i=0}^{n-1} \delta_{\omega_{X_i,X_{i+1}-X_i}} = (\phi)_{\ast}\bigg(\frac1n \chi_{\pi_n} \bigg)\]
We obtain the rate function $I_1'(\nu,q)$ of the empirical measures $\frac1n \mu_{\pi_n}$ in the direction-$q$ case by contracting the higher level LDP rate function $I_1(\mu,q)$:
\begin{equation}\label{contraction} 
 I_1'(\nu, q) = \inf_{\mu: (\phi)_{\ast}\mu = \nu} I_1(\mu,q)
 \end{equation}
for probability measures $\nu \in \mathcal{M}_1$, where the infimum is taken over probability measures $\mu$ on $\Omega \times \mathcal{G}$  whose $\phi$-pushforward is $\nu$.

We now make some convenient observations. First, for any measurable and bounded \\ $\tau: [0,1] \rightarrow \bR$, our (unnormalized) $\beta$-point-to-point Gibbs Free Energy $G_q^{\beta}(\tau)$ is precisely $\Lambda_q^{\beta}(\tau \circ \phi) + \log D$. Second, for any $\mu$ s.t. $\nu = (\phi)_{\ast}\mu$ we have 
\[\langle \phi, \mu \rangle = \int \phi(u) d\mu = \int u d\nu = \langle \iota, \nu \rangle \]
where $\iota: [0,1] \rightarrow [0,1], \iota(u) = u$ is the identity function. Thus the latter two terms in \eqref{i1} for $I_1(\mu,q)$ are ignored by the infimum in \eqref{contraction} so we get that $I_1'(\nu,q)$ is the lower semicontinuous regularization of
\[ H_1'(\nu,q) - \beta \langle \iota, \nu \rangle + G_q^{\beta} (\iota) - \log D  \]
where
\begin{equation} \label{e0}
H_1'(\nu,q) = \inf \big\{H_1(\mu): \mu \ \mbox{is a prob. meas. on} \ \Omega \times \mathcal{G} \ \mbox{s.t.} \ (\phi)_{\ast}\mu = \nu, E^{\mu}[X_1-X_0]=q \big\}
\end{equation}

 (5.9) from \cite{rassoul2014quenched} gives the following variational formula for the point-to-point Gibbs Free Energy:
 \begin{equation} \label{e1}
 \Lambda_q^{\beta}(f)  = \sup_{\mu: E^{\mu}[X_1-X_0]=q} [\beta \langle f, \mu \rangle - H_1(\mu)]
 \end{equation}
for bounded measurable potentials $f: \Omega \times \mathcal{G} \rightarrow \bR$ where the supremum is over probability measures $\mu$ on $\Omega \times \mathcal{G}$ with mean step $q$.

Taking $f = \tau \circ \phi$ for bounded measurable $\tau: [0,1] \rightarrow \bR$ and splitting the supremum over $\mu$ into an outer supremum over $\nu \in \mathcal{M}_1$ and an inner supremum over  $\mu$ whose $\phi$-pushforward is $\nu$, we get
 \begin{equation} \label{e3}
 \begin{split}
 \Lambda_q^{\beta}(\tau \circ \phi)  &= \sup_{\mu: E^{\mu}[X_1-X_0]=q} [\beta \langle \tau \circ \phi, \mu \rangle - H_1(\mu)]\\
 & = \sup_{\nu \in \mathcal{M}_1} \sup_{\mu: (\phi)_{\ast}\mu = \nu, E^{\mu}[X_1-X_0]=q} [\beta \langle \tau, \nu \rangle - H_1(\mu)]\\
 &= \sup_{\nu \in \mathcal{M}_1} [\beta \langle \tau, \nu \rangle - H_1'(\nu, q)] 
 \end{split}
 \end{equation}
 where the last equality holds by \eqref{e0}. Comparing this to our variational formula for the point-to-point Gibbs Free Energy from Theorem \ref{theorem28} (i), recalling that Gibbs Free Energy is biconjugate from Corollary \ref{biconj}, and recalling that $G_q^{\beta}(\tau)=\Lambda_q^{\beta}(\tau \circ \phi) + \log D$, we get
\[ H_1'(\nu,q) = \sup_{\tau: [0,1] \rightarrow \bR \ \mbox{bdd, meas.}} [\beta \langle \tau,\nu \rangle - G_q^{\beta}(\tau) + \log D ] = -||(q,\nu)|| +\log D \]
\bigskip

The direction-free (a.k.a. point-to-level) argument is nearly identical, except we drop the $q$ indices and the condition that the first step $X_1-X_0$ has $\mu$-mean $E^{\mu}[X_1-X_0] = q$. In the end we get
\[ H_1'(\nu) = -||\nu|| + \log D\]
where 
\begin{equation*}
H_1'(\nu) = \inf_{\mu: (\phi)_{\ast}\mu = \nu} H_1(\mu) 
\end{equation*}

As a final remark, we mention that this entropy $H_1$ originally appears in Varadhan's paper \cite{varadhan2003large} on large deviations for empirical measures of Markov chains.

\section{Extensions to Other Models and Open Questions}
Looking back at our development of grid entropy in $\bR^D$, we never required some specific property of the lattice model other than that the number of  paths $\pi: \vec{0} \rightarrow \lfloor nq \rfloor$ we consider is $O(e^{H(q)n})$ for some  model-dependent constant 
\[H(q) = \limsup_{n \rightarrow \infty} \frac1n \log (\# \pi: \vec{0} \rightarrow \lfloor nq \rfloor)\]
 that all such $\pi$ have the same length $O(n)$, and that any pair of paths $\vec{0} \rightarrow \lfloor mq\rfloor$ and $\lfloor mq \rfloor \rightarrow \lfloor (m+n)q \rfloor $ can be concatenated (thus giving us the superadditivity required to be able to apply the Subadditive Ergodic Theorem). 
 
 Therefore  the notion of grid entropy and all our results, including the variational formula for the Gibbs Free Energy, apply in any edge model with these properties. For example,  we might consider random walks on $\bZ_{+} \times \bZ^D$ where the first coordinate, "time," is always increasing along a path.

In addition, if we loosen some of these conditions, then a part of our work in this paper still holds. For example,  instead of NE paths on $\bZ^D$  consider self-avoiding walks (SAWs) on $\bZ^D$, as done in \cite{bates}. Then we cannot always concatenate a pair of SAWs $\vec{0} \rightarrow \lfloor mq\rfloor$ and $\lfloor mq \rfloor \rightarrow \lfloor (m+n)q \rfloor $ so our arguments that rely on superadditivity no longer apply. However, we may still define grid entropy by
\[ ||(q, \nu)|| := \sup \bigg\{\alpha \geq 0: \lim_{n \rightarrow \infty} \min_{\pi: \vec{0} \rightarrow \lfloor nq \rfloor}^{\lfloor e^{\alpha n} \rfloor}  \rho\bigg(\frac1{n} \mu_{\pi}, \nu \bigg) = 0  \ \mbox{a.s.} \bigg\}\]
and it will be true that only  target measures $\nu \ll \Lambda$ with total mass $||q||_1$ are observed and that we have an upper bound on the sum of relative entropy and grid entropy:
\[ D_{KL}(\nu || \Lambda) +||(q,\nu)|| \leq \chi_D \ \mbox{when $||q||_1=1$} \]
where $\chi_D$ is the connectivity constant for $\bZ^D$ satisfying 
\[\#(\mbox{length $n$ SAWs from $\vec{0}$})= e^{n \chi_D +o(n)}\]
 (see \cite[Section 6.2]{lawler} for details). It is not clear, however, whether any of the other properties (including this being a directed norm with negative sign) still hold.

Note however that with a small modification we can make this work. All our results hold for SAWs on $\bZ^{D+1}$ where the first coordinate, "time," is necessarily non-decreasing because we obtain a superadditivity akin to the one for NE paths.

\bigskip

We end by mentioning two open questions that could be the scope of future work. Can we describe the set of maximizers of our variational formula (Theorem \ref{theorem28})? As discussed before, this would provide insights into the limit points of empirical measures and also into the problem of distinguishing weak and strong order regimes. And secondly, is grid entropy strictly concave? From our results in this chapter, answering this would immediately imply that Gibbs Free Energy is strictly convex, which is currently a major open research problem in this field. 

\section{Acknowledgments}
I would like to thank my friends and family for their continual support during my graduate studies. Special thanks goes to my advisor B\'{a}lint Vir\'{a}g, for his patience, guidance, and optimism in the face of setbacks. I also wish to thank Duncan Dauvergne for very helpful discussions and peer review on a preliminary draft. In addition, I am grateful to my thesis external reviewer Firas Rassoul-Agha for his comments and for drawing some connections to previous literature. Finally, this work would not have been possible without the funding from my NSERC Canadian Graduate Scholarship-Doctoral.

\bibliography{biblio}
  
\end{document}